\documentclass{amsart}
\usepackage{xspace}
\raggedbottom
\numberwithin{figure}{section}
\numberwithin{equation}{section}
\usepackage[all]{xy}
\usepackage{tikz}
\usepackage{color}
\usepackage{amssymb}
\usepackage{comment}
\usepackage{enumitem,hyperref}
\usepackage{faktor}
%

%
%
\let\cal\mathcal
\def\Ascr{{\cal A}}

\def\Dscr{{\cal D}}
\def\Escr{{\cal E}}
\def\Fscr{{\cal F}}

\def\Lscr{{\cal L}}
\def\Mscr{{\cal M}}

\def\Oscr{{\cal O}}
\def\Pscr{{\cal P}}

\def\Sscr{{\cal S}}

\def\Wscr{{\cal W}}
\def\Xscr{{\cal X}}

%
%
\let\blb\mathbb

\def\QQ{{\blb Q}}

\def \ZZ{{\blb Z}}

\def \NN{{\blb N}}
\def \RR{{\blb R}}

\def\Res{\operatorname{Res}}

\def\Lotimes{\overset{L}{\otimes}}
\def\quot{/\!\!/}

\def\Qch{\operatorname{Qch}}
\def\coh{\mathop{\text{\upshape{coh}}}}

\def\Spec{\operatorname {Spec}}

\def\GL{\operatorname {GL}}

\def\diag{\operatorname {diag}}

\def\Hom{\operatorname {Hom}}
\def\uHom{\operatorname {\mathcal{H}\mathit{om}}}
\def\uEnd{\operatorname {\mathcal{E}\mathit{nd}}}
\def\End{\operatorname {End}}
\def\REnd{\operatorname {REnd}}
\def\RHom{\operatorname {RHom}}
\def\uRHom{\operatorname {R\mathcal{H}\mathit{om}}}
\def\uREnd{\operatorname {R\mathcal{E}\mathit{nd}}}

\def\relint{\operatorname {relint}}
\def\im{\operatorname {im}}

\def\ker{\operatorname {ker}}

\def\End{\operatorname {End}}

\def\Pic{\operatorname {Pic}}

\def\gldim{\operatorname {gl\,dim}}

\def\r{\rightarrow}

\DeclareMathOperator{\Proj}{Proj}

\DeclareMathOperator{\Ind}{Ind}

\DeclareMathOperator{\Aut}{Aut}

\DeclareMathOperator{\Perf}{Perf}
\def\open{H}
\DeclareMathOperator{\RInd}{RInd}
\def\Res{\operatorname{Res}}

%
%

\newtheorem{lemma}{Lemma}[section]

\newtheorem{proposition}[lemma]{Proposition}

\newtheorem{corollary}[lemma]{Corollary}

\newtheorem{lemmas}{Lemma}[subsection]
\newtheorem{lemmadefinitions}[lemmas]{Lemma-Definition}
\newtheorem{propositions}[lemmas]{Proposition}
\newtheorem{theorems}[lemmas]{Theorem}
\newtheorem{corollarys}[lemmas]{Corollary}

\theoremstyle{definition}

\newtheorem{definition}[lemma]{Definition}

\newtheorem{definitions}[lemmas]{Definition}

{

}

\theoremstyle{remark}

\newtheorem{remark}[lemma]{Remark}
\newtheorem{remarks}[lemmas]{Remark}

\newdimen\uboxsep \uboxsep=1ex
\def\uboxn#1{\vtop to 0pt{\hrule height 0pt depth 0pt\vskip\uboxsep
\hbox to 0pt{\hss #1\hss}\vss}}

\def\uboxs#1{\vbox to 0pt{\vss\hbox to 0pt{\hss #1\hss}
\vskip\uboxsep\hrule height 0pt depth 0pt}}

\def\codim{\operatorname{codim}}

\def\cone{\operatorname{cone}}

\def\Sp{\operatorname{Sp}}
\def\Xs{X^{\mathbf{s}}}

\def\pdim{\operatorname{pdim}}

\def\cone{\operatorname{cone}}

\def\ra{\rangle}
\def\la{\langle}
\def\D{\Dscr}
\def \W{\Wscr}

\def \cL{{\mathcal L}}
\def \cW{{\mathcal W}}

\def \cP{{\mathcal P}}

\def\Irr{\operatorname{Irr}}

\definecolor{ruta2}{rgb}{0.409, 0.459, 0.208}
\long\def\spela#1{{\color{ruta2}#1}}

\makeatletter
\def\namedlabel#1#2{\begingroup
    #2%
    \def\@currentlabel{#2}%
    \phantomsection\label{#1}\endgroup
}
\let\oldmarginpar\marginpar
\def\marginpar#1{{\oldmarginpar{\tiny #1}}}
\def\Sym{\operatorname{Sym}}
\title{Semi-orthogonal decompositions of GIT quotient stacks}
\author{\v{S}pela \v{S}penko}
\thanks{The first author is a FWO $[$PEGASUS$]^2$ Marie Sk\l odowska-Curie fellow at the Free University of Brussels
(funded by the European Union Horizon 2020 research and innovation
programme under the Marie Sk\l odowska-Curie grant agreement
No 665501 with the Research Foundation Flanders (FWO)). During part of this work she was also a postdoc with Sue Sierra at the University of Edinburgh. Partly she was supported by L'Or\'eal-UNESCO scholarship ``For women in science''.}
\email[\v{S}pela \v{S}penko]{Spela.Spenko@vub.ac.be}
\address{Departement Wiskunde, Vrije Universiteit Brussel,
Pleinlaan $2$, B-1050 Elsene}
\author{Michel Van den Bergh}
\email[Michel Van den Bergh]{michel.vandenbergh@uhasselt.be}
\address{Departement WNI, Universiteit Hasselt, Universitaire Campus \\
B-3590 Diepenbeek}
\thanks{The second author is a senior researcher at the Research Foundation Flanders (FWO).  While working on this project he was supported by
the FWO grant G0D8616N: ``Hochschild cohomology and deformation theory of triangulated categories''.}
\thanks{Substantial progress on this project was made during visits of the authors to each other's host institutions.
They respectively thank the University of Hasselt and the University of Edinburgh for their hospitality and support.}
\keywords{Non-commutative resolutions, geometric invariant theory, semi-orthogonal decomposition}
\subjclass{13A50,14L24,16E35}

\begin{document}
\def\vdb#1{\textcolor{red}{#1}}
\def\spela#1{\textcolor{blue}{#1}}
\begin{abstract}
  If $G$ is a reductive group acting on a linearized smooth
  scheme~$X$ then we show that under suitable standard conditions the
  derived category $\Dscr(X^{ss}/G)$ of the corresponding GIT quotient
  stack $X^{ss}/G$ has a semi-or\-thogonal decomposition consisting of
  derived categories of coherent sheaves of rings on $X^{ss}\quot G$
  which are locally of finite global dimension. One of the
components of the decomposition is a certain non-commutative
  resolution of $X^{ss}\quot G$ constructed earlier by the authors. As a concrete example we obtain in the case of odd Pfaffians a semi-orthogonal decomposition of the corresponding quotient
stack in which all the parts are certain specific non-commutative \emph{crepant} resolutions of Pfaffians of lower or equal rank which had also been constructed earlier by the authors.  In particular this semi-orthogonal decomposition cannot be refined further since its parts are Calabi-Yau.

The results in this paper complement results by Halpern-Leistner, Ballard-Favero-Katzarkov and Donovan-Segal
 that assert the existence of a
semi-orthogonal decomposition of $\Dscr(X/G)$ in which one of the
parts is $\Dscr(X^{ss}/G)$.
\end{abstract}
\maketitle

\section{Introduction}
\subsection{Main result}
\label{sec:intro:mainresult}
Throughout $k$ is an algebraically closed base field of characteristic~$0$. All schemes are $k$-schemes.
 If $\Lambda$ is a right noetherian
ring then we write $\Dscr(\Lambda)$ for $D^b_f(\Lambda)\subset D(\Lambda)$, the bounded derived category
of right $\Lambda$-modules with finitely generated cohomology. Similarly for a noetherian scheme/stack $\Xscr$ we write
$\Dscr(\Xscr):=D^b_{\coh}(\Xscr)$ and if $\Ascr$ is a quasi-coherent sheaf of noetherian algebras on a stack $\Xscr$ then we write $\Dscr(\Ascr)$ for $D^b_{\coh}(\Ascr)$.
\begin{definitions}
Let $\Dscr$ be a triangulated category. A \emph{semi-orthogonal decomposition}
$\Dscr=\la \D_i\mid i\in I\ra$ is a list of triangulated subcategories $(\D_i)_{i\in I}$ of $\Dscr$
indexed by a totally ordered set $I$ such that
\begin{enumerate}
\item $\D$ is \emph{generated} by $\D_{i}$, $i\in I$. I.e. it is
the smallest triangulated subcategory of $\D$ containing $\D_{i}$, $i\in I$. 
\item $\Hom(\D_{i},\D_{j})=0$ for $j<i$.
\end{enumerate}
\end{definitions}
Let $X$ be a scheme. A \emph{presheaf of triangulated categories} $\widetilde{\Escr}$ on $X$ consists
of triangulated categories $\widetilde{\Escr}(U)$ for all open subschemes
 $U\subset X$
together with exact restriction
functors $\widetilde{\Escr(U)}\r \widetilde{\Escr(V)}$ for $V\subset U$ satisfying the usual compatibilities. A triangulated
subpresheaf $\widetilde{\Fscr}$ of $\widetilde{\Escr}$ is a collection of triangulated subcategories
$\widetilde{\Fscr}(U)\subset \widetilde{\Escr}(U)$ compatible with restriction.

A semi-orthogonal decomposition $\widetilde{\Escr}=\la \widetilde{\Escr}_i\mid i\in I\ra$ is a list of triangulated
subpresheaves $(\widetilde{\Escr}_i)_{i\in I}$ of $\widetilde{\Escr}$
indexed by a totally ordered set $I$ such that for each open $U\subset X$
we have a semi-orthogonal decomposition $\widetilde{\Escr}(U)=\langle \widetilde{\Escr}_i(U)\mid i\in I\rangle$.

If $X$ is noetherian then we write  $\widetilde{\Dscr}_X$ for the presheaf of triangulated categories
$U\mapsto \Dscr(U)$ on $X$. For a quasi-coherent sheaf of noetherian algebras $\Ascr$ on $X$ we similarly
put $\widetilde{\Dscr}_\Ascr(U)=\Dscr(\Ascr\mid U)$.

  Let $G$ be a reductive group acting on a $k$-scheme $X$ such
  that a ``good quotient'' $\pi:X\r X\quot G$ exists (see \S\ref{sec:goodq}
  below). Then we define a presheaf of triangulated categories $\widetilde{\Dscr}_{X/G}$ on $X\quot G$
as follows: if $U\subset X\quot G$ is open then we put $\widetilde{\Dscr}_{X/G}(U)=\Dscr((U\times_{X\quot G} X)/G)$.
\begin{theorems}\label{nonl}
  Let $G$ be a reductive group acting on a smooth variety\footnote{Variety here means an integral separated noetherian $k$-scheme.} $X$ such
  that a good quotient $\pi:X\r X\quot G$ exists
 and put $\tilde{\Dscr}=\tilde\Dscr_{X/G}$.
There exist
  one-parameter subgroups $\lambda_i:G_m\r G$ of $G$, open subgroups $\open^{\lambda_i}$
of $G^{\lambda_i}$
and finite dimensional
  $\open^{\lambda_i}$-representations $U_i$, such that $\widetilde{\Dscr}=\la
  \dots ,\widetilde{\Dscr}_{-2},\widetilde{\Dscr}_{-1},\widetilde{\Dscr}_0\ra$ with
  $\widetilde{\D}_{-i}\cong\widetilde{\Dscr}_{\Lambda_i}$ for sheaves of
  $\Oscr_{X^{\lambda_i}\quot \open^{\lambda_i}}$-algebras (viewed as
$\Oscr_{X\quot G}$-algebras) defined by $\Lambda_i\cong
  (\End(U_i)\otimes_k \pi_\ast\Oscr_{X^{\lambda_i}})^{\open^{\lambda_i}}$. The restrictions of $\Lambda_i$ to affine opens
   have finite global dimension.
\end{theorems}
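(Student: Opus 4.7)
\emph{Strategy.} The plan is to construct the decomposition by iterated ``window'' enlargement, reducing the global problem to a local linear model via Luna's \'etale slice theorem. Being a semi-orthogonal decomposition of a presheaf on $X\quot G$ is an \'etale-local property, so it suffices to produce the decomposition in the case where $X$ is a representation $V$ of a reductive subgroup $H\subset G$ (arising as the stabilizer of a closed $G$-orbit), and then to check that the resulting subcategories glue to global subpresheaves on $X\quot G$. In this linear setting the problem translates into a question about $H$-equivariant modules and endomorphism algebras of the form $(\End U\otimes \Oscr_V)^H$.

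\emph{Construction of the layers.} The rightmost piece $\widetilde{\Dscr}_0$ is supplied directly by our earlier construction of non-commutative resolutions: a suitable finite-dimensional $G$-representation $U_0$ yields an algebra $\Lambda_0=(\End U_0\otimes\pi_\ast\Oscr_X)^G$ of locally finite global dimension on $X\quot G$, and tensoring with $U_0$ identifies $\widetilde{\Dscr}_{\Lambda_0}$ with the full subpresheaf $\widetilde{\Dscr}_0\subset \widetilde{\Dscr}_{X/G}$ cut out by the condition that $T$-weights (for a maximal torus $T\subset G$) lie in a prescribed convex polytope $W_0$. The lower layers $\widetilde{\Dscr}_{-i}$ are then produced by enlarging $W_0$ across its boundary facets one at a time. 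Each facet selects a one-parameter subgroup $\lambda_i:G_m\r G$ pointing out of the current window, and the objects newly admitted upon crossing it are identified, via a Bia\l ynicki--Birula/Koszul-type equivalence between the stratum and its fixed-point locus, with equivariant complexes on $X^{\lambda_i}$ with prescribed extremal $\lambda_i$-weight. Applying the NC-resolution construction again, now to the induced action of a suitable open subgroup $\open^{\lambda_i}\subset G^{\lambda_i}$ on $X^{\lambda_i}$, provides the representation $U_i$ and the algebra $\Lambda_i=(\End U_i\otimes\pi_\ast\Oscr_{X^{\lambda_i}})^{\open^{\lambda_i}}$ of the theorem, and identifies the newly added component with $\widetilde{\Dscr}_{\Lambda_i}$. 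Semi-orthogonality between consecutive layers is built into the weight-window definition, since equivariant complexes whose $\lambda_i$-weights lie in disjoint ranges admit no morphisms between them.

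\emph{Main obstacles.} The central difficulty will be to prove generation and compatibility with the presheaf structure. For generation, one needs to show that the nested windows $W_0\subset W_{-1}\subset\cdots$ eventually cover the weight lattice, so that every object of $\widetilde{\Dscr}_{X/G}$ lies in some window; this should follow from the boundedness of $T$-weight support of equivariant complexes restricted to affine opens of $X\quot G$, together with a descending induction on $\dim X^{\lambda_i}$ guaranteeing that only countably many facet crossings occur and that they can be organized into a total order. For compatibility with the presheaf structure one must verify that the sequence $(\lambda_i, U_i)$ can be chosen $G$-equivariantly and independently of the Luna chart, so that each $\widetilde{\Dscr}_{-i}$ restricts correctly along open immersions $U\subset X\quot G$; this is the delicate point of the argument and reduces to checking that the window-enlargement procedure and the NC-resolution construction are functorial with respect to the identifications provided by Luna's theorem.
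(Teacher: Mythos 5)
Your outline assembles the right ingredients---weight windows, Bia\l{}ynicki--Birula strata, endomorphism algebras of sums of covariants, reduction to a linear model, and Hom-vanishing via $\lambda$-weight estimates---but the architecture you propose leaves the genuinely hard point unresolved, and you flag it yourself: you want to construct the windows chart by chart, for a representation of the chart-dependent stabilizer $H=G_y$, and then glue, which requires the enlargement procedure and the choices $(\lambda_i,U_i)$ to be functorial under the Luna identifications. Nothing in the proposal supplies such compatibility: the stabilizers, their maximal tori and their weight polytopes vary from chart to chart, and the local recognition principle for semi-orthogonal decompositions (Proposition \ref{th:recognition}) only applies once the candidate subcategories are \emph{already given globally} as locally closed subpresheaves; it does not let you descend subcategories defined separately on charts. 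Moreover the statement of the theorem requires the $\lambda_i$ to be one-parameter subgroups of $G$ itself and the $\Lambda_i$ to be sheaves on $X^{\lambda_i}\quot \open^{\lambda_i}$, global data that a chartwise construction with varying groups $H$ does not directly produce. As written, this is a genuine gap rather than a routine verification.

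The paper's proof is organized precisely so that no gluing of categories is ever needed. One fixes a finite affine cover $X\quot G=\bigcup_i U_i$, embeds $\coprod_i\pi^{-1}(U_i)$ $G$-equivariantly and closedly into a single representation $W^\vee$ having a $T$-stable point, and partitions $X(T)^+$ (dominant weights of the \emph{fixed} group $G$) by the faces of the polytopes $-\bar\rho+r\bar\Sigma$ attached to $W$. The layers $\Dscr_{<j}$, $\Dscr_j$ are defined globally, as locally classically generated by the global objects $P_\chi=V(\chi)\otimes_k\Oscr_X$, respectively $\RInd^{G}_{G^{\lambda_j,+}_e}(V_{G^{\lambda_j}_e}(\chi)\otimes_k\Oscr_{X^{\lambda_j,+}})$. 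The only statement verified locally is that a canonical, globally defined comparison map---condition \eqref{ref-2.3-8} of a ``reduction setting''---is an isomorphism; this is where Luna slices and a $G_m$-contraction enter (Theorem \ref{ref-3.1-13}, Lemma \ref{ref-3.2-14}), reducing to the linear case where the explicit complexes of \cite{SVdB} apply (Proposition \ref{-iC}). Since one is checking the vanishing of the cone of a fixed global morphism, no compatibility between charts has to be established. The identifications $\Dscr_{-i}\cong\Dscr(\Lambda_i)$ and the finite global dimension then follow from Proposition \ref{Di}, Lemma \ref{lem:ff} and Proposition \ref{sigma}, rather than from re-running the resolution construction in each chart. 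The missing idea in your proposal is exactly this global choice of auxiliary representation $W$ together with the reformulation of the local step as vanishing of a globally defined cone; without it, the ``delicate point'' you identify is where the argument breaks down.
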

In this theorem the notation $(-)^{\lambda_i}$ was used for the fixed points
under $\lambda_i$ (see \S\ref{sec:goodq} below). Note that $G^{\lambda_i}$ is a reductive subgroup of $G$
(see \S\ref{sec:red}) acting on $X^{\lambda_i}$.

\medskip

Theorem \ref{nonl} applies in particular to GIT stack quotients of the
form $X^{ss}/G$ where~$X$ is a smooth projective variety over an
affine variety equipped with an ample linearization. In that way Theorem
\ref{nonl} complements \cite[Theorem 2.10]{HL} (and similar results in \cite{BFK,SegalDonovan})  which constructs a
semi-orthogonal decomposition of $\Dscr(X/G)$ in which one of the
parts is $\Dscr(X^{ss}/G)$.

\subsection{The linear case}\label{linear}
The proof of Theorem \ref{nonl} will be reduced ultimately to the case
that $G$ is connected and $X$ is a representation.
In this section we give a more precise description of the
semi-orthogonal decomposition in this case.

We first need to
introduce  more notation.  Let $T\subset B$ be a maximal torus and
a Borel subgroup in $G$.  Let $X(T)$ and $Y(T)=X(T)^\vee$ be
respectively the character group of $T$ and the group of one-parameter
subgroups of $T$.  Let the roots of $B$ be the negative roots and let
$X(T)^{\pm}$, $Y(T)^{\pm}$ be the \hbox{(anti-)}dominant cones in $X(T)$ and $Y(T)$.
Let $\bar{\rho}\in X(T)_\RR$ be half the sum of the positive roots.

Let $W$ be a finite dimensional $G$-representation
of dimension $d$ such that $X=W^\vee$ and let $R=k[X]=\Sym(W)$.  Let
$(\beta_i)_{i=1}^d\in X(T)$ be the $T$-weights of $W$. For $\lambda\in
Y(T)^-$  define the following subsets of $X(T)_\RR$
\begin{align*}
\Sigma_\lambda&=\left\{\sum_i a_i \beta_i\mid a_i\in ]-1,0],\, \la\lambda,\beta_i\ra=0\right\},\qquad
\Sigma:=\Sigma_0.
\end{align*}
We denote $\Sigma_\lambda^0={\rm relint}\Sigma_\lambda=\{\sum_i a_i
\beta_i\mid a_i\in ]-1,0[,\, \la\lambda,\beta_i\ra=0\}$.  With $W_\lambda$ we denote the coinvariants for the action of $\lambda$ (i.e. the quotient
space of $W$ obtained by dividing out the weight vectors $w_i$ such that $\la\lambda,\beta_i\ra\neq
0$).  We further denote (see also \S\ref{sec:red} below):
\begin{align*}
G^{\lambda,+}&=\{g\in G\mid \lim_{t\r 0} \lambda(t)g\lambda(t)^{-1}\text{ exists }\},
\end{align*}
(Note that $G^\lambda=G^{\lambda,+}/\mathrm {rad}\,G^{\lambda,+}$ is
the reductive Levi factor of $G^{\lambda,+}$ containing $T$.)  Let
$\cW=N(T)/T$ be the Weyl group of $G$, and let $\cW_{G^\lambda}\subset
\cW$ be the Weyl group of~$G^\lambda$.  We write
$\bar\rho_\lambda\in X(T)_\RR$ for  half the sum the  positive roots of $G^\lambda$
and $X(T)^\lambda$ for the
$G^\lambda$-dominant weights inside $X(T)$.  For $\chi\in X(T)^\lambda$ we write
$V_{G^\lambda}(\chi)=\Ind^{G^\lambda}_{G^\lambda\cap B} \chi$. I.e $V_{G^\lambda}(\lambda)$ is the irreducible $G^\lambda$-representation with highest weight
$\chi$ (or sometimes also a $G^{\lambda,+}$-representation with the
unipotent radical acting trivially).  Note that $G^\lambda$ acts on
$W_\lambda$ (when we consider it as $G^{\lambda,+}$ representation we
let $\rm{rad}\, G^{\lambda,+}$ act trivially). For a
$\cW_{G^\lambda}$-invariant $\nu\in X(T)_\RR$ we put
 \begin{align*}
{\Lscr_{r,\lambda,\nu}}&=X(T)^\lambda\cap (\nu-\bar{\rho}_{\lambda}+r\Sigma_\lambda^0),\\
U_{r,\lambda,\nu}&=\bigoplus_{\mu\in \Lscr_{r,\lambda,\nu}} V_{G^\lambda}(\mu),\\
\Lambda_{r,\lambda,\nu}&=(\End(U_{r,\lambda,\nu})\otimes_k \Sym(W_\lambda))^{G^\lambda}.
\end{align*}

\begin{propositions}\label{sigma}\cite[Theorem 1.4.1]{SVdB} Assume $r\ge 1$. Then
one has $\gldim \Lambda_{r,\lambda,\nu}<\infty$.
\end{propositions}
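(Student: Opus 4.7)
The proposition is a verbatim specialization of \cite[Theorem 1.4.1]{SVdB} to the group $G^\lambda$ acting on the representation $W_\lambda$. Indeed, by construction the $T$-weights of $W_\lambda$ are precisely those $\beta_i$ with $\langle\lambda,\beta_i\rangle=0$; hence $\Sigma_\lambda$, $\bar\rho_\lambda$, $X(T)^\lambda$, and $V_{G^\lambda}(\mu)$ play for the pair $(G^\lambda, W_\lambda)$ exactly the roles that $\Sigma$, $\bar\rho$, $X(T)^+$, and $V_G(\mu)$ play in \cite{SVdB} for $(G,W)$. So the plan is simply to invoke that theorem; I sketch the underlying argument for completeness.

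Set $M := U_{r,\lambda,\nu}\otimes_k \Sym(W_\lambda)$, so that $\Lambda_{r,\lambda,\nu}\cong \End_{\Sym(W_\lambda)^{G^\lambda}}\bigl(M^{G^\lambda}\bigr)$. Via $N\mapsto \Hom^{G^\lambda}(M,N)$, the category of finitely generated right $\Lambda_{r,\lambda,\nu}$-modules is identified with the category of finitely generated $G^\lambda$-equivariant $\Sym(W_\lambda)$-modules whose isotypic decomposition only involves $V_{G^\lambda}(\mu)$ with $\mu\in\Lscr_{r,\lambda,\nu}$. Bounding the global dimension is then equivalent to producing, for every such module $N$, a finite projective (i.e.\ direct-summand-of-$M$) resolution of uniformly bounded length.

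The key technical step is a weight-lattice reduction: for every $\chi\in X(T)^\lambda$ one resolves $V_{G^\lambda}(\chi)\otimes \Sym(W_\lambda)$ by a Koszul-type complex whose terms are $V_{G^\lambda}(\mu)\otimes \Sym(W_\lambda)$ with $\mu$ running through a finite set of lattice points in a translate of $\chi-r\Sigma_\lambda$. The precise shape $\Lscr_{r,\lambda,\nu}=X(T)^\lambda\cap(\nu-\bar\rho_\lambda+r\Sigma_\lambda^0)$ is designed, through the half-open polytope and the Weyl shift $-\bar\rho_\lambda$, so that iterative application of such resolutions (combined with a Borel--Weil--Bott cancellation to pass between $T$-weights and irreducible $G^\lambda$-characters) pushes every weight into $\Lscr_{r,\lambda,\nu}$ after finitely many steps.

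The main obstacle is to show that this iteration terminates \emph{uniformly} in $N$, yielding bounded global dimension rather than only pointwise finite projective dimension. This is precisely where the hypothesis $r\ge 1$ is used: it guarantees that the dilated interior $r\Sigma_\lambda^0$ is wide enough relative to the individual weights $\beta_i$ for the tiling/cancellation argument on $X(T)^\lambda_\RR$ to produce resolutions whose length is bounded by a function of $\dim W_\lambda$ alone, which gives the required bound on $\gldim \Lambda_{r,\lambda,\nu}$.
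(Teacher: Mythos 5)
There is a genuine gap in your proposal: you assert that the proposition is ``a verbatim specialization of [SVdB, Theorem 1.4.1]'' applied to $(G^\lambda, W_\lambda)$, but it is not, and the paper's own proof is devoted precisely to the modification required. The point you miss is that $\Lscr_{r,\lambda,\nu}$ is defined using the \emph{relative interior} $\Sigma_\lambda^0 = \{\sum_i a_i\beta_i \mid a_i\in\,]-1,0[\,,\ \langle\lambda,\beta_i\rangle=0\}$, whereas the statement of [SVdB, Theorem 1.4.1] uses the half-open region $\Sigma$ (with $a_i\in\,]-1,0]$). These two sets can differ, and when $\Sigma_\lambda^0 \subsetneq \Sigma_\lambda$ the cited theorem does not apply as stated. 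Your sketch (reduction to equivariant $\Sym(W_\lambda)$-modules, Koszul-type resolutions coming from the BWB machinery, and termination governed by the shape of the polytope) faithfully summarizes the broad strategy of [SVdB, Theorem 1.4.1], but none of this touches the actual new ingredient.

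Concretely, the paper's proof points out that the only thing one must add to make the argument go through with $\Sigma^0$ in place of $\Sigma$ is the vanishing $\Hom(P_{\Lscr},P_\chi)=0$ whenever $\chi\notin\Gamma^0$ (where $\Gamma = -\bar\rho + \{\sum_i a_i\beta_i \mid a_i\le 0\}$ and $\Gamma^0$ is its relative interior); this is obtained by the same proof as [SVdB, Lemma 11.3.1]. Without this Hom-vanishing one cannot conclude that the projective summands appearing in the resolutions stay inside the strictly smaller index set $\Lscr_{r,\lambda,\nu}$, so the termination argument you describe is incomplete. By contrast, the replacement of $\Sigma_\lambda^0$ by $r\Sigma_\lambda^0$ for $r\ge 1$ is indeed routine, and you are right that this is where the hypothesis $r\ge 1$ enters; that part of your reasoning aligns with the paper.
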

\begin{proof}
  As $\nu$ is $\cW_{G^\lambda}$-invariant, this
  follows from \cite[Theorem 1.4.1]{SVdB} in the case
  $\Sigma_\lambda^0=\Sigma_\lambda$.
  Let $\Gamma=-\bar{\rho}+\left\{\sum_i a_i \beta_i\mid a_i\le 0\right\}$, $\Gamma^0={\rm relint \Gamma}$.
   It is easy to modify the proof
  to hold also if $\Sigma_\lambda^0\subsetneq \Sigma_\lambda$.  To
  replace $\Sigma$ in \cite[Theorem 1.4.1]{SVdB} with $\Sigma^0$ one
  only needs to show that $\Hom(P_{\cL},P_\chi)=0$ if $\chi\not \in
  \Gamma^0$, and this follows by the same proof as \cite[Lemma
  11.3.1]{SVdB}.  To replace $\Sigma_\lambda^0$ by
  $r\Sigma_\lambda^0$ is also easy.
\end{proof}
We say that $x\in X$ is $T$-stable if $x$ has finite stabilizer and closed $T$-orbit. In the case $X=\Spec W$
the existence of a $T$-stable point is equivalent to the cone spanned by the weights $(\beta_i)_i$ of $W$ being equal ot $X(T)_\RR$.
\begin{propositions}\label{ref-1.3}
Let $\Dscr=\Dscr(X/G)$ and assume that $X$ has a $T$-stable point. Then there exist $r_i\ge 1$, $\lambda_i\in Y(T)^-$
and  $\cW_{G^{\lambda_i}}$-invariant $\nu_i\in X(T)_\RR$
such that
$\Dscr=\la \dots ,\Dscr_{-2},\Dscr_{-1},\Dscr_0\ra$,
$\D_{-i}\cong\Dscr(\Lambda_{r_i,\lambda_i,\nu_i})$,
 is a semi-orthogonal decomposition of $\D$. Moreover we may assume $\Dscr_0\cong\Dscr(\Lambda_{1,0,0})$.
\end{propositions}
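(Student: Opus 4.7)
The plan is to combine the Halpern--Leistner semi-orthogonal theorem \cite{HL} (cf.\ \cite{BFK,SegalDonovan}) with the tilting construction underlying Proposition \ref{sigma} and \cite[Theorem 1.4.1]{SVdB}, applied iteratively to the Kempf--Ness stratification of $X$. First I would fix a generic linearization on $X$, producing a stratification of the unstable locus by strata indexed by antidominant one-parameter subgroups $\lambda\in Y(T)^-$. Halpern--Leistner then provides, for each stratum, a semi-orthogonal piece equivalent to a window in $\Dscr(X^\lambda/G^\lambda)$ cut out by a range of $\lambda$-weights determined by the $\lambda$-weights of the conormal bundle, with the remaining piece being $\Dscr(X^{ss}/G)$.

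For the base piece $\Dscr_0$, the assumption that $X$ has a $T$-stable point is equivalent (as noted just above the statement) to the cone of weights of $W$ equalling $X(T)_\RR$, which is precisely the genericity condition needed to invoke \cite[Theorem 1.4.1]{SVdB}: the bundle $U_{1,0,0}$ is a tilting bundle on $X^{ss}/G$ with endomorphism algebra $\Lambda_{1,0,0}$, of finite global dimension by Proposition \ref{sigma}, yielding $\Dscr_0\cong \Dscr(\Lambda_{1,0,0})$.

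For each stratum piece I would repeat the tilting construction, now with $G^\lambda$ in place of $G$ and $W_\lambda$ in place of $W$: one chooses $r\ge 1$ and a $\cW_{G^\lambda}$-invariant shift $\nu\in X(T)_\RR$ so that $\Lscr_{r,\lambda,\nu}$ parametrizes exactly the irreducible $G^\lambda$-representations admitted by the Halpern--Leistner window, and the bundle $U_{r,\lambda,\nu}$ then tilts the window subcategory onto $\Dscr(\Lambda_{r,\lambda,\nu})$. Running over all strata---where multiple triples $(r_i,\lambda_i,\nu_i)$ can occur for the same $\lambda_i$, accounting for the infinite tail $\ldots,\Dscr_{-2},\Dscr_{-1}$---produces the full decomposition, semi-orthogonal in the order induced by the stratification.

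The main obstacle is matching the two descriptions of each stratum piece, i.e., showing that the Halpern--Leistner window subcategory coincides with the subcategory tilted by $U_{r,\lambda,\nu}$ and that the pieces satisfy the semi-orthogonality relations in the claimed order. This reduces to a system of $\Hom$-vanishings among the irreducible summands of the $U_{r,\lambda,\nu}$, which follow from the same Koszul-style arguments that prove Proposition \ref{sigma} and the technical results of \cite{SVdB}; the $\cW_{G^\lambda}$-invariance of $\nu$ is precisely what makes Proposition \ref{sigma} applicable and guarantees that $U_{r,\lambda,\nu}$ is well defined as a $G^\lambda$-representation rather than merely a $T$-representation.
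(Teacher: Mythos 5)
Your proposal routes the argument through the Halpern--Leistner stratification theorem, but this is genuinely not what the paper does and, more importantly, the route you describe has a gap that is not a matter of missing details.

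The paper's result is explicitly presented as a \emph{complement} to the theorems of Halpern--Leistner, Ballard--Favero--Katzarkov and Donovan--Segal: those results decompose $\Dscr(X/G)$ with one part equal to $\Dscr(X^{ss}/G)$, whereas the theorem being proved decomposes $\Dscr(X/G)$ (for the whole affine $X$, with no choice of linearization) into derived categories of sheaves of algebras $\Lambda_{r,\lambda,\nu}$ on the various $X^\lambda\quot\open^\lambda$. The paper does not invoke \cite{HL} anywhere; the engine is the machinery of ``reduction settings'' in \S\ref{secredset}, built on the explicit complexes $C_{\lambda,\chi}$ from \cite[(11.3)]{SVdB} and the combinatorial partition of $X(T)^+$ by faces of $-\bar\rho+r\bar\Sigma$ in \S\ref{partition}. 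The $\Hom$-vanishings that give semi-orthogonality come from Lemma \ref{ref-2.2-9} and Corollary \ref{betaplus}, and the identification of each piece with $\Dscr(\Lambda_{r_i,\lambda_i,\nu_i})$ is done via Lemma \ref{lem:ff} and Proposition \ref{Di}, all of which are elementary representation-theoretic and gradation arguments, with no GIT stratification in sight.

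The concrete gap in your plan is the identification of $\Dscr_0$. You assert that after choosing a generic linearization one has $\Dscr_0\cong\Dscr(X^{ss}/G)$ and that $U_{1,0,0}$ tilts $X^{ss}/G$ onto $\Lambda_{1,0,0}$. But $\Lambda_{1,0,0}=(\End(U_{1,0,0})\otimes_k\Sym W)^G$ is a sheaf of algebras on the full affine quotient $X\quot G$, not on $X^{ss}\quot G$, and the equivalence $\Dscr(\Lambda)\simeq\Dscr(X^{ss}/G)$ is precisely the content of \cite{HLSam}, which requires quasi-symmetry (and further hypotheses) not assumed in Proposition~\ref{ref-1.3}. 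What the paper actually proves is the weaker (and hypothesis-free) statement that the thick subcategory of $\Dscr(X/G)$ generated by $P_\chi$, $\chi\in\Lscr_{1,0,0}$, is equivalent to $\Dscr(\Lambda_{1,0,0})$ via a local tilting argument over $X\quot G$; it never passes through $\Dscr(X^{ss}/G)$. The same issue reappears in your treatment of the stratum pieces: matching the Halpern--Leistner windows (defined by $\lambda$-weight conditions on $X^\lambda/G^\lambda$ relative to a chosen linearization) with the categories $\Dscr(\Lambda_{r,\lambda,\nu})$ (defined by a $\cW_{G^\lambda}$-invariant shift $\nu$ and a dilated polytope $r\Sigma^0_\lambda$) is exactly the nontrivial part, which you flag as ``the main obstacle'' but do not resolve; the paper bypasses it entirely by never constructing the windows in the first place. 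Your proposal would, at best, be an alternative proof under quasi-symmetry, and even then it would need the $\Hom$-computations of \S\ref{sec:mainred} and the face combinatorics of \S\ref{partition} to make the matching precise.
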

\begin{remarks} The reader will note that (under suitable genericity conditions) $\Lambda_{1,0,0}$ is the non-commutative
resolution of $X\quot G$ constructed in \cite[Cor.\ 1.5.2]{SVdB}.
\end{remarks}
\begin{remarks}
\label{rem:notTstable}
In Proposition \ref{ref-1.3} we assume that
$X$ has a $T$-stable point. This hypothesis is not
very restrictive in view of \S\ref{sec:nonstable} below.
Roughly speaking if $X$ does not have a $T$-stable point then one may easily obtain a semi-orthogonal decomposition of $\Dscr(X/G)$ involving a set of $\Dscr(X'/G')$  such that $X'$ has a $T'$-stable point
for $T'$ a maximal torus of $G'$.
\end{remarks}
\subsection{Refined decompositions}
The semi-orthogonal decompositions in Theorem \ref{nonl} and Proposition \ref{sigma} are not optimal.
At the cost of extra technicalities one may decompose the parts $\Dscr_{-i}$ further by essentially repeating
the procedure which is used to obtain the decomposition of $\Dscr$ itself. This leads to the natural
problem to produce a decomposition which cannot be refined further in the sense that the parts admit no  non-trivial
semi-orthogonal decompositions. The latter is in particular the case if the parts are all of the form $\Dscr(\Lambda)$
where  $\Lambda$ is a  (possibly twisted) \emph{non-commutative crepant resolution (NCCR)} of its center \cite{Leuschke,VdB32,Wemyss1}.\footnote{The indecomposability of (twisted) NCCR's seems to be well known to experts. It may
be easily proved in a similar way as \cite[Lemma A.4]{SVdB5}.}
In \S\ref{appA} we will discuss this problem in the case that $W$ is ``quasi-symmetric'', i.e. the sum of the weights
  of $W$ on each line through the origin is zero.

It is shown in
  \cite[\S1.6]{SVdB} that in this case, by replacing $\Sigma$ by a polygon of
  roughly half the size, one obtains smaller non-commutative
  resolutions for $X\quot G$. Under favourable conditions one may even
  obtain NCCRs.\footnote{See also \cite{HLSam} where, again under appropriate
  conditions, it is shown that these NCCRs are of geometric origin in
  the sense that they are derived equivalent to suitable $X^{ss}/G$.}

Likewise in \S\ref{appA} we will show that if~$W$ is quasi-symmetric one may obtain corresponding 
  more refined semi-orthogonal decomposition of $\Dscr(X/G$) which again under favorable conditions
consists entirely of (twisted) NCCR parts. So they cannot be refined further.  See  \S\ref{appA}, in particular Corollary \ref{quasicor}.
As an example we mention the following explicit result which refines our construction of NCCRs for odd Pfaffians in \cite{SVdB}.
\begin{proposition}[see Proposition \ref{prop:pfaffians}]
\label{prop:intro}
Let $2n<h$, $W=V^h$, where $V$ is $2n$-dimensional vector space equipped with a non-degenerate skew-symmetric bilinear form, $G={\rm Sp}_{2n}(k)$, and let $Y_{2n,h}^-=W\quot G$ be the variety of skew-symmetric $h\times h$-matrices of rank $\leq 2n$. 
We denote by $\Lambda_j$ the NCCR of $Y_{2j,h}$ given by \cite[Proposition 6.1.2]{SVdB} (by convention we put $\Lambda_0=k$).
If $h$ is odd then $\Dscr(X/G)$ has a semi-orthogonal decomposition $\langle \dots, \D_{-2},\D_{-1},\D_0\ra$ with $\D_0\cong \D(\Lambda_n)$ such that each $\D_{-i}$ for $i>0$ is of the form $\D(\Lambda_j)$ for some $j<n$. 
\end{proposition}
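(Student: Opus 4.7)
The plan is to apply the refined version of the semi-orthogonal decomposition of Proposition~\ref{ref-1.3} that will be developed in \S\ref{appA} for quasi-symmetric representations, and then to identify each piece explicitly with one of the NCCRs $\Lambda_j$ from \cite[Proposition 6.1.2]{SVdB}.

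First, I would verify that the setup fits the framework. The representation $W = V^h$ of $G=\mathrm{Sp}_{2n}(k)$ is quasi-symmetric: the $T$-weights of $V$ are $\pm\varepsilon_1,\dots,\pm\varepsilon_n$, and each occurs with multiplicity $h$ in $W$, so weights come in opposite pairs summing to zero on every line. The hypothesis that $X=W^\vee$ has a $T$-stable point in the sense of Proposition~\ref{ref-1.3} reduces to the cone spanned by the weights being all of $X(T)_\RR$, which is immediate since $\{\pm\varepsilon_i\}$ spans $X(T)_\RR$, together with the generic finiteness of $T$-stabilizers when $h>2n$. Hence Proposition~\ref{ref-1.3} applies and, since $W$ is quasi-symmetric, its refinement (Corollary~\ref{quasicor}) gives a semi-orthogonal decomposition $\Dscr(X/G)=\langle\dots,\Dscr_{-2},\Dscr_{-1},\Dscr_0\rangle$ whose components are of the form $\Dscr(\Lambda_{r,\lambda,\nu})$ for suitable one-parameter subgroups $\lambda\in Y(T)^-$ and parameters.

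The heart of the proof is then the identification $\Lambda_{r_i,\lambda_i,\nu_i}\cong \Lambda_{j_i}$ for an appropriate $j_i\in\{0,1,\dots,n\}$. A one-parameter subgroup $\lambda:G_m\to T\subset \mathrm{Sp}_{2n}$ is given by an integer vector $(a_1,\dots,a_n)$ (with $\lambda(t)=\mathrm{diag}(t^{a_1},\dots,t^{a_n},t^{-a_1},\dots,t^{-a_n})$ on $V$). The weights of $V$ fixed by $\lambda$ form a symplectic subspace $V'\subset V$ of dimension $2j$, where $2j$ is the number of vanishing $a_i$'s (counted with the $\pm$ pairing). Then $W_\lambda=(V')^h$, and the reductive quotient $G^\lambda$ contains the symplectic factor $\mathrm{Sp}(V')\cong \mathrm{Sp}_{2j}$; the remaining factors of $G^\lambda$ are products of $\mathrm{GL}$'s coming from the non-zero eigenspaces of $\lambda$ that act trivially on $W_\lambda$. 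Thus $(\End(U_{r,\lambda,\nu})\otimes_k \Sym(W_\lambda))^{G^\lambda}$ decouples into the $\mathrm{Sp}_{2j}$-invariants on $(V')^h$ (which gives $Y_{2j,h}^-$) tensored with invariants of a reductive group acting trivially on the coordinate ring; the extra group factors act only on $U_{r,\lambda,\nu}$, and choosing the parameter $\nu$ to be $\cW_{G^\lambda}$-invariant forces the component of $U_{r,\lambda,\nu}$ on these factors to pick out a single representation, so these factors contribute only scalars to the endomorphism algebra. What remains is precisely the construction of $\Lambda_j$ from \cite[Prop.\ 6.1.2]{SVdB}, built using the same polytope $\Sigma_\lambda^0$ (for $r=1$) after quasi-symmetric shrinking. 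For $\lambda=0$ one recovers $\Lambda_{1,0,0}\cong \Lambda_n$, giving $\Dscr_0\cong\Dscr(\Lambda_n)$.

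The main obstacle is the bookkeeping in this identification: one must match the polytope $\nu-\bar\rho_\lambda+\Sigma_\lambda^0$ (after the quasi-symmetric halving used in Corollary~\ref{quasicor}) with the polytope governing $\Lambda_j$ in \cite[Prop.\ 6.1.2]{SVdB}, and verify that the $\mathrm{GL}$-factors inside $G^\lambda$ truly contribute trivially. Once that matching is done, and once one checks that the case $j=n$ can occur only for $\lambda=0$ (so all other pieces have $j<n$), the statement of Proposition~\ref{prop:intro} follows. The oddness of $h$ enters at exactly this point: when $h$ is odd, $Y_{2n,h}=Y_{2n,h}^-$ is the full ambient affine space of skew-symmetric matrices modulo rank $\le 2n$ is already crepant and $\Lambda_n$ is an NCCR, so no further refinement of $\Dscr_0$ is needed and the decomposition is terminal, with each part being Calabi-Yau and thus indecomposable.
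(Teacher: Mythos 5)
Your overall strategy — invoke the quasi-symmetric refinement (Corollary~\ref{quasicor}) and then identify the pieces via the explicit structure of $G^\lambda$, $W_\lambda$, $Q_\lambda$ — is the right one, and your description of $W_\lambda=(V')^h$ with $V'$ symplectic of dimension $2j$ and $G^\lambda\cong\mathrm{Sp}_{2j}\times\prod\mathrm{GL}_{m_i}$ matches what the paper needs. However, there is a genuine gap at the two places where the actual work lies.

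First, you never verify the hypothesis~\eqref{prazno} of Corollary~\ref{quasicor}, which is the substantive computational step and is where the oddness of $h$ actually enters. The paper writes out the shifted half-polytope
\[
\nu-\bar{\rho}_\lambda+(1/2)\,\bar{\Sigma}_\lambda
=\sum_{i\in T_\lambda^+\cup T_\lambda^-}r_iL_i-\sum_{i\in T_\lambda^0}(n-1+i)L_i
+\Bigl\{\sum_{i\in T_\lambda^0}a_iL_i\mid a_i\in[-h/2,h/2]\Bigr\}
\]
and observes that its boundary has coordinates $a_i=\pm h/2$, which are not integers when $h$ is odd, so the boundary misses $X(T)$ and~\eqref{prazno} holds. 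Your stated reason for needing $h$ odd — that ``$Y_{2n,h}^-$ is the full ambient affine space\dots already crepant and $\Lambda_n$ is an NCCR'' — is both garbled and mathematically off: $Y_{2n,h}^-$ is a proper subvariety of the space of skew $h\times h$ matrices when $2n<h$, and $\Lambda_n$ is an NCCR of it for any $h>2n$ (this is~\cite[Prop.\ 6.1.2]{SVdB}, with no parity hypothesis); the parity of $h$ is needed precisely for the lattice-avoidance condition~\eqref{prazno} so that \emph{all} pieces $\D_{-i}$, not just $\D_0$, are (twisted) NCCRs.

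Second, the assertion that $\cW_{G^\lambda}$-invariance of $\nu$ ``forces the component of $U_{r,\lambda,\nu}$ on [the $\mathrm{GL}$-factors] to pick out a single representation, so these factors contribute only scalars'' is not justified and does not match the paper's mechanism. The paper handles the trivially-acting $\mathrm{GL}$-factors through the pseudo-complement $Q_\lambda$ and the orthogonal/semi-orthogonal decompositions by central-character twists (Propositions~\ref{prop:nonstable1}, \ref{twist}, Lemma~\ref{faceTstable}); a priori these twists by $\tau\in X(A_Q)$ can be nontrivial, and one must check $Q_\lambda$-genericity of $W_\lambda$ and then identify the resulting $\Lambda^{\varepsilon}_{\lambda,\nu,\tau}$ with the NCCRs of~\cite[\S 6.1]{SVdB}. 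To turn your sketch into a proof you would need to carry out the explicit polytope computation above and to route the $\mathrm{GL}$-factor bookkeeping through the pseudo-complement/twist machinery rather than the $\cW$-invariance heuristic.
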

Other examples we discuss are quasi-symmetric toric representations, representations of $\rm SL_2$ and the analogue of Proposition \ref{prop:intro} for ordinary determinantal varieties.

\section{Acknowledgement}
The second author thanks J\o rgen Vold Rennemo and Ed Segal for interesting discussions regarding this paper. The authors thank  Agnieszka Bodzenta and Alexey Bondal for their interest in this work and
for useful comments on the first version of this paper.
\section{Preliminaries}
\subsection{Strongly \'etale morphisms}
Let $G$ be reductive group. If $G$ acts on an affine~$k$-scheme $X$ then we put $X\quot G=\Spec k[X]^G$. This
is a special case of a ``good quotient'' (see \S\ref{sec:goodq} below).
Let  $f:X\r Y$ be a $G$-equivariant morphism between affine $G$-schemes. Following \cite{Luna}\cite[App.\ D]{Mumford}  we
say that $f$ is \emph{strongly \'etale} if $X\quot G\r Y\quot G$ is \'etale and the induced morphism $X\r Y\times_{Y\quot G} X\quot G$
is an isomorphism. This implies in particular that $X\r Y$ is \'etale, which is a special case of the following lemma with $H$ being trivial.
\begin{lemmas} Assume that $f:X\r Y$ is a strongly \'etale $G$-equivariant morphism of affine
schemes and let $H$ be a reductive subgroup of $G$. Then
$f$ is strongly \'etale as $H$-equivariant morphism.
\end{lemmas}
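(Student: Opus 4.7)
The plan is to translate strong \'etaleness into commutative algebra and verify its two defining conditions directly. Writing $A=k[Y]$, $B=k[X]$, the hypothesis becomes: $A^G\to B^G$ is \'etale and $A\otimes_{A^G}B^G\xrightarrow{\sim} B$. I will deduce the same statements with $H$ replacing $G$.

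The crux will be a single algebraic lemma that I establish first: for a linearly reductive group $H$ acting $A^G$-linearly on an $A^G$-module $M$ and for any $A^G$-module $V$ on which $H$ acts trivially,
\[
(M\otimes_{A^G} V)^H \;=\; M^H\otimes_{A^G} V.
\]
The idea is to use the $H$-isotypic decomposition $M\cong M^H\oplus\bigoplus_{\rho\ne 1}V_\rho\otimes_k W_\rho$ with $W_\rho=\Hom_H(V_\rho,M)$. Since $A^G$ is $H$-fixed this decomposition is $A^G$-linear, and tensoring with $V$ over $A^G$ preserves it; taking $H$-invariants then kills the summands with $\rho\ne 1$ because $V_\rho^H=0$.

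With the lemma in hand, I would apply it with $M=A$ (on which $H\subset G$ acts and $A^G$ is $H$-fixed) and $V=B^G$ (trivial $G$-action, hence trivial $H$-action). Combined with $B\cong A\otimes_{A^G}B^G$ this yields
\[
B^H \;\cong\; (A\otimes_{A^G}B^G)^H \;=\; A^H\otimes_{A^G}B^G,
\]
exhibiting $A^H\to B^H$ as the base change of the \'etale map $A^G\to B^G$ along $A^G\to A^H$, which is automatically \'etale. The fiber-product condition then drops out by cancellation:
\[
A\otimes_{A^H}B^H \;=\; A\otimes_{A^H}(A^H\otimes_{A^G}B^G) \;=\; A\otimes_{A^G}B^G \;\cong\; B.
\]

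I expect the only real subtlety to be the algebraic lemma, specifically that the isotypic decomposition remains valid without any finiteness assumption on $M$ or $V$. This should work because in characteristic zero the Reynolds operator supplies a functorial direct-summand splitting $M=M^H\oplus M'$ on arbitrary $H$-modules, and the non-trivial isotypic components continue to have no $H$-invariants after $-\otimes_{A^G} V$ when $V$ carries the trivial $H$-action.
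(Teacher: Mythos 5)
Your proof is correct and matches the paper's argument: both establish $k[X]^H \cong k[Y]^H \otimes_{k[Y]^G} k[X]^G$ and then deduce the two defining conditions of strong \'etaleness by base change and tensor cancellation. The only difference is that you spell out, via the isotypic decomposition and exactness of $(-)^H$, the invariant-theoretic identity $(M\otimes_{A^G}V)^H = M^H\otimes_{A^G}V$ that the paper dispatches with \emph{``from the fact that $H$ is reductive we easily obtain''}.
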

\begin{proof} From the fact that $H$ is reductive we easily obtain
\[
k[X]^H=(k[Y]\otimes_{k[Y]^G} k[X]^G)^H=k[Y]^H\otimes_{k[Y]^G} k[X]^G.
\]
Thus
\[
k[Y]\otimes_{k[Y]^H} k[X]^H=k[Y]\otimes_{k[Y]^H}k[Y]^H\otimes_{k[Y]^G} k[X]^G=k[Y]\otimes_{k[Y]^G} k[X]^G=k[X].
\]
\end{proof}
\subsection{The Bia\l{}ynicki-Birula decomposition in the affine case.}
\label{ref-1.2-0}
We  use
 \cite{Drinfeld2} as a reference for some facts about the Bia\l{}ynicki-Birula decomposition~\cite{Bia}.
Let $R$ be a commutative $k$-algebra equipped with a rational $G_m$-action $\lambda:G_m\r \Aut_k(R)$. This $G_m$-action
induces a grading on $\bigoplus_n R_n$ on $R$ where $z\in G_m$ acts on $r\in R_n$ by
$z\cdot r=z^n r$. Let $I^+$, $I^-$ be the ideals in $R$ respectively generated by $(R_n)_{n>0}$ and $(R_n)_{n<0}$
and put $I=I^++I^-$. We define $R^\lambda:=R/I$. $R^{\lambda,\pm}:=R/I^{\pm}$. Note
\begin{equation}
\label{eq:part0}
(R^{\lambda,\pm})_0=R^\lambda.
\end{equation}
If $X=\Spec R$ then we also write $X^\lambda=\Spec R^\lambda$, $X^{\lambda,\pm}=\Spec R^{\lambda,\pm}$. It follows from
\cite[\S1.3.4]{Drinfeld2} that $X^\lambda$ is the subscheme of fixed points of $X$ and $X^{\lambda,+}$, $X^{\lambda,-}$
are respectively the attractor and repeller subschemes of $X$.
According to \cite[Prop 1.4.20]{Drinfeld2}, $X^\lambda$, $X^{\lambda,\pm}$ are smooth if this is the case
for $X$.
\begin{lemmas}
\label{ref-1.2.1-1} Assume that $f:X\r Y$ is a strongly \'etale
  $G_m$-equivariant morphism of affine schemes, with the action denoted by $\lambda$. Then
\begin{align*}
X^\lambda&= X\times_{Y} Y^\lambda\\
X^{\lambda,+}&= X \times_{Y} Y^{\lambda,+}\\
X^{\lambda,-}&= X\times_{Y} Y^{\lambda,-}\,.
\end{align*}
\end{lemmas}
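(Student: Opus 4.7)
The plan is to reduce each of the three claimed scheme-theoretic identities to a corresponding equality of ideals in $R := k[X]$. Writing $S := k[Y]$ and letting $I^{\pm}_R$, $I_R$ (respectively $I^{\pm}_S$, $I_S$) denote the ideals introduced in \S\ref{ref-1.2-0} that cut out $X^{\lambda,\pm}$, $X^{\lambda}$ (respectively $Y^{\lambda,\pm}$, $Y^{\lambda}$), the three assertions of the lemma amount to the ring-theoretic statements
\[
I^{+}_R = I^{+}_S \cdot R, \qquad I^{-}_R = I^{-}_S \cdot R, \qquad I_R = I_S \cdot R.
\]

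First I would unwind the strongly \'etale hypothesis. By definition it supplies a $G_m$-equivariant isomorphism $S \otimes_{S_0} R_0 \xrightarrow{\sim} R$, where $R_0 = R^{G_m}$, $S_0 = S^{G_m}$, and where $G_m$ acts on the tensor product only through $S$. Comparing weight-$n$ components, which are preserved by this isomorphism, one obtains
\[
R_n = S_n \otimes_{S_0} R_0 \qquad \text{for every } n \in \ZZ.
\]
In particular the graded submodule $R_{>0} \subset R$ equals $S_{>0} \otimes_{S_0} R_0$, and analogously for $R_{<0}$.

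Next I would verify $I^{+}_R = I^{+}_S \cdot R$. The inclusion $\supseteq$ is immediate, since $I^{+}_S$ is the $S$-ideal generated by $S_{>0}$, whose image under $S \to R$ lies in $R_{>0} \subset I^{+}_R$. For $\subseteq$ it is enough, since $I^{+}_R$ is generated by homogeneous elements of positive degree, to treat $r \in R_n$ with $n > 0$; by the previous paragraph such an $r$ may be written as a finite sum $\sum_i s_i \otimes r_{0,i}$ with $s_i \in S_n \subset I^{+}_S$ and $r_{0,i} \in R_0$, whence $r \in I^{+}_S \cdot R$. The identical argument yields $I^{-}_R = I^{-}_S \cdot R$, and adding the two gives $I_R = I_S \cdot R$.

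The three claimed identities now follow by base change:
\[
R/I^{\pm}_R \;\cong\; R \otimes_S (S/I^{\pm}_S), \qquad R/I_R \;\cong\; R \otimes_S (S/I_S),
\]
which translate exactly to the fiber product descriptions of $X^{\lambda,\pm}$ and $X^{\lambda}$. There is essentially no obstacle; the only point requiring a moment of care is that the strongly \'etale isomorphism respects the weight grading, and this is automatic from its $G_m$-equivariance together with the triviality of the $G_m$-action on the $\quot G_m$-factor. Note that the \'etaleness of $R_0$ over $S_0$ is not needed for this particular statement; only the second half of the strongly \'etale condition intervenes.
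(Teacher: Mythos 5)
Your proof is correct and follows the same route as the paper: both establish the graded identification $R_n = S_n \otimes_{S_0} R_0$ from the strongly \'etale hypothesis and then deduce the scheme-theoretic equalities; the paper simply compresses the remaining verification into ``the lemma now follows easily from the definitions,'' whereas you spell out the reduction to $I^{\pm}_R = I^{\pm}_S\cdot R$ explicitly. Your closing observation that only the fiber-product half of the strongly \'etale condition (not the \'etaleness of $R_0$ over $S_0$) is used is accurate and a nice point, though not one the paper records.
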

\begin{proof} We have
\[
k[X]= k[Y]\otimes_{k[Y]^{G_m}} k[X]^{G_m}
\]
since $f$ is a strongly \'etale
  $G_m$-equivariant morphism,
and this isomorphism is clearly compatible with the grading on both sides. Thus
\[
k[X]_n=k[Y]_n\otimes_{k[Y]_0} k[X]_{0}.
\]
The lemma now follows easily from the definitions.
\end{proof}
\subsection{The Bia\l{}ynicki-Birula decomposition when there is a good quotient.}
\label{sec:goodq}
We use the following definition from \cite{Brion} (see the discussion after Prop.\ 1.29 in loc.\ cit.).
\begin{definitions}
Let $G$ be a reductive group and let $\pi:X\r Y$ be a $G$-equivariant morphism of $k$-schemes. Then
$\pi$ is a \emph{good quotient} if the following holds
\begin{enumerate}
\item $\pi$ is affine.
\item $\Oscr_Y=(\pi_\ast \Oscr_X)^G$.
\end{enumerate}
\end{definitions}
It is easy to see that a good quotient is unique, if it
exists. Therefore following tradition we will usually write $Y=X\quot
G$. We have already used this notation in the case that $X$ is affine.
 Note the following
\begin{lemmas} Assume that $G$ is a reductive group acting on a $k$-scheme $X$ such that $X\quot G$ exists.
Let $H$ be a reductive subgroup of $G$. Then $X\quot H$ also exists.
\end{lemmas}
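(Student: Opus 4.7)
The plan is to construct $X \quot H$ by gluing local affine quotients along a cover of $X \quot G$. Since $\pi : X \to X \quot G$ is affine, choose an affine open cover $\{U_\alpha\}$ of $X \quot G$; then each $\pi^{-1}(U_\alpha)$ is an affine $G$-scheme, so in particular an affine $H$-scheme, and the affine $H$-quotient $V_\alpha := \pi^{-1}(U_\alpha) \quot H = \Spec k[\pi^{-1}(U_\alpha)]^H$ exists. Because $H \subset G$ we have $k[\pi^{-1}(U_\alpha)]^G \subset k[\pi^{-1}(U_\alpha)]^H$, hence the quotient map $\pi^{-1}(U_\alpha) \to V_\alpha$ comes equipped with an affine structure morphism $q_\alpha : V_\alpha \to U_\alpha$.

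Next I would check that the $V_\alpha$ glue. It suffices to show that for a principal open $U' = D(f) \subset U_\alpha$, with $f \in k[U_\alpha] = k[\pi^{-1}(U_\alpha)]^G$, the natural map $\pi^{-1}(U') \quot H \to V_\alpha$ identifies $\pi^{-1}(U') \quot H$ with $q_\alpha^{-1}(U')$. Since $\pi$ is affine we have $k[\pi^{-1}(U')] = k[\pi^{-1}(U_\alpha)][f^{-1}]$, and since $f$ is already $H$-invariant, reductivity of $H$ (equivalently, exactness of $(-)^H$, which commutes with the flat base change $k[U_\alpha] \to k[U_\alpha][f^{-1}]$) gives
\[
k[\pi^{-1}(U')]^H = \bigl( k[\pi^{-1}(U_\alpha)][f^{-1}] \bigr)^H = k[\pi^{-1}(U_\alpha)]^H [f^{-1}].
\]
This identifies the two affine schemes, so on overlaps $V_\alpha$ and $V_\beta$ restrict compatibly to the quotient of $\pi^{-1}(U_\alpha \cap U_\beta)$; covering $U_\alpha \cap U_\beta$ by principal opens inside either factor, the isomorphisms glue by standard descent and the cocycle condition is automatic since they are induced from the canonical map on global sections.

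Gluing the $V_\alpha$'s produces a scheme $Y = X \quot H$ together with an affine morphism $\pi_H : X \to Y$ and an affine morphism $Y \to X \quot G$. It remains to verify the two conditions defining a good quotient. Affineness of $\pi_H$ is built into the construction, since locally it is $\pi^{-1}(U_\alpha) \to V_\alpha$. The identity $\Oscr_Y = (\pi_{H,*} \Oscr_X)^H$ is verified on the affine cover, where it is the tautological equality $k[\pi^{-1}(U_\alpha)]^H = \Gamma(V_\alpha, \Oscr_{V_\alpha})$.

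The only genuinely substantive point is the local computation that taking $H$-invariants commutes with localization at an element of $k[X\quot G]$; this uses exactly that $H$ is linearly reductive (which in characteristic $0$ is automatic for reductive $H$). The rest is bookkeeping: an affine cover of $X \quot G$, the pullback affine cover of $X$, term-by-term affine GIT quotients, and a gluing argument. I expect no serious obstacle beyond organizing this carefully.
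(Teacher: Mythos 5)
Your argument is correct and is, in substance, the same one the paper gives: the paper says $X\quot H=\underline{\Spec}(\pi_\ast\Oscr_X)^H$ as a relative Spec over $X\quot G$, and your construction (cover $X\quot G$ by affines, take affine $H$-quotients of the preimages, glue) is exactly what that relative Spec unwinds to, with the key point in both cases being that formation of $H$-invariants commutes with localization at $G$-invariant (hence $H$-invariant) elements so that the resulting sheaf of algebras on $X\quot G$ is quasi-coherent.
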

\begin{proof} Let $\pi:X\r X\quot G$ be the good quotient. It is easy to verify that $X\quot H=\underline{\Spec} (\pi_\ast \Oscr_X)^H$.
\end{proof}
Assume now that $X$ is a $k$-scheme on which $G_m$ acts via
$\lambda:G_m\r \Aut(X)$. Assume that a good quotient $\pi:X\r X\quot
G_m$ exists. If $U\subset X\quot G_m$ is an open affine subvariety
then we may define closed subvarieties $\pi^{-1}(U)^\lambda$,
$\pi^{-1}(U)^{\lambda,\pm}$ of $\pi^{-1}(U)$ as in \S\ref{ref-1.2-0}
and according to Lemma
\ref{ref-1.2.1-1} these are compatible with restrictions for
$U'\subset U$. Hence we may glue these closed subvarieties to obtain
$X^\lambda$, $X^{\lambda,\pm}\subset X$. One may verify that $X^\lambda$, $X^{\lambda,\pm}$ are still
the fixed points and the attractor/repeller subschemes for~$\lambda$.

\subsection{Good quotients and geometric invariant theory}
One way to obtain good quotients is via the machinery of geometric
invariant theory \cite{Mumford}. Let~$G$ be a reductive group and let $X$ be
a~$G$-equivariant $k$-scheme which is projective over an affine scheme, equipped with a~$G$-equivariant ample line bundle $\Mscr$. If $f\in
\Gamma(X,\Mscr^{\otimes_k n})^G$, $n>0$, then $X_f:=\{f\neq 0\}\subset X$ is
affine and $G$-equivariant. The semi-stable locus in~$X$ is defined as $X^{ss}=\bigcup_f X_f$. This is an open
subvariety of~$X$ which has a good quotient $X^{ss}\quot G$ which may
be obtained by gluing $X_f\quot G=\Spec k[X_f]^G$ for varying $f$.
Another way to
obtain $X^{ss}\quot G$ is as follows: let $\Gamma_\ast(X)=\bigoplus_n
\Gamma(X,\Mscr^{\otimes_k n})$. Then $X^{ss}\quot G=\Proj \Gamma_\ast(X)^G$.

\medskip

The following result, whose proof we omit since we do not use it, gives an alternative description of $X^{ss,\lambda}$,
$X^{ss,\lambda,\pm}$ in the GIT setting.
\begin{propositions}
Let $G$ be a reductive group and let
$X$ be a $G$-equivariant $k$-scheme which is projective  over an affine scheme. Let $\Mscr\in \Pic(X)$ be a
$G$-equivariant ample line bundle on $X$ and let $X^{ss}\subset X$ be
the corresponding semi-stable locus.

Let $R=\Gamma_\ast(X)$ and let $\lambda$ be a one-parameter subgroup of $G$. Then $\lambda$ acts
on~$R$ in a way which is compatible with the grading
and  $X^{ss,\lambda}$, $X^{ss,\lambda,\pm}$ are the closed subschemes
of $X^{ss}$ defined by the (graded) quotient rings $R^\lambda$, $R^{\lambda,\pm}$ of $R$ (see \S\ref{ref-1.2-0}).
\end{propositions}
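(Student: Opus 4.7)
The plan is to reduce the proposition to a local ideal-theoretic identity on the standard affine cover of $X^{ss}$. Recall that $X^{ss} = \bigcup_{f} X_f$ with $X_f = \Spec k[X_f]$ and $k[X_f] = (R[f^{-1}])_0$ for $f \in R^G$ of positive $d$-degree $d_f$. For such $f$, the closed subscheme of $X^{ss}$ defined by the graded quotient $R \twoheadrightarrow R/J$ (for $J$ any of $I$, $I^\pm$) meets $X_f$ in the closed subscheme cut out by $(J[f^{-1}])_0 \subset k[X_f]$, while the BB-subscheme $X^{ss,\lambda,(\pm)}|_{X_f}$ of \S\ref{ref-1.2-0} is cut out by the ideal $I_f^{(\pm)} \subset k[X_f]$ generated by its (nonzero, resp.\ positive, resp.\ negative) $\lambda$-weight elements. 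Hence the proposition reduces to the local identities $(J[f^{-1}])_0 = I_f^{(\pm)}$, after which gluing is immediate.

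To prove these identities, first observe that $\lambda$ acts on $R$ compatibly with the $d$-grading because each $R_d = \Gamma(X,\Mscr^{\otimes d})$ is $G$-stable; hence $R$ is naturally bigraded by $(d,w)$. Since $f \in R^G$ has $\lambda$-weight $0$, localization at $f$ commutes with the $\lambda$-weight decomposition and $I[f^{-1}]$ (resp.\ $I^\pm[f^{-1}]$) coincides with the ideal of $R[f^{-1}]$ generated by its nonzero (resp.\ $\pm$)-weight part. The inclusion $I_f^{(\pm)} \subseteq (J[f^{-1}])_0$ is immediate. For the converse, decompose $\alpha \in (J[f^{-1}])_0$ by $\lambda$-weight; nonzero-weight components lie in $I_f^{(\pm)}$ directly, and a $\lambda$-invariant summand is a sum of terms $rg/f^m$ with $r \in R^{(w)}$ a generator of $J$ and $g \in R$, $\deg r + \deg g = m d_f$. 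The key algebraic identity
\[
(rg/f^m)^{d_f} \;=\; (r^{d_f}/f^{\deg r}) \cdot (g^{d_f}/f^{\deg g})
\]
exhibits $(rg/f^m)^{d_f}$ as a product of two elements of $k[X_f]$ (the exponents of $f$ match the $d$-degrees), the first having $\lambda$-weight $d_f w$ of the same sign as $w$, so lying in $I_f^{(\pm)}$. Thus $rg/f^m \in \sqrt{I_f^{(\pm)}}$, and therefore $\alpha \in \sqrt{I_f^{(\pm)}}$.

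The main obstacle is upgrading this radical inclusion to an equality of ideals, which requires the BB-subschemes to be reduced. In the context of the paper — where $X$ is smooth, cf.\ Theorem~\ref{nonl} — this is provided by \cite[Prop.\ 1.4.20]{Drinfeld2}: the subschemes $X^\lambda$, $X^{\lambda,\pm}$ are smooth, hence $\sqrt{I_f^{(\pm)}} = I_f^{(\pm)}$ and the desired ideal equality holds locally and glues to the global statement. Without a reducedness hypothesis the scheme structures can genuinely differ; for instance on the weighted projective space $\PP(1,1,2)$ with $G_m$ acting by $\lambda(t)\cdot[x:y:z] = [tx : t^{-1}y : z]$, the Proj-construction yields a reduced fixed point at $[0:0:1]$, while the local BB-fixed-point scheme there is isomorphic to $\Spec k[B]/(B^2)$. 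This is presumably why the authors omit the proof and use the construction only in the smooth setting.
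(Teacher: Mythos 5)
The paper explicitly omits this proof (``whose proof we omit since we do not use it''), so there is no published argument to compare against; the authors do have one in the source, commented out, which I reference below.

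Your reduction to the affine charts $X_f$ and your bigrading set-up are fine, and the identity $(rg/f^m)^{d_f} = (r^{d_f}/f^{\deg r})(g^{d_f}/f^{\deg g})$ cleanly shows $(J[f^{-1}])_0 \subseteq \sqrt{I_f^{(\pm)}}$. But this is only a radical containment, and the proposition carries no smoothness hypothesis on $X$, so the final appeal to reducedness of the Bia\l{}ynicki-Birula subschemes is not available in the stated generality. That is the gap. Furthermore, your $\PP(1,1,2)$ example does not justify imposing reducedness: it implicitly takes $\Mscr = \Oscr(1)$, which is \emph{not} a line bundle on $\PP(1,1,2)$; only $\Oscr(2)$ is. With $\Mscr = \Oscr(2)$ one has $R = \Gamma_\ast(X,\Oscr(2)) = k[x,y,z]^{(2)} \cong k[A,B,C,D]/(AC-B^2)$ where $A=x^2$, $B=xy$, $C=y^2$, $D=z$ sit in degree $1$ with $\lambda$-weights $(2,0,-2,0)$; then $I=(A,C)$, $R^\lambda = k[B,D]/(B^2)$, and on the chart $D\neq 0$ the quotient defines $\Spec k[B/D]/((B/D)^2)$, which is exactly the non-reduced fixed-point scheme $\Spec k[b]/(b^2)$ you computed. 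So the two scheme structures agree in your example, the proposition holds there for a singular $X$, and the example does not exhibit the failure you claim.

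The authors' suppressed argument closes the gap with no radicals and no smoothness. With $T := R_f$, each $T_a = \Gamma(X_f,\Mscr^{\otimes a})$ is an invertible $T_0$-module because $\Mscr$ is a line bundle and $X_f$ is affine, so $T$ is strongly graded. After localizing at a prime of $T_{00}$ (the bidegree-$(0,0)$ part), $T_0$ becomes graded-local and the invertible graded $T_0$-module $T_1$ becomes graded-free; thus $T\cong T_0[t,t^{-1}]$ with $t$ of $d$-degree $1$ and some $\lambda$-weight $\sigma$. If $\sigma\neq 0$ then $(T_0)^{\lambda,\pm}=0$ and there is nothing to prove, and if $\sigma=0$ one reads off $(T^{\lambda,\pm})_0=(T_0)^{\lambda,\pm}$ directly, which is precisely the ideal equality you want. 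Replacing the radical-plus-smoothness step of your argument with this strongly-graded reasoning yields the proposition as stated.
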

\subsection{Good quotients and local generation}
\label{sec:localgen}
Let $X/k$ be a quasi-compact, quasi-separated $G$-scheme for a reductive
group $G/k$ such that a good quotient $\pi:X\r X\quot G$ exists. It is
easy to see that then $X\quot G$ is quasi-compact and quasi-separated as
well. Below we write $\pi_s:X/G\r X\quot G$ for the corresponding
stack morphism. Note that both $\pi_\ast$ and $\pi_{s\ast}$ are exact.

Below we use some notations and concepts related to derived categories which were introduced in \S\ref{sec:intro:mainresult}.
We recall
some properties of $D_{\Qch}(X/G)$.
\begin{theorems}
\label{th:st}
\begin{enumerate}
\item \label{c1} $D_{\Qch}(X/G)$ is compactly generated.
\item \label{c2} An object in $D_{\Qch}(X/G)$ is compact if and only if it is perfect. I.e.\ if and only if its
image in $D(X)$ is perfect.
\item \label{c3} If $X$ is separated then $D_{\Qch}(X/G)=D(\Qch(X/G))$.
\end{enumerate}
\end{theorems}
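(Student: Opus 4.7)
The plan is to reduce everything to the affine case via the fact that $\pi:X\to X\quot G$ is affine and $X\quot G$ is quasi-compact and quasi-separated, so one may cover $X\quot G$ by finitely many affine opens $U_{1},\dots,U_{n}$ and set $X_{i}=\pi^{-1}(U_{i})$; each $X_{i}$ is an affine $G$-scheme, and $X/G$ is the Zariski gluing of the $X_{i}/G$.

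For the affine base case $X=\Spec A$, I would show that $\{A\otimes_{k} V\mid V\in\Irr(G)\}$ is a set of compact generators of $D_{\Qch}(X/G)$ consisting of perfect objects. Compactness is because $\Hom_{X/G}(A\otimes V,-)=\Hom_{G}(V,\Gamma(X/G,-))$ commutes with arbitrary coproducts: $(-)^{G}$ is exact since $G$ is reductive, and $\Hom_{G}(V,-)$ is a direct summand of the forgetful functor. Generation is a standard argument: an object $C$ with $\Hom(A\otimes V,C[n])=0$ for all $V,n$ has vanishing isotypic components of each cohomology sheaf, hence is zero. Perfectness of $A\otimes V$ is immediate since it is the pullback along $X/G\to BG$ of the perfect object $V$ on $BG$.

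For parts (\ref{c1}) and (\ref{c2}) in the general case, I would proceed by induction on $n$ using the Thomason--Neeman localization theorem, writing $W=W_{1}\cup W_{2}$ with $W_{1}=\pi^{-1}(U_{1}\cup\dots\cup U_{n-1})$ and $W_{2}=X_{n}$. The kernel of the restriction $D_{\Qch}(W/G)\to D_{\Qch}(W_{1}/G)$ is supported on the complement of $W_{1}$ and is compactly generated by $j_{!}$-images of equivariant Koszul complexes built from compact generators of $D_{\Qch}(W_{2}/G)$; these $j_{!}$-images are perfect on $W/G$ because they are perfect locally. Combining with perfect compact generators of $D_{\Qch}(W_{1}/G)$ (available by induction) gives perfect compact generators of $D_{\Qch}(W/G)$, proving (\ref{c1}) and the compact $\Rightarrow$ perfect direction of (\ref{c2}). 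The converse, together with the comparison of perfectness on $X/G$ and $X$, is local on $X$ and follows because $X\to X/G$ is a $G$-torsor, in particular faithfully flat of finite presentation, and perfectness is preserved and reflected by such morphisms.

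For (\ref{c3}), when $X$ is separated the stack $X/G$ has affine diagonal: this diagonal factors through the closed immersion of $X$ into $X\times_{k}X$ pulled back along the affine morphism $(X\times X)/(G\times G)\to BG\times BG$, while $BG$ has affine diagonal since $G$ is affine. For a quasi-compact quasi-separated algebraic stack with affine diagonal whose $D_{\Qch}$ is compactly generated by perfect objects, the natural functor $D(\Qch(X/G))\to D_{\Qch}(X/G)$ is an equivalence by the standard Bondal--Van den Bergh / Hall--Neeman--Rydh comparison theorem; so (\ref{c3}) follows from (\ref{c1}) and (\ref{c2}). The main obstacle is making the equivariant localization step airtight, i.e.\ constructing perfect compact generators of the subcategory of objects supported on an arbitrary closed $G$-invariant subset; this is well documented but requires some care in building the equivariant Koszul complexes.
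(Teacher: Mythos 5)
The paper's proof of this theorem is essentially a list of citations: part~\eqref{c1} is quoted from Hall--Rydh (Theorem~B), the compact~$\Rightarrow$~perfect direction of~\eqref{c2} from the same result combined with Neeman's Lemma~2.2, the converse as an easy verification, and~\eqref{c3} from Hall--Neeman--Rydh (Theorem~1.2). You instead sketch a self-contained Thomason--Neeman--style argument: cover $X\quot G$ by finitely many affines, solve the affine case directly by exhibiting the objects $A\otimes_k V$ ($V\in\Irr(G)$) as compact perfect generators, and then induct on the size of the cover using localization sequences whose supported subcategories are compactly generated by $j_!$ of equivariant Koszul complexes. Your affine base case is correct and exactly identifies the generators that the paper uses implicitly throughout (cf.\ Lemma~\ref{lem:thick}). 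The inductive step is where the real content lies, and you flag this yourself: producing perfect compact generators of $D_{\Qch,Z}(X/G)$ for a $G$-invariant closed $Z$, with the required support and extension-by-zero properties, is precisely the "Thomason condition" that Hall--Rydh's Theorem~B establishes in general; here it should go through because $Z$ can be cut out by $G$-invariant functions coming from $A^G$, but you would need to spell out that the Koszul complexes on these functions, tensored with the $A\otimes V$, generate, and that $j_!$ preserves compactness. For~\eqref{c3} you also end up invoking the Hall--Neeman--Rydh comparison theorem, but you add a correct verification that $X/G$ has affine diagonal when $X$ is separated, which is the hypothesis making that theorem applicable. In sum your route is genuinely different — a hands-on construction versus a direct appeal to the literature — and would yield a more explicit and self-contained proof, at the cost of having to work out carefully the equivariant localization step that the paper delegates to Hall--Rydh.
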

\begin{proof}
\eqref{c1} follows from \cite[Thm B]{HallRydh}. It also follows from this result and the proof of \cite[Lemma 2.2]{Neeman3}
that every compact object is perfect. On the other hand it is easy to see that in this case every perfect object
is compact. This proves~\eqref{c2}. Finally \eqref{c3} follows from \cite[Thm 1.2]{HallNeemanRydh}.
\end{proof}
For
an open $U\subset X\quot G$ we write $\tilde{U}=U\times_{X\quot G} X\subset X$.
\begin{definitions}
\label{def:locgen} Let $(E_i)_{i\in I}$
be a collection of perfect objects in $D_{\Qch}(X/G)$. We say that the full subcategory of $D_{\Qch}(X/G)$, spanned by all objects $\Fscr$ such that for every affine open
$U\subset X\quot G$
the object $\Fscr{|}\tilde{U}$ is in the smallest thick subcategory
of $D_{\Qch}(\tilde{U}/G)$ containing $(E_i{|}\tilde{U})_i$, is \emph{locally classically generated}
by $(E_i)_{i\in I}$.
\end{definitions}
Let us say that $F,G\in D_{\Qch}(X/G)$ are \emph{locally isomorphic} if there exists a covering $X\quot G=\bigcup_{i\in I} U_i$
such that $F{\mid} \tilde{U}_i\cong G{\mid} \tilde{U}_i$ for all $i$. It is convenient to call a subcategory
of $D_{\Qch}(X/G)$ \emph{locally closed} if it is closed under local isomorphism.
\begin{lemmas} Let $(E_i)_{i\in I}$ be a collection of perfect objects
  in $D_{\Qch}(X/G)$ and let $\Fscr\in \Perf(X/G)$.  Let $X\quot
  G=\bigcup_{j=1}^n U_j$ be a finite open affine covering of $X\quot G$. If
  for all $j$ one has that $\Fscr{\mid}\tilde{U}_j$ is in the smallest
  thick subcategory of $D_{\Qch}(\tilde{U}_j/G)$ containing
  $(E_i{|}\tilde{U}_j)_i$ then $\Fscr$ is in the subcategory of $D_{\Qch}(X/G)$
locally classically generated
by $(E_i)_{i\in I}$.
\end{lemmas}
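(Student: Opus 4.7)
The plan is to show, for every affine open $U\subset X\quot G$, that $\Fscr|\tilde U$ lies in the thick subcategory $\Tscr_U\subset D_{\Qch}(\tilde U/G)$ generated by $(E_i|\tilde U)_i$; note $\tilde U=U\times_{X\quot G}X$ is affine, because $\pi$ is an affine morphism. I would first restrict the given affine cover to $U$ by setting $\tilde V_j:=\tilde U\cap\tilde U_j=(U\cap U_j)\times_{X\quot G}X$, obtaining a finite $G$-invariant open cover $\tilde U=\bigcup_{j=1}^n\tilde V_j$. Since restriction along the open immersion $\tilde V_j\hookrightarrow\tilde U_j$ is exact and carries $E_i|\tilde U_j$ to $E_i|\tilde V_j$, the hypothesis $\Fscr|\tilde U_j\in\langle E_i|\tilde U_j\rangle_{\mathrm{thick}}$ immediately yields $\Fscr|\tilde V_j\in\langle E_i|\tilde V_j\rangle_{\mathrm{thick}}$ in $D_{\Qch}(\tilde V_j/G)$ for every~$j$.

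The problem is now purely local-to-global on the single stack $\tilde U/G$: a compact object whose restriction to each member of a finite $G$-invariant open cover lies in the thick closure of the correspondingly restricted generators must itself be in the thick closure on $\tilde U/G$. By Theorem~\ref{th:st}, $D_{\Qch}(\tilde U/G)$ is compactly generated and every compact object is perfect. Let $\Lscr$ denote the smallest localizing subcategory of $D_{\Qch}(\tilde U/G)$ containing $(E_i|\tilde U)_i$; by the Thomason--Neeman localization theorem the compact objects of $\Lscr$ coincide, up to direct summands, with $\Tscr_U$. Since $\Fscr$ is compact, it suffices to show $\Fscr\in\Lscr$.

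For this last step I would run the \v{C}ech/Mayer--Vietoris resolution for the cover $\{\tilde V_j\}$: writing $j_S\colon \tilde V_S/G\hookrightarrow\tilde U/G$ for the open immersion from $\tilde V_S:=\bigcap_{s\in S}\tilde V_s$, one realises $\Fscr$ as a finite iterated cone built from the pushforwards $Rj_{S*}j_S^*\Fscr$ for $\emptyset\neq S\subset\{1,\dots,n\}$. On each piece $j_S^*\Fscr$ lies in the thick closure of $(j_S^*E_i)_i$, so the triangulated functor $Rj_{S*}$ places $Rj_{S*}j_S^*\Fscr$ in the thick closure of $(Rj_{S*}j_S^*E_i)_i$ in $D_{\Qch}(\tilde U/G)$. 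The key point, and the main obstacle, is to show that each $Rj_{S*}j_S^*E_i$ lies in $\Lscr$: this reflects the fact that a localizing subcategory generated by compact objects in a compactly generated setting is stable under the Bousfield localization $Rj_*j^*$ attached to any quasi-compact open immersion, a property that can be extracted from the compact-generation results underlying Theorem~\ref{th:st} (cf.\ \cite{HallRydh,HallNeemanRydh}). Granting this, assembling the \v{C}ech cones yields $\Fscr\in\Lscr$, and hence $\Fscr\in\Tscr_U$, as required.
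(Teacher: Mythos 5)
Your approach diverges from the paper's, and the step you flag as ``the key point, and the main obstacle'' is indeed a genuine gap that you do not fill. You want $\Fscr$ to lie in the localizing subcategory $\Lscr$ generated by the $E_i|\tilde U$, and you try to exhibit it as a finite iterated cone of the pieces $Rj_{S*}j_S^*\Fscr$; for that to land in $\Lscr$ you need the localizing subcategory generated by compacts to be stable under $Rj_{S*}j_S^*$, i.e.\ that $Rj_{S*}j_S^*E_i\in\Lscr$. That assertion is not a routine consequence of the compact-generation results you cite. In the quasi-coherent setting $j^*$ has no left adjoint $j_!$, so one cannot test membership in $\Lscr={}^\perp(\Lscr^\perp)$ by adjunction; and $Rj_{S*}j_S^*E_i$ is typically not compact, so it cannot be attacked through the thick subcategory of compacts either. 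Writing ``Granting this'' does not discharge the obligation: this is precisely where the local-to-global content of the lemma is concentrated.

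The paper's proof avoids the issue entirely by working on the \emph{other} side of the Bousfield (co)localization. After reducing to the case that $X\quot G$ is affine, one invokes Brown representability to produce the triangle $\Fscr_0\to\Fscr\to\Fscr_1$ with $\Fscr_0\in\Escr$ (the localizing subcategory generated by the $E_i$) and $\Fscr_1\in\Escr^\perp$. One then only needs to know that membership in $\Escr^\perp$ is a local condition over $X\quot G$. This is easy: $\Fscr_1\in\Escr^\perp$ means $\RHom_{X/G}(E_i,\Fscr_1)=0$; since each $E_i$ is perfect, $\RHom_{X/G}(E_i,-)\cong E_i^\vee\Lotimes-$, which commutes with restriction along the flat affine open $U_j\hookrightarrow X\quot G$, so $\RHom_{\tilde U_j/G}(E_i|\tilde U_j,\Fscr_1|\tilde U_j)=0$ as well, i.e.\ $\Fscr_1|\tilde U_j\in\Escr_j^\perp$. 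Combined with the hypothesis $\Fscr|\tilde U_j\in\Escr_j$ this forces $\Fscr_1|\tilde U_j=0$ for all $j$, hence $\Fscr_1=0$ and $\Fscr\in\Escr$; compactness of $\Fscr$ then puts it in the thick closure by \cite[Lemma 2.2]{Neeman3}. In short: the paper shows that $\Escr^\perp$ is well-behaved under restriction, which is cheap (compactness of $E_i$), whereas your route needs $\Escr$ itself to be well-behaved under $Rj_*j^*$, which is not. If you wish to repair your argument you would have to supply a proof of that stability statement, and it is not clear that one exists at this level of generality; it is simpler to pass to the orthogonal complement as the paper does.
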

\begin{proof} Let $U\subset X\quot G$ be an affine open. We have to show that
$\Fscr{\mid}\tilde{U}$ is in the smallest
  thick subcategory of $D_{\Qch}(\tilde{U}/G)$ containing  $(E_i{\mid} \tilde{U})_{i\in I}$.
 By
replacing $X\quot G$ by $U$ and refining the cover $U=\bigcup_{j\in I} U\cap U_j$ to an affine
one we reduce to the case that $X\quot G$ is itself affine. In particular by Theorem \ref{th:st}\eqref{c3} (as affine schemes are separated),
$D_{\Qch}(X/G)$ is the derived category of $G$-equivariant $k[X]$-modules. The affine $U_j$
yield $G$-equivariant flat extensions $k[\tilde{U}_j]$ of $k[X]$.

Let $\Escr$ be the smallest
cocomplete triangulated subcategory of $D_{\Qch}(X/G)$ 
containing $(E_i)_{i\in I}$
and similarly let $\Escr_j$ be the smallest cocomplete triangulated subcategory of $D_{\Qch}(\tilde{U}_j/G)$ containing
 $(E_i{|} \tilde{U}_j)_{i\in I}$.
By the Brown representability theorem \cite[Theorem 4.1]{Neeman}
there is a unique distinguished triangle
\[
\Fscr_0\r \Fscr\r \Fscr_1\r
\]
where $\Fscr_0\in \Escr$ and $\Fscr_1\in \Escr^\perp$. Since $U_j$ is affine it is easy to see
that $\Fscr_1{\mid} \tilde{U}_j\in \Escr_j^\perp$.
But by hypothesis $\Fscr{\mid} \tilde{U}_j\in \Escr_j$ and thus $\Fscr_1{\mid} \tilde{U}_j=0$.
Since this is true for all $j$ we conclude $\Fscr_1=0$ and hence $\Fscr\in \Escr$.
Since $\Fscr$ is compact and $\Escr$ is compactly generated by Theorem \ref{th:st}\eqref{c1} the conclusion follows from \cite[Lemma 2.2]{Neeman3}.
\end{proof}
\begin{lemmas}
\label{lem:thick}
The category $\Perf(X/G)$  is locally classically generated by $(V\otimes_k \Oscr_X)_V$
where $V$ runs through the irreducible representations of $G$.
\end{lemmas}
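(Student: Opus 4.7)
The plan is to invoke the immediately preceding lemma to reduce to a finite affine cover, and then handle each affine piece directly using reductivity of $G$ and the standard structure theory of $G$-equivariant modules on an affine $G$-scheme.

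First, since $X\quot G$ is quasi-compact and quasi-separated (as noted at the start of \S\ref{sec:localgen}), we may choose a finite affine open covering $X\quot G=\bigcup_{j=1}^n U_j$. By the previous lemma, to prove that $\Perf(X/G)$ is locally classically generated by $(V\otimes_k \Oscr_X)_V$ it suffices, given $\Fscr\in \Perf(X/G)$, to show that each $\Fscr{\mid}\tilde U_j$ lies in the smallest thick subcategory of $D_{\Qch}(\tilde U_j/G)$ containing the $V\otimes_k \Oscr_{\tilde U_j}$. Thus we are reduced to proving the statement in the case that $X=\Spec R$ is affine.

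In the affine case, $\Qch(X/G)$ is the category of $G$-equivariant $R$-modules. By Theorem \ref{th:st}\eqref{c3} we have $D_{\Qch}(X/G)=D(\Qch(X/G))$, and by \eqref{c2} a perfect object is precisely a complex of equivariant $R$-modules whose underlying $R$-complex is perfect. A standard argument then shows that any such $\Fscr$ is quasi-isomorphic to a bounded complex of finitely generated projective equivariant $R$-modules: one builds a surjection from an equivariant free module of the required form using that the $G$-action on any equivariant module is locally finite (so any finite generating set over $R$ can be enlarged to a $G$-stable finite-dimensional $k$-subspace $V\subset\Fscr^i$, yielding a surjection $V\otimes_k R\twoheadrightarrow $ the module), then truncates at the appropriate place using perfectness to get a bounded projective resolution.

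Finally, a finitely generated projective equivariant $R$-module is by construction a direct summand of some $V\otimes_k R$ with $V$ a finite-dimensional $G$-representation: the surjection just described splits as a map of equivariant modules because $G$ is reductive (equivalently, since $V$ is projective as a $G$-representation, $V\otimes_k R$ is projective in $\Qch(X/G)$). Decomposing $V=\bigoplus_i V_i$ into irreducibles, $V\otimes_k R=\bigoplus_i V_i\otimes_k R$, so taking thick closures absorbs the summand, and we conclude that $\Fscr$ lies in the thick subcategory generated by $(V\otimes_k \Oscr_X)_V$ with $V$ irreducible.

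The only point requiring care is the passage from \emph{perfect} (a condition on the underlying $R$-complex) to a bounded resolution by the specific equivariant modules $V\otimes_k R$; this is standard but relies in an essential way on reductivity of $G$ together with Theorem \ref{th:st}\eqref{c2}\eqref{c3}. No harder geometric input is needed.
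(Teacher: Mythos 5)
Your proposal is correct and follows essentially the same route as the paper, which simply reduces to the case of affine $X$ (the paper then calls the remaining step ``clear'', and you have merely spelled out that standard argument: resolve by modules $V\otimes_k R$, truncate using perfectness, and use reductivity to see that an equivariant module which is $R$-projective is an equivariant summand of some $V\otimes_k R$). The only quibble is your parenthetical justification of the splitting: what is needed is that the $R$-projective equivariant module $P$ is projective in the equivariant category (equivalently, average an $R$-linear splitting with the Reynolds operator), not that $V\otimes_k R$ is projective in $\Qch(X/G)$ --- but this does not affect the correctness of the argument.
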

\begin{proof}
We may assume that $X$ is affine and then it is clear.
\end{proof}
It will be convenient to pick for every $E\in D_{\Qch}(X/G)$ a $K$-injective resolution
$E\r I_E$ and to define $\pi_{s\ast}\uRHom_{X/G}(E,F)$ as the complex of sheaves
$U\mapsto \Hom_{\tilde{U}}(I_E{\mid}\tilde{U},I_F{\mid}\tilde{U})^G$ on $X\quot G$.
With this definition $\Lambda:=\pi_{s\ast}\uREnd_{X/G}(E):=\pi_{s\ast} \uRHom_{X/G}(E,E)$ is a sheaf of DG-algebras
on $X\quot G$
and $\pi_{s\ast}\uRHom_{X/G}(E,F)$ is a sheaf of right $\Lambda$-DG-modules.
\begin{remarks}
Note that if $U\subset X\quot G$ is affine then $\Lambda{\mid} U$ is the sheaf of DG-algebras
associated to the DG-algebra $\REnd_{\tilde{U}/G}(E)$. We will use this routinely below.
\end{remarks}

\begin{lemmas} \label{lem:ff}
Assume that $\Dscr\subset D_{\Qch}(X/G)$ is locally classically generated by the perfect complex $E$
and let $\Lambda=\pi_{s\ast}\uREnd_{X/G}(E)$ be the sheaf of DG-algebras on $X\quot G$ as defined above.
The functors
\begin{align*}
\Dscr\r \Perf(\Lambda)&:F\mapsto \pi_{s\ast}\uRHom_{X/G}(E,F),\\
\Perf(\Lambda)\r \Dscr&:H\mapsto H\Lotimes_\Lambda E,
\end{align*}
are well-defined (the second functor is computed starting from a $K$-flat resolution\footnote{Such a $K$-flat resolution is constructed in the same way as for DG-algebras
(see \cite[Theorem 3.1.b]{Keller1}). One starts from the observation that for every $M\in D(\Lambda)$ there is a morphism $\bigoplus_{i\in I} j_{i!}(\Lambda{\mid} U_i)\r M$
with open immersions $(j_i:U_i\r X\quot G)_{i\in I}$, which is an epimorphism on the level of cohomology.
 }
 of $H$) and yield inverse equivalences between $\Dscr$ and $\Perf(\Lambda)$.
\end{lemmas}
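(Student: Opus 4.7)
The plan is to prove the lemma by reducing to the affine case and then applying the standard derived Morita equivalence for DG-algebras. Throughout, $\Dscr$ is locally classically generated by $E$, and it suffices to check every assertion on an affine open covering $\{U_j\}$ of $X \quot G$ because (i) being perfect over $\Lambda$ is local on $X\quot G$, (ii) membership in $\Dscr$ is local by Definition \ref{def:locgen}, and (iii) both functors commute with restriction to affine opens. The latter is the key technical point and follows from the fact that for affine $U$, restriction of $E$ remains perfect and that $\pi_{s\ast}$ commutes with restriction (since $\pi$ is affine, whence so is $\pi_s$).

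First I would verify the two functors are well-defined on an affine base. When $X \quot G$ is affine with corresponding $G$-scheme $X$, the sheaf $\Lambda$ corresponds to the DG-algebra $A := \REnd_{X/G}(E)$, and $\Dscr$ is simply the thick subcategory $\langle E\rangle_{\mathrm{thick}} \subset D_{\Qch}(X/G)$ classically generated by $E$ (one direction is Definition \ref{def:locgen}, the other follows because this thick subcategory already satisfies the local generation condition). Since $E$ is perfect, $\RHom_{X/G}(E,E) = A$ is a compact generator of its own perfect derived category, and for any $F \in \langle E\rangle_{\mathrm{thick}}$, the functor $\RHom_{X/G}(E, -)$ takes $E$ to $A \in \Perf(A)$ and commutes with cones and summands; hence $\RHom_{X/G}(E, F) \in \Perf(A)$. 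Dually, $H \Lotimes_A E$ for $H \in \Perf(A)$ takes $A$ to $E$ and commutes with cones and summands, landing in $\langle E\rangle_{\mathrm{thick}} = \Dscr$.

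Next I would establish the equivalence in the affine case by the classical Morita-style argument of Keller \cite{Keller1}. The unit $H \to \RHom_{X/G}(E, H\Lotimes_A E)$ is an isomorphism when $H = A$ (both sides equal $A$), and since both functors are exact and preserve sums/summands, the full subcategory of $H$ on which the unit is an isomorphism is thick, hence equals all of $\Perf(A)$. The same argument with $F = E$ shows the counit $\RHom_{X/G}(E, F)\Lotimes_A E \to F$ is an isomorphism on the thick subcategory generated by $E$, i.e.\ on all of $\Dscr$. The $K$-flat resolution required for the second functor exists by the footnoted construction, analogous to the DG-algebra case in \cite[Thm 3.1.b]{Keller1}.

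Finally, I would glue: given a finite affine cover $X \quot G = \bigcup_j U_j$, both functors and both adjunction morphisms restrict compatibly, and by the affine case they are isomorphisms on each $\tilde U_j$. Since being an isomorphism in $D_{\Qch}(X/G)$ (resp.\ in $D(\Lambda)$) is local on $X\quot G$, the global unit and counit are isomorphisms. The main obstacle is the book-keeping around the resolutions: one must check that the chosen $K$-injective resolution of $E$ on $X/G$ restricts to a complex computing $\uRHom$ on each $\tilde U_j$ (which is standard since $K$-injectivity is preserved under restriction to open substacks), and similarly that a $K$-flat resolution of $H$ over $\Lambda$ restricts to a $K$-flat resolution over $\Lambda \mid U_j$; these are routine but necessary to ensure that all the comparison maps are honestly defined in the derived category.
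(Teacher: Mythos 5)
Your proposal is correct and follows essentially the same route as the paper: the two functors are adjoint between $D_{\Qch}(X/G)$ and $D(\Lambda)$, well-definedness and the invertibility of the unit and counit are checked on an affine cover, and the affine case is the standard Keller/Morita tilting argument using that $\Dscr$ becomes the thick subcategory generated by $E$ and $\Perf(\Lambda)$ the thick subcategory generated by $\Lambda$. The paper's proof is just a terser statement of exactly this local-verification strategy, so your extra detail (including the remarks on restricting the $K$-injective and $K$-flat resolutions) is a faithful elaboration rather than a different argument.
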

\begin{proof} The two functors are  adjoint functors between $D_{\Qch}(X/G)$ and $D(\Lambda)$. The fact that they define functors between
$\Dscr$ and $\Perf(\Lambda)$ can be checked locally. The fact  that the unit and counit are invertible can also be checked locally.
\end{proof}
\begin{lemmas} \label{gldimadm} Assume that $X$ is a smooth
  $k$-scheme. Let $E\in \Dscr(X/G)$. If $\Lambda=\pi_{s\ast}\uREnd_{X/G}(E)$ is a
  sheaf of algebras of finite global dimension when restricted to
   affine opens in $X\quot G$ then the induced fully faithful
  functor (see Lemma \ref{lem:ff})
\[
I:\Dscr(\Lambda)\r \Dscr(X/G):H\mapsto H\Lotimes_\Lambda E
\]
is admissible (i.e.\ it has a left and a right adjoint).
\end{lemmas}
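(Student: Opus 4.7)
First I would observe that both derived categories in sight coincide with their perfect subcategories. Smoothness of $X$ together with reductivity of $G$ ensures that every coherent sheaf on the quotient stack $X/G$ admits a finite resolution by equivariant locally free sheaves of finite rank, so $\Dscr(X/G)=\Perf(X/G)$. The hypothesis that $\Lambda$ is locally of finite global dimension gives $\Dscr(\Lambda)=\Perf(\Lambda)$. In particular the functor $I$ of the statement is the composition of the equivalence of Lemma~\ref{lem:ff} with the inclusion of the locally classically generated subcategory into $\Dscr(X/G)$; to prove admissibility it therefore suffices to produce both adjoints at the bounded coherent level.

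For the right adjoint I would take $J(F):=\pi_{s\ast}\uRHom_{X/G}(E,F)$, which by Lemma~\ref{lem:ff} is right adjoint to $I$ on the unbounded level. The question whether $J$ restricts to $\Dscr(X/G)\to\Dscr(\Lambda)$ is local on $X\quot G$: on an affine open $U$ one has $J(F){\mid}U=\RHom_{\tilde U/G}(E{\mid}\tilde U,F{\mid}\tilde U)$, which is bounded because $E{\mid}\tilde U$ is perfect. Its cohomology sheaves $\Ext^i_{\tilde U/G}(E{\mid}\tilde U,F{\mid}\tilde U)=\Ext^i_{k[\tilde U]}(E{\mid}\tilde U,F{\mid}\tilde U)^G$ are finitely generated over $k[\tilde U]^G$ by reductivity of $G$ (applied to a finitely generated $k[\tilde U]$-module with compatible $G$-action), and hence over $\Lambda{\mid}U$, which contains $k[\tilde U]^G$ in its center.

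For the left adjoint I would argue dually. Since $E$ is perfect as an $\Oscr_{X/G}$-module, its $\Oscr$-dual $E^\vee=\uRHom_{\Oscr_{X/G}}(E,\Oscr_{X/G})$ is again perfect and carries a natural right $\Lambda$-action dual to the left action on $E$. The candidate left adjoint is
\[
L(F):=\pi_{s\ast}(F\Lotimes_{\Oscr_{X/G}}E^\vee).
\]
The adjunction $\Hom_{\Dscr(\Lambda)}(L(F),H)\cong\Hom_{\Dscr(X/G)}(F,I(H))$ reduces locally to the standard tensor-Hom adjunction $\Hom_{\Lambda_U}(F_U\otimes_{k[\tilde U]}E^\vee_U,H_U)\cong\Hom_{k[\tilde U]}(F_U,H_U\otimes_{\Lambda_U}E_U)$ for perfect $E_U$ over $k[\tilde U]$; the $G$-invariants appearing on both sides are reconciled via the coincidence of invariants and coinvariants for reductive $G$ in characteristic~$0$. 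Boundedness and $\Lambda$-coherence of $L(F)$ on affine opens follow by the same argument as for $J$, using perfectness of $E^\vee$ in place of $E$. The main technical obstacle will be the careful setup of the bimodule structures in the stacky setting and verifying that the explicit formula for $L$ actually represents the abstract left adjoint on the bounded coherent subcategory; the right adjoint portion is essentially bookkeeping given Lemma~\ref{lem:ff} and reductivity of $G$.
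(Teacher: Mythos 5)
Your construction of the right adjoint $J(F)=\pi_{s\ast}\uRHom_{X/G}(E,-)$ is correct and is exactly what the paper does; the argument that $J$ lands in $\Dscr(\Lambda)$ (locally bounded, finitely generated cohomology, using reductivity) is a reasonable way to fill in what the paper leaves implicit.

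The proposed left adjoint, however, is wrong. Since $E$ is perfect over $\Oscr_{X/G}$, there is a natural isomorphism of right $\Lambda$-modules
\[
F\Lotimes_{\Oscr_{X/G}}E^\vee \;\cong\; \uRHom_{\Oscr_{X/G}}(E,F),
\]
and this identification is compatible with the right $\Lambda$-action by precomposition on both sides. Hence your candidate $L(F)=\pi_{s\ast}(F\Lotimes_{\Oscr_{X/G}}E^\vee)$ is literally equal to $J(F)$, i.e.\ you have written down the right adjoint a second time. The two adjoints of a fully faithful inclusion only agree in very special (Calabi--Yau/Frobenius type) situations, so in general this formula cannot serve as a left adjoint. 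Concretely, the ``standard tensor--Hom adjunction'' you invoke,
\[
\Hom_{\Lambda}(F\otimes_{R}E^\vee,H)\;\cong\;\Hom_{R}(F,H\otimes_{\Lambda}E),
\]
would require $\Hom_{\Lambda}(E^\vee,H)\cong H\otimes_{\Lambda}E$, i.e.\ the $\Oscr$-dual $E^\vee$ would have to coincide with the $\Lambda$-dual of $E$. This fails precisely because $\Lambda=\pi_{s\ast}\uEnd_{X/G}(E)$ is only the $G$-invariant part of the full $\Oscr_X$-endomorphism algebra pushed down to $X\quot G$.

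What the paper does instead is dualize over $\Lambda$ rather than over $\Oscr$: it sets $(-)^\vee=\uRHom_{\Lambda}(-,\Lambda):\Dscr(\Lambda^{\circ})\to\Dscr(\Lambda)$ and takes
\[
L(F)=\bigl(\pi_{s\ast}\uRHom_{X/G}(F,E)\bigr)^{\vee}.
\]
Here $\pi_{s\ast}\uRHom_{X/G}(F,E)$ is a \emph{left} $\Lambda$-module, and the finite global dimension hypothesis is exactly what guarantees that it is (locally) perfect over $\Lambda$, so that biduality over $\Lambda$ holds and the adjunction identity $\Hom_{\Lambda}(L(F),H)\cong H\Lotimes_{\Lambda}\pi_{s\ast}\uRHom_{X/G}(F,E)\cong\Hom_{X/G}(F,H\Lotimes_{\Lambda}E)$ goes through. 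So the missing idea is to replace the $\Oscr$-dual of $E$ by a $\Lambda$-linear dual applied after $\pi_{s\ast}\uRHom_{X/G}(F,E)$.
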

\begin{proof} The right adjoint to $I$ is given $\pi_{s\ast}\uRHom_{X/G}(E,-)$. To construct the left adjoint note that there is a duality
$\Dscr(\Lambda^\circ)\r \Dscr(\Lambda)$ given by $(-)^\vee:=\uRHom_{\Lambda}(-,\Lambda)$. One checks that the
left adjoint to $I$ is given by $\pi_{s\ast}\uRHom_{X/G}(-,E)^\vee$.
\end{proof}
The following result shows that semi-orthogonal decompositions can be constructed locally.
\begin{propositions}
\label{th:recognition}
Let $I$ be a  totally ordered set. Assume
$\Dscr\subset\Perf(X/G)$ is locally classically generated by
a collection of locally closed subcategories $\Dscr_i\in \Perf(X/G)$.
Assume $\pi_{s\ast}\uRHom_{X/G}(\Dscr_i,\Dscr_j)=0$ for $i>j$. Then $\Dscr$ is generated by $(\Dscr_i)_i$
and in particular we have a semi-orthogonal decomposition $\Dscr=\langle \Dscr_i\mid i\in I\rangle$.
\end{propositions}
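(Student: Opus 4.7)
The Hom-vanishing axiom of a semi-orthogonal decomposition is essentially free: for $F_i\in\Dscr_i$, $F_j\in\Dscr_j$ with $i>j$,
$$\Hom_{X/G}(F_i,F_j)=H^0R\Gamma(X\quot G,\pi_{s\ast}\uRHom_{X/G}(F_i,F_j))=0$$
by the hypothesis that this sheaf vanishes. So the real content of the proposition is to show that $\Dscr$ coincides with the triangulated subcategory $\langle\Dscr_i\mid i\in I\rangle$ generated by the $\Dscr_i$. The inclusion $\langle\Dscr_i\rangle\subset\Dscr$ is tautological, and the plan is to build, for any $F\in\Dscr$, a filtration with graded pieces in the $\Dscr_i$ by constructing it locally and then gluing.

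First I would fix $F\in\Dscr$ and choose a finite affine open cover $X\quot G=\bigcup_{\alpha=1}^n U_\alpha$. Local classical generation places each $F|_{\tilde U_\alpha}$ in the thick subcategory of $D_{\Qch}(\tilde U_\alpha/G)$ generated by the restrictions of objects of $\bigcup_i\Dscr_i$, and (since $F$ is compact) only finitely many indices $i\in I$ are actually needed. Over the affine $U_\alpha$, the vanishing of the sheaf $\pi_{s\ast}\uRHom_{X/G}(\Dscr_i,\Dscr_j)$ for $i>j$ is equivalent to the vanishing of the $\RHom$-complex $\RHom_{\tilde U_\alpha/G}(E,E')=0$ for $E\in\Dscr_i|_{\tilde U_\alpha}$, $E'\in\Dscr_j|_{\tilde U_\alpha}$. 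Combined with local classical generation this is exactly what is needed to assemble the $\Dscr_i|_{\tilde U_\alpha}$ into a genuine semi-orthogonal decomposition of the thick subcategory containing $F|_{\tilde U_\alpha}$, yielding a canonical filtration of $F|_{\tilde U_\alpha}$ whose successive cones lie in the appropriate $\Dscr_i|_{\tilde U_\alpha}$.

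Next I would glue these local filtrations. Because semi-orthogonal decompositions are unique when they exist, the canonical triangles on $\tilde U_\alpha$ and on $\tilde U_\beta$ agree on the overlap $\tilde U_\alpha\cap\tilde U_\beta$ up to unique isomorphism, so the pieces descend to a global filtration of $F$. The assumption that each $\Dscr_i$ is locally closed (closed under local isomorphism) ensures the glued graded pieces actually belong to the global $\Dscr_i$, not just to some locally defined version. A tidy way to organise this is to define, for each threshold $i_0\in I$, the subcategory $\Dscr^{\geq i_0}\subset\Dscr$ of objects whose restriction to every affine $\tilde U$ lies in the local thick subcategory generated by $\bigcup_{i\geq i_0}\Dscr_i|_{\tilde U}$, and to glue the local projection functors coming from the affine semi-orthogonal decompositions into a right adjoint to the inclusion $\Dscr^{\geq i_0}\hookrightarrow\Dscr$. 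Iterating the resulting functorial triangles $F^{\geq i_0}\to F\to F^{<i_0}$ over the finitely many indices relevant to $F$ produces the desired global filtration.

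The main obstacle is this gluing/descent step: the local semi-orthogonal projections are canonical only up to unique isomorphism, and one must verify rigorously that the local truncation triangles on $\tilde U_\alpha$ agree on overlaps closely enough to descend to a genuine triangle in $D_{\Qch}(X/G)$. The ingredients that make this work are (i) the uniqueness of semi-orthogonal decompositions, which turns ``compatible up to isomorphism'' into ``compatible via the unique isomorphism'', (ii) the locally closed hypothesis on each $\Dscr_i$, which guarantees that the glued pieces lie back in the global subcategories, and (iii) the compactness/perfectness of the objects involved, which allows one to reduce questions about all affine opens to the chosen finite cover.
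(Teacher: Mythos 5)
Your high-level plan is right: the Hom-vanishing is essentially free from the sheaf-level hypothesis, and the real content is to show $\Dscr = \langle\Dscr_i\mid i\in I\rangle$, which one does by filtering an arbitrary $F\in\Dscr$. But the method you propose for producing the filtration — build the semi-orthogonal truncation on each affine $\tilde U_\alpha$ and then descend to a global triangle — contains the actual hard step, and you flag it yourself without resolving it. Gluing objects, let alone distinguished triangles, along a Zariski cover in a plain triangulated category is not automatic: uniqueness of the local projections up to canonical isomorphism on overlaps does not by itself give descent of the objects (this is the classic failure of triangulated categories to form a stack). So as written the argument has a genuine gap exactly where you say ``the pieces descend to a global filtration of $F$''.

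The paper sidesteps this entirely. After reducing to $|I|=2$ and to $\Dscr_i$ locally classically generated by a single perfect complex $E_i$, it never constructs a local truncation and never glues. Instead it constructs the global projection of $F$ onto $\Dscr_2$ by the explicit formula
\[
F_2 \;=\; \pi_{s\ast}\uRHom_{X/G}(E_2,F)\Lotimes_{\Lambda_2} E_2,
\qquad \Lambda_2=\pi_{s\ast}\uREnd_{X/G}(E_2),
\]
i.e.\ as a derived tensor product over the sheaf of DG-algebras $\Lambda_2$ on $X\quot G$ (this uses the setup of Lemma \ref{lem:ff}). This is a global object by definition; locality enters only in \emph{checking properties}: that $\pi_{s\ast}\uRHom_{X/G}(E_2,F)$ is perfect over $\Lambda_2$ (hence $F_2\in\Dscr_2$), that $\pi_{s\ast}\uRHom_{X/G}(E_2,\cone(F_2\to F))=0$, and finally that the cone $F_1=\cone(F_2\to F)$, after a similar comparison against $E_1$, is locally classically generated by $E_1$ and so lies in $\Dscr_1$. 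The ``locally closed'' hypothesis is what lets one pass from these local verifications to membership of $F_1,F_2$ in the global categories. So the decisive idea you are missing is: do not glue local SOD triangles; instead produce the adjoint/projection functor directly at the sheaf level via $\pi_{s\ast}\uRHom\Lotimes_{\Lambda}E$, so that the filtration is global from the outset and only its properties are tested locally.
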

\begin{proof} It is clear that we may first reduce to the case that $I$ finite and then to $|I|=2$. Hence
we assume $I=\{1,2\}$. In the same vein we may reduce to the case that the $\Dscr_i$ are locally classically generated by single perfect complexes $(E_i)_{i=1,2}$. Put $\Lambda_i=\pi_{s\ast} \uREnd_{X/G}(E_i)$.

Let $F\in \Dscr$. Then for every affine $U\subset X\quot G$,
$F{\mid} \tilde{U}$ is in the thick subcategory of $\Perf(\tilde{U}/G)$ generated by $E_1{\mid} \tilde{U}$, $E_2{\mid} \tilde{U}$ (by the definition of local classical generation, cfr.
Definition \ref{def:locgen}).

Put $F_2=\pi_{s\ast} \uRHom_{X/G}(E_2,F)\Lotimes_{\Lambda_2} E_2$. 
Since $ \uRHom_{X/G}(E_2,E_1)=0$ we deduce (checking locally) that $\pi_{s\ast} \uRHom_{X/G}(E_2,F)\in \Perf(\Lambda_2)$ and hence that $F_2\in \Dscr_2$.
Put
$F_1=\cone(F_2\r F)$.
Then we obtain $\pi_{s\ast}\uRHom_{X/G}(E_2,F_1)=0$ (again checking locally). Let $C$ be the cone of $\pi_{s\ast} \uRHom_{X/G}(E_1,F_1)\Lotimes_{\Lambda_1} E_1\r F_1$.
Since $\pi_{s\ast} \uRHom_{X/G}(E_i,C)=0$ for $i=1,2$ we conclude $C=0$ and thus $F_1=\pi_{s\ast} \uRHom_{X/G}(E_1,F_1)\Lotimes_{\Lambda_1} E_1$.
It follows that if $U\subset X\quot G$ is affine
then $F_1{\mid} \tilde{U}$ is in the cocomplete subcategory of $D_{\Qch}(X/G)$ generated by $E_1{\mid} \tilde{U}$. Since $E_1{|}\tilde{U}$, $F_1{|}\tilde{U}$ are compact
we conclude  by \cite[Lemma 2.2]{Neeman3} that
$F_1{\mid} \tilde{U}$ is in the thick subcategory of $\Perf(\tilde{U}/G)$ generated by $E_1{\mid} \tilde{U}$. As this is is true for all $U$ we obtain that $F_1\in \Dscr_1$. 
Hence
$\Dscr$ is generated by $\Dscr_1$,~$\Dscr_2$.
\end{proof}
\subsection{The Bia\l{}ynicki-Birula decomposition for reductive algebraic groups.}
\label{sec:red}
We recall the following.
\begin{propositions} \cite[Proposition 8.4.5, Exercise 8.4.6(5), Theorem 13.4.2]{Springer}
  Let $G$ be a connected reductive algebraic group and let
  $\lambda:G_m\r G$ be a one-parameter subgroup of $G$. Then $G^{\lambda}$, $G^{\lambda,\pm}$ are connected
subgroups of $G$. Moreover the $G^{\lambda,\pm}$ are parabolic subgroups of $G$ and $G^\lambda$ is the
Levi-subgroup of $G^{\lambda,\pm}$.
\end{propositions}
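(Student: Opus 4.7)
The plan is to reduce the statement to classical structure theory and cite the indicated results in Springer, but a direct proof proceeds as follows. First I would identify $G^\lambda$ with the centralizer $Z_G(\lambda(G_m))$. Since $\lambda(G_m)$ is a (possibly trivial) subtorus of $G$, and since the centralizer of a torus in a connected reductive group is known to be connected and reductive, this gives connectedness of $G^\lambda$ immediately.

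Next I would pick a maximal torus $T \subset G$ containing $\lambda(G_m)$ and work with the root decomposition of $\mathrm{Lie}(G)$ with respect to $T$, together with the associated root subgroups $U_\alpha \subset G$. The key observation is that the conjugation action of $\lambda(t)$ on $U_\alpha$ is scaling by $t^{\langle \lambda,\alpha\rangle}$, so the limit $\lim_{t\to 0}\lambda(t)u\lambda(t)^{-1}$ exists for $u\in U_\alpha$ if and only if $\langle\lambda,\alpha\rangle\geq 0$, and dually for $G^{\lambda,-}$. Using the Bruhat decomposition (or equivalently the fact that $G$ is generated by $T$ and the $U_\alpha$ with prescribed commutation relations), one shows that
\[
G^{\lambda,+} = G^\lambda \cdot U^{\lambda,+}, \qquad U^{\lambda,+} = \langle U_\alpha \mid \langle\lambda,\alpha\rangle>0\rangle,
\]
with $U^{\lambda,+}$ unipotent and normal in $G^{\lambda,+}$, and analogously for $G^{\lambda,-}$. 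The set $\{\alpha\mid \langle\lambda,\alpha\rangle\geq 0\}$ is a parabolic subset of roots, so $G^{\lambda,+}$ is exactly the standard parabolic associated to $\lambda$, with Levi factor $G^\lambda$ and unipotent radical $U^{\lambda,+}$. Connectedness of $G^{\lambda,\pm}$ then follows since they are products (as varieties) of the connected $G^\lambda$ and the connected unipotent $U^{\lambda,\pm}$.

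The only genuinely delicate step is verifying that the limit criterion on an arbitrary element $g \in G$, expanded via Bruhat cells, forces $g$ to land in the predicted parabolic; but this is exactly the content of the dynamic characterization of parabolic subgroups treated in \cite[Ch.~8 and Ch.~13]{Springer}, so in the spirit of the paper I would simply appeal to the cited references.
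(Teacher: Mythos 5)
Your proposal is correct and follows essentially the same path as the paper, which contains no proof at all and simply cites \cite[Proposition 8.4.5, Exercise 8.4.6(5), Theorem 13.4.2]{Springer}; your fleshed-out sketch (centralizer of a subtorus is connected reductive, root-subgroup scaling by $t^{\langle\lambda,\alpha\rangle}$, parabolic subset of roots, Levi decomposition) is precisely the content of those cited results, and you correctly defer the one delicate step to the same references.
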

We recall the following.
\begin{lemmas}
\label{lem:recall} Let $G$ be a connected reductive algebraic group with $T\subset B\subset G$ being
a maximal torus and a Borel subgroup of $G$. Let
  $\lambda\in Y(T)^-$  and $\chi\in X(T)^+$ and let $V(\chi)$ be the irreducible $G$-representation with
highest weight $\chi$. Then $\Res^G_{G^\lambda} V(\chi)=V_{G^\lambda}(\chi)\oplus
\bigoplus_i V_{G^{\lambda}}(\mu_i)$
with $\langle \lambda,\mu_i\rangle>\langle \lambda,\chi\rangle$.
\end{lemmas}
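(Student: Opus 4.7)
The plan is to exploit the PBW decomposition of $V(\chi)$ adapted to the $\lambda$-weight grading. First, since $\chi\in X(T)^+$ is the highest weight of $V(\chi)$, every weight $\mu$ of $V(\chi)$ satisfies $\chi-\mu=\sum_{\alpha>0}c_\alpha\alpha$ with $c_\alpha\ge 0$, and antidominance of $\lambda\in Y(T)^-$ gives $\langle\lambda,\alpha\rangle\le 0$ for every positive root $\alpha$; hence $\langle\lambda,\mu\rangle\ge n_0:=\langle\lambda,\chi\rangle$. Decompose $V(\chi)=\bigoplus_{n\ge n_0}V[n]$ into $\lambda$-weight spaces. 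Each $V[n]$ is $G^\lambda$-stable because $G^\lambda$ centralizes $\lambda$, and since the roots of $G^\lambda$ pair trivially with $\lambda$, every irreducible $G^\lambda$-constituent $V_{G^\lambda}(\nu)$ of $V[n]$ has $\langle\lambda,\nu\rangle=n$. It therefore suffices to prove $V[n_0]\cong V_{G^\lambda}(\chi)$ as $G^\lambda$-modules.

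Let $\mathfrak{n}^-=\bigoplus_{\alpha>0}\mathfrak{g}_{-\alpha}$ be the Lie algebra of the unipotent radical of $B$, and let $v_\chi\in V(\chi)$ be a highest weight vector, which is annihilated by the opposite nilpotent $\mathfrak{n}^+=\bigoplus_{\alpha>0}\mathfrak{g}_\alpha$. Split $\mathfrak{n}^-=\mathfrak{n}^-_0\oplus \mathfrak{n}^-_+$ according to $\lambda$-weight, where
\[
\mathfrak{n}^-_0=\bigoplus_{\alpha>0,\ \langle\lambda,\alpha\rangle=0}\mathfrak{g}_{-\alpha},\qquad \mathfrak{n}^-_+=\bigoplus_{\alpha>0,\ \langle\lambda,\alpha\rangle<0}\mathfrak{g}_{-\alpha}.
\]
By antidominance of $\lambda$, the summand $\mathfrak{n}^-_0$ is exactly the negative nilpotent of $\mathfrak{g}^\lambda$ with respect to the Borel $G^\lambda\cap B$, while every nonzero vector in $\mathfrak{n}^-_+$ has strictly positive $\lambda$-weight.

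Now $V(\chi)=U(\mathfrak{n}^-)v_\chi$, and by PBW we have the vector space isomorphism $U(\mathfrak{n}^-)\cong U(\mathfrak{n}^-_+)\otimes U(\mathfrak{n}^-_0)$, so
\[
V(\chi)=U(\mathfrak{n}^-_+)\cdot\bigl(U(\mathfrak{n}^-_0)v_\chi\bigr).
\]
Since $v_\chi$ is annihilated by $\mathfrak{n}^+\cap\mathfrak{g}^\lambda$ and has $T$-weight $\chi$, it is a highest weight vector for $G^\lambda$, so $U(\mathfrak{n}^-_0)v_\chi\cong V_{G^\lambda}(\chi)$. Because every nonscalar PBW monomial in $U(\mathfrak{n}^-_+)$ has strictly positive $\lambda$-weight, the $\lambda$-weight $n_0$ part of $V(\chi)$ equals $U(\mathfrak{n}^-_0)v_\chi$, giving $V[n_0]\cong V_{G^\lambda}(\chi)$. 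Combined with the reduction in the first paragraph, this yields the claimed decomposition. No serious obstacle arises; the main point is simply to set up the $\lambda$-weight grading on $\mathfrak{n}^-$ correctly so that PBW separates the $G^\lambda$-summand generated by $v_\chi$ from the strictly higher $\lambda$-levels.
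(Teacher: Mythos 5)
Your proof is correct. You and the paper both begin the same way: decompose $V(\chi)$ into $\lambda$-weight spaces $V[n]$, observe that each piece is $G^\lambda$-stable and that every irreducible $G^\lambda$-constituent $V_{G^\lambda}(\nu)$ of $V[n]$ has $\langle\lambda,\nu\rangle=n$, and thereby reduce the claim to identifying the bottom piece $V[n_0]$ with $V_{G^\lambda}(\chi)$. The two arguments diverge only at that last step. The paper argues by contradiction with the irreducibility of $V(\chi)$: if $V[n_0]$ were $G^\lambda$-decomposable, its indecomposable summands would generate distinct nonzero $G$-subrepresentations of $V(\chi)$; so $V[n_0]$ is indecomposable, hence irreducible, and since it contains the $\chi$-weight line it must be $V_{G^\lambda}(\chi)$. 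You instead split $\mathfrak{n}^-$ by its $\lambda$-weight into $\mathfrak{n}^-_0\oplus\mathfrak{n}^-_+$ and use PBW to compute $V[n_0]=U(\mathfrak{n}^-_0)v_\chi$ directly, then recognize this as a finite-dimensional highest weight $\mathfrak{g}^\lambda$-module and hence irreducible. Your route is more explicit and avoids a second appeal to the irreducibility of $V(\chi)$; it is also essentially the calculation one would use to substantiate the paper's ``it is easy to see'' claim about distinct subrepresentations (decompose $U(\mathfrak{g})$ by $\lambda$-weight and check that $U(\mathfrak{g})A\cap V[n_0]=A$ for any $G^\lambda$-summand $A\subset V[n_0]$), so the two proofs rest on the same Lie-theoretic mechanism, with yours making the PBW bookkeeping visible at the cost of a slightly longer write-up.
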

\begin{proof} This is similar to the proof that $\chi$ occurs with multiplicity one
among the weights of $V(\chi)$ \cite[Proposition 2.4]{Jantzen}.
All the weights $\mu$ of $V(\chi)$ satisfy $\langle
  \lambda,\mu\rangle \ge \langle \lambda ,\chi\rangle$ as $\lambda\in Y(T)^-$. Hence we have
  a decomposition $V(\chi)=V(\chi)^\lambda\oplus V(\chi)^+$ where
  $V(\chi)^+$ is the span of the weight vectors with weights $\mu$
  such that $\langle \lambda,\mu\rangle > \langle \lambda
  ,\chi\rangle$. It is clear that this is a decomposition as
  $G^\lambda$-modules. If $V(\chi)^\lambda$ is decomposable then it is
  easy to see that its indecomposable summands generate distinct
  $G$-subrepresentations of $V(\chi)$ which is impossible.

Since $V(\chi)^\lambda$ contains the weight vector with weight $\chi$ we must have $V(\chi)^\lambda=
V_{G^\lambda}(\chi)$.
\end{proof}
\subsection{The $G/G_e$-action on weights}
\label{sec:definition}
Let $G$ be a reductive group such that $T\subset B\subset G_e$ are respectively a
maximal torus and a Borel subgroup of $G_e$.

Let $g\in G$ and $\sigma_g=g\,\cdot\,g^{-1}\in \Aut(G_e)$.  Then
$\sigma_g(T)\subset\sigma_g(B)$ are respectively a maximal torus and a
Borel subgroup of $G_e$. Thus there exists $g_0\in G_e$ such that
$g_0\sigma_g(T)g_0^{-1}=T$, $g_0\sigma_g(B)g_0^{-1}=B$.

In the sequel if $\bar{g}\in G/G_e$ then we write $\sigma_{\bar{g}}\in
\Aut(G_e)$ for $\sigma_{g_0g}$ where $g_0g$ is an element of the coset
$\bar{g}$ such that $\sigma_{g_0g}$ preserves $(T,B)$. Since $g_0$ is
unique up to multiplication by an element of $T$,
$\sigma_{\bar{g}}$ is well defined up to conjugation by an element
of $T$.  Since $\sigma_{\bar{g}}$ preserves $(T,B)$ it yields a
well defined action on $X(T)$ via $\chi\mapsto \chi\circ \sigma_{\bar{g}}$
which preserves $X(T)^+$. We will write
$\bar{g}(\chi)$ for $\chi\circ \sigma^{-1}_{\bar{g}}$. There is
also an action of $G/G_e$ on $Y(T)$ given by $\lambda\mapsto \sigma_{\bar{g}}\circ \lambda$.
Finally we have
\[
\langle \lambda,\chi\circ \sigma_{\bar{g}}\rangle=\langle \sigma_{\bar{g}}\circ\lambda,\chi\rangle.
\]
If $\lambda\in Y(T)$ then we will write $(G/G_e)^\lambda\subset G/G_e$ for the
stabilizer of $\lambda$ under the $G/G_e$-action on $Y(T)$. Let $\tilde{G}^\lambda$ be the inverse image
of $(G/G_e)^\lambda$ in $G$.
There is an obvious
inclusion $(G/G_e)^\lambda \subset G_eG^\lambda/G_e=G^\lambda/G^\lambda_e$. We will
write $\open^\lambda$ for the (open) subgroup of $G^\lambda$ such that
$G^\lambda_e\subset \open^\lambda$ and $\open^\lambda/G^\lambda_e=(G/G_e)^\lambda $.
So $\tilde{G}^\lambda/G_e=\open^\lambda/G_e^\lambda$.

For $\chi\in X(T)^+$ we put
$V_G(\chi):= \Ind^G_{B}\chi$. Note that if $G$ is not connected then
$V_G(\chi)$ will usually not be simple.
We have
\begin{equation}
\label{eq:restriction}
\Res^G_{G_e} V_G(\chi)=\bigoplus_{\bar{g}\in G/G_e} {}_{\sigma_{\bar{g}}} V_{G_e}(\chi)
\end{equation}
and
\begin{equation}
\label{eq:restriction1}
{}_{\sigma_{\bar{g}}}V_{G_e}(\chi)\cong V_{G_e}(\chi\circ \sigma_{\bar{g}}).
\end{equation}
\section{Reduction settings}\label{secredset}
Now we introduce our main technical tool to obtain semi-orthogonal decompositions of $\Dscr(X/G)$.
In \S\ref{def:reductionsetting} introduce the concept of a \emph{reduction setting}.
In \S\ref{sec:mainred} we give our main technical result about such reduction settings.
Subsequent sections are concerned with the construction of reduction settings. Since the definitions
and results are quite technical and not so easy to motivate, the reader is advised to skim this section
on first reading and come back to it afterwards.
\subsection{Definition}
\label{def:reductionsetting}

Let $G$ be a reductive group such that $T\subset B\subset G_e$ are respectively a
maximal torus and a Borel subgroup of $G_e$

\medskip

Below we consider the
situation where $G$ acts on a variety $X$. In that case we also put for $\chi\in X(T)^+$
\[
P_\chi=V_G(\chi)\otimes_k \Oscr_X \in \coh(X/G).
\]
To indicate context we may also write $P_{G,\chi}$, $P_{G,X,\chi}$, etc\dots.
If $\Lscr\subset X(T)^+$
then we put $P_\Lscr=\bigoplus_{\chi\in \Lscr} P_\chi$.
We make the following definition (using some notation introduced in \S\ref{linear}).
\begin{definitions}
\label{ref-2.1-2}
A \emph{reduction setting}
is a tuple
$(G,B,T,X,\Lscr,\chi,\lambda)$ with the following properties:
\begin{enumerate}
\item \label{ref-1-3}
$G$ is a reductive group and $T\subset B\subset G_e$ are respectively a maximal
torus and a Borel subgroup of $G$.
\item $\chi\in X(T)^+$.
\item $\lambda\in Y(T)^-$.
\item $\Lscr$ is a finite subset of $X(T)^+$
invariant under $G/G_e$.
\item \label{ref-5-4}
$\forall \mu\in \Lscr:\langle \lambda,\chi\rangle<\langle\lambda,\mu\rangle$.
\item \label{ref-6-5} $X$ is a smooth $G$-equivariant $k$-scheme such
  that a good quotient $\pi:X\r X\quot G$ exists (with associated stack morphism $\pi_s:X/G\r X\quot G$).
\label{ref-5-4-2}
\item We will show in Lemma \ref{ref-2.2-9} below that (\ref{ref-5-4}) implies
\begin{equation}
\label{ref-2.1-6}
\pi_{s\ast}\uRHom_{X/G}\left(P_{G,\Lscr},\RInd^{G}_{G^{\lambda,+}_e}
(V_{G_e^\lambda}(\chi) \otimes_k j_\ast\Oscr_{X^{\lambda,+}})\right)=0
\end{equation}
where $j$ is the inclusion $X^{\lambda,+}\hookrightarrow X$.
Consider the map
\begin{equation}
\label{ref-2.2-7}
P_{G,\chi}=
\RInd^G_{G^{\lambda,+}_e}\left(V_{G_e^\lambda}(\chi)
\otimes\Oscr_X\right)\r \RInd^{G}_{G^{\lambda,+}_e} \left(V_{G_e^\lambda}(\chi) \otimes_k j_\ast\Oscr_{X^{\lambda,+}}\right)
\end{equation}
obtained by applying $\RInd_{G^\lambda_e}^G(V_{G_e^\lambda}(\chi) \otimes-)$ to the obvious map
$
\Oscr_X\r j_\ast \Oscr_{X^{\lambda,+}}
$. Combining \eqref{ref-2.2-7}  with \eqref{ref-2.1-6}
we obtain  from the axioms of triangulated categories a canonical map
\begin{multline}
\label{ref-2.3-8}
\cone
\left(
\pi_{s\ast}\uHom_{X/G}(P_{G,\Lscr},P_{G,\chi})\Lotimes_{\pi_{s\ast}\uEnd_{X/G}(P_{G,\Lscr})} P_{G,\Lscr} \r P_{G,\chi}
\right)\\
\r
\RInd^{G}_{G^{\lambda,+}_e}
\left(V_{G_e^\lambda}(\chi) \otimes_k j_\ast\Oscr_{X^{\lambda,+}}\right).
\end{multline}
We require that \eqref{ref-2.3-8} is an isomorphism.
\end{enumerate}
\end{definitions}
The following lemma is necessary to complete Definition \ref{ref-2.1-2}.
\begin{lemmas} \label{ref-2.2-9}
Assume that $\Lscr$ is as in Definition \ref{ref-2.1-2}(\ref{ref-5-4}). Then
\eqref{ref-2.1-6} holds.
\end{lemmas}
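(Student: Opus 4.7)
The plan is to combine the derived induction-restriction adjunction with a $\lambda$-weight count. First, I would use the adjunction $\Res^{G}_{G^{\lambda,+}_e}\dashv \RInd^{G}_{G^{\lambda,+}_e}$ to rewrite the left-hand side of \eqref{ref-2.1-6} as
\[
\pi_{s\ast}\uRHom_{X/G^{\lambda,+}_e}\!\left(\Res^G_{G^{\lambda,+}_e} P_{G,\Lscr},\; V_{G^\lambda_e}(\chi)\otimes_k j_\ast \Oscr_{X^{\lambda,+}}\right).
\]
Next, I would decompose $\Res^G_{G^{\lambda,+}_e}V_G(\mu)$ for $\mu\in\Lscr$. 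By \eqref{eq:restriction}--\eqref{eq:restriction1} it splits as a $G_e$-representation into $\bigoplus_{\bar g\in G/G_e}V_{G_e}(\nu_{\bar g})$, with $\nu_{\bar g}$ ranging over the $G/G_e$-orbit of $\mu$; by the $G/G_e$-invariance of $\Lscr$ each $\nu_{\bar g}$ lies in $\Lscr$, so assumption \ref{ref-2.1-2}(\ref{ref-5-4}) gives $\langle\lambda,\nu_{\bar g}\rangle>\langle\lambda,\chi\rangle$. Applying Lemma \ref{lem:recall} to each summand yields a $G^{\lambda,+}_e$-equivariant filtration whose successive quotients are simple $G^\lambda_e$-modules $V_{G^\lambda_e}(\tau)$ (extended to $G^{\lambda,+}_e$ with the unipotent radical acting trivially), with $\langle\lambda,\tau\rangle\geq\langle\lambda,\nu_{\bar g}\rangle>\langle\lambda,\chi\rangle$ for every $\tau$ that appears.

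Via the induced distinguished triangles, the problem reduces to establishing
\[
\pi_{s\ast}\uRHom_{X/G^{\lambda,+}_e}\!\left(V_{G^\lambda_e}(\tau)\otimes_k \Oscr_X,\; V_{G^\lambda_e}(\chi)\otimes_k j_\ast \Oscr_{X^{\lambda,+}}\right)=0
\]
for every $\tau$ with $\langle\lambda,\tau\rangle>\langle\lambda,\chi\rangle$. The underlying sheaf-Hom here equals $V_{G^\lambda_e}(\tau)^\ast\otimes_k V_{G^\lambda_e}(\chi)\otimes_k j_\ast \Oscr_{X^{\lambda,+}}$. Since $\lambda\in G^\lambda_e$ is central, the factor $V_{G^\lambda_e}(\tau)^\ast\otimes V_{G^\lambda_e}(\chi)$ is $\lambda$-pure of weight $\langle\lambda,\chi-\tau\rangle<0$; and from the description $R^{\lambda,+}=R/I^+$ recalled in \S\ref{ref-1.2-0}, $j_\ast\Oscr_{X^{\lambda,+}}$ carries only non-positive $\lambda$-weights. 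Hence every $\lambda$-weight appearing in this sheaf-Hom is strictly negative.

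Finally, I would compute derived $G^{\lambda,+}_e$-invariants as the composition $(-)^{G^\lambda_e}\circ R(-)^{U^{\lambda,+}_e}$, with the outer functor exact in characteristic zero by reductivity of $G^\lambda_e$. Because $\mathfrak{u}^{\lambda,+}_e$ has strictly positive $\lambda$-weights (as the Lie algebra of the attractor), the Chevalley-Eilenberg complex $\wedge^\bullet (\mathfrak{u}^{\lambda,+}_e)^\ast\otimes(-)$ computing $R(-)^{U^{\lambda,+}_e}$ can only shift $\lambda$-weights downwards, so the resulting complex of sheaves still has only strictly negative $\lambda$-weights. Since $\lambda(G_m)\subset G^\lambda_e$, every $G^\lambda_e$-invariant section is in particular $\lambda$-invariant, i.e.\ of $\lambda$-weight $0$, and so the $G^\lambda_e$-invariants of such a complex vanish identically. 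The main obstacle I anticipate is the careful handling of the derived induction-restriction adjunction and of the pushforward $\pi_{s\ast}$ across the non-reductive subgroup $G^{\lambda,+}_e$ — in particular, identifying $\RInd^G_{G^{\lambda,+}_e}$ with the sheaf-theoretic right adjoint to restriction on the relevant stacks and justifying that the $\lambda$-weight filtrations survive at the derived level; once this formalism is pinned down, the weight vanishing mechanism itself is very robust.
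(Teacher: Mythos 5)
Your proposal is correct and rests on the same underlying mechanism as the paper's proof --- the $G/G_e$-invariance of $\Lscr$, the strict inequality $\langle\lambda,\mu\rangle>\langle\lambda,\chi\rangle$ for $\mu\in\Lscr$, and the fact that $k[X^{\lambda,+}]$ carries only non-positive $\lambda$-weights --- but it realizes that mechanism on the opposite side of the $\Res\dashv\RInd$ adjunction. After reducing to $G_e$ and to simples, the paper keeps the induction in the \emph{second} argument, collapses the composite inductions to $\RInd^{G_e}_B(\chi\otimes_k k[X^{\lambda,+}])$, and quotes a Bott-type computation from \cite[Lemma 11.2.1]{SVdB} to see that its cohomology is built from $V_{G_e}(\mu')$ with $\langle\lambda,\mu'\rangle\le\langle\lambda,\chi\rangle$; reductivity of $G_e$ then makes $\Hom_{G_e}(V_{G_e}(\mu),-)$ exact, so the $\Hom$ vanishes. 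You instead transpose fully via $\Res^G_{G^{\lambda,+}_e}$, filter the restricted source by $\lambda$-weight into $G^\lambda_e$-simples of weight strictly greater than $\langle\lambda,\chi\rangle$, and kill the resulting $\RHom_{G^{\lambda,+}_e}$ by a direct Lie-algebra-cohomology weight count: $\wedge^\bullet(\Lie U^{\lambda,+}_e)^\ast$ has non-positive $\lambda$-weights, so the Chevalley--Eilenberg complex keeps all $\lambda$-weights strictly negative, and the reductive invariants $(-)^{G^\lambda_e}$ then vanish because $\lambda(G_m)\subset G^\lambda_e$. Both routes are valid; yours is somewhat more self-contained (it does not invoke the loc.\ cit.\ Bott-type lemma), while the paper's is shorter because it reuses machinery already set up there. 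The one formal gap in your write-up is that you should reduce to $X$ affine at the outset, as the paper does: otherwise $\pi_{s\ast}\uRHom_{X/G^{\lambda,+}_e}$ is not literally defined (there is no good quotient by the non-reductive $G^{\lambda,+}_e$), and the Chevalley--Eilenberg computation of $\RHom_{G^{\lambda,+}_e}$ implicitly replaces sheaves by modules. Your closing remark shows you are already aware of this point; making the affine reduction explicit suffices to close it.
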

\begin{proof}
  By using an affine covering of $X\quot G$ we may assume that $X$ is
  affine.  By adjointness we have
\begin{multline}
\label{ref-2.4-10}
\RHom_{X/G}(P_{G,\Lscr},\RInd^{G}_{G^{\lambda,+}_e}
(V_{G_e^\lambda}(\chi) \otimes_k j_\ast\Oscr_{X^{\lambda,+}}))\\=
\bigoplus_{\mu\in \Lscr}\Hom_{G_e}(\Res^G_{G_e} \Ind^G_{G_e} V_{G_e}(\mu),
\RInd^{G_e}_{G^{\lambda,+}_e}(V_{G^\lambda_e}(\chi)\otimes_k k[X^{\lambda,+}])).
\end{multline}
By  the $G/G_e$-invariance of $\Lscr$, the simple summands of $(\Res^G_{G_e} \Ind^G_{G_e} V_{G_e}(\mu))_{\mu\in\Lscr}$
are precisely the $(V_{G_e}(\mu))_{\mu\in\Lscr}$ (see (\ref{eq:restriction}, \ref{eq:restriction1})). In other words it suffices to prove that for every $\mu$ such that
$\langle \lambda,\mu\rangle>\langle \lambda,\chi\rangle$
one has
\[
\Hom_{G_e}( V_{G_e}(\mu),
\RInd^{G_e}_{G^{\lambda,+}_e}(V_{G^\lambda_e}(\chi)\otimes_k k[X^{\lambda,+}]))=0
\]
Note
\begin{align}\label{BQ}
\RInd^{G_e}_{G^{\lambda,+}_e}(V_{G^\lambda_e}(\chi)\otimes_k k[X^{\lambda,+}]))
&=\RInd^{G_e}_{G^{\lambda,+}_e}(\RInd_{B}^{G_e^{\lambda,+}}(\chi\otimes_k k[X^{\lambda,+}]))\\\nonumber
&=\RInd^{G_e}_{G^{\lambda,+}_e}\RInd_{B}^{G_e^{\lambda,+}}(\chi\otimes_k k[X^{\lambda,+}])\\\nonumber
&=\RInd^{G_e}_{B}(\chi\otimes_k k[X^{\lambda,+}]).
\end{align}
Using the fact that the weights $\mu$ of  $k[X^{\lambda,+}]$ all satisfy $\langle \lambda,\mu\rangle \le 0$ (see
\S\ref{ref-1.2-0}) we conclude as in the proof of \cite[Lemma 11.2.1]{SVdB} that the cohomology of $\RInd^{G_e}_{B}(\chi\otimes_k k[X^{\lambda,+}])$ are direct sums of $V_{G_e}(\mu)$ with $\langle \lambda,\mu\rangle \le \langle \lambda,\chi\rangle$.
This finishes the proof.
\end{proof}

\subsection{Reduction settings and { $\boldmath \uRHom$}}
\label{sec:mainred}
The following technical result will be our main application of reduction settings.
\begin{propositions}  \label{Di}
Assume that we have a reduction setting $(G,B,T,X,\Lscr,\chi,\lambda)$
and assume $\chi'\in X(T)^+$ is such that
 $\la\lambda,\chi\ra=\la\lambda,\chi'\ra$ and
  $\la\lambda,\bar{g}(\chi')\ra>\la\lambda,\chi\ra$ for all $\bar{g}\not\in (G/G_e)^\lambda$.
Let $i_\lambda:X^\lambda\quot\open^\lambda \r X\quot G$ (see \S\ref{sec:definition} for notation)
be induced from $X^\lambda\r X$. It is clear
that $i_\lambda$ is affine so $i_{\lambda\ast}$ is exact.
Let $\pi_{s,\lambda}$ be the canonical map $X^\lambda/\open^\lambda\r
X^\lambda\quot \open^\lambda$.

We have  isomorphisms
\begin{multline}
\label{eq:rhom}
\pi_{s\ast}\uRHom_{X/G}(\RInd^{G}_{G^{\lambda,+}_e} (V_{G_e^\lambda}(\chi) \otimes_k \Oscr_{X^{\lambda,+}}),
\RInd^{G}_{G^{\lambda,+}_e} (V_{G_e^\lambda}(\chi') \otimes_k \Oscr_{X^{\lambda,+}}))\\
\cong
i_{\lambda\ast}\pi_{s,\lambda\ast}\uRHom_{X^\lambda/\open^\lambda}
(P_{H^\lambda,X^\lambda,\chi},P_{H^\lambda,X^\lambda,\chi'})
\end{multline}
Moreover such isomorphisms are compatible with composition when applicable.
\end{propositions}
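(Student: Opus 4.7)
The plan is to pass from $X$ to $X^{\lambda,+}$ by an induction/restriction adjunction and then descend to $X^\lambda$ by combining a Mackey-type decomposition with a weight-count argument in the spirit of Lemma \ref{ref-2.2-9}.

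First I would reduce to an affine base: since both $\pi$ and $i_\lambda$ are affine, it suffices to verify \eqref{eq:rhom} over an affine open of $X\quot G$, so I may assume $X$ is affine. The $(\RInd^G_{G^{\lambda,+}_e},\Res^G_{G^{\lambda,+}_e})$-adjunction then identifies the LHS of \eqref{eq:rhom} with
\[
\RHom_{X^{\lambda,+}/G^{\lambda,+}_e}\bigl(V_{G^\lambda_e}(\chi)\otimes\Oscr_{X^{\lambda,+}},\,\Res^G_{G^{\lambda,+}_e}\RInd^G_{G^{\lambda,+}_e}(V_{G^\lambda_e}(\chi')\otimes\Oscr_{X^{\lambda,+}})\bigr).
\]

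Next I would apply the Mackey-type decomposition \eqref{eq:restriction}--\eqref{eq:restriction1}: $\Res^G_{G_e}V_G(\chi')\cong\bigoplus_{\bar g\in G/G_e}V_{G_e}(\bar g(\chi'))$. This expresses the preceding $\RHom$ as a direct sum indexed by $G/G_e$. For $\bar g\notin (G/G_e)^\lambda$ the hypothesis gives $\la\lambda,\bar g(\chi')\ra>\la\lambda,\chi\ra$; the weight-count from Lemma \ref{ref-2.2-9} (only $\la\lambda,-\ra\le 0$ weights occur on $k[X^{\lambda,+}]$) then kills the corresponding summand. The surviving summands are parametrised by $(G/G_e)^\lambda=\open^\lambda/G^\lambda_e$.

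For these surviving summands I would descend from $X^{\lambda,+}$ to $X^\lambda$. Since $\lambda$ is central in $G^\lambda_e$, every weight of $V_{G^\lambda_e}(\chi)$ and $V_{G^\lambda_e}(\bar g(\chi'))$ satisfies $\la\lambda,-\ra=\la\lambda,\chi\ra$; a $T$-weight matching then forces the $k[X^{\lambda,+}]$-factor into $\lambda$-weight zero, which by \eqref{eq:part0} is $k[X^\lambda]$. Reassembling the $\open^\lambda/G^\lambda_e$-indexed summands via the action of $G/G_e$ from \S\ref{sec:definition} produces an $\open^\lambda$-equivariant object isomorphic to $\RHom_{X^\lambda/\open^\lambda}(P_{\open^\lambda,X^\lambda,\chi},P_{\open^\lambda,X^\lambda,\chi'})$, which after sheafifying is the RHS of \eqref{eq:rhom}. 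Compatibility with composition is automatic since every isomorphism used is natural in both arguments. I expect the main difficulty to be this final descent: assembling the coset-indexed summands into a genuine $\open^\lambda$-equivariant object on $X^\lambda$ compatibly with the $G/G_e$-action on weights requires careful bookkeeping, especially in mediating between the unipotent radical of $G^{\lambda,+}_e$ and the fixed locus.
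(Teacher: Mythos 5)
There is a genuine gap at the very first step. You try to peel off $\RInd^G_{G^{\lambda,+}_e}$ from the \emph{first} argument of $\uRHom$ via what you call the $(\RInd,\Res)$-adjunction, as if $\RInd^G_{G^{\lambda,+}_e}$ were a \emph{left} adjoint of restriction. But $G^{\lambda,+}_e$ is a parabolic subgroup of $G_e$, so $G_e/G^{\lambda,+}_e$ is a (projective) flag variety and $\RInd^G_{G^{\lambda,+}_e}$ is the right derived functor of a \emph{right} adjoint; it can only be removed from the \emph{second} argument of $\RHom$, not the first. There is no adjunction identity of the form
\[
\RHom_G\bigl(\RInd^G_{G^{\lambda,+}_e} A,\; B\bigr)\cong \RHom_{G^{\lambda,+}_e}\bigl(A,\;\Res^G_{G^{\lambda,+}_e} B\bigr)
\]
in this setting, so your displayed identification of the LHS is false as stated.

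What is actually missing is the role of the \emph{reduction setting} hypothesis, which you never invoke. The paper's proof first uses \eqref{ref-2.3-8} (together with the vanishing \eqref{ref-2.1-6} applied with $\chi$ replaced by $\chi'$) to replace the first argument $\RInd^G_{G^{\lambda,+}_e}(V_{G_e^\lambda}(\chi)\otimes_k\Oscr_{X^{\lambda,+}})$ by $P_{G,\chi}$: the cone of $P_{G,\chi}\to\RInd^G_{G^{\lambda,+}_e}(V_{G_e^\lambda}(\chi)\otimes\Oscr_{X^{\lambda,+}})$ lies in the subcategory generated by $P_{G,\Lscr}$, which is killed by $\uRHom(-,\RInd^G_{G^{\lambda,+}_e}(V_{G_e^\lambda}(\chi')\otimes\Oscr_{X^{\lambda,+}}))$. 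Only after this replacement is the first argument of the form $\Ind^G_{G_e}V_{G_e}(\chi)\otimes_k k[X]$, and since $G/G_e$ is finite the functor $\Ind^G_{G_e}$ \emph{is} a two-sided (hence left) adjoint, so it can be peeled off; the $\RInd^G_{G^{\lambda,+}_e}$ in the second argument is peeled off as a right adjoint. Your subsequent Mackey decomposition, the weight-count via $\langle\lambda,-\rangle\le 0$ on $k[X^{\lambda,+}]$, the use of Lemma \ref{lem:recall}, and the descent from $X^{\lambda,+}$ to $X^\lambda$ via \eqref{eq:part0} all match the paper's argument and would go through once the first argument has been correctly replaced; but without that replacement — which is precisely where the axioms of a reduction setting enter — the proof does not get off the ground.
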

\begin{proof} The right-hand side of \eqref{eq:rhom} only has cohomology in degree zero. Hence it is sufficient to construct an isomorphism like \eqref{eq:rhom} in the affine case, in a way which is compatible with
restriction. So we now assume $X$ is affine. Using \eqref{ref-2.1-6} (applied with $\chi$ replaced by $\chi'$) and \eqref{ref-2.3-8} we may replace the first argument
to $\uRHom_{X/G}(-,-)$ in \eqref{eq:rhom} by $P_{G,\chi}$ and as $P_{G,\chi}\cong \Ind_{G_e}^GV_{G_e}(\chi)\otimes_k k[X]$, by adjointness we have to
construct an isomorphism
\begin{multline}\label{firstdisplay}
\RHom_G(\Ind^G_{G_e}V_{G_e}(\chi),
\RInd^{G}_{G^{\lambda,+}_e} (V_{G_e^\lambda}(\chi') \otimes_k k[X^{\lambda,+}]))\\
\cong \Hom_{\open^\lambda}(\Ind_{G_e^{\lambda}}^{\open^\lambda}V_{G_e^\lambda}(\chi),\Ind_{G_e^{\lambda}}^{\open^\lambda}V_{G_e^\lambda}(\chi')\otimes_k k[X^\lambda]).
\end{multline}
We do this next. We have
\begin{multline}\label{7.1.e1}
\RHom_G(\Ind^G_{G_e}V_{G_e}(\chi),
\RInd^{G}_{G^{\lambda,+}_e} (V_{G_e^\lambda}(\chi') \otimes_k k[X^{\lambda,+}]))\\
\cong
\Hom_{G_e^{\lambda,+}}(\Res_{G_e^{\lambda,+}}^{G_e} \Res^G_{G_e}\Ind_{G_e}^{G}V_{G_e}(\chi),
V_{G_e^\lambda}(\chi')\otimes_k k[X^{\lambda,+}])
\\
\overset{\text{\eqref{eq:restriction}}}{\cong}
\Hom_{G_e^{\lambda,+}}(\Res_{G_e^{\lambda,+}}^{G_e} \bigoplus_{\bar{g}\in G/G_e}
{}_{\sigma_{\bar{g}}} V_{G_e}(\chi),V_{G_e^\lambda}(\chi')\otimes_k k[X^{\lambda,+}])
\\\cong
\bigoplus_{\bar g\in \faktor{\tilde{G}^\lambda}{G_e}}\Hom_{G_e^{\lambda,+}}(V_{G_e}(\chi\circ \sigma_{\bar{g}}),V_{G_e^\lambda}(\chi')\otimes_k k[X^{\lambda,+}]),
\end{multline}
where the last isomorphism follows by \eqref{eq:restriction1}, the assumption
$\la\lambda,\bar{g}(\chi)\ra>\la\lambda,\chi'\ra$ for all $\bar{g}\not\in (G/G_e)^\lambda=
\tilde{G}^{\lambda}/G_e$ and the fact that all the weights $\mu$ of $k[X^{\lambda,+}]$ satisfy $\langle \lambda,\mu\rangle\le 0$ by \S\ref{ref-1.2-0}.

Assume $\bar{g}\in (G/G_e)^\lambda=\tilde{G}^{\lambda}/G_e$. By Lemma \ref{lem:recall}, as a $G_e^{\lambda}$-representation
$V_{G_e}(\chi\circ \sigma_{\bar{g}})$ is a direct sum of
$V_{G_e^\lambda}(\chi\circ \sigma_{\bar{g}})$ and representations of the form
$V_{G_e^{\lambda }}(\mu)$ with $\langle \lambda,\mu\rangle > \langle
\lambda,\chi\circ \sigma_{\bar{g}}\rangle=\langle
\lambda,\chi\rangle$. For similar reasons as above such $V_{G^\lambda_e}(\mu)$ cannot contribute to the right-hand side of \eqref{7.1.e1} so that \eqref{7.1.e1} is isomorphic to
\begin{multline}
\label{eq:step2}
\bigoplus_{\bar g\in \faktor{\tilde{G}^\lambda}{G_e}}\Hom_{G_e^{\lambda,+}}(V_{G^\lambda_e}(\chi\circ \sigma_{\bar{g}}),V_{G_e^\lambda}(\chi')\otimes_k k[X^{\lambda,+}])\\
\cong
\bigoplus_{\bar g\in \faktor{\tilde{G}^\lambda}{G_e}}\Hom_{G_e^{\lambda}}(V_{G^\lambda_e}(\chi\circ \sigma_{\bar{g}}),V_{G_e^\lambda}(\chi')\otimes_k k[X^{\lambda}])
\end{multline}
where the second isomorphism follows again by considering $\lambda$-weights and \eqref{eq:part0}. To finish the proof we recall that by definition $\tilde{G}^\lambda/G_e=\open^\lambda/G^\lambda_e$ and
by \eqref{eq:restriction}
\[
\bigoplus_{\bar g\in \open^\lambda/G^\lambda_e} V_{G^\lambda_e}(\chi\circ \sigma_{\bar{g}})
=\Res^{\open^\lambda}_{G^\lambda_e}\Ind^{\open^\lambda}_{G^\lambda_e} V_{G_e^\lambda}(\chi).
\]
It now suffice to apply the adjunction $(\Res^{\open^\lambda}_{G^\lambda_e},\Ind^{\open^\lambda}_{G^\lambda_e})$
to the right-hand side of \eqref{eq:step2} to obtain \eqref{firstdisplay}.

The compatibility with composition is a straightforward but tedious verification.
\end{proof}
\subsection{Reduction to unit components}
We have the following convenient fact.
\begin{propositions}\label{ref-prop-3.4}
Assume that $G$ is a reductive group acting on a smooth variety~$X$.
 If
$(G_e,B,T,X,\Lscr,\chi,\lambda)$ is a reduction setting then so is $(G,B,T,X,\Lscr,\chi,\lambda)$.
\end{propositions}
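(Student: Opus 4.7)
Conditions (1)–(6) of Definition~\ref{ref-2.1-2} and the vanishing \eqref{ref-2.1-6} are inherited immediately from the $G_e$-tuple: the data $T\subset B$, $\lambda$, $\chi$, $X$ are unchanged, the $G/G_e$-invariance of $\Lscr$ is the same hypothesis, and \eqref{ref-2.1-6} is automatic by Lemma~\ref{ref-2.2-9}. Only the isomorphism in condition~(7) requires argument.

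The plan is to verify it after applying the restriction-of-equivariance functor
\[
\Res^G_{G_e}\colon D_{\Qch}(X/G)\longrightarrow D_{\Qch}(X/G_e),
\]
which is exact and conservative (since $G/G_e$ is finite and $\charact k=0$; equivalently, $X/G_e\to X/G$ is a finite étale cover of stacks). Using \eqref{eq:restriction} and \eqref{eq:restriction1} one has
\[
\Res^G_{G_e}P_{G,\chi}=\bigoplus_{\bar g\in G/G_e}P_{G_e,\bar g(\chi)},\qquad \Res^G_{G_e}P_{G,\Lscr}=\bigoplus_{\bar g\in G/G_e}P_{G_e,\Lscr},
\]
the latter by the $G/G_e$-invariance of $\Lscr$. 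On the right-hand side of \eqref{ref-2.3-8}, the factorization $\RInd^G_{G^{\lambda,+}_e}=\Ind^G_{G_e}\RInd^{G_e}_{G^{\lambda,+}_e}$ combined with Mackey's formula yields an analogous direct-sum decomposition. A parallel but more delicate computation — combining adjunction with Mackey at the level of the derived tensor product over $\pi_{s\ast}\uEnd_{X/G}(P_{G,\Lscr})$, which upon $\Res^G_{G_e}$ becomes Morita equivalent to $\pi_{s\ast}\uEnd_{X/G_e}(P_{G_e,\Lscr})$ — identifies $\Res^G_{G_e}$ of the left-hand side of \eqref{ref-2.3-8} as a direct sum, indexed by $\bar g\in G/G_e$, of the left-hand sides of \eqref{ref-2.3-8} for the $G_e$-tuples $(G_e,B,T,X,\Lscr,\bar g(\chi),\bar g(\lambda))$.

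To conclude, observe that each twisted tuple $(G_e,B,T,X,\Lscr,\bar g(\chi),\bar g(\lambda))$ is again a reduction setting: the automorphism $\sigma_{\bar g}\in\Aut(G_e)$ preserves $(T,B)$ and fixes $\Lscr$ setwise (by the $G/G_e$-invariance), hence provides an equivalence with the given $(G_e,B,T,X,\Lscr,\chi,\lambda)$ setting. Thus each summand after $\Res^G_{G_e}$ is an isomorphism by the $G_e$-hypothesis, and by conservativity so is the original map \eqref{ref-2.3-8} for $G$. The main technical obstacle is the bookkeeping required to identify the derived tensor product on the left-hand side of \eqref{ref-2.3-8} with the corresponding direct sum of $G_e$-analogues; once this Mackey/adjunction identification is set up, the rest is formal.
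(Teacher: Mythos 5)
Your strategy---restrict along $\Res^G_{G_e}$ and use conservativity---is a reasonable starting point, but the step you flag as ``a parallel but more delicate computation'' is exactly where the mathematical content lives, and it is left undone. The left-hand side of \eqref{ref-2.3-8} is a derived tensor product over the sheaf of algebras $\pi_{s\ast}\uEnd_{X/G}(P_{G,\Lscr})$. That algebra is a crossed product of $\pi_{s\ast}\uEnd_{X/G_e}(P_{G_e,\Lscr})$ with $G/G_e$ --- not something that ``becomes Morita equivalent'' to the $G_e$-algebra after restriction (the phrase itself is not well formed: $\Res^G_{G_e}$ acts on modules, not on the base algebra). Identifying the derived tensor product over the $G$-algebra with a $G/G_e$-indexed direct sum of derived tensor products over the $G_e$-algebra, compatibly with the Mackey decomposition of $\Res^G_{G_e}\RInd^G_{G_e^{\lambda,+}}(-)$, is real work and not mere bookkeeping; as written, your proof has a genuine gap at its central step.

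The paper's argument is in the opposite direction and avoids the crossed-product issue entirely. After reducing to $X$ affine, it invokes Lemma~\ref{lem:criterion}, which replaces the isomorphism condition \eqref{ref-2.3-8} by the containment $P^\bullet\in\langle\Pscr_{G,\Lscr}\rangle$ (with $P^\bullet$ the cone from \eqref{eq:2.3-8}); it then applies $\Ind^G_{G_e}$. Since $\Ind^G_{G_e}P_{G_e,\mu}\cong P_{G,\mu}$, $\Ind^G_{G_e}$ is exact and coproduct-preserving, and $\Ind^G_{G_e}\circ\RInd^{G_e}_{G_e^{\lambda,+}}=\RInd^G_{G_e^{\lambda,+}}$ by transitivity, one gets $\Ind^G_{G_e}(P^\bullet_{G_e})\cong P^\bullet_G$ and $\Ind^G_{G_e}\langle\Pscr_{G_e,\Lscr}\rangle\subset\langle\Pscr_{G,\Lscr}\rangle$, so the $G_e$-hypothesis implies the $G$-conclusion in one line, with no Mackey decomposition, no conservativity, and no discussion of endomorphism algebras. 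If you insist on the restriction route, the clean way is still to go through Lemma~\ref{lem:criterion}: apply $\Res^G_{G_e}$ to the containment, observe that $\Res^G_{G_e}P^\bullet_G$ splits as a sum of twisted $P^\bullet_{G_e}$'s and that $\Res^G_{G_e}P_{G,\Lscr}$ is a sum of copies of $P_{G_e,\Lscr}$, and then use that $M$ is a retract of $\Ind^G_{G_e}\Res^G_{G_e}M$ in characteristic~$0$ to descend the containment back to $G$. Either way the criterion lemma is the key tool you are missing.
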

\begin{proof}
We only have to verify that \eqref{ref-2.3-8} is an isomorphism and to do so we may assume that $X$ is affine.
We use Lemma \ref{lem:criterion} below.
Assume that \eqref{eq:2.3-8} holds for $G=G_e$. Then applying the  functor $\Ind^G_{G_e}$
yields that it holds for $G$.
\end{proof}
\begin{lemmas}
\label{lem:criterion}
Assume that $X$ is affine and assume that conditions (\ref{ref-1-3}-\ref{ref-5-4-2}) in Definition \ref{ref-2.1-2} hold. Let $\langle \Pscr_{G,\Lscr}\rangle$ be the smallest cocomplete subcategory of $D_{\Qch}(X/G)$
containing $P_{G,\Lscr}$.
Then
\eqref{ref-2.3-8} is an isomorphism if and only if
\begin{equation}
\label{eq:2.3-8}
P^\bullet:=\cone(P_{G,\chi}\r \RInd^{G}_{G^{\lambda,+}_e}(V_{G_e^\lambda}(\chi) \otimes_k j_\ast\Oscr_{X^{\lambda,+}}))[-1]\in \langle \Pscr_{G,\Lscr}\rangle .
\end{equation}
Moreover in that case $P^\bullet\cong \Hom_{X/G}(P_{G,\Lscr},P_{G,\chi})\Lotimes_{\End_{X/G}(P_{G,\Lscr})} P_{G,\Lscr}$. In particular\footnote{Replacing $\Hom_{X/G}(P_{G,\Lscr},P_{G,\chi})$ with its projective
resolution over $\End_{X/G}(P_{G,\Lscr})$.}~$P^\bullet$ may be represented by a complex in degrees $\le 0$ whose entries are direct sums of $P_{\mu}$, $\mu\in \Lscr$.
\end{lemmas}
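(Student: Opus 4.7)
The plan is to reformulate the condition in terms of the unit of adjunction
$$\eta_F : \RHom_{X/G}(P_{G,\Lscr},F) \Lotimes_A P_{G,\Lscr} \to F$$
(with $A := \End_{X/G}(P_{G,\Lscr})$) and to match $\eta_{P^\bullet}$ directly with the comparison map \eqref{ref-2.3-8}. Write $Q := \RInd^G_{G^{\lambda,+}_e}(V_{G^\lambda_e}(\chi) \otimes_k j_\ast\Oscr_{X^{\lambda,+}})$, so that by construction there is a distinguished triangle $P^\bullet \to P_{G,\chi} \to Q \to$. Because $X$ is affine, $\RHom_{X/G}(P_{G,\Lscr}, P_{G,\chi}) = \RHom_G(V_G(\Lscr), V_G(\chi)\otimes_k k[X])$, which by complete reducibility of finite-dimensional rational $G$-representations is concentrated in degree zero; call this $M$. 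Combining with the vanishing \eqref{ref-2.1-6} and applying $\RHom_{X/G}(P_{G,\Lscr},-)$ to the triangle yields a natural isomorphism $\RHom_{X/G}(P_{G,\Lscr}, P^\bullet) \cong M$, so that $\eta_{P^\bullet}$ takes the concrete form $M \Lotimes_A P_{G,\Lscr} \to P^\bullet$, and by naturality its composition with $P^\bullet \to P_{G,\chi}$ is the evaluation map appearing in \eqref{ref-2.3-8}.

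I would then set up a morphism of distinguished triangles with bottom row $P^\bullet \to P_{G,\chi} \to Q$, top row $M \Lotimes_A P_{G,\Lscr} \to P_{G,\chi} \to C$ (the latter defining $C$ as in \eqref{ref-2.3-8}), identity middle vertical, $\eta_{P^\bullet}$ as left vertical, and \eqref{ref-2.3-8} as right vertical. The three-lemma for triangulated categories then gives that $\eta_{P^\bullet}$ is an isomorphism if and only if \eqref{ref-2.3-8} is. If \eqref{ref-2.3-8} is an isomorphism, then $P^\bullet \cong M \Lotimes_A P_{G,\Lscr}$ manifestly belongs to $\langle \Pscr_{G,\Lscr}\rangle$ as a homotopy colimit built from copies of $P_{G,\Lscr}$; this simultaneously settles the \emph{moreover} clause. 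Conversely, the full subcategory of $D_{\Qch}(X/G)$ on which $\eta$ is an isomorphism contains $P_{G,\Lscr}$, is triangulated, and is closed under arbitrary direct sums since $P_{G,\Lscr}$ is compact by Theorem \ref{th:st}\eqref{c2}; it therefore contains all of $\langle \Pscr_{G,\Lscr}\rangle$, so $P^\bullet \in \langle \Pscr_{G,\Lscr}\rangle$ forces $\eta_{P^\bullet}$ to be an isomorphism.

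For the \emph{in particular} clause I would use that $A$ is a noetherian ring (being a finitely generated $k[X]^G$-module, with $k[X]^G$ noetherian by Hilbert--Nagata) and that $M$ is a finitely generated right $A$-module concentrated in degree zero; hence it admits a projective resolution $F_\bullet \to M$ in non-positive degrees whose terms are finite direct sums of the indecomposable projective summands $e_\mu A \subset A$ for $\mu \in \Lscr$. Since $e_\mu A \otimes_A P_{G,\Lscr} \cong P_\mu$, the complex $F_\bullet \otimes_A P_{G,\Lscr}$ lies in degrees $\le 0$, its entries are direct sums of $P_\mu$, and it represents $M \Lotimes_A P_{G,\Lscr} \cong P^\bullet$, as required. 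The only nontrivial step in this plan is the triangle-comparison identifying $\eta_{P^\bullet}$ with \eqref{ref-2.3-8}, which is a routine octahedral manipulation once the key vanishing $\RHom_{X/G}(P_{G,\Lscr}, Q) = 0$ has been secured by \eqref{ref-2.1-6}.
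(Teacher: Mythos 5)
Your proposal is correct and follows essentially the same route as the paper: both compare $P^\bullet$ with $\RHom_{X/G}(P_{G,\Lscr},P_{G,\chi})\Lotimes_{\End_{X/G}(P_{G,\Lscr})}P_{G,\Lscr}$ via the evaluation (counit) map, using the vanishing \eqref{ref-2.1-6}, compactness of $P_{G,\Lscr}$, and the fact that $P_{G,\Lscr}$ generates $\langle\Pscr_{G,\Lscr}\rangle$. The only cosmetic difference is that the paper obtains this comparison by invoking Brown representability to realize $\RHom(P_{G,\Lscr},-)\Lotimes P_{G,\Lscr}$ as the right adjoint to the inclusion and then tests the map with $\RHom(P_{G,\Lscr},-)$, whereas you check directly that the counit is an isomorphism on the localizing subcategory generated by $P_{G,\Lscr}$; this is a minor variant of the same argument.
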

\begin{proof}
Since $P_{G,\Lscr}$ is compact by Theorem \ref{th:st}\eqref{c2}
the inclusion functor $\langle \Pscr_{G,\Lscr}\rangle\hookrightarrow D_{\Qch}(X/G)$ has a right adjoint by Brown representability \cite{Neeman}. Moreover one checks that it is explicitly given by
\[
R=\RHom_{X/G}(P_{G,\Lscr},-)\Lotimes_{\End_{X/G}(P_{G,\Lscr})} P_{G,\Lscr}.
\]
So \eqref{ref-2.3-8} is an isomorphism
if and only if we have  a distinguished triangle\footnote{Here we use that $X$ is affine to identify  global and local $\Hom$'s.}
\[
R(P_{G,\chi})\r P_{G,\chi}\r \RInd^{G}_{G^{\lambda,+}_e}(V_{G_e^\lambda}(\chi) \otimes_k j_\ast\Oscr_{X^{\lambda,+}})\r
\]
where the first arrow is the counit.
This in turn is true if and only if
\begin{equation}
\label{eq:iso1}
R(P_{G,\chi})\r \cone(P_{G,\chi}\r \RInd^{G}_{G^{\lambda,+}_e}(V_{G_e^\lambda}(\chi) \otimes_k j_\ast\Oscr_{X^{\lambda,+}}))[-1]\r
\end{equation}
is an isomorphism (and in that case $P^\bullet\cong R(P_{G,\chi})$). Clearly if \eqref{eq:iso1} is an isomorphism then \eqref{eq:2.3-8} holds.
Conversely assume that \eqref{eq:2.3-8} holds.
 Then \eqref{eq:iso1} is a morphism in $\langle P_{G,\Lscr}\rangle$. To test if it is an isomorphism
we may apply $\RHom_{X/G}(P_{G,\Lscr},-)$. But then \eqref{eq:iso1} becomes the identity by \eqref{ref-2.1-6}.
Hence we are done.
\end{proof}
\subsection{Reduction to closed subschemes}\label{subschemes}
We will create reduction settings first in the linear case ($X$ being a representation) and then
we will restrict them to closed subschemes.
To do this will use the following theorem:
\begin{theorems}
\label{ref-3.1-13}
Let $G$ be a  reductive group and let $Y\subset X$ be a  closed embedding
of smooth  $G$-varieties.  If
$(G,B,T,X,\Lscr,\chi,\lambda)$ is a reduction setting then so is
$(G,B,T,Y,\Lscr,\chi,\lambda)$.
\end{theorems}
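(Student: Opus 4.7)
Conditions (\ref{ref-1-3})--(\ref{ref-5-4}) of Definition \ref{ref-2.1-2} involve only the group-theoretic data $G,B,T,\chi,\lambda,\Lscr$ and transfer from $X$ to $Y$ without change. Condition (\ref{ref-6-5}) is immediate: $Y$ is smooth and $G$-equivariant by hypothesis, and since $G$ is reductive the good quotient $Y\quot G$ exists as a closed subscheme of $X\quot G$. The substantive content of the theorem is therefore the verification of condition~(7), i.e.\ that \eqref{ref-2.3-8} is an isomorphism for the data on $Y$.

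The plan is to deploy Lemma \ref{lem:criterion} in both directions. After covering $Y\quot G$ by restrictions of affine opens of $X\quot G$, the question reduces to the affine case, where it suffices to show that
\[
P^\bullet_Y := \cone\bigl(P_{G,Y,\chi}\to F_Y\bigr)[-1], \qquad F_Y := \RInd^G_{G^{\lambda,+}_e}\bigl(V_{G^\lambda_e}(\chi)\otimes_k j'_\ast\Oscr_{Y^{\lambda,+}}\bigr),
\]
lies in $\langle\Pscr_{G,Y,\Lscr}\rangle$, the smallest cocomplete triangulated subcategory of $D_{\Qch}(Y/G)$ containing $P_{G,Y,\Lscr}$. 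In the other direction, Lemma \ref{lem:criterion} applied to the reduction setting on $X$ produces a complex $K^\bullet_X$ in degrees $\le 0$ with entries direct sums of $P_{G,X,\mu}$ for $\mu\in\Lscr$, together with a distinguished triangle $K^\bullet_X\to P_{G,X,\chi}\to F_X\to K^\bullet_X[1]$ where $F_X := \RInd^G_{G^{\lambda,+}_e}(V_{G^\lambda_e}(\chi)\otimes_k j_\ast\Oscr_{X^{\lambda,+}})$.

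Applying the derived restriction $Li^\ast = -\Lotimes_{\Oscr_X}\Oscr_Y$ along $i:Y\hookrightarrow X$ then yields, since every $P_{G,X,\mu}$ is locally free, a triangle
\[
K^\bullet_Y \to P_{G,Y,\chi} \to Li^\ast F_X \to K^\bullet_Y[1]
\]
in $D_{\Qch}(Y/G)$, where $K^\bullet_Y := K^\bullet_X\otimes_{\Oscr_X}\Oscr_Y$ has entries direct sums of $P_{G,Y,\mu}$ and therefore lies in $\langle\Pscr_{G,Y,\Lscr}\rangle$. Using the derived base change $Li^\ast F_X \cong \RInd^G_{G^{\lambda,+}_e}(V_{G^\lambda_e}(\chi)\otimes_k Li^\ast j_\ast\Oscr_{X^{\lambda,+}})$, I combine this with the canonical morphism $Li^\ast F_X\to F_Y$ induced by the truncation $Li^\ast j_\ast\Oscr_{X^{\lambda,+}}\to H^0(Li^\ast j_\ast\Oscr_{X^{\lambda,+}})=j'_\ast\Oscr_{Y^{\lambda,+}}$; an octahedron argument then reduces the theorem to showing that $C := \cone(Li^\ast F_X\to F_Y)[-1]$ lies in $\langle\Pscr_{G,Y,\Lscr}\rangle$.

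The main obstacle is exactly this last step: the scheme-theoretic identity $Y^{\lambda,+} = Y\cap X^{\lambda,+}$ does not in general lift to a derived equality, so $Li^\ast j_\ast\Oscr_{X^{\lambda,+}}$ may carry nonzero higher Tors, and the object $C \cong \RInd^G_{G^{\lambda,+}_e}(V_{G^\lambda_e}(\chi)\otimes_k\tau^{<0}Li^\ast j_\ast\Oscr_{X^{\lambda,+}})$ must still be shown to sit in $\langle\Pscr_{G,Y,\Lscr}\rangle$. The key observation is that the $\lambda$-grading forces these higher Tors to concentrate in $\lambda$-weights strictly greater than those already present in $j'_\ast\Oscr_{Y^{\lambda,+}}$; after tensoring with $V_{G^\lambda_e}(\chi)$ and inducing along $G^{\lambda,+}_e\subset G$, only $G$-weights $\mu$ with $\la\lambda,\mu\ra > \la\lambda,\chi\ra$ appear. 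Condition (\ref{ref-5-4}), together with a computation in the spirit of Lemma \ref{ref-2.2-9}, should then allow one to build $C$ as a cocomplete combination of the $P_{G,Y,\mu}$ for $\mu\in\Lscr$. Controlling precisely how $\RInd^G_{G^{\lambda,+}_e}$ interacts with the graded higher Tor sheaves is the most delicate point of the argument.
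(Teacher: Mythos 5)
You correctly isolate condition (7) as the substantive content of the theorem, and your octahedron correctly reduces it to the statement that $C := \cone(Li^*F_X \to F_Y)[-1]$ lies in the cocomplete subcategory $\langle\Pscr_{G,Y,\Lscr}\rangle$. But the step you flag as delicate is in fact a genuine gap, on two grounds. First, the asserted direction of the weight shift is not correct: locally the Koszul resolution of $\Oscr_Y$ over $\Oscr_X$ contributes a twist by $\wedge^i N^\vee_{Y/X}$, and after tensoring with $j_*\Oscr_{X^{\lambda,+}}$ (whose weights are $\le 0$) the resulting Tor sheaves acquire $\lambda$-weights on \emph{both} sides of zero; already for $Y=X^{\lambda,+}$ one finds $\Tor_i\cong\wedge^i N^\vee_{Y/X}$, whose $\lambda$-weights range over $(-\infty,i]$. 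Second, and more fundamentally, even a correct weight estimate on the cohomologies of $C$, combined with a vanishing argument in the spirit of Lemma \ref{ref-2.2-9}, would only control $\RHom_{X/G}(P_{G,\Lscr},C)$, whereas what Lemma \ref{lem:criterion} requires is that $C$ be \emph{built} out of the objects $P_{G,Y,\mu}$, $\mu\in\Lscr$. These are very different assertions; in fact since $C\neq 0$ in general (the Tors above are nonzero), a vanishing $\RHom(P_{G,\Lscr},C)=0$ would \emph{exclude} $C$ from $\langle\Pscr_{G,Y,\Lscr}\rangle$ rather than place it there. Your sketch supplies no mechanism for producing the needed cell structure on $C$.

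The paper avoids the issue entirely by replacing the non-exact functor $Li^*$ with the exact functor $(-)^{G_m}$ for an auxiliary one-parameter group. Lemma \ref{ref-3.2-14} treats the model case in which there is a $G_m$-action commuting with $G$ such that $k[X]$ has non-negative weights and $k[Y]=k[X]^{G_m}$: the $G_m$-equivariant lift of the resolution $P^\bullet_X$ has terms $P_{G,X,\mu}\otimes_k\sigma_n$ with $n\ge 0$, and passing to $G_m$-invariants kills exactly the terms with $n>0$, yielding a resolution over $Y$ with no Tor correction because $(-)^{G_m}$ is exact on rational $G_m$-modules. The general case is then reduced to this model by the Luna slice theorem: Lemma \ref{ref-3.3-16} localizes the vanishing of the cone of \eqref{ref-2.3-8} to a neighbourhood of a closed orbit, Lemma \ref{ref-3.4-17} transports it along strongly \'etale morphisms, and the slice theorem replaces $Y\subset X$ by $G\times^{G_y}N_Y\subset G\times^{G_y}(N_Y\oplus V)$, which carries the requisite $G_m$-action (trivial on $N_Y$, weight $-1$ on $V$). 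This is in effect a deformation to the normal cone, sidestepping precisely the Tor obstruction your approach meets head-on.
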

To prove this we may assume that $X$ is affine. We first discuss a special case. 
\begin{lemmas}
\label{ref-3.2-14} Assume that $(G,X,Y)$ are as in the statement of Theorem \ref{ref-3.1-13} but
with $X$ affine. Assume in addition
that there is a $G_m$-action on $X$ in a way which commutes with the $G$-action such that
$k[X]$ has only weights $\ge 0$  and\footnote{If the $G_m$-action is denoted by $\gamma$ then this condition may also be written as
$Y=X^\gamma$, $X=X^{\gamma,-}$.} $k[Y]=k[X]^{G_m}$. Then the conclusion of Theorem \ref{ref-3.1-13} holds.
\end{lemmas}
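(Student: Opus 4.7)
The plan is to reduce everything to the criterion of Lemma~\ref{lem:criterion} and then transport the complex representation across the $G_m$-invariants functor. Conditions (1)--(5) of Definition~\ref{ref-2.1-2} refer only to $G,B,T,\chi,\lambda,\Lscr$ and are unaffected by the change from $X$ to $Y$. For condition (6), smoothness of $Y=X^{G_m}$ follows from smoothness of $X$ and reductivity of $G_m$, and the good quotient $Y\quot G$ exists since $Y$ is affine. So the entire content is condition (7), and by Lemma~\ref{lem:criterion} applied to $Y$ it is enough to show that
\[
P^\bullet_Y:=\cone\bigl(P_{G,Y,\chi}\r \RInd^G_{G^{\lambda,+}_e}(V_{G^\lambda_e}(\chi)\otimes_k k[Y^{\lambda,+}])\bigr)[-1]
\]
lies in $\langle\Pscr_{G,Y,\Lscr}\rangle$.

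The key observation is that because the $G_m$-action commutes with $G$ and with $\lambda$, all the ingredients in the analogous triangle for $X$ acquire a natural $G\times G_m$-action, with $G_m$ acting trivially on the various $V_G(\mu)$, $V_{G^\lambda_e}(\chi)$ (they are $G$-representations and $G_m$ commutes with $G$), and acting on $k[X]$ and $k[X^{\lambda,+}]$ through its action on $X$. Since $k[Y]=k[X]^{G_m}$ by hypothesis and $(X^{\lambda,+})^{G_m}=(X^{G_m})^{\lambda,+}=Y^{\lambda,+}$ (the two group actions commute), the weight-zero functor $\Phi:=(-)^{G_m}$ from $G\times G_m$-equivariant quasi-coherent sheaves on $X$ to $G$-equivariant quasi-coherent sheaves on $Y$ is well defined, is exact (since $G_m$ is reductive), and satisfies
\[
\Phi(P_{G,X,\mu})=P_{G,Y,\mu},\qquad \Phi\bigl(\RInd^G_{G^{\lambda,+}_e}(V\otimes_k k[X^{\lambda,+}])\bigr)=\RInd^G_{G^{\lambda,+}_e}(V\otimes_k k[Y^{\lambda,+}]),
\]
the latter because $\RInd$ is computed by a complex in the $G$-variable while $\Phi$ only touches the ``coefficient'' $k[X^{\lambda,+}]$.

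Now apply Lemma~\ref{lem:criterion} to the reduction setting already known for $X$: we obtain a $G\times G_m$-equivariant quasi-isomorphism between $P^\bullet_X$ and a complex $C^\bullet_X$ concentrated in non-positive degrees whose terms are finite direct sums of objects $P_{G,X,\mu}$ with $\mu\in\Lscr$. Such a representative can be chosen $G\times G_m$-equivariantly since all the ingredients in the construction (the $\Hom$-spaces, the $\End$-algebra $\Lambda_X$, and a projective resolution of $\Hom_{X/G}(P_{G,X,\Lscr},P_{G,X,\chi})$ over $\Lambda_X$) inherit canonical $G_m$-actions and $G_m$ is reductive. Applying $\Phi$ term by term yields a complex $C^\bullet_Y$ in non-positive degrees whose terms are finite direct sums of $P_{G,Y,\mu}$, $\mu\in\Lscr$. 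Since $\Phi$ is exact and commutes with the formation of the triangle defining $P^\bullet_X$, we conclude $C^\bullet_Y\simeq P^\bullet_Y$, and therefore $P^\bullet_Y\in\langle\Pscr_{G,Y,\Lscr}\rangle$, as required.

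The main obstacle is the verification that $\Phi$ interacts well with every piece of the Lemma~\ref{lem:criterion} construction: one must check that the canonical $G\times G_m$-equivariance propagates through the chosen projective resolution of $\Hom_{X/G}(P_{G,\Lscr},P_{G,\chi})$ over $\End_{X/G}(P_{G,\Lscr})$, and that $\Phi$ identifies this with the analogous data over $Y$. This reduces to the two clean facts $k[X]^{G_m}=k[Y]$ and $k[X^{\lambda,+}]^{G_m}=k[Y^{\lambda,+}]$, so the verification is essentially a bookkeeping exercise once the $G\times G_m$-equivariant framework is set up.
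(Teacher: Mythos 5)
Your overall strategy coincides with the paper's: promote the whole Lemma~\ref{lem:criterion} construction for $X$ to a $G\times G_m$-equivariant one (using that $G_m$ is reductive), observe that $(X^{\lambda,+})^{G_m}=Y^{\lambda,+}$ and $\bigl(\RInd^{G}_{G^{\lambda,+}_e}(V_{G_e^\lambda}(\chi)\otimes_k -)\bigr)$ commutes with $(-)^{G_m}$, and then take $G_m$-invariants of the resolution to produce the required resolution over $Y$. The reductions at the start (conditions (1)--(6) are inherited, so only (7) is at issue) are correct.

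There is, however, a gap in the step where you pass from $C^\bullet_X$ to $C^\bullet_Y$. A $G\times G_m$-equivariant projective resolution of $\Hom_{X/G}(P_{G,\Lscr},P_{G,\chi})$ over the $G_m$-graded algebra $\End_{X/G}(P_{G,\Lscr})$ will in general have terms of the form $P_{G,X,\mu}\otimes_k\sigma_n$ for various $n\in\ZZ$, where $\sigma_n$ is the $G_m$-character $z\mapsto z^n$; the $G_m$-structure on the summands is \emph{not} automatically trivial. Applying $\Phi=(-)^{G_m}$ to $P_{G,X,\mu}\otimes_k\sigma_n$ yields $V_G(\mu)\otimes_k k[X]_{-n}$, which is $P_{G,Y,\mu}$ precisely when $n=0$, is $0$ when $n>0$, and for $n<0$ is some $k[Y]$-module with no reason to lie in $\langle\Pscr_{G,Y,\Lscr}\rangle$. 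So your conclusion that $C^\bullet_Y$ has terms which are direct sums of $P_{G,Y,\mu}$ requires knowing the twists satisfy $n\ge 0$. This is exactly the point the paper addresses: since $k[X]$ and hence $k[X^{\lambda,+}]$ have only non-negative $G_m$-weights, and applying $\RInd^{G}_{G^{\lambda,+}_e}(V_{G_e^\lambda}(\chi)\otimes_k -)$ preserves this, the (minimal) $G\times G_m$-equivariant resolution can be taken with twists $\sigma_n$, $n\ge 0$. Without this positivity the "bookkeeping exercise" does not actually close, so you should add the verification that the resolution's $G_m$-twists are all non-negative.
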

\begin{proof} Assume that $(G,B,T,X,\Lscr,\chi,\lambda)$ is a reduction setting. If we replace the $\End_{X/G}(P_{G,\Lscr})$-module $\Hom_{X/G}(P_{G,\Lscr},P_{G,\chi})$ in \eqref{ref-2.3-8} by its projective resolution we  obtain  a resolution
\begin{equation}
\label{ref-3.1-15a}
P^\bullet_{X}\cong \cone(P_{G,X,\chi}\r \RInd^{G}_{G^{\lambda,+}_e}
(V_{G_e^\lambda}(\chi) \otimes_k k[X^{\lambda,+}]))[-1]
\end{equation}
as in Lemma \ref{lem:criterion} (we have switched to coordinate ring notation). Moreover we may assume
that this resolution is $G_m$-equivariant. In addition since $k[X^{\lambda,+}]$ is a quotient of $k[X]$ it only
has $G_m$-weights $\ge 0$ and this property is not affected by applying $\RInd^{G}_{G^{\lambda,+}_e}
(V_{G_e^\lambda}(\chi) \otimes_k -)$. We conclude that as $G\times G_m$-equivariant $k[X]$-module $P^n_X$ may be
assumed to be a direct sum of $P_{G,X,\mu}\otimes_k \sigma_n$, $n\ge 0$  where $\sigma_n$ is the $G_m$-character $z\mapsto z^n$ and $\mu\in \Lscr$.
We then have
\[
(P_{G,X,\mu}\otimes_k \sigma_n)^{G_m}=
\begin{cases}
P_{G,Y,\mu}&\text{if $n=0$}\\
0&\text{if $n>0$}
\end{cases}
\]
Taking $G_m$-invariants of \eqref{ref-3.1-15a} we now get a similar resolution
\[
P^\bullet_{Y}\cong\cone( P^\bullet_{G,Y,\chi}\r \RInd^{G}_{G^{\lambda,+}_e}
(V_{G_e^\lambda}(\chi) \otimes_k k[X^{\lambda,+}])^{G_m}[-1].
\]
Furthermore since the $G$ and $G_m$-action do not interfere with each other we have
\[
 \RInd^{G}_{G^{\lambda,+}_e} (V_{G_e^\lambda}(\chi) \otimes_k k[X^{\lambda,+}])^{G_m}= \RInd^{G}_{G^{\lambda,+}_e} (V_{G_e^\lambda}(\chi) \otimes_k k[X^{\lambda,+}]^{G_m})
\]
and finally using the description of $k[X^{\lambda,+}]$ in \S\ref{ref-1.2-0} we easily see that
$k[X^{\lambda,+}]^{G_m}=k[Y^{\lambda,+}]$. We conclude that \eqref{eq:2.3-8} holds for $Y$. This finishes the proof by Lemma \ref{lem:criterion}.
\end{proof}
We will now reduce the proof of Theorem \ref{ref-3.1-13} to the special case considered in Lemma \ref{ref-3.2-14} using the Luna slice
theorem.
\begin{lemmas}
\label{ref-3.3-16} Let $G$ be a linear algebraic group acting on an affine variety
  $X$ and let $P\in D_{\Qch}(X/G)$. Assume that $P$ is zero in the neighborhood
of any point with closed orbit, i.e.\ any $x\in X$ such that $Gx$ is closed
has an open neighborhood $U_x$ such that $P{\mid} U_x=0$. Then $P=0$.
\end{lemmas}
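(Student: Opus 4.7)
The plan is to argue by contradiction. Suppose $P \neq 0$, and let $Z \subseteq X$ denote the complement of the largest open subscheme on which $P$ vanishes; equivalently, since $X$ is noetherian and each cohomology sheaf $H^i(P)$ is quasi-coherent, $Z = \bigcup_{i \in \ZZ} \supp H^i(P)$. Because $P$ lies in $D_{\Qch}(X/G)$ it is $G$-equivariant, so the family of opens on which $P$ restricts to zero is $G$-stable. Consequently $U := X \setminus Z$ is $G$-stable, and $Z$ is a non-empty closed $G$-stable subset of $X$.

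The crux is then to exhibit a closed $G$-orbit inside $Z$, since by hypothesis $P$ would vanish in some neighborhood of such an orbit, contradicting the defining property of $Z$. To produce such an orbit, pick any $x \in Z$; since $Z$ is closed and $G$-stable, the orbit closure $\overline{Gx}$ lies in $Z$. Among all $G$-orbits contained in $\overline{Gx}$ choose one, say $Gx_0$, of minimal dimension. By the standard boundary principle for orbits of a linear algebraic group, $\overline{Gx_0} \setminus Gx_0$ is a (possibly empty) union of $G$-orbits of strictly smaller dimension, so minimality forces $\overline{Gx_0} \setminus Gx_0 = \emptyset$; hence $Gx_0 = \overline{Gx_0}$ is a closed $G$-orbit of $X$ contained in $Z$.

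Applying the hypothesis of the lemma to the point $x_0$ yields an open neighborhood $U_{x_0}$ with $P|_{U_{x_0}} = 0$. By maximality of $U$ we then have $U_{x_0} \subseteq U$, so $x_0 \in U$, contradicting $x_0 \in Gx_0 \subseteq Z$. Therefore $P = 0$.

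I do not expect a serious obstacle: the argument combines two standard facts, namely the locality of the zero object in $D_{\Qch}$ (the ``support'' is well defined on a noetherian scheme) and the existence of a closed orbit inside every non-empty $G$-stable closed subvariety, which is valid for any linear algebraic group and does not require reductivity. Note in particular that the statement and proof do not invoke the existence of a good quotient.
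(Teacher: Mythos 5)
Your proof is correct and follows essentially the same route as the paper's: both reduce to the fact that a non-empty closed $G$-stable subset of an affine $G$-variety contains a closed orbit (namely an orbit of minimal dimension), and then derive a contradiction with the hypothesis. The only cosmetic difference is that you take the canonical largest open on which $P$ vanishes and note it is automatically $G$-stable by equivariance, whereas the paper builds the open as $G\cdot\bigl(\bigcup_x U_x\bigr)$; the two opens coincide and the argument is the same.
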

\begin{proof}
  Put $U'=\bigcup_x U_x$ and $U=GU'$. Then $P{\mid} U=0$ so it is
  sufficient to prove $U=X$. Assume this is not the case. Since $X-U$
  is closed and $G$-invariant it contains a closed orbit (e.g.\ an
  orbit of minimal dimension). This is an obvious contradiction.
\end{proof}

\begin{lemmas} \label{ref-3.4-17} If $(G,B,T,X,\Lscr,\chi,\lambda)$ is
  such that the conditions (\ref{ref-1-3}-\ref{ref-6-5}) from
  Definition \ref{ref-2.1-2} hold, and such that $X$ is affine, then
  write $C_X$ for the cone of \eqref{ref-2.3-8}. If $\alpha:Z\r X$ is
  a strongly \'etale $G$-equivariant morphism
then $\alpha^\ast (C_X)=C_Z$.
\end{lemmas}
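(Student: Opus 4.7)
The plan is to show that $\alpha^\ast$ commutes with every object and every arrow involved in the construction of the map \eqref{ref-2.3-8} for $X$; since $\alpha$ is \'etale, hence flat, $\alpha^\ast$ is exact and in particular commutes with cones, yielding $\alpha^\ast C_X = C_Z$.

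\textbf{Step 1 (trivial objects).} For any $\mu \in X(T)^+$ we have $P_{G,X,\mu}=V_G(\mu)\otimes_k\Oscr_X$, so $\alpha^\ast P_{G,X,\mu} = P_{G,Z,\mu}$, and in particular $\alpha^\ast P_{G,X,\Lscr} = P_{G,Z,\Lscr}$ and $\alpha^\ast P_{G,X,\chi} = P_{G,Z,\chi}$.

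\textbf{Step 2 (the attractor term).} By Lemma \ref{ref-1.2.1-1} applied to the $G_m$-action coming from $\lambda$, $Z^{\lambda,+}=Z\times_X X^{\lambda,+}$. Since $\alpha$ is flat, flat base change gives $\alpha^\ast j_\ast\Oscr_{X^{\lambda,+}} = j'_\ast\Oscr_{Z^{\lambda,+}}$, where $j':Z^{\lambda,+}\hookrightarrow Z$. Since the derived induction $\RInd^G_{G^{\lambda,+}_e}$ of a $G^{\lambda,+}_e$-equivariant sheaf $\Fscr$ on $X$ is computed by tensoring with a $k$-linear resolution of the $G$-equivariant sheaf of rings on the relevant $G$-orbit structure, it is natural in $\Fscr$ under $G$-equivariant flat pullback, so $\alpha^\ast$ commutes with $\RInd^{G}_{G^{\lambda,+}_e}(V_{G_e^\lambda}(\chi)\otimes_k -)$. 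Combined with Step 1 this identifies $\alpha^\ast$ applied to the right-hand side of \eqref{ref-2.2-7} with the corresponding object for $Z$, and the map \eqref{ref-2.2-7} is likewise natural.

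\textbf{Step 3 (strongly \'etale base change for $\uHom$).} The strongly \'etale hypothesis gives $k[Z]=k[X]\otimes_{k[X]^G} k[Z]^G$, so for each pair $\mu\in\Lscr$ and $\chi\in X(T)^+$,
\[
\Hom_G\bigl(V_G(\mu)\otimes_k k[X],V_G(\chi)\otimes_k k[X]\bigr)\otimes_{k[X]^G} k[Z]^G
=\Hom_G\bigl(V_G(\mu)\otimes_k k[Z],V_G(\chi)\otimes_k k[Z]\bigr),
\]
and analogously for $\uEnd$. Since $X$ is affine, $\pi_{s\ast}\uHom_{X/G}(P_{G,\Lscr},P_{G,\chi})$ and $\pi_{s\ast}\uEnd_{X/G}(P_{G,\Lscr})$ are the quasi-coherent sheaves on $X\quot G$ associated to the corresponding $\Hom_G$/$\End_G$ modules, so pulling back along the \'etale map $Z\quot G\to X\quot G$ yields the analogous modules for $Z$.

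\textbf{Step 4 (compatibility with the derived tensor product).} Since $\alpha^\ast$ is exact it preserves $K$-flat resolutions, so it commutes with $\Lotimes$. Combining with Steps 1--3, $\alpha^\ast$ applied to the left-hand term of \eqref{ref-2.3-8} produces the corresponding term for $Z$, and the natural counit map to $P_{G,\chi}$ is compatible with pullback.

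\textbf{Conclusion.} All objects and arrows in \eqref{ref-2.2-7} and \eqref{ref-2.3-8} commute with $\alpha^\ast$ by Steps 1--4; the canonical map \eqref{ref-2.3-8} is natural in $X$ within the class of smooth $G$-varieties equipped with strongly \'etale $G$-morphisms, so $\alpha^\ast$ carries it to the corresponding map for $Z$. Passing to cones yields $\alpha^\ast C_X = C_Z$.

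The main obstacle is verifying the naturality of the canonical map \eqref{ref-2.3-8} itself under the pullback $\alpha^\ast$, which comes down to checking that the exceptional triangle used to produce it (from \eqref{ref-2.2-7} together with the vanishing \eqref{ref-2.1-6}) is preserved; this follows because \eqref{ref-2.1-6} is equally valid on $Z$ by Lemma \ref{ref-2.2-9} and Steps 1--3, and the construction of the map from the octahedral axiom is functorial.
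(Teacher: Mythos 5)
Your proposal is correct, and the approach is essentially the same as the paper's, which compresses the entire argument into a single observation: every object and arrow in \eqref{ref-2.2-7} and \eqref{ref-2.3-8} visibly commutes with the flat pullback $\alpha^\ast$, and the only non-obvious ingredient is the identification $\alpha^\ast\Oscr_{X^{\lambda,+}}=\Oscr_{Z^{\lambda,+}}$, for which both you and the paper invoke Lemma~\ref{ref-1.2.1-1}. Your Steps~1--4 are a fleshed-out version of that one line, so there is no substantive difference in route.
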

\begin{proof} This ultimately boils down to $\alpha^\ast \Oscr_{X^{\lambda,+}}=\Oscr_{Z^{\lambda,+}}$ which is true thanks to Lemma \ref{ref-1.2.1-1}.
\end{proof}
\begin{proof}[Proof of Theorem \ref{ref-3.1-13}]
  We assume that $(G,B,T,X,\Lscr,\chi,\lambda)$ is a reduction setting
and $X$ is affine.  Thus $C_X=0$
  and we have to deduce from it $C_Y=0$.  According to Lemma
  \ref{ref-3.3-16} it suffices to do this in the neighborhood of any
  closed orbit.  So let $Gy$ be a closed orbit in $Y$ and let $N_Y$ be a
  $G_y$-invariant complement to $T_y(Gy)$ in $T_y(Y)$.  Then according to the
  Luna slice theorem \cite{Luna} there is an affine $G_y$-invariant
  ``slice'' $y\in S\subset Y$ to the orbit of $y$ and strongly
  $G_y$-equivariant \'etale morphism $S\r N_Y$ which sends $y$ to $0$
  such that the induced maps
\[
Y\xleftarrow{\alpha} G\times^{G_y} S\xrightarrow{\beta} G\times^{G_y} N_Y
\]
are strongly \'etale. Then by Lemma \ref{ref-3.4-17} we have $\alpha^\ast(C_Y)=\beta^\ast(C_{ G\times^{G_y} N_Y})$.
Thus it is sufficient to prove that $C_{G\times^{G_y} N_Y}=0$.

By assumption $Gy$ is closed in $X$. Let $V$ be a $G_y$-invariant complement to $T_y(Y)$ in $T_y(X)$
and put
$N_X:=N_Y\oplus V$. Since $C_X=0$, by the same reasoning as above we conclude
that $C_{ G\times^{G_y} N_X}$ is zero in a neighborhood of the zero section of $G\times^{G_y} N_X\r G/G_y$. However
note that since $ C_{ G\times^{G_y} N_X}$ is natural, it is in particular equivariant for the scalar $G_m$-action on $N_X$. So in fact $ C_{ G\times^{G_y} N_X}=0$.

Now let $G_m$ act on $N_X=N_Y\oplus V$ by acting trivially on $N_Y$ and with weight $-1$ on~$V$. Then
the inclusion $ G\times^{G_y} N_Y\hookrightarrow G\times^{G_y} N_X$ falls under the setting
considered in Lemma \ref{ref-3.2-14}. We conclude from this lemma that  $C_{G\times^{G_y} N_Y}=0$, finishing the proof.
\end{proof}

\subsection{Reduction settings in the connected linear case}
We use the notation and conventions introduced in \S\ref{linear}.
We need the twisted
Weyl group action of $\Wscr$ on $X(T)$: $w{\ast}\chi:=w(\chi+\bar{\rho})-\bar{\rho}$. If $\chi\in X(T)$
and there is some $w{\ast} \chi$ which is dominant then we write $\chi^+=w{\ast} \chi$. Otherwise
$\chi^+$ is undefined.
\begin{propositions}\label{-iC}
  Let $G$ be a connected reductive group and assume
  $B,T,\chi,\lambda,\Lscr$ satisfy (\ref{ref-1-3}-\ref{ref-5-4}) in Definition
  \ref{ref-2.1-2}.  Let $X=W^\vee$ where $W$ is a $G$-representation
  with weights $\beta_1,\ldots,\beta_d$.  Assume
  $(\chi+\beta_{i_1}+\dots+\beta_{i_{-p}})^+\in \Lscr$ for all
  $\emptyset\neq\{i_1,\ldots,i_{-p}\}\subseteq\{1,\ldots,d\}$,
  $i_j\neq i_{j'}$ for $j\neq j'$ such that $(\chi+\beta_{i_1}+\dots+\beta_{i_{-p}})^+$ is defined.  Then
  $(G,B,T,X,\Lscr,\chi,\lambda)$ is a reduction setting.
\end{propositions}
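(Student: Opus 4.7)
Since $G$ is connected, so $G_e=G$, conditions (1)--(6) of Definition \ref{ref-2.1-2} hold by hypothesis (axiom (4) is vacuous as $G/G_e$ is trivial), and by Lemma \ref{lem:criterion}, applied affine-locally on $X\quot G$, the remaining condition (7) reduces to showing that
\[
P^\bullet := \cone\bigl(P_{G,\chi} \longrightarrow \RInd^G_{G^{\lambda,+}}(V_{G^\lambda}(\chi) \otimes_k k[X^{\lambda,+}])\bigr)[-1]
\]
lies in $\langle \Pscr_{G,\Lscr}\rangle$. Let $W_+\subset W$ denote the sum of the weight spaces $w_i$ with $\langle\lambda,\beta_i\rangle > 0$. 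By the description in \S\ref{ref-1.2-0}, $X^{\lambda,+}\subset X=W^\vee$ is cut out by the ideal generated by $W_+$, and the $G^{\lambda,+}$-equivariant Koszul complex $\Sym(W)\otimes_k \Lambda^\bullet W_+$ (with $\Lambda^p W_+$ placed in cohomological degree $-p$) resolves $k[X^{\lambda,+}]$. Via this resolution the canonical map $k[X]\to k[X^{\lambda,+}]$ is realized as the degree-$0$ inclusion $\Sym(W)\hookrightarrow \Sym(W)\otimes_k \Lambda^\bullet W_+$, whose cone is the truncation $\Sym(W)\otimes_k \Lambda^{\ge 1} W_+$.

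Using the identity \eqref{BQ} to rewrite $P_{G,\chi}$ as $\RInd^G_{G^{\lambda,+}}(V_{G^\lambda}(\chi)\otimes k[X])$, the projection formula to pull the polynomial algebra out as $\Oscr_X$, and the identity $\RInd^G_{G^{\lambda,+}}(V_{G^\lambda}(\chi)\otimes_k N) = \RInd^G_B(\chi\otimes_k N|_B)$ (valid for any $G^{\lambda,+}$-module $N$ via $V_{G^\lambda}(\chi) = \RInd^{G^{\lambda,+}}_B(\chi)$ and transitivity of induction), we identify $P^\bullet$ with $\RInd^G_B(\chi\otimes_k \Lambda^{\ge 1} W_+)\otimes_k \Oscr_X$, shifted so that $\Lambda^p W_+$ contributes in cohomological degree $-p+1$ for $p\ge 1$.

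Viewed as a $B$-module, $\Lambda^p W_+$ carries a finite filtration (refining the $T$-weight decomposition with a suitable total order compatible with the root ordering) whose associated graded is the $T$-semisimple module $\bigoplus k_{\beta_{i_1}+\cdots+\beta_{i_p}}$, summed over $p$-subsets $i_1<\cdots<i_p$ of $\{i : \langle\lambda,\beta_i\rangle > 0\}$. Combining this with the Koszul grading produces a finite filtration of $P^\bullet$; the associated graded is computed by the Borel-Weil-Bott theorem for $\RInd^G_B$, each piece being either zero (when $(\chi+\beta_{i_1}+\cdots+\beta_{i_p})^+$ is undefined) or a cohomologically shifted copy of $P_{G,(\chi+\beta_{i_1}+\cdots+\beta_{i_p})^+}$. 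By the hypothesis of the proposition every such weight belongs to $\Lscr$ whenever defined, so every associated graded piece lies in $\langle\Pscr_{G,\Lscr}\rangle$; since this subcategory is triangulated, so does $P^\bullet$.

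\textbf{Main obstacle.} The genuine technicality is combining the Koszul differential with the $T$-weight filtration of $\Lambda^\bullet W_+$ as a $B$-module, since the unipotent radical of $B$ mixes $T$-weight spaces and so $\Lambda^\bullet W_+$ is not $B$-semisimple. The triangulated closure $\langle\Pscr_{G,\Lscr}\rangle$ however absorbs all resulting extensions uniformly; the substantive content of the argument is therefore the Borel-Weil-Bott step, and the only way the argument could fail is if some BWB output were to land outside $\Lscr$, which is precisely what the hypothesis rules out.
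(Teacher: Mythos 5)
Your argument is essentially the same as the paper's: both proceed by applying Lemma \ref{lem:criterion}, replacing $k[X^{\lambda,+}]$ by its $G^{\lambda,+}$-equivariant Koszul resolution $\Sym(W)\otimes\Lambda^\bullet W_+$ (your $W_+$ is the paper's $K_\lambda$), applying $\RInd^G_B(\chi\otimes -)$ using the tensor identity and transitivity of induction, and then reading off via Borel--Weil--Bott that the resulting complex is built from the $P_\mu$ with $\mu=(\chi+\beta_{i_1}+\cdots+\beta_{i_{-p}})^+\in\Lscr$. The difference is only in what is cited versus re-derived: the paper simply quotes \cite[(11.3), Lemma 11.2.1]{SVdB} for the quasi-isomorphism $C_{\lambda,\chi}\r\RInd^G_{G^{\lambda,+}}(V_{G^\lambda}(\chi)\otimes_k k[X^{\lambda,+}])$ and for the description of the terms of $C_{\lambda,\chi}$, and only adds the observation that the degree-zero copy of $P_\chi$ inside $C_{\lambda,\chi}$ realizes the canonical map \eqref{ref-2.2-7}, so that the quotient $C'_{\lambda,\chi}$ is the desired cone. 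You reconstruct the same complex from scratch, making the Koszul/BWB content explicit. One small point you flagged yourself -- the interaction of the Koszul differential with the one-dimensional $B$-filtration of $\Lambda^p W_+$ -- is handled cleanly in the cited reference by simply noting that $C_{\lambda,\chi}$ is a \emph{bounded complex} whose individual terms are finite direct sums of $P_\mu$'s (extensions along the differential are then automatically absorbed since the subcategory is triangulated and closed under cones), which is a slightly more robust phrasing than filtering the total object; also note that since $X=W^\vee$ is affine there is no need to localize over $X\quot G$ when invoking Lemma \ref{lem:criterion}.
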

\begin{proof}
We only have to verify \eqref{ref-2.3-8}.
 We  denote by
$K_\lambda$ the subspace of $W$ spanned by the weight vectors $w_j$
such that $\la \lambda,\beta_j\ra>0$. Note that $\Spec
\Sym(W/K_\lambda)\cong X^{\lambda,+}$.
In \cite[(11.3)]{SVdB} we constructed a quasi-isomorphism
\begin{equation}
\label{quasiiso}
C_{\lambda,\chi}\r \RInd^{G}_{G^{\lambda,+}} (V_{G^\lambda}(\chi) \otimes_k k[X^{\lambda,+}]),
\end{equation}
where $C_{\lambda,\chi}$ is a complex of the form
\begin{align}\label{ref-11.3-98}
C_{\lambda,\chi}\overset{\text{def}}{=}&\left(\bigoplus_{p\le 0,q\ge 0} R^q\Ind^G_B(\chi\otimes_k \wedge^{-p} K_{\lambda})\otimes_{k} k[X][-p-q],d\right).
\end{align}
We
showed that after forgetting the differential $C_{\lambda,\chi}$ is a sum of
$G$-equivariant projective modules of the form ${P}_{\mu}$ where the $\mu$ are among
the weights
\begin{equation}
\label{eq:weights}
(\chi+\beta_{i_1}+\beta_{i_2}+\cdots+\beta_{i_{-p}})^+
\end{equation}
(with each such expression occurring at most once)
where $\{i_1,\ldots,i_{-p}\}\subset\{1,\ldots,d\}$, $i_j\neq i_{j'}$ for
$j\neq j'$ and $\langle\lambda,\beta_{i_j}\rangle>0$ \cite[Lemma 11.2.1]{SVdB}.
Moreover there is a single copy of $P_\chi$ which lives in degree
zero.  It is not explicitly stated in loc.\ cit.\ but it follows easily
from the construction that this copy of $P_\chi$ yields an inclusion $P_\chi\r
C_{\lambda,\chi}$ such that the composition $P_\chi\r
C_{\lambda,\chi}\r \RInd^{G}_{G^{\lambda,+}} (V_{G^\lambda}(\chi)
\otimes_k k[X^{\lambda,+}])$ is the canonical morphism exhibited in \eqref{ref-2.2-7}.

Let $C'_{\lambda,\chi}=C_{\lambda,\chi}/P_\chi$. Then by the fact that \eqref{quasiiso} is a quasi-isomorphism we have $C'_{\lambda,\chi}\cong \cone(P_\chi\to \RInd^{G}_{G^{\lambda,+}} (V_{G^\lambda}(\chi)
\otimes_k k[X^{\lambda,+}]))$.
Since by hypothesis the summands $P_\mu$ of $C'_{\lambda,\chi}$ are summands
of $P_\Lscr$ we find that \eqref{eq:2.3-8} holds and hence
we are done by Lemma \ref{lem:criterion}.
\end{proof}

\section{Partitioning $X(T)^+$}\label{partition}
\subsection{Preliminaries}
\label{sec:prelim}
We assume we are in the setting of \S\ref{linear}.  In particular $G$ is
connected and acts on a representation $X=W^\vee$.
We now introduce some extra notation.
We let $\Phi\subset X(T)$ be the roots of $G$. We
write $\Phi^-$ for the negative roots of $G$ (the roots of $B$) and
$\Phi^+$ for the positive roots. We choose a positive definite $\Wscr$-invariant quadratic form $(-,-)$ on $X(T)_\RR$.
If $\alpha\in \Phi$ then $\check{\alpha}\in Y(T)_\RR$ is the corresponding coroot defined by
$\langle \check{\alpha},\chi\rangle=2(\alpha,\chi)/(\alpha,\alpha)$
and the associated reflection on
$X(T)_\RR$ is defined by $s_\alpha(\chi)=\chi-\langle\check{\alpha},\chi\rangle\alpha$. We put
$\Phi^\vee=\{\check{\alpha}\mid \alpha\in \Phi\}$.

We set $ \Phi_\lambda=\{\alpha\in \Phi\mid
\la\lambda,\alpha\ra=0\}.  $ We denote by
$\Phi_\lambda^+=\Phi^+\cap\Phi_\lambda$ the set of positive roots of
$G^\lambda$.

Let us recall a slightly extended version of \cite[Corollary D.3.]{SVdB} with the same proof.
\begin{lemmas}\label{ref-A.2}
Let  $\lambda\in Y(T)_\RR^-$, $w\in \Wscr$, $\chi\in X(T)$  such that $w{\ast}\chi$ is dominant.  Then
$\la \lambda, w{\ast}\chi\ra\le \la \lambda, \chi\ra$
with equality if and only if $w \in \Wscr_{G^\lambda}$.
\end{lemmas}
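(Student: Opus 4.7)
The plan is to translate the condition ``$w{\ast}\chi$ dominant'' into a statement about regularity of $w(\chi+\bar\rho)$, and then expand $\langle\lambda,w{\ast}\chi\rangle-\langle\lambda,\chi\rangle$ as a pairing of $\lambda$ with a nonnegative integer combination of positive roots.

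First I would set $\mu=\chi+\bar\rho$, so that $w{\ast}\chi+\bar\rho=w\mu$. Since $\langle\check\alpha,\bar\rho\rangle=1$ for every simple root $\alpha$, the hypothesis that $w{\ast}\chi\in X(T)^+$ upgrades to $w\mu$ being \emph{strictly} dominant (in particular $\Wscr$-regular). Next I would invoke the classical fact that for any dominant weight $\nu$ and any $u\in\Wscr$ one has $\nu-u\nu\in\sum_{\alpha\in\Phi^+}\ZZ_{\ge 0}\,\alpha$ (proved by induction on $\ell(u)$ using $s_\beta\nu=\nu-\langle\check\beta,\nu\rangle\beta$ for simple $\beta\in\Phi^+$). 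Applied to $\nu=w\mu$ and $u=w^{-1}$ this yields
\[
w\mu-\mu=\sum_{\alpha\in\Phi^+} c_\alpha\,\alpha,\qquad c_\alpha\in\ZZ_{\ge 0}.
\]

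Then I would compute
\[
\langle\lambda,w{\ast}\chi\rangle-\langle\lambda,\chi\rangle
=\langle\lambda,w\mu-\mu\rangle
=\sum_{\alpha\in\Phi^+} c_\alpha\,\langle\lambda,\alpha\rangle\le 0,
\]
since each $c_\alpha\ge 0$ and $\langle\lambda,\alpha\rangle\le 0$ for all $\alpha\in\Phi^+$ (because $\lambda\in Y(T)^-$). This gives the stated inequality. For the equality case, vanishing of the above sum forces $c_\alpha=0$ whenever $\langle\lambda,\alpha\rangle<0$, so only $\alpha\in\Phi_\lambda$ can contribute, and therefore
\[
w\mu-\mu\in\sum_{\alpha\in\Phi_\lambda^+}\ZZ_{\ge 0}\,\alpha\ \subseteq\ V_\lambda:=\RR\Phi_\lambda.
\]

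Finally I would deduce $w\in\Wscr_{G^\lambda}$ as follows. Setting $\eta=w\mu$, the previous step says that $\eta$ and $w^{-1}\eta=\mu$ differ by an element of $V_\lambda$, while $\eta$ is $\Wscr$-regular dominant. The standard orbit lemma for parabolic subsystems of Weyl groups (applied to the parabolic $\Wscr_{G^\lambda}=\mathrm{Stab}_\Wscr(\lambda)$, whose root system is $\Phi_\lambda$) implies that the $\Wscr$-orbit of $\eta$ intersects the affine subspace $\eta+V_\lambda$ exactly in the $\Wscr_{G^\lambda}$-orbit of $\eta$; hence $w^{-1}=u$ for some $u\in\Wscr_{G^\lambda}$ (regularity of $\eta$ kills stabilizer ambiguity), and $w\in\Wscr_{G^\lambda}$. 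The converse is immediate because $\Wscr_{G^\lambda}$ acts trivially on $\lambda$ so $\langle\lambda,w{\ast}\chi\rangle=\langle w^{-1}\lambda,\chi+\bar\rho\rangle-\langle\lambda,\bar\rho\rangle=\langle\lambda,\chi\rangle$ whenever $w\in\Wscr_{G^\lambda}$. The main obstacle, and the only non-formal ingredient, is the final step identifying the relevant orbit intersection with $\Wscr_{G^\lambda}\cdot\eta$; this is exactly the content one has to extract from the proof of \cite[Corollary D.3]{SVdB}, which carries over verbatim.
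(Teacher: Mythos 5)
Your proof is correct, and while it ultimately rests on the same underlying chain-of-simple-reflections idea as the paper, the packaging is genuinely different and worth contrasting. The paper writes $\chi^+=s_{\alpha_n}\ast\cdots\ast s_{\alpha_1}\ast\chi$ with each step strictly increasing dominance and then appeals to the (proof of) \cite[Corollary D.3]{SVdB}, reading off from that proof that equality forces every $\alpha_i\in\Phi_\lambda$; the final identification $w=w'$ uses triviality of the $\ast$-stabilizer of the dominant $\chi^+$. You instead shift by $\bar\rho$ to convert the dot action into the linear action, invoke the classical fact that $\nu-u\nu\in\sum_{\alpha\in\Phi^+}\RR_{\ge 0}\,\alpha$ for $\nu$ dominant (note the coefficients need not be integers since $\bar\rho\in X(T)_\RR$, but only nonnegativity is used), and then handle equality via an orbit lemma. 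This is cleaner in that it isolates the $\bar\rho$-shift trick explicitly and avoids reaching into the internals of another proof.

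One remark: the orbit lemma you invoke is true, but in this situation it can be bypassed. Equality already gives the scalar identity $\langle\lambda,\eta\rangle=\langle\lambda,w^{-1}\eta\rangle=\langle w\lambda,\eta\rangle$ with $\eta=w\mu$ strictly dominant (hence $\Wscr$-regular). Applying the same ``$\nu-u\nu$ is a nonnegative combination of positive roots'' fact to the dominant weight $-\lambda$ shows $w\lambda-\lambda\in\sum_{\alpha\in\Phi^+}\RR_{\ge0}\,\alpha$, whence $0=\langle w\lambda-\lambda,\eta\rangle$ forces all coefficients to vanish (since $\langle\alpha,\eta\rangle>0$), so $w\lambda=\lambda$ and $w\in\Wscr_{G^\lambda}=\mathrm{Stab}_\Wscr(\lambda)$. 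This sidesteps any appeal to an external orbit-intersection statement and keeps the argument self-contained; as you have written it, the orbit lemma is the one unproved ingredient, so you should either prove it (the scalar argument above is essentially the proof) or cite it precisely.
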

\begin{proof}
 When comparing with \cite[Corollary
  D.3.]{SVdB} note that in loc.\ cit. the role of $\lambda$ is played
  by $y$, which is assumed to be dominant, rather than anti-dominant
  which is the case here. So the inequalities are reversed.

  Since $\chi^+:=w{\ast}\chi$ is dominant we have $\chi^+=s_{\alpha_n}{\ast}\cdots
  {\ast}s_{\alpha_1} {\ast}\chi$
such that for each $\chi_i:=s_{\alpha_i}{\ast}\cdots {\ast} s_{\alpha_1}{\ast}\chi$
the inequality $\langle\check{\alpha}_{i+1},\chi_i\rangle\le -2$ holds.

In loc.\ cit.\ it is shown that $\langle\lambda,w{\ast}\chi\rangle\le \langle\lambda ,\chi\rangle$.
Going through the proof we see that the only possibility for equality to occur is when $\langle \lambda,\alpha_i\rangle=0$ for all $i$. But then $w':=s_{\alpha_n}\cdots
  s_{\alpha_1}\in \Wscr_{G^\lambda}$. Since $\chi^+$ is dominant it has trivial stabilizer
for the $\ast$-action. Since $(ww'^{-1})\ast\chi^+=\chi^+$ we conclude $w=w'\in \Wscr_{G^\lambda}$.
\end{proof}
We will also need the following variant.
\begin{lemmas}\label{ref-A.21}
Let  $\lambda\in Y(T)_\RR^-$, $w\in \Wscr$, $\chi\in X(T)$  such that $w\chi$ is dominant.  Then
$\la \lambda, w\chi\ra\le \la \lambda, \chi\ra$
with equality if and only if $w\chi \in \Wscr_{G^\lambda}\chi$.
\end{lemmas}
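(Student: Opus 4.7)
The plan is to mimic the proof of Lemma \ref{ref-A.2}, replacing the $\ast$-action throughout by the ordinary Weyl action. Since $w\chi$ is $G$-dominant, a standard iterative construction (successively apply reflections along positive roots to move toward the dominant chamber, which is a fundamental domain for $\Wscr$) produces a factorization $w\chi = s_{\alpha_n}\cdots s_{\alpha_1}\chi$ with $\alpha_i\in \Phi^+$ such that at each intermediate step $\chi_i:=s_{\alpha_i}\cdots s_{\alpha_1}\chi$ one has $\langle\check{\alpha}_{i+1},\chi_i\rangle\le 0$. Setting $c_{i+1}:=-\langle\check{\alpha}_{i+1},\chi_i\rangle\ge 0$ gives $\chi_{i+1}-\chi_i=c_{i+1}\alpha_{i+1}$, and telescoping yields
\[
w\chi-\chi=\sum_{i=1}^n c_i\alpha_i,\qquad c_i\ge 0,\ \alpha_i\in\Phi^+.
\]

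Pairing with $\lambda\in Y(T)_\RR^-$ and using $\langle\lambda,\alpha\rangle\le 0$ for $\alpha\in\Phi^+$ immediately gives $\langle\lambda,w\chi\rangle-\langle\lambda,\chi\rangle\le 0$, which is the desired inequality. For the equality case the sum above must vanish term by term, i.e.\ $c_i\langle\lambda,\alpha_i\rangle=0$ for each $i$. After discarding the indices with $c_i=0$ (those $s_{\alpha_i}$ fix $\chi_{i-1}$, hence may be dropped from the product without changing the image of $\chi$), every surviving $\alpha_i$ lies in $\Phi_\lambda$. Thus $w':=s_{\alpha_n}\cdots s_{\alpha_1}$ (product over the retained indices) belongs to $\Wscr_{G^\lambda}$ and satisfies $w'\chi=w\chi$, giving $w\chi\in \Wscr_{G^\lambda}\chi$.

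The converse is essentially a triviality: for any $\beta\in\Phi_\lambda$ and any $\nu\in X(T)_\RR$ one has
\[
\langle\lambda,s_\beta\nu\rangle=\langle\lambda,\nu-\langle\check{\beta},\nu\rangle\beta\rangle=\langle\lambda,\nu\rangle
\]
since $\langle\lambda,\beta\rangle=0$, so $\Wscr_{G^\lambda}$ preserves $\langle\lambda,-\rangle$ on $X(T)_\RR$. Consequently $w\chi=w'\chi$ with $w'\in\Wscr_{G^\lambda}$ forces $\langle\lambda,w\chi\rangle=\langle\lambda,w'\chi\rangle=\langle\lambda,\chi\rangle$.

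The only subtle step is the construction of the factorization with the sign property on $\langle\check{\alpha}_{i+1},\chi_i\rangle$; this is exactly the same ingredient used in Lemma \ref{ref-A.2}, and is the natural adaptation of that argument to the ordinary (unshifted) action, where $\chi$ need not have trivial stabilizer so one can only conclude $w\chi\in\Wscr_{G^\lambda}\chi$ rather than $w\in\Wscr_{G^\lambda}$.
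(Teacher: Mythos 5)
Your proposal is correct and follows the same strategy the paper intends: the paper's proof of Lemma~\ref{ref-A.21} is a one-line remark pointing to the proof of Lemma~\ref{ref-A.2} with the twisted $\ast$-action replaced by the ordinary Weyl action, and you have carried that out in detail — constructing the chain of positive-root reflections, telescoping to get $w\chi-\chi=\sum_i c_i\alpha_i$ with $c_i\ge 0$, and observing that equality forces the active $\alpha_i$ into $\Phi_\lambda$, which (because $\chi$ may now have nontrivial stabilizer) yields only $w\chi\in\Wscr_{G^\lambda}\chi$ rather than $w\in\Wscr_{G^\lambda}$.
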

\begin{proof} The proofs is along the same lines as the proof of Lemma \ref{ref-A.2} except that $w\chi$ may have non-trivial stabilizer. This accounts for the slightly weaker conclusion.
\end{proof}
We define
\begin{align*}
T_\lambda^+=\{i\mid \la\lambda,\beta_i\ra> 0\},\quad
T_\lambda^0=\{i\mid \la\lambda,\beta_i\ra= 0\},\quad
T_\lambda^-=\{i\mid \la\lambda,\beta_i\ra< 0\}.
\end{align*}
A point $x\in X$ is {\em stable} if it has closed orbit and finite
stabilizer.  $X$ has a $T$-stable point if and only if for every
$\lambda\in Y(T)\setminus\{0\}$ there exists $i$ such that
$\la\lambda,\beta_i\ra>0$ (i.e., not all the weights lie in a
half space defined by the hyperplane through the origin).

\medskip

\emph{In the rest of this section we assume that $X=W^\vee$ has a $T$-stable point.}

\subsection{Expression of $\chi$ in terms of faces of $\Sigma$}
\label{sec:interms}
As $X$ has a $T$-stable point, $0$ lies in the interior of the positive span of
$(\beta_i)_i$ and in particular $-\bar \rho+\bigcup_r
r\Sigma=X(T)_\RR$, thus every $\chi\neq -\bar{\rho}\in X(T)$ lies  in the
relative interior of a unique proper face of $-\bar\rho+r\bar{\Sigma}$ for
a unique $r>0$. We will partition the set $X(T)^+$ according to the
relative interiors of faces of $-\bar{\rho}+r\bar{\Sigma}$ to which its elements belong. However for convenience
we will not use the faces directly but rather some equivalent combinatorial data
associated to them.

\medskip

For a set $S$ let $\Pscr(S)$ be its power set.
We put a partial ordering $\prec$
on $\RR^+\times \cP(\{1,\dots,d\})^3$  by declaring $(r,S^+,S^-,S^0)\preceq (r',S^{\prime +},S^{\prime -},S^{\prime 0})$
if either  $r<r'$ or else $r=r'$, $S^+\subset S^{\prime +}$ and $S^-\subset S^{\prime -}$.
If ${\bf S}=(S^+,S^-,S^0)$ then we write $|{\bf S}|=(|S^+|,|S^-|,|S^0|)$.

Let $\RR^+\times \NN^3$ be equipped with the (total) lexicographic ordering.  There
is an order preserving map
\begin{equation}
\label{eq:order}
(\RR^+\times \cP(\{1,\dots,d\})^3,\prec)\r (\RR^+\times \NN^3,<):(r,{\bf S})\mapsto (r,|{\bf S}|)
\end{equation}
whose fibers are incomparable among each other.
\begin{lemmadefinitions}\label{ref-1.5}
\begin{enumerate}
\item
For $\chi\neq -\bar{\rho}\in X(T)$ there exists an expression of the form
\begin{equation}\label{expresschi}
\chi=-\bar\rho-r\sum_{i\in S^+}\beta_i+0\sum_{i\in S^-}\beta_i+\sum_{i\in S^0}b_i\beta_i,
\end{equation}
where  $S^+\neq \emptyset$, $r>0$,
  $\forall i:-r<b_i< 0$ and $S^+\coprod S^-\coprod S^0=\{1,\dots,d\}$.
There is a unique tuple $(r_\chi,{\bf S}_\chi):=(r_{\chi},S_{\chi}^+,S_{\chi}^-,S_{\chi}^0),$
for which $(r_\chi,|{\bf S}_\chi|)$ is minimal among tuples attached to such expression of the form \eqref{expresschi}.
  If $\chi=-\bar{\rho}$
then we put by convention $r_\chi=0$, $S_\chi^+=S_\chi^-=\emptyset$ (although  \eqref{expresschi} is then not true). Below we refer to this situation as the ``trivial case''.
\item\label{tri}
If $\chi\in X(T)^+$ then there exists $\lambda\in Y(T)^-$ with the properties:
$S_{\chi}^{-}=T_\lambda^-$, $S_{\chi}^+=T_\lambda^+$, $S_{\chi}^0=T_\lambda^0$.
\end{enumerate}
\end{lemmadefinitions}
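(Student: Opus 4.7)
The plan is to handle part (1) by an extremal argument plus a convex-combination uniqueness trick, and part (2) by locating the correct face of $r_\chi\bar{\Sigma}$ and showing its normal cone meets the antidominant chamber, using the dominance of $\chi$.

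For existence in (1), set $r_\chi := \inf\{r>0 : \chi+\bar\rho \in r\bar{\Sigma}\}$. Since $\chi \neq -\bar\rho$ and $X(T)_\RR = -\bar\rho + \bigcup_{r > 0} r\Sigma$, this infimum is strictly positive, and by closedness it is attained. At $r = r_\chi$, any expression $\chi + \bar\rho = \sum_i a_i\beta_i$ with $a_i \in [-r_\chi, 0]$ must have $a_i = -r_\chi$ for at least one $i$: otherwise, setting $r' := \max_i |a_i| < r_\chi$, the same coefficients would witness $\chi + \bar\rho \in r'\bar{\Sigma}$, contradicting minimality of $r_\chi$. Hence $S^+ \neq \emptyset$, and finiteness of the index set produces a lex-minimal tuple $(r_\chi, \mathbf{S}_\chi)$.

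For uniqueness in (1), suppose two tuples $(r_\chi, S^+, S^-, S^0)$ and $(r_\chi, S'^+, S'^-, S'^0)$, witnessed by expressions $\sum a_i\beta_i = \chi + \bar\rho = \sum a'_i\beta_i$, both attain the lex minimum. Averaging gives $\chi + \bar\rho = \sum_i ((a_i + a'_i)/2)\beta_i$ with coefficients in $[-r_\chi, 0]$. Since $(a_i + a'_i)/2 = -r_\chi$ forces $a_i = a'_i = -r_\chi$, and similarly $(a_i + a'_i)/2 = 0$ forces $a_i = a'_i = 0$ (both summands being nonpositive), the averaged expression has $S^+_{\mathrm{avg}} = S^+ \cap S'^+$ and $S^-_{\mathrm{avg}} = S^- \cap S'^-$. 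Minimality of $|S^+|$ then forces $S^+ = S'^+$; repeating the argument with $S^+$ now fixed forces $S^- = S'^-$; and $S^0$ is determined as the complement.

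For part (2), the tuple $\mathbf{S}_\chi$ distinguishes the face $F$ of $r_\chi\bar{\Sigma}$ on which simultaneously $a_i = -r_\chi$ for $i \in S_\chi^+$ and $a_i = 0$ for $i \in S_\chi^-$; by construction $\chi + \bar\rho \in \mathrm{relint}(F)$. Its normal cone is the open cone $N_F = \{\lambda \in Y(T)_\RR : \langle\lambda, \beta_i\rangle > 0 \text{ on } S_\chi^+,\ <0 \text{ on } S_\chi^-,\ =0 \text{ on } S_\chi^0\}$. Any $\lambda \in N_F$ gives $T_\lambda^{\pm} = S_\chi^{\pm}$, $T_\lambda^0 = S_\chi^0$, and rationality of the defining inequalities yields an integral representative in $Y(T)$. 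So the task reduces to $N_F \cap Y(T)^-_\RR \neq \emptyset$.

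The main obstacle is exactly this non-emptiness, which is where the hypothesis $\chi \in X(T)^+$ enters decisively (the strict dominance of $\chi + \bar\rho$ is what makes $F$ the ``correct'' face in its Weyl orbit). My plan is to exploit $\cW$-invariance of the multiset $\{\beta_i\}$, hence of $r_\chi\bar{\Sigma}$, together with Lemma \ref{ref-A.21}: for antidominant $\lambda$, $\langle\lambda,-\rangle$ on a $\cW$-orbit is minimized at the dominant representative, so $\min_{v \in r_\chi\bar{\Sigma}}\langle\lambda,v\rangle$ can be computed on the dominant-chamber slice of the polytope. Since $\chi+\bar\rho$ is strictly dominant and lies in $\mathrm{relint}(F)$, the face $F$ is the unique face of $r_\chi\bar{\Sigma}$ picking out $\chi+\bar\rho$ through this slice, and an LP-duality argument on the dominant-chamber section then produces an antidominant $\lambda$ exposing precisely $F$.
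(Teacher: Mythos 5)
Your part (1) is correct and is essentially the paper's argument: take the lex-minimal tuple, and if two distinct tuples both attain the minimum, averaging the two expressions produces a valid expression with $S^+_{\mathrm{avg}}=S^+\cap S'^+$ and $S^-_{\mathrm{avg}}=S^-\cap S'^-$, which would be strictly smaller. Your write-up is a bit more explicit than the paper's one-liner but follows the same route (you also correctly handle the point that $S^+_{\mathrm{avg}}\neq\emptyset$, since minimality of $r_\chi$ forces some coefficient to equal $-r_\chi$ in any valid expression).

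For part (2) there is a genuine gap at the final and crucial step. You correctly identify the face $F$ of $r_\chi\bar{\Sigma}$ containing $\chi+\bar\rho$ in its relative interior, correctly describe the cone $N_F$ of linear functionals exposing exactly $F$, and correctly reduce the problem to showing $N_F\cap Y(T)^-_\RR\neq\emptyset$. You also correctly flag that $\Wscr$-invariance of $r_\chi\bar{\Sigma}$, Lemma~\ref{ref-A.21}, and the strict dominance of $\chi+\bar\rho$ are the ingredients. But you then stop at ``an LP-duality argument on the dominant-chamber section produces an antidominant $\lambda$ exposing precisely $F$'' --- this is a statement of intent, not a proof, and it is not obvious how such an LP-duality argument would actually produce a $\lambda$ that simultaneously satisfies the $N_F$ sign conditions and antidominance. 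The paper closes this gap differently and more directly: it first invokes \cite[Lemma~C.2]{SVdB} to produce \emph{some} $\lambda\in Y(T)$ exposing $F$ (you implicitly assume $N_F\neq\emptyset$ but never establish it), then takes $w\in\Wscr$ with $w\lambda\in Y(T)^-$, and shows that $w\lambda$ still exposes $F$. The point is that $\min_{v\in r_\chi\bar\Sigma}\langle w\lambda,v\rangle = \min_{v\in r_\chi\bar\Sigma}\langle\lambda,v\rangle = \langle\lambda,\chi+\bar\rho\rangle$ by $\Wscr$-invariance, while Lemma~\ref{ref-A.21} (applied to the dominant weight $\chi+\bar\rho$) gives $\langle w\lambda,\chi+\bar\rho\rangle\le\langle\lambda,\chi+\bar\rho\rangle$, forcing equality; hence the face exposed by $w\lambda$ is $wF$ and contains the strictly dominant $\chi+\bar\rho\in\mathrm{relint}(F)$, which forces $wF=F$. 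This conjugation argument is what your LP-duality placeholder needs to become; as written your proof does not establish $N_F\cap Y(T)^-_\RR\neq\emptyset$.
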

\begin{remarks} If $\chi\in X(T)^+$ then the trivial case is equivalent to $G=T$ and $\chi=0$.
\end{remarks}
\begin{remarks}\label{rmk:faces} It will follow from the proof of Lemma \ref{ref-1.5} as well a
 Lemma \ref{eq:supporting} below that
the data $(r_\chi,{\bf S}_\chi)$ identifies which
  proper face $F$ of $-\bar{\rho}+r_\chi\Sigma$ contains~$\chi$ in its relative
  interior. Reformulating Lemma \ref{eq:supporting} one has
\begin{equation}
\label{eq:relint}
\relint F=\biggl\{ -\bar\rho-r_{\chi}\sum_{i\in S_{\chi}^+}\beta_i+\sum_{i\in S_\chi^0}b_i\beta_i\mid \forall i:-r_\chi<b_i<0\biggr\}
\end{equation}
Furthermore $\lambda$ as in Lemma \ref{ref-1.5}\eqref{tri} defines an appropriately chosen
   supporting plane for that face. Finally the $\prec$-ordering is opposite to the ordering given by inclusion of faces.
\end{remarks}
\begin{proof}[Proof of Lemma \ref{ref-1.5}]
  The existence of an expression with minimal $(r_\chi,|{\bf
    S}_\chi|)$ is obvious.  To prove the uniqueness of the associated
  tuple $(r_{\chi},{\bf S}_{\chi})$ assume that there are two minimal
  expressions with different associated tuples.  Taking their average
  we obtain an expression which is strictly smaller than both the original
  expressions, contradicting the minimality.

We will now prove \eqref{tri}. If we are in the trivial
case
then we take
$\lambda=0$.
So we will now assume we are not in the trivial case.
We take $r$ minimal such that $\chi\in -\bar \rho+r\bar\Sigma$ (and hence $r_{\chi}=r$).
By \cite[Lemma C.2]{SVdB} (and as all $\beta_i\in X(T)$) there exists $0\neq \lambda\in Y(T)$ 
such that $\la\lambda,\chi\ra<\la\lambda,\mu\ra$ for all $\mu\in -\bar\rho+r_{\chi}\Sigma$
 and $\chi$ can be written as
\begin{equation}
\label{eq:chiexpr}
\chi=-\bar\rho-r_{\chi}\sum_{i\in T_{\lambda}^+}\beta_i+\sum_{i\in T_{\lambda}^0}b_i\beta_i,
\end{equation}
$-r_{\chi}<b_i<0$. Thus by Lemma \ref{eq:supporting} below $\chi+\bar{\rho}$ is
in the relative interior of the face of $r_\chi\bar{\Sigma}$ defined
by the supporting half plane
$\la\lambda,\chi+\bar{\rho}\ra\le\la\lambda,-\ra$. Let $w\in \Wscr$ be such
that $w{\lambda}\in Y(T)^-$. By the discussion preceding the \cite[
(11.4)]{SVdB} $\la w\lambda,\chi+\bar{\rho}\ra\le\la w\lambda,-\ra$ is still a
supporting half plane for $r_\chi\bar{\Sigma}$. It is easy to see
that the corresponding face must be equal to $wF$.
Since the face still contains $\chi+\bar{\rho}$ we must have $F=wF$.
It follows again
from Lemma \ref{eq:supporting} below that \eqref{eq:chiexpr} remains
true with ${\lambda}$ replaced by $w{\lambda}$. So we now assume ${\lambda}\in Y(T)^-$.

By Observation (3)
in the proof of \cite[Theorem 1.4.1]{SVdB} (applied to both boundaries of the interval $[-r_\chi,0]$)
we find that in any expression of the form \eqref{expresschi} we must have $T_{\lambda}^+\subset S^+$,
$T_{\lambda}^-\subset S^-$. Since $(|S_\chi^+|,|S_\chi^-|)$ is minimal this implies
$S_{\chi}^\pm=T_{{\lambda}}^\pm$, $S_{\chi}^0=T_{\lambda}^0$, establishing \eqref{tri}.
\end{proof}
We have used the following result.
\begin{lemmas} \label{eq:supporting}
Let $\chi\in X(T)_\RR$, $0\neq \lambda\in Y(T)_\RR$ be such that 
the equation $\langle {\lambda},\chi\rangle = \langle {\lambda},-\rangle$ defines a supporting half-plane of $\bar{\Sigma}$. Then $\chi$ is in the relative interior of the corresponding face if and only if $\chi$ can be written as
\[
\chi=-\sum_{i\in T_{\lambda}^+}\beta_i+\sum_{i\in T_{\lambda}^0}b_i\beta_i,
\]
with $-1<b_i<0$.
\end{lemmas}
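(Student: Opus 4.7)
The plan is to identify explicitly the face of $\bar\Sigma$ cut out by the supporting hyperplane $\{\mu : \langle\lambda,\mu\rangle=\langle\lambda,\chi\rangle\}$, parametrize it affinely, and then invoke the fact that a surjective affine map sends relative interior to relative interior.

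First I would compute the extremum of $\langle\lambda,-\rangle$ on $\bar\Sigma$. For $\mu=\sum_i a_i\beta_i\in\bar\Sigma$ with $a_i\in[-1,0]$, we have
\[
\langle\lambda,\mu\rangle=\sum_{i\in T_\lambda^+} a_i\langle\lambda,\beta_i\rangle+\sum_{i\in T_\lambda^-}a_i\langle\lambda,\beta_i\rangle.
\]
Each summand over $T_\lambda^+$ is minimized (at value $-\langle\lambda,\beta_i\rangle$) by $a_i=-1$, while each summand over $T_\lambda^-$ is minimized (at value $0$) by $a_i=0$. Hence the minimum of $\langle\lambda,-\rangle$ on $\bar\Sigma$ is $-\sum_{i\in T_\lambda^+}\langle\lambda,\beta_i\rangle$, and since by hypothesis $\chi$ lies on a supporting hyperplane of $\bar\Sigma$, this minimum is the common value $\langle\lambda,\chi\rangle$ (with $\bar\Sigma$ lying on the appropriate side; the other choice of sign is excluded since $0\in\bar\Sigma$ and the bound must be attained). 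The corresponding face $F$ is then exactly the locus where these minima are realized coordinate-wise, i.e.
\[
F=\Bigl\{-\sum_{i\in T_\lambda^+}\beta_i+\sum_{i\in T_\lambda^0} b_i\beta_i\;\Big|\; b_i\in[-1,0]\Bigr\}.
\]

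Next I would parametrize $F$ as the image of the cube $C=[-1,0]^{T_\lambda^0}$ under the affine map
\[
\varphi:(b_i)_{i\in T_\lambda^0}\longmapsto -\sum_{i\in T_\lambda^+}\beta_i+\sum_{i\in T_\lambda^0}b_i\beta_i.
\]
The image is $F$ by the previous step. The key general fact I would invoke is that if $\varphi:V\to W$ is affine and $C\subset V$ is convex, then $\varphi(\operatorname{relint} C)=\operatorname{relint}\varphi(C)$ (both inclusions being standard from the definition of relative interior via the affine hull). Applied here, $\operatorname{relint} C=(-1,0)^{T_\lambda^0}$, so
\[
\operatorname{relint} F=\Bigl\{-\sum_{i\in T_\lambda^+}\beta_i+\sum_{i\in T_\lambda^0} b_i\beta_i\;\Big|\; b_i\in (-1,0)\Bigr\}.
\]
This gives the desired characterization in both directions: a $\chi$ admitting such an expression lies in $\varphi((-1,0)^{T_\lambda^0})=\operatorname{relint} F$, and conversely every $\chi\in\operatorname{relint} F$ admits (at least) one such expression.

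The only genuinely non-formal point is the claim $\varphi(\operatorname{relint} C)=\operatorname{relint}\varphi(C)$, which is the routine fact that the relative interior is preserved (and surjects) under affine maps onto convex sets; I would cite this rather than reprove it. Everything else is a direct unpacking of the definitions of $\bar\Sigma$, of $T_\lambda^\pm,T_\lambda^0$, and of the sign conventions used to identify which supporting hyperplane is meant.
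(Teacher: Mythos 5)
Your proof is correct and structurally the same as the paper's: both arguments first identify the face $F=H\cap\bar{\Sigma}$ as the set of points $-\sum_{i\in T^+_\lambda}\beta_i+\sum_{i\in T^0_\lambda}b_i\beta_i$ with $b_i\in[-1,0]$, and then pass to the relative interior by replacing the closed interval by the open one. The differences are only in how the two steps are justified: the paper obtains the coordinate description of $F$ by citing Observation (3) in the proof of \cite[Theorem 1.4.1]{SVdB} and then simply asserts the relative-interior statement (``it follows''), whereas you prove the face description directly by termwise minimization of $\langle\lambda,-\rangle$ over the coefficient cube (equality in the sum forces $a_i=-1$ on $T^+_\lambda$ and $a_i=0$ on $T^-_\lambda$ in every admissible representation) and you make the second step explicit via the standard fact that an affine map $\varphi$ of a convex set $C$ satisfies $\varphi(\relint C)=\relint\varphi(C)$; this is a legitimate, self-contained filling-in of exactly the points the paper leaves to the reader, and it gives both implications at once. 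One small caveat: your parenthetical reason for excluding the ``maximum'' side of the supporting hyperplane (that $0\in\bar{\Sigma}$ and the bound is attained) is not actually an argument, since the statement as written does not by itself determine the side; however, the minimum-side convention you adopt is the one the paper intends (the supporting half-plane is $\langle\lambda,\chi\rangle\le\langle\lambda,-\rangle$, as is used where the lemma is applied), so this is a matter of interpreting the statement rather than a gap in the proof.
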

\begin{proof} Let $H$ be the hyperplane $\langle {\lambda},\chi\rangle = \langle {\lambda},-\rangle$. Then
$
F=H\cap \bar{\Sigma}
$
is given by those $\mu=\sum_i c_i\beta_i$ such that $\langle {\lambda},\chi\rangle = \langle {\lambda},\mu\rangle$
and
\begin{align*}
i\in T^+_{\lambda} &\Rightarrow c_i=-1,\\
i\in T^-_{\lambda} &\Rightarrow c_i=0,\\
i\in T^0_{\lambda} &\Rightarrow c_i\in [-1,0].
\end{align*}
This follows in fact from Observation (3)
in the proof of \cite[Theorem 1.4.1]{SVdB} (applied to both boundaries of the interval $[-1,0]$). It
follows that $\relint F$ is given by those $\mu=\sum_i c_i\beta_i$ such that $\langle {\lambda},\chi\rangle = \langle {\lambda},\mu\rangle$
and
\begin{align*}
i\in T^+_{\lambda} &\Rightarrow c_i=-1,\\
i\in T^-_{\lambda} &\Rightarrow c_i=0,\\
i\in T^0_{\lambda} &\Rightarrow c_i\in ]-1,0[.
\end{align*}
Applying this with $\mu=\chi$ yields what we want.
\end{proof}
\begin{lemmas} \label{ref-1.5bis}
Assume that $\lambda\in Y(T)^-$ corresponds to $\chi\in X(T)^+$ as in Lemma \ref{ref-1.5}\eqref{tri}. Then the following properties hold
\begin{enumerate}
\item\label{ena}
Let $\mu \in X(T)^+$. If $({r}_{\chi},{\bf S}_\chi)\not\preceq({r}_{\mu},{\bf S}_{\mu})$ then
$\la\lambda,\chi\ra<\la\lambda,\mu\ra$.
Similarly if $({r}_{\chi},{\bf S}_\chi)=({r}_{\mu},{\bf S}_{\mu})$  then
$\la\lambda,\chi\ra=\la \lambda,\mu\ra$.
\item\label{dva}
the sets $\{\beta_i\mid i\in S_{\chi}^+\}$, $\{\beta_i\mid i\in S_{\chi}^-\}$ are $\cW_{G^\lambda}$-invariant.
\end{enumerate}
\end{lemmas}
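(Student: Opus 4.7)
The plan is to use the identification $S_\chi^\pm = T_\lambda^\pm$, $S_\chi^0 = T_\lambda^0$ from Lemma \ref{ref-1.5}\eqref{tri} together with a direct computation of $\langle \lambda, -+\bar\rho\rangle$ on expressions of the form \eqref{expresschi}. Setting $C := \sum_{i \in T_\lambda^+}\langle\lambda,\beta_i\rangle$, which is $>0$ in the non-trivial case (all summands are positive and $T_\lambda^+ = S_\chi^+ \neq \emptyset$), substitution gives $\langle\lambda,\chi+\bar\rho\rangle = -r_\chi C$ since the $T_\lambda^0$-coefficients are killed by $\lambda$. For $\mu$, I would take its minimal expression $\mu + \bar\rho = \sum_i c_i\beta_i$ with $c_i \in [-r_\mu, 0]$ ($c_i = -r_\mu$ on $S_\mu^+$, $c_i = 0$ on $S_\mu^-$, $c_i \in\,]-r_\mu, 0[$ on $S_\mu^0$), and split $\langle\lambda,\mu+\bar\rho\rangle$ according to the partition $T_\lambda^+ \sqcup T_\lambda^- \sqcup T_\lambda^0$. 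Using $c_i \geq -r_\mu$ and $\langle\lambda,\beta_i\rangle > 0$ on $T_\lambda^+$, and $c_i \leq 0$ and $\langle\lambda,\beta_i\rangle < 0$ on $T_\lambda^-$, one obtains $\langle\lambda,\mu+\bar\rho\rangle \geq -r_\mu C$, with equality if and only if $T_\lambda^+ \subseteq S_\mu^+$ and $T_\lambda^- \subseteq S_\mu^-$.

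The case analysis for \eqref{ena} is then short. If $r_\mu < r_\chi$, then $\langle\lambda,\mu+\bar\rho\rangle \geq -r_\mu C > -r_\chi C = \langle\lambda,\chi+\bar\rho\rangle$; and if $r_\mu = r_\chi$ while $S_\chi^+ \not\subseteq S_\mu^+$ or $S_\chi^- \not\subseteq S_\mu^-$, then the bound above is strict, so again $\langle\lambda,\mu\rangle > \langle\lambda,\chi\rangle$. The trivial case $\chi = -\bar\rho$ is vacuous since there $(r_\chi, \mathbf{S}_\chi)$ is $\preceq$-minimal. For the equality clause, $(r_\chi, \mathbf{S}_\chi) = (r_\mu, \mathbf{S}_\mu)$ forces equality throughout, hence $\langle\lambda,\chi\rangle = \langle\lambda,\mu\rangle$. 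Part \eqref{dva} is then immediate: $\mathcal{W}_{G^\lambda}$ is generated by reflections $s_\alpha$ with $\alpha \in \Phi_\lambda$, so it fixes $\lambda$ and hence preserves the pairing $\langle\lambda,-\rangle$ on $X(T)$; since $W$ is a $G$-representation, $\mathcal{W}_{G^\lambda}$ permutes the weights $(\beta_i)$ of $W$, so the sets $\{\beta_i \mid i \in T_\lambda^+\}$ and $\{\beta_i \mid i \in T_\lambda^-\}$, defined by the strict sign of $\langle\lambda,\beta_i\rangle$, are $\mathcal{W}_{G^\lambda}$-invariant.

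The subtle point lies in part \eqref{ena} at $r_\mu = r_\chi$: it is tempting to argue geometrically by saying that $\mu$ lies in the face $F_\chi$ of $-\bar\rho + r\bar\Sigma$ defined by the supporting hyperplane $\lambda$ and then identify $\preceq$ with containment of faces, but this is awkward because a single weight admits several expressions \eqref{expresschi} and the link between the minimal tuple $\mathbf{S}_\mu$ and the face $F_\mu$ is not direct. The computation above sidesteps this by extracting the strict inequality from the coefficient bounds alone.
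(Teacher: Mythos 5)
Your proof is correct and follows essentially the same route as the paper's: write $\chi+\bar\rho$ and $\mu+\bar\rho$ in their minimal expressions, use the identification $S_\chi^\pm=T_\lambda^\pm$ from Lemma \ref{ref-1.5}\eqref{tri}, pair with $\lambda$ term by term (noting $\langle\lambda,\beta_i\rangle=0$ on $T_\lambda^0$), and bound the $T_\lambda^+$ and $T_\lambda^-$ contributions by the coefficient constraints $c_i\in[-r_\mu,0]$. Your factoring-out of the constant $C=\sum_{i\in T_\lambda^+}\langle\lambda,\beta_i\rangle>0$ and organizing the split directly by $T_\lambda^\pm,T_\lambda^0$ is a mild cosmetic cleanup of the paper's chain of inequalities \eqref{eq:chain}, and your handling of part \eqref{dva} matches the paper's one-line argument.
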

\begin{proof}
\eqref{dva} is immediate from Lemma \ref{ref-1.5}\eqref{tri}
since $\lambda$ is stabilized by $\Wscr_{G^\lambda}$.

Now we verify \eqref{ena}. Again it is sufficient to consider the
non-trivial case. The second claim is easy so we
discuss the first one.  We have
\[
\mu=-\bar\rho-r_\mu\sum_{i\in S^+_\mu}\beta_i+\sum_{i\in S^0_\mu}b_i\beta_i,
\]
with $-r_\mu<b_i<0$. Write $\ss_i=\langle \lambda,\beta_i\rangle$. Then using Lemma \ref{ref-1.5}\eqref{tri}
\begin{equation}
\label{eq:scalar}
\langle \lambda ,\chi\rangle=-\langle \lambda,\bar{\rho}\rangle
-r_\chi\sum_{i\in T_\lambda^+}\ss_i
\end{equation}
and
\begin{equation}
\label{eq:chain}
\begin{aligned}
\langle \lambda,\mu\rangle&=
-\langle \lambda,\bar{\rho}\rangle-r_\mu\sum_{i\in S^+_\mu}\ss_i+
\sum_{i\in S^0_\mu}b_i\ss_i\\
&=-\langle \lambda,\bar{\rho}\rangle-r_\mu\sum_{i\in S^+_\mu\cap T_\lambda^+}\ss_i
-r_\mu\sum_{i\in S^+_\mu\cap T_\lambda^-}\ss_i+
\sum_{i\in S^0_\mu\cap T^+_\lambda}b_i\ss_i
+\sum_{i\in S^0_\mu\cap T^-_\lambda}b_i\ss_i\\
&\ge -\langle \lambda,\bar{\rho}\rangle-r_\mu\sum_{i\in S^+_\mu\cap T_\lambda^+}\ss_i
-0\sum_{i\in S^+_\mu\cap T_\lambda^-}\ss_i
-r_\mu\sum_{i\in S^0_\mu\cap T^+_\lambda}\ss_i
+0\sum_{i\in S^0_\mu\cap T^-_\lambda}\ss_i\\
&= -\langle \lambda,\bar{\rho}\rangle-r_\mu\sum_{i\in (S^+_\mu\cup S^0_\mu)\cap T_\lambda^+}\ss_i\\
&\ge -\langle \lambda,\bar{\rho}\rangle-r_\mu\sum_{i\in T_\lambda^+}\ss_i.
\end{aligned}
\end{equation}
The total inequality will be strict if any of the following conditions hold:
\begin{align*}
S^+_\mu\cap T_\lambda^-&\neq \emptyset\qquad \text{ or}\\
S^0_\mu\cap (T_\lambda^+\cup T^-_\lambda)&\neq \emptyset\qquad \text{ or}\\
(S^+_\mu\cup S^0_\mu)&\not\supset T_\lambda^+,
\end{align*}
which is equivalent to any of the following conditions holding
\begin{equation}
\label{eq:strict}
\begin{aligned}
S^+_\mu&\not\supset T_\lambda^+\qquad\text{ or}\\
S^-_\mu&\not\supset T_\lambda^-\qquad\text{ or}\\
S^0_\mu&\not\supset T_\lambda^0.
\end{aligned}
\end{equation}
To prove \eqref{ena} we have to show
\[
(r_\chi,{\bold S}_\chi)\not\preceq (r_\mu,{\bold S}_\mu)\Rightarrow \langle \lambda,\chi\rangle<\langle \lambda,\mu\rangle.
\]
The condition on the left hand side is equivalent to any of the following conditions holding
\begin{equation}
\label{eq:partial}
\begin{gathered}
r_\chi>r_\mu\qquad \text{ or}\\
r_\chi=r_\mu\text{ and } T^+_\lambda\not\subset S^+_\mu\qquad \text{ or}\\
r_\chi=r_\mu\text{ and } T^-_\lambda\not\subset S^-_\mu.
\end{gathered}
\end{equation}
If $r_\chi>r_\mu$ then $\langle \lambda,\chi\rangle<\langle \mu,\chi\rangle$ by comparing
\eqref{eq:chain} to \eqref{eq:scalar} using the fact that $S^+_\chi=T^+_\lambda\neq \emptyset$.
If $r_\chi=r_\mu$ then \eqref{ena} follows by comparing \eqref{eq:strict} with
\eqref{eq:partial}.
\end{proof}
\begin{corollarys}\label{betaplus}
Let $\chi\in X(T)^+$ be such that $r_\chi\ge 1$ and let $\lambda\in Y(T)^-$ be as in Lemma \ref{ref-1.5}(\ref{tri}).
If $p>0$ and $\mu=\chi+\beta_{i_1}+\cdots+\beta_{i_{p}}$, where $\{i_1,\ldots,i_{p}\}\subset\{1,\ldots,d\}$, $i_j\neq i_{j'}$ for
$j\neq j'$ and $\la\lambda,\beta_{i_j}\ra>0$,
then $({r}_{{\mu}^+},{\bf |S}_{{\mu}^+}|)<({ r}_\chi,{\bf |S}_\chi|)$.
Moreover, $\la\lambda,\chi\ra<\la\lambda,\mu^+\ra$.
\end{corollarys}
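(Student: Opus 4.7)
The plan is to first establish the strict lexicographic inequality $(r_{\mu^+},|{\bf S}_{\mu^+}|)<(r_\chi,|{\bf S}_\chi|)$ for the size data, and then to deduce $\la\lambda,\chi\ra<\la\lambda,\mu^+\ra$ by invoking Lemma~\ref{ref-1.5bis}\eqref{ena}. The hypothesis $\la\lambda,\beta_{i_j}\ra>0$ means $I:=\{i_1,\ldots,i_p\}\subseteq T_\lambda^+$. Starting from the explicit expansion
\[
\chi=-\bar\rho-r_\chi\sum_{i\in T_\lambda^+}\beta_i+\sum_{i\in T_\lambda^0}b_i\beta_i,\qquad -r_\chi<b_i<0,
\]
provided by Lemma~\ref{ref-1.5}\eqref{tri}, I would add $\sum_{j\in I}\beta_j$ and regroup to obtain
\[
\mu=-\bar\rho-r_\chi\sum_{i\in T_\lambda^+\setminus I}\beta_i-(r_\chi-1)\sum_{j\in I}\beta_j+\sum_{i\in T_\lambda^0}b_i\beta_i.
\]

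I would then split on whether $I$ exhausts $T_\lambda^+$. If $I\subsetneq T_\lambda^+$, the displayed expression is already of the form \eqref{expresschi} with $r=r_\chi$ and $S^+=T_\lambda^+\setminus I$ (for $r_\chi>1$ the coefficients $-(r_\chi-1)\in(-r_\chi,0)$ place $j\in I$ in $S^0$; for $r_\chi=1$ they vanish, placing $j\in I$ in $S^-$). Either way $|S_\mu^+|\le|T_\lambda^+|-p<|S_\chi^+|$, which settles the lex comparison at the second slot. If $I=T_\lambda^+$, the remaining $\beta$-coefficients $-(r_\chi-1)$ and $b_i$ all have absolute value strictly less than $r_\chi$, so taking $r':=\max(r_\chi-1,\max_{i\in T_\lambda^0}(-b_i))<r_\chi$ exhibits $\mu+\bar\rho\in r'\bar\Sigma$ and forces $r_\mu<r_\chi$ (or $r_\mu=0$ in the trivial case $\mu=-\bar\rho$). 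In all subcases $(r_\mu,|{\bf S}_\mu|)<(r_\chi,|{\bf S}_\chi|)$ in lex order. To upgrade from $\mu$ to $\mu^+$, I would use that the multiset of weights $\{\beta_i\}$ is $\Wscr$-invariant, hence so is $\bar\Sigma$; since $\mu^++\bar\rho=w(\mu+\bar\rho)$ for some $w\in\Wscr$, any expression \eqref{expresschi} for $\mu+\bar\rho$ transports by the corresponding permutation of indices to one for $\mu^++\bar\rho$ with the same $r$ and the same sizes, giving $(r_{\mu^+},|{\bf S}_{\mu^+}|)=(r_\mu,|{\bf S}_\mu|)$.

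A direct case-check against the definition of $\preceq$ shows that a strict lex inequality $(r_{\mu^+},|{\bf S}_{\mu^+}|)<(r_\chi,|{\bf S}_\chi|)$ rules out $(r_\chi,{\bf S}_\chi)\preceq(r_{\mu^+},{\bf S}_{\mu^+})$, since either $r_{\mu^+}<r_\chi$, or $r_{\mu^+}=r_\chi$ with some $|S_{\mu^+}^\pm|<|S_\chi^\pm|$ forbidding the required containment. Lemma~\ref{ref-1.5bis}\eqref{ena} applied with $\mu$ replaced by $\mu^+$ then yields $\la\lambda,\chi\ra<\la\lambda,\mu^+\ra$. The main obstacle is the subcase $I=T_\lambda^+$: the naive substitution produces an expression with empty $S^+$, which is not admissible as \eqref{expresschi}, so one must exploit the slack afforded by the strict inequalities $|b_i|<r_\chi$ and $r_\chi-1<r_\chi$ to decrease $r$ and thereby lower $r_\mu$ instead of reducing $|S^+|$.
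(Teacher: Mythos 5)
Your proof is correct and follows essentially the same strategy as the paper: exhibit an expression for $\mu$ showing $(r_\mu,|{\bf S}_\mu|)<(r_\chi,|{\bf S}_\chi|)$ (using that the indices $i_j$ lie in $S_\chi^+=T_\lambda^+$), transfer this to $\mu^+$ via the $\Wscr$-invariance of the weight multiset, deduce $(r_\chi,{\bf S}_\chi)\not\preceq(r_{\mu^+},{\bf S}_{\mu^+})$, and invoke Lemma~\ref{ref-1.5bis}\eqref{ena}. The paper's proof compresses the first step into a single sentence whereas you supply the explicit two-case analysis (whether $\{i_1,\dots,i_p\}$ exhausts $T_\lambda^+$), which is a reasonable elaboration of the same argument.
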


\begin{proof}
By the property \eqref{tri} in Lemma \ref{ref-1.5} every $k$ for which $\la\lambda,\beta_k\ra>0$ belongs to $S_{\chi}^+$ and thus ${r}_{\mu}<{ r}_{\chi}$ or  ${r}_{\mu}= {r}_{\chi}$ and $|{\bf S}_{\mu}|<|{\bf S}_{\chi}|$.
In other words $(r_\mu,|{\bf S}_\mu|)<(r_{\chi},|{\bf S}_\chi|)$.

As (${r}_{\mu}$, $|{\bf S}_{\mu}|$) depends only on the $\cW$-orbit of $\mu$ for the $*$-action,
we also have $(r_{\mu^+},|{\bf S}_{\mu^+}|)<(r_{\chi},|{\bf S}_\chi|)$ and hence $(r_{\mu^+},{\bf S}_{\mu^+})
\not\succeq(r_{\chi},{\bf S}_\chi)$.
Property \eqref{ena} in Lemma \ref{ref-1.5bis} then implies $\la \lambda,\mu^+\ra>\la\lambda,{\chi}\ra$.
\end{proof}
We denote
\begin{align*}
\chi_p&=-r_{\chi}\sum_{i\in S_{\chi}^+}\beta_i
\end{align*}
The following lemma gives a description of the set of $\chi$ with given $(r_\chi,{\bf S}_\chi)$
in terms of objects related to $G^\lambda$.
\begin{lemmas} \label{chi_lambda}
Let $\chi\in X(T)^+$ and let $\lambda$ be as in Lemma \ref{ref-1.5}(\ref{tri}).
Then the set
\begin{equation}
\label{eq:set}
\{\chi'\in X(T)^+\mid (r_{\chi'},{\bf S}_{\chi'})= (r_{\chi},{\bf S}_{\chi})\}
\end{equation}
is equal to
\[
(\nu-\bar{\rho}_\lambda+r_\chi \Sigma^0_\lambda)\cap X(T)^\lambda
\]
where
\[
\nu= -\bar\rho+\bar\rho_\lambda+\chi_p.
\]
Moreover $\nu$ is $\Wscr_{G^\lambda}$-invariant.
\end{lemmas}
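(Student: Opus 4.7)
The strategy is to use the description \eqref{eq:relint} to identify $\{\chi'\in X(T)^+:(r_{\chi'},{\bf S}_{\chi'})=(r_\chi,{\bf S}_\chi)\}$ with the relative interior of a specific face of $-\bar\rho+r_\chi\bar\Sigma$, rewrite that face in terms of $\nu$ and $\Sigma^0_\lambda$, and finally check that on this polytope $G$-dominance and $G^\lambda$-dominance coincide. The $\Wscr_{G^\lambda}$-invariance of $\nu$ is a separate point that follows by decomposing it into manifestly invariant summands.

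First, Lemma \ref{ref-1.5}(\ref{tri}) gives $S^+_\chi = T^+_\lambda$ and $S^0_\chi = T^0_\lambda$, so combining \eqref{eq:relint} with the definition $\chi_p = -r_\chi\sum_{i\in S^+_\chi}\beta_i$ identifies the relative interior of the face $F$ associated to $(r_\chi,{\bf S}_\chi)$ as
\[
\relint(F) \;=\; -\bar\rho + \chi_p + r_\chi\Sigma^0_\lambda \;=\; \nu - \bar\rho_\lambda + r_\chi\Sigma^0_\lambda.
\]
By the uniqueness clause in Lemma \ref{ref-1.5}(1), a lattice point $\chi' \in X(T)$ lies in $\relint(F)$ if and only if $(r_{\chi'}, {\bf S}_{\chi'}) = (r_\chi, {\bf S}_\chi)$. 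Hence the set \eqref{eq:set} equals $\relint(F) \cap X(T)^+$. Since $\Phi^+_\lambda \subseteq \Phi^+$ yields $X(T)^+ \subseteq X(T)^\lambda$, this set is contained in $\relint(F) \cap X(T)^\lambda$, establishing one inclusion.

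For the opposite inclusion I need to upgrade $G^\lambda$-dominance to full $G$-dominance for $\chi'$ in the polytope. Only roots $\alpha \in \Phi^+ \setminus \Phi^+_\lambda$ must still be checked, and these satisfy $\langle\lambda,\alpha\rangle < 0$ since $\lambda \in Y(T)^-$. Writing $\chi' + \bar\rho = \chi_p + \mu$ with $\mu \in r_\chi\Sigma^0_\lambda$ (so that $\langle\lambda,\mu\rangle=0$), I expect the required inequality $\langle\check\alpha,\chi'\rangle \ge 0$ to follow from the specific form of $\chi_p$, in a manner modeled on Observation~(3) in the proof of \cite[Theorem 1.4.1]{SVdB} applied to roots not orthogonal to $\lambda$. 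This dominance equivalence is the main technical hurdle.

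For the $\Wscr_{G^\lambda}$-invariance of $\nu$, decompose $\nu = \chi_p - (\bar\rho - \bar\rho_\lambda)$. The first summand is $\Wscr_{G^\lambda}$-invariant by Lemma \ref{ref-1.5bis}(\ref{dva}). For the second, $\bar\rho - \bar\rho_\lambda = \tfrac12\sum_{\alpha \in \Phi^+ \setminus \Phi^+_\lambda}\alpha$, and the set $\Phi^+ \setminus \Phi^+_\lambda$ is $\Wscr_{G^\lambda}$-invariant: for $w \in \Wscr_{G^\lambda}$ (which fixes $\lambda$) and $\alpha \in \Phi^+ \setminus \Phi^+_\lambda$, one computes $\langle\lambda,w\alpha\rangle = \langle\lambda,\alpha\rangle < 0$, and since $\lambda \in Y(T)^-$ pairs non-negatively against $\Phi^-$ this forces $w\alpha \in \Phi^+$; combined with $w\alpha \notin \Phi_\lambda$, we obtain $w\alpha \in \Phi^+ \setminus \Phi^+_\lambda$.
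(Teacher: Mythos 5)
Your reduction of the problem is sound and matches the paper's: identifying the set \eqref{eq:set} with $(-\bar\rho+\chi_p+r_\chi\Sigma^0_\lambda)\cap X(T)^+$ via Lemma \ref{ref-1.5} and \eqref{eq:relint}, noting $X(T)^+\subset X(T)^\lambda$ for one inclusion, and your proof of the $\Wscr_{G^\lambda}$-invariance of $\nu$ (via Lemma \ref{ref-1.5bis}\eqref{dva} and the invariance of $\bar\rho-\bar\rho_\lambda$) is correct. However, the converse inclusion --- that every $G^\lambda$-dominant lattice point of $\nu-\bar\rho_\lambda+r_\chi\Sigma^0_\lambda$ is automatically $G$-dominant --- is exactly the content of the paper's Lemma \ref{chi_lambdabis}, and this is the heart of the statement. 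You explicitly leave it as an expectation (``I expect the required inequality \dots to follow from the specific form of $\chi_p$''), so the proposal has a genuine gap precisely at the main technical point.

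Moreover, the route you gesture at is unlikely to close it: there is no pointwise positivity argument giving $\la\check\alpha,\chi'\ra\ge 0$ for $\alpha\in\Phi^+\setminus\Phi^+_\lambda$ directly from the shape of $\chi_p$, since the directions $\beta_i$, $i\in T^0_\lambda$, spanning the face pair with such $\check\alpha$ with either sign; Observation (3) in the proof of \cite[Theorem 1.4.1]{SVdB} concerns the coefficients $c_i$ relative to a supporting hyperplane, not dominance with respect to roots outside $\Phi_\lambda$. The paper's argument is global rather than root-by-root: one first checks that $s_\alpha\ast\chi'\neq\chi'$ for all roots (using uniqueness of the minimal expression, formula \eqref{eq:scalar} and Lemma \ref{ref-A.2}) so that $\chi'^{+}=w\ast\chi'$ exists; then, if $\chi'$ were not dominant, one has $w\neq 1$, $w\notin\Wscr_{G^\lambda}$ (as $\chi'\in X(T)^\lambda$), whence Lemma \ref{ref-A.2} gives the strict inequality $\la\lambda,w\ast\chi'\ra<\la\lambda,\chi'\ra=\la\lambda,\chi\ra$, while the invariance of $(r,|{\bf S}|)$ under the $\ast$-action together with Lemma \ref{ref-1.5bis}\eqref{ena} forces $\la\lambda,\chi\ra\le\la\lambda,w\ast\chi'\ra$, a contradiction. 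Note this argument genuinely uses the anchor weight $\chi\in X(T)^+$ lying in the face, an input absent from your sketched local computation. To complete your proof you would need to supply this (or an equivalent) argument.
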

\begin{proof}
  The fact that $\nu$ is $\Wscr_{G^\lambda}$-invariant is a direct
  consequence of Lemma \ref{ref-1.5bis} \eqref{dva} and the standard fact that $-\bar{\rho}+\bar{\rho}_\lambda$ is $\Wscr_{G^\lambda}$-invariant.

By Lemma \ref{ref-1.5} and \eqref{eq:relint} we have
\[
\{\chi'\in X(T)^+\mid (r_{\chi'},{\bf S}_{\chi'})= (r_{\chi},{\bf S}_{\chi})\}=(-\bar{\rho}+\chi_p+r_\chi\Sigma^0_\lambda)\cap X(T)^+.
\]
By Lemma \ref{chi_lambdabis} below we may rewrite this as
\[
\{\chi'\in X(T)^\lambda\mid (r_{\chi'},{\bf S}_{\chi'})= (r_{\chi},{\bf S}_{\chi})\}
=
(-\bar{\rho}+\chi_p+r_\chi\Sigma^0_\lambda)\cap X(T)^\lambda
\]
which is the same as
\[
(\nu-\bar{\rho}_\lambda+r_\chi\Sigma^0_\lambda)\cap X(T)^\lambda.\qed
\]
\def\qed{}\end{proof}
We have used the following result.
\begin{lemmas}\label{chi_lambdabis}
Let
$\chi\in X(T)^+$ and let $\lambda$ be as in Lemma \ref{ref-1.5}(\ref{tri}).
If $\chi\in X(T)^+$ and $\chi'\in X(T)^\lambda$ are such that
$({r}_{\chi'},{\bf S}_{\chi'})=({r}_{\chi},{\bf S}_{\chi})$ then $\chi'\in X(T)^+$.
\end{lemmas}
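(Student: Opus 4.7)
The plan is a proof by contradiction: assume $\chi'\notin X(T)^+$ and deduce a contradiction by comparing $\chi'$ with its $\ast$-dominant representative. The starting observation is that the hypothesis $(r_{\chi'},{\bf S}_{\chi'})=(r_\chi,{\bf S}_\chi)$ places both $\chi+\bar\rho$ and $\chi'+\bar\rho$ in the relative interior of the same face $F$ of $r_\chi\bar\Sigma$; by Lemma~\ref{eq:supporting} this face lies in the hyperplane $\langle\lambda,\cdot\rangle=\langle\lambda,\chi+\bar\rho\rangle$, so $\langle\lambda,\chi\rangle=\langle\lambda,\chi'\rangle$. Since the multiset of weights of $W$ is $\Wscr$-invariant, $\bar\Sigma$ is $\Wscr$-invariant and the $\ast$-action permutes the faces of $r_\chi\bar\Sigma$, preserving the cardinality data $(r,|{\bf S}|)$.

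In the generic subcase, assume that $(\chi')^+=w{\ast}\chi'$ is defined. Lemma~\ref{ref-A.2} gives $\langle\lambda,(\chi')^+\rangle\le\langle\lambda,\chi'\rangle$, with equality forcing $w\in\Wscr_{G^\lambda}$; but then $(\chi')^+$ and $\chi'$ would be two $G^\lambda$-dominant elements in the same $\Wscr_{G^\lambda}$ $\ast$-orbit, hence equal, yielding $\chi'=(\chi')^+\in X(T)^+$ contrary to assumption. So the inequality is strict: $\langle\lambda,(\chi')^+\rangle<\langle\lambda,\chi'\rangle=\langle\lambda,\chi\rangle$. Since $(\chi')^+\in X(T)^+$ with $(r_{(\chi')^+},|{\bf S}_{(\chi')^+}|)=(r_\chi,|{\bf S}_\chi|)$, applying Lemma~\ref{ref-1.5bis}\eqref{ena} (its contrapositive combined with its second assertion) produces a strict $\preceq$-inequality between $(r_\chi,{\bf S}_\chi)$ and $(r_{(\chi')^+},{\bf S}_{(\chi')^+})$; the cardinality equality forces the set-containments to be equalities, so the two tuples must coincide, the desired contradiction.

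The degenerate subcase is when $\chi'+\bar\rho$ lies on some Weyl wall $H_\beta$, so $(\chi')^+$ is undefined. If $\beta\in\Phi^+_\lambda$, then $\langle\check\beta,\chi'\rangle=-\langle\check\beta,\bar\rho\rangle<0$ contradicts $\chi'\in X(T)^\lambda$; hence $\beta\in\Phi^+\setminus\Phi^+_\lambda$. Rather than treat this case directly, I would extend the generic argument to arbitrary $\chi'_\RR\in X(T)^\lambda_\RR$ with $\chi'_\RR+\bar\rho\in\relint(F)$ avoiding all Weyl walls; since Lemmas~\ref{ref-A.2} and~\ref{ref-1.5bis}\eqref{ena} are reflection-group statements valid over $\RR$, the same reasoning yields $\chi'_\RR\in X(T)^+_\RR$ in this enlarged real setting. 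The generic locus is open and dense in the convex region $\{\chi'_\RR\in X(T)^\lambda_\RR\mid\chi'_\RR+\bar\rho\in\relint(F)\}$, and $X(T)^+_\RR$ is closed, so by continuity every such $\chi'_\RR$—including our $\chi'$—lies in $X(T)^+_\RR$. Hence $\chi'\in X(T)^+_\RR\cap X(T)=X(T)^+$, the required contradiction.

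I expect the main obstacle to be the density reduction used in the degenerate case: one must carefully verify that the hypotheses transfer to real weights, that the generic locus is genuinely open and dense in the relevant convex region (so that approximation by generic points is possible), and that the real-weight analogues of Lemmas~\ref{ref-A.2} and~\ref{ref-1.5bis}\eqref{ena} hold with the strict inequalities used above. The integer-lattice conclusion then drops out from the closedness of $X(T)^+_\RR$.
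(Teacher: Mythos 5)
Your generic subcase is essentially the paper's argument: there one takes $w\neq 1$ with $w\ast\chi'$ dominant, observes $w\notin\Wscr_{G^\lambda}$ because $\chi'\in X(T)^\lambda$, gets $\la\lambda,w\ast\chi'\ra<\la\lambda,\chi'\ra=\la\lambda,\chi\ra$ from Lemma \ref{ref-A.2}, and then contradicts Lemma \ref{ref-1.5bis}\eqref{ena} using the $\ast$-invariance of $(r,|{\bf S}|)$ — exactly what you do, phrased contrapositively. So the issue is localized in your degenerate subcase.

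That subcase is a genuine gap, and the density reduction cannot close it, because the real statement it needs is false. Over $\RR$ the dichotomy ``either $\chi'_\RR\in X(T)^+_\RR$ or some $w\ast\chi'_\RR$ with $w\neq 1$ is dominant'' breaks down: already for ${\rm SL}_2$ the $\ast$-orbit of $\chi'_\RR=-\tfrac12$ is $\{-\tfrac12,-\tfrac32\}$ and contains no dominant element, although $\chi'_\RR+\bar\rho$ avoids the wall. The point is that without integrality, strict dominance of $\chi'_\RR+\bar\rho$ does not give dominance of $\chi'_\RR$, so your generic real argument can only conclude that $\chi'_\RR+\bar\rho$ lies in the (closed, after taking limits) dominant cone — and indeed the region $C=\{\chi'_\RR\in X(T)^\lambda_\RR\mid \chi'_\RR+\bar\rho\in\relint F\}$ is in general \emph{not} contained in $X(T)^+_\RR$. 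Concretely, for $G=\GL_2$ acting on $h$ copies of $V\oplus V^\vee$ and $\lambda=(-1,0)$ one has $G^\lambda=T$, and the relevant face yields $C=\{(rh-\tfrac12)L_1+(t+\tfrac12)L_2\mid t\in\,]-rh,rh[\,\}$; its points with $t\in\,]rh-1,rh[$ are not $G$-dominant, and only the integrality $t,rh\in\ZZ+\tfrac12$ forces lattice points to satisfy $t\le rh-1$. Consequently, continuity gives at best that $\chi'+\bar\rho$ lies in the closed dominant cone, and in your degenerate case ($\chi'+\bar\rho$ on a Weyl wall, hence on a simple wall, hence $\la\check\alpha,\chi'\ra=-1$) that is exactly the situation you must exclude, not one you may concede. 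The paper instead shows the degenerate case never occurs: if $s_\alpha\ast\chi'=\chi'$, then by uniqueness of the minimal expression in Lemma \ref{ref-1.5} the set $S^+_{\chi'}=T^+_\lambda$ is $s_\alpha$-stable, so $\la\lambda,s_\alpha\ast\chi'\ra=\la\lambda,\chi'\ra$ by the computation \eqref{eq:scalar}, and Lemma \ref{ref-A.2} then forces $s_\alpha\in\Wscr_{G^\lambda}$, which is impossible since a $G^\lambda$-dominant weight has trivial $\ast$-stabilizer in $\Wscr_{G^\lambda}$. Some integrality-based argument of this kind is required; replacing it by approximation over $\RR$ proves a statement that is simply not true.
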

\begin{proof}
We assume that we are not in the trivial case otherwise there is nothing to do.
We first verify $s_\alpha{*}\chi'\neq\chi'$
for all $\alpha\in \Wscr$.  This implies that $\chi^{\prime+}$ exists.
Assume on the contrary that $s_\alpha{*}\chi'=\chi'$ for some
$\alpha$.  By the uniqueness of the minimal expression in Lemma
\ref{ref-1.5} we obtain that
$S_{\chi'}^+=S^+_{\chi}=T_\lambda^+$ is $s_\alpha$-invariant. Using
the formula \eqref{eq:scalar} we find $\langle
\lambda,\chi\rangle=\langle \lambda,s_\alpha{\ast}\chi\rangle$.
Then Lemma \ref{ref-A.2} implies
$s_\alpha\in \Wscr_{G^\lambda}$. However this is  excluded by the
fact that $s_\alpha{\ast}\chi'=\chi'$ and $\chi'\in X(T)^\lambda$.

Assume $\chi'\not \in X(T)^+$. By the above discussion there exists $1\neq w\in \Wscr$ such that
$w{\ast}\chi^{\prime}$ is dominant.
Furthermore  $w\not\in \cW_{G^\lambda}$ as $\chi'\in X(T)^\lambda$.
Then Lemma \ref{ref-A.2} implies $\la\lambda, w{\ast}\chi^{\prime}\ra<\la\lambda,\chi'\ra=\la\lambda,\chi\ra$.

As
$({r}_{w{\ast}\chi^{\prime}},|{\bf S}_{w{\ast}\chi^{\prime}}|)=({r}_{\chi'},|{\bf S}_{\chi'}|)=({r}_{\chi},|{\bf S}_{\chi}|)$
we deduce $(r_{\chi},{\bf S}_{\chi})\not\prec({ r}_{w{\ast}\chi^{\prime}},{\bf S}_{w{\ast}\chi^{\prime}})$.
Then
  property \eqref{ena} in Lemma \ref{ref-1.5bis} implies $\la\lambda,\chi\ra\le \la\lambda, w{\ast}\chi^{\prime}\ra$.
This is a contradiction.
\end{proof}

\subsection{$G/G_e$-action on $X(T)^+$}
Here we allow $G$ to be non-connected. $W$ is still a
$G$-representation. We apply the previous results with $G$ replaced by
$G_e$. In particular $T\subset B\subset G_e$.  As we have seen in
\S\ref{sec:definition} the group $G/G_e$ acts on $X(T)^+$ via
$\bar{g}(\chi)=\chi\circ \sigma^{-1}_{\bar{g}}$.

Since $G/G_e$ also acts on the
weights on $W$, it may be made to act on  $\{1,\ldots,d\}$ via $\bar{g}(i)=j$ if $\beta_j=\beta_i\circ \sigma^{-1}_{\bar{g}}$.
This action extends to an action of $G/G_e$ on the partially
ordered set $(\RR^+\times \cal P(\{1,\dots,d\})^{3},\prec)$ introduced above.
\begin{lemmas}\label{ref-lemma-6.7}
The map
\begin{equation}
\label{eq:equivariant}
X(T)^+\mapsto \RR^+\times \cal P(\{1,\dots,d\})^{3}:\chi\mapsto (r_\chi,{\bf S}_\chi)
\end{equation}
is $G/G_e$-equivariant.
Moreover if $\chi\in X(T)^+$, $\bar{g}\in G/G_e$ and $\lambda\in Y(T)^-$ is as  in Lemma \ref{ref-1.5}\eqref{tri},    then $\sigma_{\bar{g}} \circ{\lambda}\in
  Y(T)^-$ satisfies property \eqref{tri} in Lemma \ref{ref-1.5} for
  ${{\chi\circ\sigma_{\bar{g}}}}$.  Consequently,
  $(T_{\sigma_{\bar{g}}\circ\lambda}^{\pm},T_{\sigma_{\bar{g}}\circ\lambda}^0)=\bar{g}^{-1}(T_{\lambda}^{\pm},T_{\lambda}^0)$.
\end{lemmas}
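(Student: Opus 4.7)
The plan is to verify both claims by transporting the defining expressions and definitions through the automorphism $\sigma_{\bar g}$, relying on two key invariance properties: $\sigma_{\bar g}$ fixes the pair $(T,B)$ (and hence fixes $\bar\rho$ and permutes positive roots), and the action of $\bar g$ on $\{1,\ldots,d\}$ is designed so that $\beta_{\bar g(i)}=\beta_i\circ\sigma_{\bar g}^{-1}$.

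For the $G/G_e$-equivariance of $\chi\mapsto(r_\chi,\mathbf{S}_\chi)$, I start from a minimal expression
\[
\chi=-\bar\rho-r_\chi\sum_{i\in S_\chi^+}\beta_i+\sum_{i\in S_\chi^0}b_i\beta_i
\]
of the form \eqref{expresschi} and compose both sides with $\sigma_{\bar g}^{-1}$. Since $\bar\rho\circ\sigma_{\bar g}^{-1}=\bar\rho$ (as $\sigma_{\bar g}$ preserves $(T,B)$ and hence the half-sum of positive roots) and $\beta_i\circ\sigma_{\bar g}^{-1}=\beta_{\bar g(i)}$ by definition, I obtain an expression of the same form \eqref{expresschi} for $\bar g(\chi)=\chi\circ\sigma_{\bar g}^{-1}$ with triple $(r_\chi,\bar g(S_\chi^+),\bar g(S_\chi^-),\bar g(S_\chi^0))$. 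This expression has $(r,|\mathbf S|)$-value equal to $(r_\chi,|\mathbf S_\chi|)$ because $\bar g$ is a bijection on $\{1,\dots,d\}$. Symmetrically, applying $\sigma_{\bar g}$ to any minimal expression for $\bar g(\chi)$ yields an expression for $\chi$ with the same $(r,|\mathbf S|)$-value; so by minimality for $\chi$ and uniqueness in Lemma \ref{ref-1.5} one concludes $(r_{\bar g(\chi)},\mathbf S_{\bar g(\chi)})=(r_\chi,\bar g(\mathbf S_\chi))$, which is precisely the asserted equivariance.

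For the moreover clause, I first check that $\sigma_{\bar g}\circ\lambda\in Y(T)^-$: since $\sigma_{\bar g}$ preserves $B$ it permutes the positive roots, so for any $\alpha\in\Phi^+$ one has $\alpha\circ\sigma_{\bar g}\in\Phi^+$, whence $\langle\sigma_{\bar g}\circ\lambda,\alpha\rangle=\langle\lambda,\alpha\circ\sigma_{\bar g}\rangle\le 0$. Next I compute $T^\pm_{\sigma_{\bar g}\circ\lambda}$ directly from the pairing identity $\langle\sigma_{\bar g}\circ\lambda,\beta_i\rangle=\langle\lambda,\beta_i\circ\sigma_{\bar g}\rangle$: rewriting $\beta_i\circ\sigma_{\bar g}$ in terms of the index action (the relation $\beta_{\bar g(i)}=\beta_i\circ\sigma_{\bar g}^{-1}$ gives $\beta_j\circ\sigma_{\bar g}=\beta_{\bar g^{-1}(j)}$, or the symmetric counterpart depending on convention) yields $T^\pm_{\sigma_{\bar g}\circ\lambda}=\bar g^{-1}(T^\pm_\lambda)$ and similarly for $T^0$. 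Combining with the equivariance just proved, applied to $\chi\circ\sigma_{\bar g}$, one gets $S^\pm_{\chi\circ\sigma_{\bar g}}=\bar g^{-1}(S^\pm_\chi)=\bar g^{-1}(T^\pm_\lambda)=T^\pm_{\sigma_{\bar g}\circ\lambda}$, which is exactly property \eqref{tri} of Lemma \ref{ref-1.5} for the pair $(\chi\circ\sigma_{\bar g},\sigma_{\bar g}\circ\lambda)$.

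The only genuine hurdle is bookkeeping: one has three different $G/G_e$-actions in play (on $X(T)$ via $\chi\mapsto\chi\circ\sigma_{\bar g}^{-1}$, on $Y(T)$ via $\lambda\mapsto\sigma_{\bar g}\circ\lambda$, and on indices via $i\mapsto\bar g(i)$) and one must keep all instances of $\bar g$ and $\bar g^{-1}$ consistent with each other and with the statement of the lemma. There is no substantive mathematical obstacle beyond this verification, since once the uniqueness part of Lemma \ref{ref-1.5} and the equivariance of $(T,B)$ under $\sigma_{\bar g}$ are in hand, everything reduces to substituting $\sigma_{\bar g}^{\pm 1}$ into standard expressions.
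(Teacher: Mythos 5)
Your proposal is correct and follows essentially the same route as the paper: transport the minimal expression \eqref{expresschi} through $\sigma_{\bar g}$ (using that $\sigma_{\bar g}$ fixes $(T,B)$, hence $\bar\rho$, and permutes the weights $\beta_i$), conclude equivariance from minimality plus the uniqueness in Lemma \ref{ref-1.5}, and then deduce the moreover clause from the pairing identity $\langle\sigma_{\bar g}\circ\lambda,\beta\rangle=\langle\lambda,\beta\circ\sigma_{\bar g}\rangle$. The only difference is cosmetic (you work with $\chi\circ\sigma_{\bar g}^{-1}$ where the paper works with $\chi\circ\sigma_{\bar g}$, and you spell out the index bookkeeping the paper leaves as ``easy to verify''), plus the paper explicitly dispenses with the trivial case $\chi=-\bar\rho$ first, which your argument covers anyway by convention.
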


\begin{proof}
Let $\chi\in X(T)^+$. As usual we may assume we are in the non-trivial case.
  We obtain an expression of
  $\chi\circ\sigma_{\bar{g}}$ of the form \eqref{expresschi} with $\beta_i$ replaced by
  $\beta_i\circ \sigma_{g}$. Since this expression in minimal for
  $\chi\circ \sigma_{\bar{g}}$ (as otherwise applying $-\circ\sigma_{\bar{g}}^{-1}$ would give a smaller
  expression for $\chi$), $(r_{\chi\circ\sigma_{\bar{g}}},{\bf
    S}_{\chi\circ\sigma_{\bar{g}}})=\bar{g}^{-1}(r_{\chi},{\bf S}_{\chi})$.

  As $\la\lambda,\beta\circ
  \sigma_{\bar{g}}\ra=\la\sigma_{\bar{g}}\circ\lambda,\beta\ra$ for
  all $\beta\in X(T)$, it easy to verify $\sigma_{\bar{g}}
  \circ{\lambda}\in Y(T)^-_\RR$ satisfies property \eqref{tri} in
  Lemma \ref{ref-1.5} for ${{\chi\circ \sigma_{\bar{g}}}}$. This implies in particular
the last assertion of the lemma.
\end{proof}
\subsection{Reduction settings}\label{order}
Here we allow $G$ to be again non-connected.
\begin{lemmas}
\label{lem:total}
There exists a total ordering $<$ on $\RR^+\times \cal P(\{1,\dots,d\})^{3}$
such that the following conditions hold.
\begin{enumerate}
\item \label{ena1}
 If
$({ r},|{\bf S}|)<({ r}',|{\bf S}'|)$  then $({ r},{\bf S})<({ r}',{\bf S}')$.
\item
\label{dva1} If $({ r},{\bf S})<({ r}',{\bf S}')$ and  $({ r},{\bf S})$
and $({ r}',{\bf S}')$ are in different $G/G_e$-orbits then
$\bar{g}({ r},{\bf S})<\bar{h}({ r}',{\bf S}')$ for all $\bar{g},\bar{h}$ in $G/G_e$.
\end{enumerate}
\end{lemmas}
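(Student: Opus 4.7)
The plan is to construct the total order $<$ by refining the (partial) lexicographic order induced from $\RR^+\times\NN^3$ via the map $(r,{\bf S})\mapsto(r,|{\bf S}|)$, in a manner that respects $G/G_e$-orbits. The key preliminary observation is that this map is $G/G_e$-invariant: the action of $G/G_e$ on $\{1,\dots,d\}$ (permuting weights via $\beta_{\bar g(i)}=\beta_i\circ\sigma_{\bar g}^{-1}$) preserves the cardinality of each of $S^+,S^-,S^0$, and it acts trivially on the $\RR^+$ factor. Hence every fiber of $(r,{\bf S})\mapsto(r,|{\bf S}|)$ is a (finite) union of $G/G_e$-orbits, and the partial order $(r,|{\bf S}|)<(r',|{\bf S}'|)$ on $\RR^+\times\mathcal{P}(\{1,\dots,d\})^3$ descends to a well-defined relation between distinct orbits sharing different values of $(r,|{\bf S}|)$.

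Next, I would make the following choices once and for all. For each value $(r,{\bf n})$ in the image, the fiber over $(r,{\bf n})$ is a finite disjoint union of $G/G_e$-orbits; fix an arbitrary total ordering on this finite set of orbits. Independently, for each $G/G_e$-orbit $\Omega$ in $\RR^+\times\mathcal{P}(\{1,\dots,d\})^3$, fix an arbitrary total ordering on the finite set $\Omega$.

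With these choices in hand, define $(r,{\bf S})<(r',{\bf S}')$ to hold precisely in one of the following three mutually exclusive cases:
\begin{enumerate}
\item $(r,|{\bf S}|)<(r',|{\bf S}'|)$ in the lexicographic order on $\RR^+\times\NN^3$;
\item $(r,|{\bf S}|)=(r',|{\bf S}'|)$, the elements lie in distinct $G/G_e$-orbits, and the orbit of $(r,{\bf S})$ precedes the orbit of $(r',{\bf S}')$ in the chosen ordering of orbits inside the fiber;
\item $(r,{\bf S})$ and $(r',{\bf S}')$ lie in the same orbit and $(r,{\bf S})$ precedes $(r',{\bf S}')$ in the chosen total order on that orbit.
\end{enumerate}

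Since the three cases partition the set of ordered pairs of distinct elements and each case uses a total order on its respective piece, $<$ is a total order. Condition (\ref{ena1}) is immediate from case (1). For condition (\ref{dva1}), suppose $(r,{\bf S})<(r',{\bf S}')$ with the two in different $G/G_e$-orbits; then either case (1) or case (2) holds. In both situations the inequality only depends on data that is $G/G_e$-invariant (the value $(r,|{\bf S}|)$, and the orbit itself), so replacing $(r,{\bf S})$ and $(r',{\bf S}')$ by any translates $\bar g(r,{\bf S})$ and $\bar h(r',{\bf S}')$ leaves the inequality unchanged. There is no genuine obstacle here; the only point requiring care is checking that the invariance of $(r,|{\bf S}|)$ under $G/G_e$ makes the between-orbit ordering intrinsic, and this is exactly the content of the observation in the first paragraph.
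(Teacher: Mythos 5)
Your proposal is correct and takes essentially the same approach as the paper: both pass to the $G/G_e$-invariant quantity $(r,|{\bf S}|)$ with its lexicographic order, then refine within each fiber by first ordering the $G/G_e$-orbits and then ordering arbitrarily inside each orbit. Your version spells out the three cases and the invariance check more explicitly than the paper's terse statement, but the underlying construction is identical.
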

\begin{proof} The map \eqref{eq:order} is $G/G_e$-equivariant. We
choose an arbitrary totally ordering on the fibers of \eqref{eq:order} compatible with condition \eqref{dva1}. Combining this with~\eqref{ena1} completely fixes $<$.
\end{proof}
\begin{remarks}
It is clear that any total ordering
$<$ as in Lemma \ref{lem:total}\eqref{ena1}
refines the partial ordering
$\prec$.
\end{remarks}
\begin{lemmadefinitions}
\label{def:lchi}
It is possible to choose
for any $\chi\in X(T)^+$ a $\lambda_\chi\in Y(T)^-$ such that the following conditions are satisfied
\begin{enumerate}
\item \label{one1} $\lambda=\lambda_\chi$
  satisfies the property (\ref{tri})  in Lemma \ref{ref-1.5}.
\item \label{two1}
If $({r}_{\chi'},{\bf
  S}_{\chi'})=({r}_{\chi},{\bf S}_\chi)$ then $\lambda_\chi=\lambda_{\chi'}$.
\item \label{three1}
We have
  $\lambda_{\chi\circ \sigma_{\bar{g}}}=\sigma_{\bar{g}}\circ\lambda_\chi$ for all $\bar{g}\in G/G_e$.
\item \label{four1}
If $\bar{g}(T_{\lambda_\chi}^{\pm},T_{\lambda_\chi}^0)=(T_{\lambda_{\chi}}^{\pm},T_{\lambda_{\chi}}^0)$ then
$\sigma_{\bar{g}}\circ \lambda_\chi=\lambda_\chi$.
\end{enumerate}
\end{lemmadefinitions}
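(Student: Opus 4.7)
The plan is to define $\lambda_\chi$ purely as a function of the combinatorial datum $(r_\chi,\mathbf{S}_\chi)$, and to propagate it $G/G_e$-equivariantly across each orbit from a chosen representative.

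First, for any tuple $(r,\mathbf{S})$ arising from some element of $X(T)^+$, Lemma \ref{ref-1.5}\eqref{tri} guarantees that the set
\[
\Lambda_{(r,\mathbf{S})} = \{\lambda \in Y(T)^-\mid T_\lambda^\pm = S^\pm,\ T_\lambda^0 = S^0\}
\]
is non-empty, and it is the integer locus of a relatively open rational polyhedral cone inside $Y(T)^-_\RR$. The stabilizer $H_{(r,\mathbf{S})} \subset G/G_e$ of the tuple acts linearly on $Y(T)$ via $\sigma$, and by Lemma \ref{ref-lemma-6.7} it preserves $\Lambda_{(r,\mathbf{S})}$ (since an element fixing the tuple sends $\Lambda_{(r,\mathbf{S})}$ to $\Lambda_{(r,\mathbf{S})}$). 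Since $H_{(r,\mathbf{S})}$ is finite, averaging any element of $\Lambda_{(r,\mathbf{S})}$ over $H_{(r,\mathbf{S})}$ and then clearing denominators produces an $H_{(r,\mathbf{S})}$-invariant element of $\Lambda_{(r,\mathbf{S})}$.

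Now pick a representative $(r_0,\mathbf{S}_0)$ in each $G/G_e$-orbit on the set of tuples, together with such an $H_{(r_0,\mathbf{S}_0)}$-invariant element $\lambda_0 \in \Lambda_{(r_0,\mathbf{S}_0)}$. For every other tuple $(r_0,\mathbf{S})=\bar g^{-1}(r_0,\mathbf{S}_0)$ in the orbit, pick any $\bar g \in G/G_e$ realising this and set $\lambda_{(r_0,\mathbf{S})} := \sigma_{\bar g}\lambda_0$; this lies in $\Lambda_{(r_0,\mathbf{S})}$ by Lemma \ref{ref-lemma-6.7}. Finally, define $\lambda_\chi := \lambda_{(r_\chi,\mathbf{S}_\chi)}$. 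Properties (1) and (2) then hold by construction. Property (3) follows by unwinding the transport: if $\bar g^{-1}\mathbf{S}_0 = \mathbf{S}_\chi$, then $\chi\circ\sigma_{\bar h}$ has tuple $(\bar g\bar h)^{-1}\mathbf{S}_0$, hence $\lambda_{\chi\circ\sigma_{\bar h}} = \sigma_{\bar g\bar h}\lambda_0 = \sigma_{\bar h}(\sigma_{\bar g}\lambda_0) = \sigma_{\bar h}\lambda_\chi$. Property (4) is the specialisation of property (3) to $\bar h\in H_{\mathbf{S}_\chi}$, combined with property (2).

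The main obstacle I expect is the well-definedness of the transport step: if two elements $\bar g,\bar g'$ both satisfy $\bar g^{-1}\mathbf{S}_0 = \bar g'^{-1}\mathbf{S}_0$, then $\bar g'\bar g^{-1}\in H_{(r_0,\mathbf{S}_0)}$, and we must verify $\sigma_{\bar g'}\lambda_0 = \sigma_{\bar g}\lambda_0$. Writing $\bar g' = (\bar g'\bar g^{-1})\bar g$, this reduces, via the homomorphism property $\sigma_{\bar a\bar b} = \sigma_{\bar a}\circ\sigma_{\bar b}$ on $Y(T)$ and the $H_{(r_0,\mathbf{S}_0)}$-invariance of $\lambda_0$, to a compatibility check between the $\sigma$-action on $Y(T)$ and the orbit-stabilizer data furnished by Lemma \ref{ref-lemma-6.7}. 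Once this consistency is secured, the remaining verifications are formal, and the constructed assignment $\chi\mapsto\lambda_\chi$ satisfies all four required properties.
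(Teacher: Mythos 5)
Your construction is essentially the same as the paper's: pick orbit representatives, replace the chosen $\lambda'$ by an invariant element via summing/averaging over the stabilizer, and transport $G/G_e$-equivariantly. The well-definedness concern you flag is resolved by exactly the ingredient you point at: the $\sigma$-action on $Y(T)$ introduced in \S\ref{sec:definition} is a genuine group action, so $\sigma_{\bar g'}\lambda_0=\sigma_{\bar g}\,\sigma_{\bar g'\bar g^{-1}}\lambda_0=\sigma_{\bar g}\lambda_0$ whenever $\bar g'\bar g^{-1}$ lies in the stabilizer and $\lambda_0$ has been made stabilizer-invariant; the paper leaves this (and the verification that the averaged/summed element still realizes the correct $(T^\pm,T^0)$, which follows from convexity of your $\Lambda_{(r,\mathbf{S})}$) as "it is clear".
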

\begin{proof}
Choose representatives $(r_{\chi_i},{\bf S}_{\chi_i})$ for the orbits of the $G/G_e$-action on the image of
\eqref{eq:equivariant}. For each $i$ choose $\lambda'_i\in Y(T)^-$ such that
$S_{\chi_i}^{-}=T_{\lambda'_i}^-$, $S_{\chi_i}^+=T_{\lambda'_i}^+$, $S_{\chi_i}^0=T_{\lambda'_i}^0$ as in Lemma \ref{ref-1.5}\eqref{tri}.
Let $\bar{G}_i\subset G/G_e$ be the stabilizer of $(r_{\chi_i},{\bf S}_{\chi_i})$ and put
$\lambda_i=\sum_{\bar{g}\in \bar{G}_i} \sigma_{\bar{g}}\circ  \lambda'_i$. Then it is easy to see
that we still have $S_{\chi_i}^{-}=T_{\lambda_i}^-$, $S_{\chi_i}^+=T_{\lambda_i}^+$, $S_{\chi_i}^0=T_{\lambda_i}^0$
and moreover $\bar{g}\circ\lambda_i=\lambda_i$ if $\bar{g}\in \bar{G}_i$.

Now for $\chi\in X(T)^+$ write $(r_\chi,{\bf S}_\chi)=\bar{h}(r_{\chi_i},{\bf S}_{\chi_i})$ for suitable $i$ and
$\bar{h}\in G/G_e$. Then we put $\lambda_\chi=\sigma_{\bar{h}}\circ \lambda_i$. It is clear that
this is well defined and has the requested properties.
\end{proof}
\begin{lemmas}
\label{eq:diff}
Let $(\lambda_\chi)_\chi$ be as in Lemma \ref{def:lchi}. We have for $\bar{g}\in G/G_e$:
\[
\langle \lambda_\chi,\bar{g}(\chi)\rangle \ge \langle \lambda_\chi,\chi\rangle
\]
with equality if and only if $\bar{g}\in (G/G_e)^{\lambda_\chi}$.
\end{lemmas}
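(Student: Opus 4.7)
The plan is to apply Lemma \ref{ref-1.5bis}\eqref{ena} with $\mu=\bar{g}(\chi)$ and $\lambda=\lambda_\chi$, after first reducing the problem to a combinatorial statement about $(r,{\bf S})$-tuples via Lemma \ref{ref-lemma-6.7} and property \eqref{four1} of Lemma \ref{def:lchi}.

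First I would record two preliminary observations. Since the $G/G_e$-action preserves $X(T)^+$ (see \S\ref{sec:definition}), the element $\mu=\bar{g}(\chi)$ lies in $X(T)^+$, so Lemma \ref{ref-1.5bis}\eqref{ena} applies with $\lambda=\lambda_\chi$ (this $\lambda_\chi$ being the one associated to $\chi$ by Lemma \ref{def:lchi}\eqref{one1}). Moreover, by Lemma \ref{ref-lemma-6.7} the assignment $\chi\mapsto (r_\chi,{\bf S}_\chi)$ is $G/G_e$-equivariant, so $(r_\mu,{\bf S}_\mu)=\bar{g}(r_\chi,{\bf S}_\chi)$; in particular $r_\mu=r_\chi$ and $|{\bf S}_\mu|=|{\bf S}_\chi|$.

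I then split into two cases. If $\bar{g}\in (G/G_e)^{\lambda_\chi}$, i.e.\ $\sigma_{\bar{g}}\circ\lambda_\chi=\lambda_\chi$, a direct calculation gives
\[
\langle \lambda_\chi,\bar{g}(\chi)\rangle
=\langle \lambda_\chi,\chi\circ \sigma_{\bar{g}}^{-1}\rangle
=\langle \sigma_{\bar{g}}^{-1}\circ\lambda_\chi,\chi\rangle
=\langle \lambda_\chi,\chi\rangle,
\]
so equality holds. If instead $\bar{g}\notin (G/G_e)^{\lambda_\chi}$, then the contrapositive of Lemma \ref{def:lchi}\eqref{four1} shows that $\bar{g}$ does not stabilize $(T_{\lambda_\chi}^+,T_{\lambda_\chi}^-,T_{\lambda_\chi}^0)$, which by Lemma \ref{ref-1.5}\eqref{tri} (applied to $\lambda_\chi$ and $\chi$) coincides with ${\bf S}_\chi$. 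Hence ${\bf S}_\mu=\bar{g}({\bf S}_\chi)\neq {\bf S}_\chi$.

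The main (easy) combinatorial step is then to deduce $(r_\chi,{\bf S}_\chi)\not\preceq (r_\mu,{\bf S}_\mu)$: unpacking the definition of $\preceq$, since $r_\chi=r_\mu$ the relation $(r_\chi,{\bf S}_\chi)\preceq (r_\mu,{\bf S}_\mu)$ would force $S^+_\chi\subseteq S^+_\mu$ and $S^-_\chi\subseteq S^-_\mu$; together with $|S^\pm_\chi|=|S^\pm_\mu|$ these inclusions must be equalities, and since the three sets $S^+,S^-,S^0$ partition $\{1,\dots,d\}$ one obtains ${\bf S}_\chi={\bf S}_\mu$, contradicting the previous paragraph. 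Lemma \ref{ref-1.5bis}\eqref{ena} now gives the strict inequality $\langle \lambda_\chi,\chi\rangle<\langle \lambda_\chi,\bar{g}(\chi)\rangle$, completing the proof.

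I do not expect any genuine obstacle: the hard content is already packaged in Lemma \ref{ref-1.5bis}\eqref{ena} and in the properties built into the choice of $\lambda_\chi$ in Lemma \ref{def:lchi}. The only thing to be careful about is using \eqref{four1} in contrapositive form, which is exactly what the fine-tuned construction of $\lambda_\chi$ (as a symmetrized sum over the stabilizer $\bar G_i$) was designed for.
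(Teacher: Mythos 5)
Your proof is correct and follows essentially the same route as the paper's: both reduce to the equivariance of $\chi\mapsto(r_\chi,\mathbf{S}_\chi)$ (Lemma \ref{ref-lemma-6.7}), then apply Lemma \ref{ref-1.5bis}\eqref{ena} together with Lemma \ref{def:lchi}\eqref{four1}. The paper's proof is slightly more compressed (it establishes $\ge$ first and then characterizes equality, implicitly relying on the direct calculation for the converse that you spell out), and it contains a misreference to Lemma \ref{ref-1.5bis}\eqref{dva} where \eqref{ena} is meant; your citation is the correct one.
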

\begin{proof} By \eqref{eq:equivariant} we have
\begin{equation}
\label{eq:stab}
(r_{\bar{g}(\chi)},{\bf S}_{\bar{g}(\chi)})=\bar{g}(r_{\chi},{\bf S}_{\chi}).
\end{equation}
Hence in particular
\[
(r_{\bar{g}(\chi)},|{\bf S}_{\bar{g}(\chi)}|)=(r_{\chi},|{\bf S}_{\chi}|)
\]
and so $(r_\chi,S_\chi)\not\prec (r_{\bar{g}(\chi)},S_{\bar{g}(\chi)})$. It follows from Lemma \ref{ref-1.5bis}\eqref{dva} that
\[
\langle \lambda_\chi,\chi\rangle\le \langle \lambda_\chi,\bar{g}(\chi)\rangle.
\]
Also by Lemma \ref{ref-1.5bis} equality will happen precisely when $(r_{\bar{g}(\chi)},{\bf S}_{\bar{g}(\chi)})=
(r_{\chi},{\bf S}_{\chi})$ which by \eqref{eq:stab} implies that $\bar{g}$ stabilizes
${\bf S}_{\chi}=(T^{+}_{\lambda_\chi},T^-_{\lambda_\chi},T^0_{\lambda_\chi})$. By Lemma \ref{def:lchi}\eqref{four1} this
implies $\bar{g}\in (G/G_e)^{\lambda_\chi}$.
\end{proof}
Below we fix $(\lambda_\chi)_\chi$ as in Lemma \ref{def:lchi} and
we choose a total ordering on
$\RR^+\times \cal P(\{1,\dots,d\})^{3}$ as in Lemma \ref{lem:total}.
We put the induced ordering on $I:=\{({ r}_\chi,{\bf S}_\chi)\mid \chi\in X(T)^+\}\subset \RR^+\times \cal P(\{1,\dots,d\})^{3}$. As a totally
ordered set we have~$I\cong\NN$.  For $i\in I$ we put 
$F_i=\{\chi\in
X(T)^+\mid ({ r}_\chi,{\bf S}_\chi)=i\}$. This gives a $G/G_e$-equivariant partition
\begin{equation}
\label{eq:partition}
X(T)^+=\coprod_{i\in I} F_i.
\end{equation}
In each $F_i$ we choose one
representative which we denote by $\chi_i$. We write
$\lambda_i=\lambda_{\chi_i}$, $r_i=r_{\chi_i}$, ${\bf S}_i={\bf S}_{\chi_i}$.
By our choice of $\lambda_\chi$ and the definition of $F_i$,  $(r_i,\lambda_i,{\bf S}_i)$
depends only on  $i\in I$ and not on the choice of $\chi_i\in F_i$.

For $j\in I$ we write
\begin{align*}
\Lscr_{<j}&=\bigcup_{i\in I,i<j}F_i\subset X(T)^+,\\
\Lscr_{\le j}&=\bigcup_{i\in I,i\le j}F_i\subset X(T)^+.
\end{align*}
Let $J\subset I$ be the minimal representatives for the orbits of the
action of $G/G_e$ on~$I$.   By the choice of $J$ and
property \eqref{dva1} in Lemma \ref{lem:total} the set $\{i\in I\mid
i<j\}$ is $G/G_e$-invariant if $j\in J$ and hence the same is true for $\Lscr_{<j}$.
\begin{corollarys}\label{linredsec}
Let  $j\in J$  with $r_j\ge 1$ and $\chi\in F_j$. Then $(G,B,T,X,\Lscr_{<j},\chi,\lambda_j)$ is a reduction setting.
\end{corollarys}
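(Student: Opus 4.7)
\medskip

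The plan is to verify each of the seven conditions in Definition \ref{ref-2.1-2} for the tuple $(G,B,T,X,\Lscr_{<j},\chi,\lambda_j)$. Conditions (\ref{ref-1-3})--(3) are given by the setup of \S\ref{sec:prelim}, and condition (\ref{ref-6-5}) holds because $X=W^\vee$ is smooth and affine so $X\quot G=\Spec k[X]^G$ is a good quotient by reductivity of $G$. For condition (4), since $j\in J$ is the minimal representative of its $G/G_e$-orbit in $I$, the set $\{i\in I\mid i<j\}$ is $G/G_e$-invariant by property (\ref{dva1}) of Lemma \ref{lem:total}; combined with the fact that the partition \eqref{eq:partition} is $G/G_e$-equivariant (Lemma \ref{ref-lemma-6.7}), this shows $\Lscr_{<j}$ is $G/G_e$-invariant.

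For condition (\ref{ref-5-4}), I would take $\mu\in \Lscr_{<j}$, so $\mu\in F_i$ for some $i<j$ in the total ordering, i.e.\ $(r_\mu,{\bf S}_\mu)<j=(r_\chi,{\bf S}_\chi)$. Since $<$ refines $\prec$ (Remark following Lemma \ref{lem:total}), it cannot be that $(r_\chi,{\bf S}_\chi)\preceq(r_\mu,{\bf S}_\mu)$. Lemma \ref{ref-1.5bis}(\ref{ena}), applied with $\lambda=\lambda_\chi=\lambda_j$, then yields $\langle\lambda_j,\chi\rangle<\langle\lambda_j,\mu\rangle$.

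The main content is condition (7), i.e.\ that the canonical map \eqref{ref-2.3-8} is an isomorphism. I would simply invoke Proposition \ref{-iC} with $\lambda=\lambda_j$ and $\Lscr=\Lscr_{<j}$. Inspecting the proof of that proposition shows the only weights one needs in $\Lscr$ are those of the form $\mu^+$ with $\mu=\chi+\beta_{i_1}+\dots+\beta_{i_p}$, where $p>0$, the $i_k$ are distinct, and $\langle\lambda_j,\beta_{i_k}\rangle>0$ for all $k$ (these are precisely the weights entering the complex $C'_{\lambda_j,\chi}$). Since $r_j=r_\chi\ge 1$, Corollary \ref{betaplus} gives $(r_{\mu^+},|{\bf S}_{\mu^+}|)<(r_\chi,|{\bf S}_\chi|)$, and then property (\ref{ena1}) of Lemma \ref{lem:total} lifts this to $(r_{\mu^+},{\bf S}_{\mu^+})<j$ in the total ordering, so $\mu^+\in\Lscr_{<j}$ as required.

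The only genuine obstacle is condition (7); everything else is bookkeeping about how the total ordering $<$ interacts with the partial ordering $\prec$. That obstacle is dissolved by combining Proposition \ref{-iC} (which reduces the cone condition to the weight bookkeeping) with Corollary \ref{betaplus} (which provides the key strict inequality $(r_{\mu^+},|{\bf S}_{\mu^+}|)<(r_\chi,|{\bf S}_\chi|)$ for the weights entering the complex), so no further ingredients are required.
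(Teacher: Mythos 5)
Your verification of conditions (1)--(5) is correct and matches the bookkeeping the paper has in mind (in particular you correctly invoke Lemmas \ref{lem:total}, \ref{ref-lemma-6.7}, \ref{ref-1.5bis}, and \ref{def:lchi}(\ref{two1}) to get $\lambda_\chi = \lambda_j$). Your reading of the proof of Proposition \ref{-iC} is also accurate: only weights $\mu^+$ with $\la\lambda,\beta_{i_k}\ra > 0$ enter the complex $C'_{\lambda,\chi}$, and these land in $\Lscr_{<j}$ by Corollary \ref{betaplus} together with Lemma \ref{lem:total}(\ref{ena1}).

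However, there is a genuine gap in the argument for condition (7). You invoke Proposition \ref{-iC} directly, but that proposition is stated and proved only for $G$ \emph{connected} and $X = W^\vee$, whereas Corollary \ref{linredsec} lives in \S\ref{order}, where $G$ is explicitly allowed to be non-connected, and it is applied in \S\ref{sec:proofs} to smooth $G$-varieties $X$ that are merely closed $G$-subvarieties of $W^\vee$ (after restricting to affine opens), not to $W^\vee$ itself. Your claim that ``no further ingredients are required'' therefore misses two reduction steps that the paper uses: Proposition \ref{ref-prop-3.4}, which upgrades a reduction setting for $G_e$ to one for $G$ (the key move being to apply $\Ind^G_{G_e}$ to \eqref{eq:2.3-8} via Lemma \ref{lem:criterion}), and Theorem \ref{ref-3.1-13}, which transports a reduction setting from $W^\vee$ to any smooth closed $G$-subvariety (using the Luna slice theorem and the $G_m$-contraction argument of Lemma \ref{ref-3.2-14}). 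Without Proposition \ref{ref-prop-3.4}, the invocation of Proposition \ref{-iC} is simply not applicable when $G \neq G_e$; and without Theorem \ref{ref-3.1-13} and the prior reduction to the affine case, the proof does not cover the general $X$ that Proposition \ref{affineso} requires. The paper's proof is precisely: reduce to $X$ affine, then use Theorem \ref{ref-3.1-13} and Proposition \ref{ref-prop-3.4} to reduce to $G = G_e$, $X = W^\vee$, and only then apply Proposition \ref{-iC} via Corollary \ref{betaplus}.
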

\begin{proof}
We may reduce to the case that $X$ is affine. Then by
  Theorem \ref{ref-3.1-13} and Proposition \ref{ref-prop-3.4}
we may assume $G=G_e$ and $X=W^\vee$.
  Thus we need to verify the assumptions of Proposition
  \ref{-iC}. They are satisfied by Corollary \ref{betaplus} due to the
  choice of $\lambda_j=\lambda_\chi$ for every $\chi\in F_j$.
\end{proof}

\section{Proofs of the semi-orthogonal decompositions}
\label{sec:proofs}
In this section we will prove Theorem \ref{nonl} and
 along the way we will also prove Proposition \ref{ref-1.3} and Corollary \ref{twist}.
We continue to use the notations introduced in the previous section.

For the proof of Theorem
\ref{nonl} we select a finite open affine covering $X\quot G=\bigcup_i U_i$
and put $X_i=\pi^{-1}(U_i)$. Thus
$X=\bigcup_i X_i$ for
$G$-equivariant affine $G$-varieties $X_i$. We choose a
$G$-representation $W$ such that $W^\vee$ has a $T$-stable point
together with a closed $G$-equivariant embedding $\coprod_i X_i\hookrightarrow
W^\vee$.  We use $W$ to construct a partition \eqref{eq:partition} of
$X(T)^+$ as in \S\ref{order}. The arguments below will be based on ``reduction to the affine case'', i.e.\ to one of the $X_i$.

\medskip

For $j\in I$ let $\Dscr_{<j}$, $\Dscr_{\le j}$ be the triangulated subcategories of $\Dscr({{X}}/G)$
locally classically generated by $P_{\Lscr_{<j}}$, $P_{\Lscr_{\le j}}$ as in \S\ref{sec:localgen}
and put $\Lambda_{<j}=\pi_{s\ast}\uEnd_{X/G}(P_{\Lscr_{<j}})$.

For $j\in J$ let $\Dscr_j$ be the triangulated subcategory of $\Dscr({{X}}/G)$ locally classically generated by $\la
\RInd^{G}_{G^{\lambda_j,+}} (V_{G^{\lambda_j}}(\chi) \otimes
\Oscr_{X^{\lambda_j,+}})\mid \chi\in F_j\ra$.

 For a
$\cW_{G_e^\lambda}$-invariant $\nu\in X(T)_\RR$ we put
 \begin{align*}
{\Lscr_{r,\lambda,\nu}}&=X(T)^\lambda\cap (\nu-\bar{\rho}_{\lambda}+r\Sigma^0_\lambda),\\
U_{r,\lambda,\nu}&=\bigoplus_{\mu\in \Lscr_{r,\lambda,\nu}} \Ind^{\open^\lambda}_{G_e^\lambda} V_{G_e^\lambda}(\mu),\\
\Lambda_{r,\lambda,\nu}&=(\End(U_{r,\lambda,\nu})\otimes_k \Oscr_{X^\lambda})^{\open^\lambda}.
\end{align*}

Proposition \ref{ref-1.3} and
 Theorem \ref{nonl} will be  consequences of the following proposition.
\begin{proposition}\label{affineso}
\begin{enumerate}
\item \label{0}
If $j\not\in J$ then $\Dscr_{<j}=\Dscr_{\le j}$.
\end{enumerate}
Assume that $j\in J$ is such that $r_j\ge 1$. Then
\begin{enumerate}
\setcounter{enumi}{1}
\item\label{1}
$\D_{<j}\cong\Dscr(\Lambda_{<j})$  and $\Lambda_{<j}$  has finite global dimension when restricted to affine opens in $X\quot G$.
\item\label{3} $\D_{j}\cong \Dscr(\Lambda_{r_j,\lambda_j,\nu_j})$
  for $\nu_j=-\bar\rho+\bar\rho_{\lambda_j}+(\chi_j)_p$ (where $(\chi_j)_p$ was introduced in
\S\ref{sec:interms}) and
  $\Lambda_{r_j,\lambda_j,\nu_j}$  has finite global dimension when restricted to affine opens in $X\quot G$.
\item\label{2}
$\D_{\le j}=\la \D_{j},\D_{<j}\ra$ is a semi-orthogonal decomposition of $\D_{\le j}$.
\item\label{4} One has $\Dscr(X/G)=\bigcup_{j\in J}\Dscr_j$.
\end{enumerate}
\end{proposition}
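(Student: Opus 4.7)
The plan is to induct on $j\in I$ with respect to the total ordering of Section~\ref{order}, handling items (1) and (5) directly and establishing items (2)--(4) for $j\in J$ with $r_j\ge 1$. Item~(1) is immediate from \eqref{eq:restriction}: $V_G(\chi)=\Ind^G_{G_e}V_{G_e}(\chi)$ depends on $\chi$ only through its $G/G_e$-orbit, so when $j\notin J$ the generators $\{P_\chi\mid\chi\in F_j\}$ are already (up to isomorphism) among $\{P_\chi\mid\chi\in F_i\}$ for the unique minimal orbit representative $i\in J$, which by construction satisfies $i<j$. Item~(5) follows from Lemma~\ref{lem:thick}: smoothness of $X$ gives $\Dscr(X/G)=\Perf(X/G)$, every irreducible $G$-representation is a summand of some $V_G(\chi)=\Ind^G_{G_e}V_{G_e}(\chi)$ with $\chi\in X(T)^+$, and a finite affine cover together with compactness force every object of $\Dscr(X/G)$ to lie in some $\Dscr_{\le j}$.

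For item~(3), fix $j\in J$ with $r_j\ge 1$. I would apply Proposition~\ref{Di} to arbitrary pairs $\chi,\chi'\in F_j$: the reduction setting needed is supplied by Corollary~\ref{linredsec}, the hypothesis $\langle\lambda_j,\chi\rangle=\langle\lambda_j,\chi'\rangle$ is automatic since $(r_\chi,\mathbf{S}_\chi)=(r_{\chi'},\mathbf{S}_{\chi'})=(r_j,\mathbf{S}_j)$, and the hypothesis $\langle\lambda_j,\bar g(\chi')\rangle>\langle\lambda_j,\chi\rangle$ for $\bar g\notin(G/G_e)^{\lambda_j}$ comes from Lemma~\ref{eq:diff}. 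Proposition~\ref{Di} then produces compatible isomorphisms of $\uRHom$'s identifying $\Dscr_j$ with the subcategory of $\Dscr(X^{\lambda_j}/\open^{\lambda_j})$ locally classically generated by $\{P_{\open^{\lambda_j},X^{\lambda_j},\chi}\mid\chi\in F_j\}$. Lemma~\ref{chi_lambda} recognizes $F_j$ as $\Lscr_{r_j,\lambda_j,\nu_j}$ with $\nu_j=-\bar\rho+\bar\rho_{\lambda_j}+(\chi_j)_p$, and Lemma~\ref{lem:ff}, together with the finite global dimension supplied by Proposition~\ref{sigma} and admissibility by Lemma~\ref{gldimadm}, yields $\Dscr_j\cong\Dscr(\Lambda_{r_j,\lambda_j,\nu_j})$.

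For item~(4), Corollary~\ref{linredsec} again supplies the reduction setting $(G,B,T,X,\Lscr_{<j},\chi,\lambda_j)$ for every $\chi\in F_j$. Lemma~\ref{lem:criterion} then produces affine-locally a distinguished triangle placing $P_{G,\chi}$ between an object $P^\bullet\in\Dscr_{<j}$ and the generator $\RInd^G_{G^{\lambda_j,+}_e}(V_{G_e^{\lambda_j}}(\chi)\otimes_k j_\ast\Oscr_{X^{\lambda_j,+}})$ of $\Dscr_j$, so $\Dscr_{\le j}$ is locally classically generated by $\Dscr_{<j}$ and $\Dscr_j$. The semi-orthogonality $\pi_{s\ast}\uRHom(\Dscr_{<j},\Dscr_j)=0$ required by Proposition~\ref{th:recognition} is exactly \eqref{ref-2.1-6}; its hypothesis $\langle\lambda_j,\mu\rangle>\langle\lambda_j,\chi\rangle$ for $\mu\in\Lscr_{<j}$ reduces via Lemma~\ref{ref-1.5bis}\eqref{ena} to verifying $(r_j,\mathbf{S}_j)\not\preceq(r_\mu,\mathbf{S}_\mu)$, and the latter follows from Lemma~\ref{lem:total}\eqref{ena1} applied to the strict inequality $(r_\mu,\mathbf{S}_\mu)<(r_j,\mathbf{S}_j)$ that defines membership in $\Lscr_{<j}$. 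Proposition~\ref{th:recognition} then delivers the required decomposition.

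Item~(2) is the natural induction output: combining items~(3) and~(4) already established for all $j'<j$ in $J$ with $r_{j'}\ge 1$ (and item~(1) in the remaining cases) realizes $\Dscr_{<j}$ as an iterated semi-orthogonal decomposition whose pieces are the $\Dscr(\Lambda_{r_{j'},\lambda_{j'},\nu_{j'}})$, whence Lemma~\ref{lem:ff} provides $\Dscr_{<j}\cong\Dscr(\Lambda_{<j})$. I expect the most delicate step to be the finite global dimension of $\Lambda_{<j}$ on affine opens: because $\Lscr_{<j}$ is typically infinite, one must verify that on each affine open only finitely many of the $P_\chi$ contribute nontrivially (using compactness and Lemma~\ref{lem:thick}-style local generation) and then propagate finite global dimension through the iterated semi-orthogonal structure from the individual pieces $\Lambda_{r_{j'},\lambda_{j'},\nu_{j'}}$.
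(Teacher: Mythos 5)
Your treatment of items (1), (3), (4), (5) matches the paper's: item (1) via \eqref{eq:restriction}--\eqref{eq:restriction1}, item (5) via Lemma \ref{lem:thick}, item (3) via the reduction setting from Corollary \ref{linredsec}, Proposition \ref{Di}, Lemma \ref{chi_lambda}, Lemma \ref{lem:ff}, and Proposition \ref{sigma}, and item (4) via \eqref{ref-2.1-6}, \eqref{ref-2.3-8} and (the content of) Lemma \ref{lem:criterion}. Your re-derivation of condition (\ref{ref-5-4}) inside item (4) via Lemma \ref{ref-1.5bis}\eqref{ena} and Lemma \ref{lem:total}\eqref{ena1} is correct but redundant: Corollary \ref{linredsec} already established $(G,B,T,X,\Lscr_{<j},\chi,\lambda_j)$ as a reduction setting, and condition (\ref{ref-5-4}) is part of that definition.

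The real gap is the finite global dimension of $\Lambda_{<j}$ on affine opens in item (2). First, a factual correction: $\Lscr_{<j}$ is \emph{finite}, not ``typically infinite.'' The index set $I$ is a totally ordered set isomorphic to $\NN$, so $\{i\in I\mid i<j\}$ is finite, and each $F_i$ is finite being the lattice points of a bounded polytope by Lemma \ref{chi_lambda}. Second and more seriously, you propose to ``propagate finite global dimension through the iterated semi-orthogonal structure from the individual pieces $\Lambda_{r_{j'},\lambda_{j'},\nu_{j'}}$,'' but you do not prove such a propagation, and this is not what the paper does. The paper gives a direct argument for $\gldim\Lambda_{<j}<\infty$: one first reduces to $X=W^\vee$ and $G=G_e$ using \cite[Thm.~4.3.1, Lem.~4.5.1]{SVdB}, then applies \cite[Lem.~11.1.1]{SVdB} to reduce to showing $\pdim\tilde P_{\Lscr_{<j},\chi}<\infty$ for all $\chi\in X(T)^+$, and finally argues by choosing $\chi$ with $(r_\chi,|\mathbf{S}_\chi|)$ minimal among alleged counterexamples: by Lemma \ref{ref-1.5bis}\eqref{ena} one has $\langle\lambda_\chi,\chi\rangle<\langle\lambda_\chi,\mu\rangle$ for $\mu\in\Lscr_{<j}$, so $\Hom_{X/G}(P_{\Lscr_{<j}},C_{\lambda_\chi,\chi})$ is acyclic, and by \eqref{eq:weights} together with Corollary \ref{betaplus} the other weights $\chi'$ appearing in this complex have strictly smaller $(r_{\chi'},|\mathbf{S}_{\chi'}|)$, contradicting minimality. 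Your plan also does not explain how one would compare global dimensions across the differing base rings ($\Oscr_{X\quot G}$ for $\Lambda_{<j}$ versus $\Oscr_{X^{\lambda_{j'}}\quot\open^{\lambda_{j'}}}$ for the $\Lambda_{r_{j'},\lambda_{j'},\nu_{j'}}$), nor does it address admissibility, which such a propagation would require. In short: items (1), (3), (4), (5) are correct and follow the paper; item (2) has a concrete gap in the finite global dimension argument that the paper fills with a direct minimality-and-Koszul-type argument rather than an appeal to the iterated decomposition.
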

\begin{proof}
\begin{enumerate}
\item[\eqref{0}]
We claim that $\bigoplus_{\chi\in \Lscr_{<j}}V(\chi)=\bigoplus_{\chi\in \Lscr_{\le j}}
V(\chi)$.  We only
have to prove that if $\chi\in F_j$ then $V(\chi)$ is a summand of
$\bigoplus_{\chi\in \Lscr_{<j}}
V(\chi)$. Since $j\not\in J$
there is $\bar{g}\in G/G_e$ such that $\bar{g}(j)<j$. Put $\chi'=\bar{g}(\chi)\in F_{\bar{g}(j)}\subset \Lscr_{<j}$.
Using \eqref{eq:restriction}, \eqref{eq:restriction1} we obtain
\begin{align*}
V(\chi')&=\Ind_{G_e}^G V_{G_e}(\chi')\\
&=\Ind_{G_e}^G {}_{\sigma^{-1}_{\bar{g}}}V_{G_e}(\chi)\\
&\cong \Ind_{G_e}^G V_{G_e}(\chi)\\
&=V(\chi)\,.
\end{align*}
\item[\eqref{1}]
The fact that  $\Dscr_{<j}=\Dscr(\Lambda_{<j})$ follows from Lemma \ref{lem:ff}. To prove
that $\Lambda_{<j}$ is locally of finite global dimension we
 may restrict to the case that $X$ is affine.
To prove that $\gldim \Lambda_{<j}<\infty$, by \cite[Thm. 4.3.1, Lem.\ 4.5.1]{SVdB} it suffices to consider the case
  ${{X}}=W^\vee$ and $G=G_e$.  We denote
  $\tilde{P}_{{\Lscr_{<j}},\chi}=\Hom_{{{X/G}}}(P_{<j},P_\chi)$.  By
  \cite[Lem.\ 11.1.1]{SVdB} it is enough to show that $\pdim \tilde
  P_{\cal L_{<j},\chi}<\infty$ for every $\chi\in X(T)^+$. Assume that
  there exists~$\chi$ such that $\pdim \tilde P_{\cal
    L_{<j},\chi}=\infty$ and take $\chi\in X(T)^+$ with minimal $({
    r}_\chi,|{\bf S}_\chi|)$.  Then $({ r}_\chi,{\bf S}_\chi)\not\leq({
    r}_\mu,{\bf S}_\mu)$ for all $\mu\in \cL_{<j}$ (for otherwise $\chi\in
  \cL_{<j}$ and hence $\pdim \tilde P_{\cal L_{<j},\chi}=0$).  Let
  $\lambda=\lambda_\chi$.  It follows from Lemma \ref{ref-1.5bis}
  (\ref{ena}) that $\la\lambda,\chi\ra<\la\lambda,\mu\ra $ for all
  $\mu\in\cL_{<j}$.  Thus, $C_{\cL_{<j},\lambda,\chi}:=\Hom_{X/G}(P_{\Lscr_{<j}},C_{\lambda,\chi})$ is acylic by
  \eqref{quasiiso} and the fact that the $\lambda$-weights of $k[{{X}}^{\lambda,+}]$ are $\le 0$ (see \S\ref{ref-1.2-0}). We have
 $({ r}_{\chi'},|{\bf S}_{\chi'}|)<({
    r}_\chi,|{\bf S}_\chi|)$ for all $\tilde P_{\cal L_{<j},\chi'}\neq
  \tilde P_{\cal L_{<j},\chi}$ that appear in $C_{\cL_i,\lambda,\chi}$ by
  \eqref{eq:weights} and Corollary \ref{betaplus}.  Hence
  $\pdim\tilde P_{\cal L_{<j},\chi'}<\infty$ by the minimality
  assumption, and therefore $\pdim\tilde P_{\cal L_{<j},\chi}<\infty$, a
  contradiction.
\item[\eqref{3}]
Now we use the fact that $(G,B,T,{{X}},\Lscr_j,\chi,\lambda_j)$ is a reduction setting for $\chi\in F_j$
 by Corollary \ref{linredsec}.
Let us abbreviate $D_{j,\chi}=\RInd^{G}_{G^{\lambda_j,+}} (V_{G^{\lambda_j}}(\chi) \otimes_k  \Oscr_{X^{\lambda_j,+}})$.
By Proposition \ref{Di} we have
\[
\pi_{s\ast}\uREnd_{X/G}(\oplus_{\chi\in F_j} D_{{j},\chi})=
\pi_{s\ast}\uEnd_{X^\lambda/\open^\lambda}(\oplus_{\chi\in F_j} P_{\open^\lambda,\chi})
\]
 and the latter is equal to  $\Lambda_{r_j,\lambda_j,\nu_j}$
by Lemma \ref{chi_lambda}.

To prove that $\Lambda_{r_j,\lambda_j,\nu_j}$ locally has finite global dimension we may reduce to
the affine case. Then by \cite[Thm. 4.3.1, Lem.\ 4.5.1]{SVdB} we may reduce to the case $G=G_e$ and $X=W^\vee$. Finally we invoke  Proposition \ref{sigma}.
\item[\eqref{2}]
The fact that $\Dscr_{\le j}$ is generated by $\Dscr_{<j}$ and $\Dscr_j$ follows from \eqref{ref-2.3-8}. The
fact that $\Hom_{X/G}(\Dscr_{<j},\Dscr_j)=0$ follows from  \eqref{ref-2.1-6}
by a suitable version of the  local global spectral
sequence on $X\quot G$.
\item[\eqref{4}] This follows from Lemma \ref{lem:thick}.
\end{enumerate}
\def\qed{}\end{proof}
\begin{proof}[Proof of Theorem \ref{nonl}]
Put
\[
j_0=\min \{j\in J\mid r_j\ge 1\}.
\]
Then we have by Lemma \ref{lem:ff}
$
\Dscr_{\le j_0}=\Dscr(\Lambda_{1,0,0})
$. Now write
\[
\{j\in J\mid r_j\ge 1\}=\{j_0,j_{1},j_{2},\ldots\}.
\]
Put $\Dscr_0=\Dscr_{\le j_0}$ and for $i>0$ let $\Dscr_{-i}$
be the right orthogonal of $\Dscr_{\le j_{i-1}}
=\Dscr_{< j_{i}}$ in $\Dscr_{\Lscr_{\le j_{i}}}$. By Proposition \ref{affineso}
(\ref{0},\ref{2},\ref{4}) we have a semi-orthogonal decomposition
\begin{equation}
\label{eq:semi}
\Dscr(X/G)=\langle \ldots,\Dscr_{-2},\Dscr_{-1},\Dscr_0\rangle
\end{equation}
and by  Proposition \ref{affineso}\eqref{3} each of the $\Dscr_{-1}$, $\Dscr_{-2}$,\dots has the required form. The corresponding statement for $\tilde{\Dscr}_{X/G}$ follows by replacing $X\quot G$ by open subschemes.
\end{proof}
\begin{remark} If follows from Lemma \ref{gldimadm} that  $\Dscr_{-j}\subset \Dscr(X/G)$ and
$\D_{\leq j}$ for $j\in J$ are admissible.
\end{remark}
 \begin{proof}[Proof of Proposition \ref{ref-1.3}]
This corresponds to the special case $X=W^\vee$ in the proof of  Theorem \ref{nonl}.
\end{proof}

\section{The case that $X$ does not have a $T$-stable point.}\label{introsec:nonstable}
\label{sec:nonstable}
In this section we will assume throughout that $G$ is connected and
that $X$ is a connected smooth $G$-variety such that a good
quotient $X\quot G$ exists.  We will give an alternative
semi-orthogonal decomposition of $\D(X/G)$ in case $X$ does not have a
$T$-stable point.
\begin{lemma}
\label{prop:two} Assume that $X$ does not have a $T$-stable point. Then
 at least one of the following settings holds:
\begin{enumerate}
\item \label{sit1} There is a non-trivial normal connected subgroup $K$ of $G$ acting trivially on $X$.
\item \label{sit2} There is a non-trivial central one parameter group $\nu:G_m\r Z(G)$
such that $X=X^{\nu,+}$.
\end{enumerate}
\end{lemma}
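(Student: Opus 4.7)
The strategy is to separate two alternatives according to whether the $G$-action on $X$ is almost faithful, and in the remaining case to reduce to a linear argument via Luna's slice theorem.

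First, I would reduce to the case where the scheme-theoretic kernel $K:=\ker(G\to\Aut(X))$ is finite. If $\dim K>0$, then $K^0$ is a non-trivial connected normal subgroup of $G$ acting trivially on $X$, giving case (\ref{sit1}). So from now on $K$ is finite.

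Second, I would apply Luna's slice theorem at a closed $G$-orbit $Gx_0\subset X$ (such an orbit exists since $X\quot G\neq\emptyset$); let $H=G_{x_0}$, which is reductive by Matsushima, and let $N=T_{x_0}X/T_{x_0}(Gx_0)$ be the normal $H$-representation. Luna provides strongly $G$-\'etale morphisms linking a $G$-saturated neighborhood of $Gx_0$ in $X$ to the model $G\times^H N$, and by Lemma \ref{ref-1.2.1-1} the formation of both $T$-fixed loci and attractors is compatible with strongly \'etale maps. Using the standard fact that $N$ is a direct summand of the restriction of some $G$-representation $U$, together with a closed $G$-equivariant embedding $G/H\hookrightarrow V_1$ (Chevalley), one obtains a closed $G$-equivariant embedding $G\times^H N \hookrightarrow V_1 \oplus U$, reducing the question to the linear setting.

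In the linear case (see \S\ref{sec:prelim}) the absence of a $T$-stable point means the cone $\sum_i \RR_{\ge 0}\beta_i$ does not span $X(T)_\RR$, so there exists $0\neq\lambda\in Y(T)$ with $\la\lambda,\beta_i\ra\le 0$ for every weight $\beta_i$ of the ambient representation, giving $X=X^{\lambda,+}$. To produce a \emph{central} one-parameter subgroup I would average: set $\nu:=\sum_{w\in\Wscr}w\lambda\in Y(T)_\RR^{\Wscr}$. Since $\Wscr$-invariance in $Y(T)_\RR$ cuts out $Y(Z(G)^0)\otimes\RR$, $\nu$ is rationally central, and the $\Wscr$-stability of the weight multiset $\{\beta_i\}$ forces $\la\nu,\beta_i\ra=\sum_w\la\lambda,w^{-1}\beta_i\ra\le 0$, whence $X=X^{\nu,+}$ after clearing denominators. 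If $\nu\neq 0$ we are in case (\ref{sit2}). If $\nu=0$, then each summand $\la\lambda,w^{-1}\beta_i\ra\le 0$ in a zero sum must itself vanish, so $\lambda$ acts trivially on the representation, placing the positive-dimensional $\lambda(G_m)$ in $K$ and contradicting the reduction.

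The main obstacle will be the transfer of information across the Luna reduction: the central $\nu\in Z(G)$ from the linear argument need not lie in $H$, so its action on $G\times^H N$ does not simply reduce to acting on the $N$-factor, and one must combine $G$-invariance of the attractor with Lemma \ref{ref-1.2.1-1} to globalize the identity $X=X^{\nu,+}$ from the Luna neighborhood of $Gx_0$ to all of $X$.
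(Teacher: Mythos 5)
Your averaging step, taking $\nu:=\sum_{w\in\Wscr}w\lambda$ and splitting into the cases $\nu\ne 0$ and $\nu=0$, is exactly the idea in the paper, and it is correct. The problem is with the preliminary step of producing a single nonzero $\lambda$ (resp.\ $\sigma$ in the paper's notation) with $X=X^{\lambda,+}$.

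The paper gets this directly: since $X$ has no $T$-stable point, every $x\in X$ lies in some attractor $X^{\sigma,+}$ with $\sigma\ne 0$ (either a destabilizing one-parameter subgroup for a non-closed $T$-orbit, or one inside a positive-dimensional $T$-stabilizer), so $X=\bigcup_{\sigma\ne 0}X^{\sigma,+}$. There are only finitely many distinct such attractors (check on a finite affine cover of $X\quot G$ and embed each $\pi^{-1}(U_i)$ in a representation: only the sign pattern of $\langle\sigma,\beta_i\rangle$ matters). Since $X$ is irreducible, $X$ must equal one $X^{\sigma,+}$. No slice theorem, no embedding of $X$ into a representation, no local-to-global passage is needed.

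Your Luna reduction, by contrast, has a genuine gap that the acknowledged ``main obstacle'' paragraph does not address. You embed the Luna model $G\times^H N$ (or, really, only an \'etale-saturated neighborhood of the closed orbit in $X$, which corresponds to a neighborhood of the zero section in $G\times^H N$) into a representation $V_1\oplus U$, and then assert that ``the absence of a $T$-stable point means the cone $\sum_i\RR_{\ge 0}\beta_i$ [of $V_1\oplus U$] does not span.'' That implication only holds when $X$ \emph{is} the representation; for a proper closed $G$-subvariety of $V_1\oplus U$, ``no $T$-stable point in $X$'' implies nothing about the weight cone of the ambient representation (the ambient representation can easily have plenty of $T$-stable points not lying on $X$). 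Even granting a $\lambda$ with $X=X^{\lambda,+}$ near $Gx_0$, you would still face the globalization issue you flag, but it is the earlier step that actually fails. You should replace the slice-theorem reduction with the finiteness-plus-irreducibility argument: the point is not to linearize $X$, but to observe that the collection $\{X^{\sigma,+}\}_{\sigma}$ is a finite set of closed subschemes covering the irreducible variety $X$.

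One more small remark on your $\nu=0$ branch: you conclude $\lambda$ acts trivially on ``the representation'' and hence lies in the finite kernel $K$, a contradiction. The paper's version is cleaner and avoids the ambient representation entirely: from $\nu=0$ one has $X^\nu=X$, and from $X^\nu\subset X^\sigma$ (proved by a short weight-cone computation that only uses $k[X]$) one gets $X^\sigma=X$, i.e.\ $\im\sigma$ lies in the kernel of $G\to\Aut(X)$, landing in case~\eqref{sit1} without any prior reduction to the almost-faithful case.
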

\begin{proof}
Since
$X$ does not have a $T$-stable point we have
\begin{equation}
\label{eq:union}
X=\bigcup_{\sigma\in X(T)-\{0\}} X^{\sigma,+}\,.
\end{equation}
It is well-known and easy to see that there are only a finite
number of distinct $X^{\sigma,+}$.  Indeed: by covering $X\quot G$ by a finite number
of affines, it suffices to verify this in the affine
case and then it follows by embedding~$X$ into a representation. Hence since
the union in \eqref{eq:union} is finite and $X$ is irreducible we
find  that there is some $\sigma\in X(T)-\{0\}$ such that $X=X^{\sigma,+}$.

It follows that $X=X^{w\sigma,+}$ for every $w\in \Wscr$.
Put $\nu=\sum_{w\in \Wscr} w\sigma$.
Then $\nu$ is a~$\W$-in\-variant $1$-parameter subgroup of $T$ and in particular its image is contained in the center of $G$.

We claim $X^{\nu,+}=X$, $X^{\nu}\subset X^\sigma$. We may check this in the case that $X$ is affine. Let $C\subset X(T)_\RR$ be the cone
spanned by the weights of  $k[X]$.  Since $X^{\sigma,+}=X$ we have $\langle \sigma,C\rangle\le 0$. Since $C$ is $\Wscr$-invariant
we immediately deduce $\langle \nu,C\rangle \le 0$ and hence $X^{\nu,+}=X$.

To prove $X^{\nu}\subset X^\sigma$ we have to verify that if $\chi\in C$ and $\langle \nu,\chi\rangle=0$ then
$\langle \sigma,\chi\rangle=0$. To prove this it suffices to observe that $\langle \nu,\chi\rangle=\sum_{w\in \Wscr} \langle \sigma,w^{-1}\chi\rangle$
and this can only be zero if $ \langle \sigma,w^{-1}\chi\rangle=0$ for all $w\in \Wscr$.

If $\nu\neq 0$ then we are in situation \eqref{sit2}. If $\nu=0$ then
$X^{\nu}=X$. Hence also $X^\sigma=X$ by the above discussion. Therefore \eqref{sit1} holds
with $K$ being the identity component of $\ker(G\r \Aut(X))$. The group $K$ is
not trivial as it contains $\im \sigma$.
\end{proof}
To continue it will be convenient to slightly generalize our setting in a similar way as \cite[Thm 1.6.3]{SVdB}.
We will however use different notations which are more adapted to the current setting.
We will assume that $G$ contains a finite central subgroup~$A$ acting trivially on $X$ with $\bar{G}:=G/A$.
Let $X(A):=\Hom(A,G_m)$ be the character group of $A$.
For $\tau\in X(A)$ let ${\D}(X/G)_{\tau}$
be the triangulated subcategory of ${\D}(X/G)$ consisting of complexes
on which $A$ acts as $\tau$.
We have an orthogonal decomposition
\begin{equation}
\label{eq:orthogonal}
\D(X/G)=\bigoplus_{\tau\in X(A)} {\D}(X/G)_{\tau}
\end{equation}
and moreover $ {\D}(X/G)_{0}=\D(X/\bar{G})$. In general we should think of
${\D}(X/G)_{\tau}$ as a twisted version of $\D(X/\bar{G})$.
\begin{proposition}\label{twist}
Let $\tau\in X(A)$. Then there exists a semi-orthogonal decomposition of
${\D}(X/G)_{\tau}$ of the form described in Theorem \ref{nonl}.
\end{proposition}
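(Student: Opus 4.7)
I would prove the proposition by induction on a complexity measure such as $\dim G + \dim X$. The base case is when $X$ admits a $T$-stable point. Here the machinery of Sections \ref{secredset}--\ref{sec:proofs} applies with only bookkeeping changes to accommodate the twist: since $A$ is central and acts trivially on $X$, each $P_{G,\chi} = V_G(\chi) \otimes_k \Oscr_X$ carries an $A$-action determined by the restriction $\chi|_A$, and the orthogonal decomposition \eqref{eq:orthogonal} is preserved by every functor appearing in Proposition \ref{affineso}. Restricting the partition \eqref{eq:partition} of $X(T)^+$ to those $\chi$ with $\chi|_A = \tau$ yields the desired semi-orthogonal decomposition of $\Dscr(X/G)_\tau$ in the form of Theorem \ref{nonl}, with each $\Lambda_i$ replaced by its $\tau$-isotypic summand.

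For the inductive step $X$ has no $T$-stable point, and I invoke Lemma \ref{prop:two}. In Case \eqref{sit2} a non-trivial central $\nu \colon G_m \to Z(G)$ satisfies $X = X^{\nu,+}$, providing a $G$-equivariant retraction $X \to X^\nu$ onto a smooth closed subvariety of strictly smaller dimension. One then produces a semi-orthogonal decomposition of $\Dscr(X/G)_\tau$ indexed by $\nu$-weight intervals, in the spirit of Halpern-Leistner/Ballard-Favero-Katzarkov, whose pieces are equivalent to subcategories of $\Dscr(X^\nu/G)_{\tau'}$ for appropriate twists $\tau'$ of an enlargement of $A$ by a finite torsion subgroup of $\im(\nu)$. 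Applying the inductive hypothesis to each piece (which concerns $X^\nu$ with $\dim X^\nu < \dim X$) and concatenating the resulting decompositions yields one of the form described in Theorem \ref{nonl}.

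In Case \eqref{sit1} there is a non-trivial normal connected $K \subset G$ acting trivially. The connected center $Z(K)_e$ is characteristic in $K$, hence normalized by $G$, and since $\Aut(Z(K)_e)$ is discrete while $G$ is connected the conjugation action is trivial; thus $Z(K)_e \subset Z(G)$. If $Z(K)_e$ is non-trivial, any non-trivial $\nu \colon G_m \to Z(K)_e$ acts trivially on $X$, so $X = X^{\nu,+}$ and we are back in Case \eqref{sit2}. Otherwise $K$ is semisimple with finite central subgroup $Z(K) \subset Z(G)$; enlarging $A$ to $A \cdot Z(K)$ refines \eqref{eq:orthogonal}, and on each refined summand the induction hypothesis applies with $G$ replaced by the strictly smaller reductive group $G/K$ acting on $X$.

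The main obstacle will be Case \eqref{sit2}: verifying that the window subcategories determined by $\nu$-weight intervals fit together with the decompositions provided by the inductive hypothesis on $X^\nu/G$ to yield a semi-orthogonal decomposition of the precise form required by Theorem \ref{nonl}, namely with parts $\Dscr(\Lambda_i)$ for $\Lambda_i$ a sheaf on $X^{\lambda_i} \quot \open^{\lambda_i}$ of locally finite global dimension, associated to representations $U_i$ and one-parameter subgroups $\lambda_i$ of the original group $G$. A secondary subtlety lies in Case \eqref{sit1}: verifying that a decomposition for $G/K$ acting on $X$ can be promoted to one for $G$ itself in the required form.
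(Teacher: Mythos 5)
Your ``base case'' \emph{is} the paper's proof; the induction you build on top of it is unnecessary and its inductive step has genuine gaps.

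The key point you missed is that the proof of Theorem \ref{nonl} in \S\ref{sec:proofs} never assumes that $X$ has a $T$-stable point: the partition \eqref{eq:partition} of $X(T)^+$ is constructed relative to an auxiliary representation $W^\vee$, into which a disjoint union of affine pieces of $X$ is $G$-equivariantly embedded, and it is $W^\vee$---not $X$---that is chosen to have a $T$-stable point. Consequently the observations you call ``bookkeeping changes'' constitute the entire argument: each generator $P_{G,\chi}$ carries $A$-character $\chi|_A$ (because $A$ is central and $A\subset T$), so $\pi_{s\ast}\uHom_{X/G}(P_{G,\chi'},P_{G,\chi})=0$ unless $\chi'$ and $\chi$ lie in the same coset $X(\bar{T})_\tau$; hence \eqref{ref-2.3-8} remains an isomorphism after replacing $\Lscr_{<j}$ by $\Lscr_{<j}\cap X(\bar{T})_\tau$, the algebras split as $\Lambda_{<j}=\bigoplus_{\tau}\Lambda_{<j,\tau}$ and $\Lambda_j=\bigoplus_{\tau}\Lambda_{j,\tau}$, and the $\tau$-summands inherit locally finite global dimension as direct summands. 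Running the proof of Theorem \ref{nonl} with $\Lscr_{<j,\tau}$ in place of $\Lscr_{<j}$ then gives the result directly.

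Your inductive step is both extraneous and gapped. Lemma \ref{prop:two} and the dimension-reducing constructions of \S\ref{sec:nonstable} are used in the paper only for Propositions \ref{prop:nonstable1} and \ref{prop:nonstable2}, which serve a different purpose (justifying and sharpening Remark \ref{rem:notTstable}); they play no role in the proof of Proposition \ref{twist}. Moreover the two ``obstacles'' you flag are not mere subtleties but genuine gaps: in case \eqref{sit2} the inductive hypothesis on $X^\nu/Q$ produces one-parameter subgroups of $Q$ and algebras supported on fixed-point loci inside $X^\nu$, and there is no canonical way to repackage these as one-parameter subgroups $\lambda_i$ of the original $G$ with algebras on $X^{\lambda_i}\quot\open^{\lambda_i}$ as the statement of Theorem \ref{nonl} demands; the same lifting problem appears in case \eqref{sit1} for one-parameter subgroups of $G/K$. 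Closing those gaps would amount to a substantially harder proof than the direct one the paper actually gives.
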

\begin{proof}
Let the notation be as in \S\ref{sec:proofs}. We have $A\subset T$. Let $\bar{T}=T/A$. Then
there is an exact sequence
\begin{equation}\label{eq:barTTA}
0\r X(\bar{T})\r X(T)\r X(A)\r 0.
\end{equation}
Let $X(\bar{T})_\tau\subset X(T)$ be the inverse image of $\tau\in X(A)$.
Let $\chi\in F_j\cap X(\bar{T})_{\tau}$.
Since~$A$ acts trivially on $X$, it acts with the character $\tau$ on the right-hand side of \eqref{ref-2.3-8} with $\Lscr=\Lscr_{<j}$.
Since $\pi_{s\ast}\uHom_{X/{G}}(P_{{G},\chi'},P_{{G},\chi})=0$ if $\chi'\not\in X(\bar{T})_{\tau}$,  \eqref{ref-2.3-8} is thus still an isomorphism if we replace $\Lscr_{<j}$ by
$\Lscr_{<j,\tau}=\Lscr_{<j}\cap {X(\bar{T})}_{\tau}$.
Put
\begin{align*}
\Lambda_{<j,{\tau}}&=\pi_{s*}\uEnd_{X/{G}}(P_{\Lscr_{<j,\tau}}),\\
\Lscr_{j,{\tau}}&=X(\bar{T})_{\tau}^{\lambda_j}\cap (\nu-\bar{\rho}_{\lambda_j}+r_j\Sigma^0_{\lambda_j}),\\
U_{j,{\tau}}&=\bigoplus_{\mu\in \Lscr_{j,{\tau}}} \Ind^{\open^{\lambda_j}}_{{G}_e^{\lambda_j}} V_{{G}_e^{\lambda_j}}(\mu),\\
\Lambda_{j,{\tau}}&=(\End(U_{j,{\tau}})\otimes_k \Oscr_{X^{\lambda_j}})^{\open^{\lambda_j}}.
\end{align*}
We have $\Lambda_{<j}=\oplus_{ \tau\in X(A)} \Lambda_{<j,{\tau}}$,
$\Lambda_{j}=\oplus_{\tau\in X(A)} \Lambda_{j,{\tau}}$.
As $\Lambda_{<j}$ and $\Lambda_j$ have finite global dimension when restricted to affine opens in $X\quot{G}$, the same holds for
$\Lambda_{<j,{\tau}}$, $\Lambda_{j,{\tau}}$.
Arguing as above we thus obtain a semi-orthogonal decomposition $\D_{\leq j,{\tau}}=\la \D_{j,{\tau}},\D_{<j,{\tau}}\ra$ (with the obvious definitions for $\D_{j,{\tau}},\D_{<j,{\tau}}$),
$\D_{<j,{\tau}}\cong \D(\Lambda_{<j,{\tau}})$, $\D_{j,{\tau}}\cong \D(\Lambda_{j,{\tau}})$. Then the proof continues as before.
\end{proof}
If $K$ is a connected normal subgroup of $G$ then we will define a
\emph{pseudo-comple\-ment} of $K$ as a connected normal subgroup $Q$ of $G$ such that
$K$ and $Q$ commute, $G=KQ$ and $K\cap Q$ is finite. It follows easily
from \cite[Theorem 8.1.5, Corollary 8.1.6]{Springer}  that such a
pseudo-complement always exists.
\begin{proposition}
\label{prop:nonstable1}
Assume $X$ does not have a $T$-stable point and
that we are in the situation of Lemma \ref{prop:two}\eqref{sit1}. Let $Q$ be a  pseudo-complement of $K$ in $G$. Then there is a finite central subgroup $A_Q$ of $Q$ acting trivially on $X$ such that there is  an orthogonal decomposition
\begin{equation}
\label{eq:Q}
\Dscr(X/G)_{\tau}\cong \bigoplus_{i\in I}  \Dscr(X/Q)_{\mu_i}
\end{equation}
for a suitable collection of $(\mu_i)_{i\in I}\in X(A_Q)$.
\end{proposition}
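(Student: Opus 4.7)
The plan is to decompose $\Dscr(X/G)_\tau$ via the $K$-isotypic splitting of sheaves, exploiting that $K$ acts trivially on $X$, $[K,Q]=1$, and $K\cap Q$ is finite. Let $\phi\colon K\times Q\to G$, $(\kappa,q)\mapsto \kappa q$, be the central isogeny with kernel the anti-diagonal copy of $K\cap Q$, and set $A_Q:=\pi_Q(\phi^{-1}(A))\subseteq Q$, where $\pi_Q$ is projection to $Q$. Since $A\subseteq Z(G)$, its preimage lies in $Z(K)\times Z(Q)$, so $A_Q\subseteq Z(Q)$; $A_Q$ is finite because $A$ and $K\cap Q$ are; and $A_Q$ acts trivially on $X$ because for $q\in A_Q$ with $\kappa q\in A$, both $\kappa\in K$ and $\kappa q\in A$ act trivially, whence $q=\kappa^{-1}(\kappa q)$ does too.

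Because $K$ acts trivially on $X$ and commutes with $Q$, a $G$-equivariant quasi-coherent sheaf on $X$ is the same as a $Q$-equivariant sheaf $\Fscr$ together with a $K$-action on $\Fscr$ by $Q$-equivariant automorphisms, subject to the compatibility that the two resulting $(K\cap Q)$-actions on $\Fscr$ coincide. Decomposing $\Fscr$ by $K$-isotypes gives
\[
\Fscr = \bigoplus_{\chi\in\Irr K} V_\chi\otimes\Fscr_\chi,\qquad \Fscr_\chi\in\Qch(X/Q),
\]
which is a splitting of $\Fscr$ as $\Oscr_X$-modules (using that $K$ acts trivially on $X$), so each $\Fscr_\chi$ is coherent whenever $\Fscr$ is, and only finitely many are nonzero in that case. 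The compatibility condition translates to: $K\cap Q$ acts on $\Fscr_\chi$ through $Q$ by the character $\chi|_{K\cap Q}$.

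Finally I restrict to the $\tau$-isotypic piece under $A$. For $a\in A$ choose a lift $(\kappa,q)\in Z(K)\times Z(Q)$ with $\kappa q=a$; then $q\in A_Q$. On the summand $V_\chi\otimes\Fscr_\chi$ the element $a$ acts as the scalar $\chi(\kappa)\cdot \rho_\chi(q)$, where $\chi(\kappa)\in k^\times$ is the Schur scalar and $\rho_\chi$ is the $Q$-action on $\Fscr_\chi$. Requiring this to equal $\tau(a)$ for every $a\in A$ yields two conditions: (i) $\chi|_{A\cap K}=\tau|_{A\cap K}$, which singles out the index set $I\subseteq \Irr K$; and (ii) a specific character $\mu_\chi\in X(A_Q)$ by which $A_Q$ must act on $\Fscr_\chi$, namely $\mu_\chi(q)=\tau(a)\chi(\kappa)^{-1}$. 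The ambiguity in the choice of lift $(\kappa,q)$ lies in $K\cap Q$ and is absorbed by the compatibility above, so $\mu_\chi$ is well-defined and restricts to $\chi|_{K\cap Q}$. Assembling these pieces gives the claimed orthogonal decomposition
\[
\Dscr(X/G)_\tau \cong \bigoplus_{\chi\in I}\Dscr(X/Q)_{\mu_\chi},
\]
orthogonality being immediate from the fact that morphisms respect the $K$-isotypic decomposition.

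The main subtlety will be the careful bookkeeping of the interplay of the three finite groups $A$, $K\cap Q$, and $A_Q$ and of their characters; the passage from the abelian setting to $\Dscr=D^b_{\coh}$ is essentially automatic once one notes that the $K$-isotypic decomposition of a coherent $G$-equivariant sheaf is a finite direct sum of coherent $\Oscr_X$-submodules.
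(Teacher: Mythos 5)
Your proof takes essentially the same route as the paper's: factor through the central isogeny $K\times Q\to G$, split $\Dscr(X/\tilde G)$ as an orthogonal direct sum over $\Irr(K)$, and impose the $\tau$-condition on the resulting pieces; the paper writes this more compactly by working with $\tilde A\subset K\times Q$ and the identity $\Dscr(X/G)_\tau=\Dscr(X/\tilde G)_{\tilde\tau}$, while you unwind the bookkeeping by hand. One minor imprecision: for a fixed $q\in A_Q$, the ambiguity in choosing $\kappa$ with $\kappa q\in A$ lies in $A\cap K$ rather than in $K\cap Q$, and the well-definedness of $\mu_\chi$ under this ambiguity is precisely your condition (i); the $K\cap Q$-ambiguity (varying the factorization of a fixed $a=\kappa q$) is a separate matter, absorbed by the compatibility condition as you note.
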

\begin{proof}
Let $\tilde{G}:=K\times Q\r G$ be the multiplication map and let $\tilde{A}\subset K\times Q$ be
the inverse image of $A$. Let $A_K\subset K$, $A_Q\subset Q$ be the images of $\tilde{A}$ under the
projections $K\times Q\r K,Q$.
It is easy to see that $A_Q$ acts trivially on $X$.
Let $\tilde{\tau}$ be the composition $\tilde{A}\r A\xrightarrow{\tau} G_m$. In a similar way,
if $\mu\in X(A_K)$ or $\mu\in X(A_Q)$ then we denote by $\tilde{\mu}$ the
element of $X(\tilde{A})$, obtained by composing $\mu$ with the appropriate projections $\tilde{A}\r A_K,A_Q$.

We have
\begin{equation}
\label{eq:part1}
D(X/G)_{\tau}=D(X/\tilde{G})_{\tilde{\tau}}\,.
\end{equation}
Let $\Irr(K)$ be the set of isomorphism classes of irreducible representations of $K$.
We have an orthogonal decomposition
\begin{equation}
\label{eq:part2}
\bigoplus_{V\in \Irr(K)} D(X/Q) \r D(X/\tilde{G}):(\Fscr_V)_V\mapsto \oplus_{V\in\Irr(K)} V\otimes_k \Fscr_V\,.
\end{equation}
If $V\in\Irr(K)$ then $A_K$ acts on $V$ via a character which we
denote by $\chi_V\in X(A_K)$.
Combining \eqref{eq:part1} and \eqref{eq:part2} yields
\[
D(X/G)_{\tau}\cong\bigoplus_{(\mu,V)\in X(A_Q)\times \Irr(K), \tilde{\mu}+\tilde{\chi}_V=\tilde{\tau}} D(X/Q)_{\mu}
\]
which implies \eqref{eq:Q}.
\end{proof}
\begin{proposition}
\label{prop:nonstable2}
Assume $X$ does not have a $T$-stable point and
that we are in the situation of Lemma \ref{prop:two}\eqref{sit2}. Let $Q$ be a pseudo-complement of $\im \nu$ in $G$. Then there is a finite central subgroup $A_Q$ of $Q$ acting trivially on $X^\nu$ such that there is  a semi-orthogonal decomposition
\begin{equation}
\label{eq:Q1}
\Dscr(X/G)_{\tau}= \langle \Dscr(X^\nu/Q)_{\mu_i}\mid i\in I\rangle
\end{equation}
for a suitable totally ordered set $I$ and a
collection of $(\mu_i)_{i\in I}\in X(A_Q)$.
\end{proposition}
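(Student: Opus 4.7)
The plan is to combine the reduction strategy of Proposition \ref{prop:nonstable1} with a Bia\l{}ynicki-Birula style semi-orthogonal decomposition attached to the contracting action $\nu$. First I would set $\tilde G=\im\nu\times Q$ with the multiplication map $m\colon\tilde G\to G$ (a finite cover since $\im\nu\cap Q$ is finite), $\tilde A=m^{-1}(A)$, and $A_Q$ the image of $\tilde A$ in $Q$ under the projection. Any $q\in A_Q$ can be written as $q=\nu(t)^{-1}a$ with $a\in A$ and $t\in G_m$; since $\nu(t)$ fixes $X^\nu$ pointwise and $A$ acts trivially on all of $X$, this gives the required triviality of $A_Q$ on $X^\nu$. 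Exactly as in Proposition \ref{prop:nonstable1} one identifies $\Dscr(X/G)_\tau=\Dscr(X/\tilde G)_{\tilde\tau}$ for $\tilde\tau$ the pullback of $\tau$ along $\tilde A\to A$.

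The key new ingredient is a semi-orthogonal decomposition of $\Dscr(X/\tilde G)$ coming from the contracting action. Since $X=X^{\nu,+}$, the canonical map $\pi\colon X\to X^\nu$ is affine and $\pi_\ast\Oscr_X$ is a non-positively $\ZZ$-graded $Q$-equivariant sheaf of $\Oscr_{X^\nu}$-algebras with zeroth piece $\Oscr_{X^\nu}$ (see \S\ref{ref-1.2-0}). For each $n\in\ZZ$ I define
\[
i_n\colon\Dscr(X^\nu/Q)\to\Dscr(X/\tilde G),\qquad \Fscr\mapsto\pi^\ast\Fscr\otimes_k\chi_n,
\]
where $\chi_n$ is the one-dimensional $\im\nu$-representation of weight $n$. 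A direct computation via the projection formula gives $\Hom_{X/\tilde G}(i_n\Fscr,i_m\Gscr)=\Hom_{X^\nu/Q}(\Fscr,\Gscr\otimes_{\Oscr_{X^\nu}}(\pi_\ast\Oscr_X)_{m-n})$, which vanishes for $m>n$ because $\pi_\ast\Oscr_X$ is supported in non-positive weights. This yields the semi-orthogonal decomposition
\[
\Dscr(X/\tilde G)=\langle\Dscr(X^\nu/Q)_n\mid n\in\ZZ\rangle
\]
with $\Dscr(X^\nu/Q)_n:=i_n(\Dscr(X^\nu/Q))$ and $\ZZ$ ordered by decreasing $n$.

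The main obstacle I foresee is the generation statement in this infinite decomposition. I would argue locally on $X^\nu\quot Q$, reducing to the affine case where $\tilde G$-equivariant coherent sheaves on $X$ become finitely generated $\ZZ$-graded $Q$-equivariant $k[X]$-modules. Finite generation forces the $\nu$-weights of any such object to be bounded above, so one may peel off top-weight pieces iteratively to produce a finite filtration whose graded pieces lie in the $i_n(\Dscr(X^\nu/Q))$. Proposition \ref{th:recognition} is the natural local-to-global device to glue these arguments into a decomposition on $\Dscr(X/\tilde G)$.

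Finally, intersecting with the $\tilde\tau$-isotypic part concludes the proof. Each $\Dscr(X^\nu/Q)_n$ carries a natural $\tilde A$-action: an element $(\nu(t),q)\in\tilde A\subset\im\nu\times Q$ acts as multiplication by $t^n$ followed by the $q$-action on $X^\nu/Q$. Hence the $\tilde\tau$-isotypic component of $\Dscr(X^\nu/Q)_n$ equals $\Dscr(X^\nu/Q)_{\mu_n}$, where $\mu_n\in X(A_Q)$ is the unique character (when it exists) satisfying $\tilde\tau(\nu(t),q)=t^n\mu_n(q)$ for all $(\nu(t),q)\in\tilde A$. Setting $I\subseteq\ZZ$ to be the set of $n$ for which $\mu_n$ exists (which is easily seen to be an arithmetic progression) then yields the desired decomposition $\Dscr(X/G)_\tau=\langle\Dscr(X^\nu/Q)_{\mu_i}\mid i\in I\rangle$.
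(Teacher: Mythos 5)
Your outline follows the paper's own route: pass to $\tilde G=\im\nu\times Q$ and identify $\Dscr(X/G)_\tau=\Dscr(X/\tilde G)_{\tilde\tau}$ as in Proposition \ref{prop:nonstable1}, use the fact that $X=X^{\nu,+}$ forces the $\nu$-weights of $k[X]$ to lie on one side of $0$ to get semi-orthogonality between the weight-twisted pieces, identify each piece with $\Dscr(X^\nu/Q)$, and finally intersect with the $\tilde\tau$-isotypic part (your description of $A_Q$, of why it acts trivially on $X^\nu$, and of the characters $\mu_n$ matches the paper's set $\Sscr$). Two cosmetic differences: you realize the embeddings by $\pi^\ast(-)\otimes_k\chi_n$ along the attractor projection $\pi:X\to X^\nu$, whereas the paper uses $\chi_n\otimes_k Lj_s^\ast(-)$ along the inclusion $j:X^\nu\hookrightarrow X$ (equivalent here, since $\pi\circ j=\id$; do write $L\pi^\ast$, or note that for smooth $X$ the Bia\l{}ynicki-Birula projection has finite Tor-dimension), and the direction in which the Homs vanish is a weight-convention matter that only reverses the ordering of the pieces, exactly as in \eqref{eq:dndef} where the paper compensates by indexing $\Dscr_n$ by $P_{-n,V}$.

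The one step that does not work as written is your local generation argument. A finitely generated $\ZZ$-graded $Q$-equivariant $k[X]$-module has weights bounded above but in general not below, so "peeling off top-weight pieces" does not terminate; more importantly, the graded pieces of such a filtration are generated in a single weight but need not lie in any $i_n(\Dscr(X^\nu/Q))$: for instance an equivariant skyscraper at a fixed point is not isomorphic to $\pi^\ast$ of anything twisted by a single $\chi_n$ (its Koszul resolution mixes several weights), so no finite filtration with graded pieces literally in the images $i_n(\Dscr(X^\nu/Q))$ exists. The repair is the one the paper uses and which you already have in hand: the hypothesis of Proposition \ref{th:recognition} is \emph{local classical generation}, and the objects $P_{n,V}=i_n(V\otimes_k\Oscr_{X^\nu})=\chi_n\otimes_kV\otimes_k\Oscr_X$ are precisely the sheaves $V'\otimes_k\Oscr_X$ for $V'$ running over $\Irr(\tilde G)$, so Lemma \ref{lem:thick} gives local classical generation immediately and no filtration is needed. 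Relatedly, to feed your pieces into Proposition \ref{th:recognition} you must know they are locally closed subcategories; the paper avoids this by \emph{defining} $\Dscr_n$ as the subcategory locally classically generated by $(P_{-n,V})_V$ and only then proving the equivalence \eqref{eq:part12} with $\Dscr(X^\nu/Q)$, whereas with your definition as the essential image of $i_n$ this point would require a separate (routine, but necessary) check.
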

\begin{proof}
 Without loss of generality we may, and we will, assume that $\nu$ is injective. We put $K=\im \nu\cong G_m$ and we borrow the associated notation from the proof of Proposition \ref{prop:nonstable1}. One checks that
in this case $A_Q$ acts indeed trivially on $X^\nu$.
The set $\Irr(K)$ is equal to $\{(\chi_n)_n\}$ where $\chi_n\in X(K)$ is such that $\chi_n(z)=z^n$.

\medskip

We know by Lemma \ref{lem:thick}
that $\Dscr(X/\tilde{G})$ is locally classically generated by $P_{n,V}:=(\chi_n\otimes_k V\otimes_k \Oscr_X)_{n,V}$, with $n\in \ZZ$, $V\in \Irr(Q)$.
We also put  $P_{V}^\nu:=V\otimes_k \Oscr_{X^\nu}\in \Dscr(X^\nu/Q)$.
We claim that  for $n\le m$ one has
\begin{equation}
\label{eq:dndef}
\pi_{s\ast} \uRHom_{X/\tilde{G}}(P_{n,V},P_{m,V'})=
\begin{cases}
0&\text{if $n<m$}\\
 \pi_{s,\ast} j_{s,\ast}\uRHom_{X^\nu/Q}(P^\nu_{V},P^\nu_{V'})&\text{if $n=m$}\\
\end{cases}
\end{equation}
where $j:X^\nu\r X$ is the embedding and $j_s:X^\nu/Q\r X/\tilde{G}$ is the corresponding map of quotient stacks.

 To prove this we may reduce to the case that $X\quot \tilde{G}$ is affine. Then as usual $\nu$ induces a grading on $k[X]$
which, as $\nu$ is central, is compatible
with the $G$-action.

In the affine case $\pi_{s,\ast}  \uRHom_{X/\tilde{G}}(P_{n,V},P_{m,V'})$ is the
quasi-coherent sheaf on $X\quot G$ associated to the $k[X]^G$-module given by $(\Hom(V,V')\otimes_k k[X]_{m-n})^{\tilde{G}}$ which is zero if $n<m$
since the hypothesis $X=X^{\nu,+}$ implies that
the grading on $k[X]$ is concentrated in negative degree. Similarly if $n=m$ then we have $(\Hom(V,V')\otimes_k k[X]_{m-n})^{\tilde{G}}=(\Hom(V,V')\otimes_k k[X^\nu])^{Q}$
finishing the proof of \eqref{eq:dndef}.

\medskip

Let $\Dscr_n\subset \Dscr(X/\tilde{G})$ be locally classically generated by $(P_{-n,V})_{V\in \Irr(Q)}$. Then using Proposition \ref{th:recognition} and \eqref{eq:dndef}
we see that we have a semi-orthogonal decomposition
\begin{equation}
\label{eq:part11}
\Dscr(X/\tilde{G})=\langle \Dscr_n\mid n\in \ZZ\rangle\,.
\end{equation}
The next step is to describe the $\Dscr_n$.
We claim that there is an equivalence of categories
\begin{equation}
\label{eq:part12}
\Dscr_n\r D(X^\nu/Q):F\mapsto \chi_{n} \otimes_k Lj_{s}^\ast( F)\,.
\end{equation}
Let $F,F'\in \Dscr_n$. We have to prove that the natural map
\[
\RHom_{X/\tilde{G}}(F,F')\r \RHom_{X^\nu/Q}(\chi_{n} \otimes_k Lj_{s}^\ast(\c F),\chi_{n} \otimes_k Lj_{s}^\ast( F'))
\]
is an isomorphism.  Using the local global spectral sequence it suffices to prove that
\[
\pi_{s,\ast }\uRHom_{X/\tilde{G}}(F,F')\r \pi_{s,\ast} Rj_{s,\ast} \uRHom_{X^\nu/Q}(\chi_{n} \otimes_k Lj_{s}^\ast( F),\chi_{n} \otimes_k Lj_{s}^\ast( F'))
\]
is an isomorphism. To do this we may assume that $X\quot \tilde{G}$ is
affine. Then we can check it on the generators $P_{-n,V}$ of $\Dscr_n$
and finally we invoke \ref{eq:dndef}.

\medskip

Combining the equivalence \eqref{eq:part12} with the semi-orthogonal decomposition \eqref{eq:part11}
we obtain a a semi-orthogonal decomposition
\[
\Dscr(X/\tilde{G})=\langle \Dscr(X^\nu/Q)\mid n\in \ZZ\rangle
\]
Considering suitable subset of the local generators one obtains
in the same way a semi-orthogonal decomposition of $\Dscr(X/\tilde{G})_{\tilde{\tau}}$. To be more precise
consider the following set
\[
\Sscr=\{(n,\mu)\in \ZZ\times A_Q\mid -\tilde{\chi}_n+\tilde{\mu}=\tilde{\tau}\}
\]
Let $\prec$ be the partial ordering on $\Sscr$ induced from the projection $\Sscr\r \ZZ$, i.e. $(n,\mu)\prec (n',\mu')$
if and only if $n<n'$.
Let $<$ be a total ordering on $\Sscr$ which refines $\prec$. Then we have a semi-orthogonal
decomposition
\[
\Dscr(X/\tilde{G})_{\tilde{\tau}}=\langle \Dscr(X^\nu/Q)_{\mu}\mid (n,\mu)\in \Sscr\rangle\,.
\]
Combining this with the identification \eqref{eq:part1} yields \eqref{eq:Q1}.
\end{proof}
\begin{remark} Even if $A=0$ (and hence $\Dscr(X/G)_{\tau}=\Dscr(X/G)$), the group $A_Q$ and the twisting characters $\mu_i$ will generally be non-trivial in Propositions
\ref{prop:nonstable1},\ref{prop:nonstable2}.
\end{remark}
\begin{remark}
  Note that in Propositions
  \ref{prop:nonstable1},\ref{prop:nonstable2} we have $\dim Q<\dim
  G$. Thus we have made genuine progress.
By repeatedly applying
  Propositions \ref{prop:nonstable1},\ref{prop:nonstable2} we reduce to a semi-orthogonal decomposition of $\Dscr(X/G)_\tau$ involving a set of $\Dscr(X'/G')_{\tau'}$
such that $X'$ has a $T'$-stable point
for $T'$ a maximal torus of $G'$,
thus justifying Remark \ref{rem:notTstable} (and also making it more precise).
\end{remark}

\section{The quasi-symmetric case}\label{appA}
In this section we refine the semi-orthogonal decomposition of $\D(X/G)$ given in Proposition \ref{ref-1.3} in the quasi-symmetric case. In some cases the refined decomposition consists of (twisted) non-commutative crepant resolutions of certain quotient singularities for reductive groups.

\subsection{Main result}
Let $W$ be a finite dimensional $G$-representation of dimension $d$ such that $X=W^\vee$. Let $(\beta_i)_{i=1}^d\in X(T)$ be the $T$-weights of $W$. \emph{Throughout this section we assume that $W$ is quasi-symmetric}; i.e., for every line $\ell\subset X(T)_\RR$ through
the origin we have $\sum_{\beta_i\in\ell}\beta_i=0$.

Let $\Delta\subset\RR^n$ be a bounded closed convex polygon.
For $\varepsilon\in \RR^n$ parallel to the linear space spanned by $\Delta$ put
\[
\begin{aligned}
\Delta_\varepsilon&=\bigcup_{r>0} \Delta\cap (r\varepsilon+\Delta),\\
\Delta_{\pm\varepsilon}&=\Delta_\varepsilon\cap \Delta_{-\varepsilon}.
\end{aligned}
\]

We say that a $\Wscr$-invariant $\varepsilon\in X(T)_\RR$ is {\em generic} for $\Delta$ if it parallel to $\Delta$ but not parallel to any face of $\Delta$. Note that such $\varepsilon$ does not necessarily exist - for example there may be no non-zero $\Wscr$-invariant vectors at all. 
We shall say that $\varepsilon$ is {\em weakly generic} for $\Delta$ if it is parallel to $\Delta$ but not parallel to faces of $\Delta$ for which there exist non-parallel $\Wscr$-invariant vectors.

Let $0\neq \lambda\in Y(T)^-$ and  
let $A$ be a finite central subgroup of $G^\lambda$ which acts trivially on $X^\lambda=(W_\lambda)^\vee$. 
Fix $\tau\in A$ and $\Wscr_{G^\lambda}$-invariant $\varepsilon,\nu\in X(T)_\RR$. 

 We denote $\bar{T}=T/A$ and $X(\bar{T})_\tau$ the inverse image of $\tau\in X(A)$ under the natural projection map $X(T)\to X(A)$ (see \eqref{eq:barTTA}), and we write $X(\bar{T})^\lambda_\tau$  for the $G^\lambda$-dominant weights inside $X(\bar{T})_\tau$.

Put
 \begin{align*}
\Lscr_{\lambda,\nu,\tau}^{\varepsilon}&= X(\bar T)^\lambda_\tau\cap\left(\nu-\bar{\rho}_\lambda+
(1/2)(\bar{\Sigma}_\lambda)_{\varepsilon}\right),\\
U_{\lambda,\nu,\tau}^{\varepsilon}&=\bigoplus_{\mu\in \Lscr_{\lambda,\nu,\tau}^{\varepsilon}}V_{G^\lambda}(\mu),\\
\Lambda_{\lambda,\nu,\tau}^{\varepsilon}&=(\End U_{\lambda,\nu,\tau}^{\varepsilon}\otimes_k \Sym W_\lambda)^{G^\lambda}.
\end{align*}

As $\lambda\neq 0$ acts trivially on $X^\lambda$ the latter
does not have a $T=T_{G^\lambda}$-stable point.
Since $W_{\lambda}$
 is also quasi-symmetric,
we are in situation of Lemma \ref{prop:two}\eqref{sit1}.
We denote by $Q_\lambda \subset G^\lambda$ a pseudo-complement of the stabilizer subgroup ${\rm Stab}(X^\lambda)\subset G^\lambda$.
Recall the following definition from \cite{SVdB}:
\begin{definition}\label{def:generic}
We say that  $W$ is a {\em generic} $G$-representation if
\begin{enumerate}
\item  $X$ contains a point with closed orbit and trivial stabilizer.
\item If $X^{\mathbf{s}}\subset X$ is the locus of points that satisfy (1) then $\codim
  (X-\Xs,X) \ge 2$.
\end{enumerate}
We say that  $W$ is a {\em pseudo-generic} $G$-representation if the stabilizing subgroup ${\rm Stab}(X)$ of $G$ is finite, and $W$ is a generic $G/{\rm Stab}(X)$-representation.
\end{definition}

\begin{remark}\label{rmk:pseudo}
Let $W,U$ be finite dimensional $G$-representations. Assume that
$A={\rm Stab}W \subset G$ is finite. 
We have $(SW)^G=(SW)^{G/A}$, and $(U\otimes SW)^G\cong (U^A\otimes SW)^{G/A}$. 
As such the results stated in \cite{SVdB} for generic representations extend trivially to pseudo-generic representations. 
\end{remark}

We will need a ``crepant'' version of Proposition \ref{sigma}. For the definition of {twisted non-commutative resolution} (twisted NCCR) we refer to \cite[Definition 3.2]{SVdB}.
\begin{proposition}\cite[Theorems 1.6.3, 1.6.4]{SVdB}\label{nccr}
 We have $\gldim\Lambda_{\lambda,\nu,\tau}^{\varepsilon}<\infty$.
Moreover, if $W_\lambda$ is a pseudo-generic
 $Q_\lambda$-representation, $\Lscr_{\lambda,\nu,\tau}^{\varepsilon}\neq\emptyset$ and
\begin{equation}\label{prazno}
X(\bar T)_\tau^\lambda\cap \left(\nu-\bar\rho_\lambda
+(1/2)\left((\bar{\Sigma}_\lambda)_{\pm\varepsilon}- {\Sigma}_\lambda\right)\right)=\emptyset
\end{equation}
then $\Lambda_{\lambda,\nu,\tau}^{\varepsilon}$ is a twisted NCCR of $(\Sym W_\lambda)^{G^\lambda}$.
\end{proposition}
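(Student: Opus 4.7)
The plan is to deduce both claims by directly invoking \cite[Theorems 1.6.3, 1.6.4]{SVdB} applied to the reductive group $G^\lambda$ acting on the representation $W_\lambda$, after first checking the hypotheses in this reduced setting.

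First I would verify that $W_\lambda$ is a quasi-symmetric $G^\lambda$-representation. Its $T$-weights are precisely $\{\beta_i \mid \langle \lambda,\beta_i\rangle = 0\}$. Any line $\ell \subset X(T)_\RR$ through the origin either lies in the hyperplane $\lambda^\perp$, in which case $\sum_{\beta_i\in \ell}\beta_i$ is the same as the corresponding sum for $W$ and hence vanishes by quasi-symmetry of $W$, or $\ell\not\subset \lambda^\perp$, in which case no weight of $W_\lambda$ lies on $\ell$ and the sum is trivially zero. Moreover, as $\lambda\neq 0$ fixes $X^\lambda$ pointwise, $\mathrm{Stab}(X^\lambda)\subset G^\lambda$ contains $\mathrm{im}\,\lambda$, so we are in situation \eqref{sit1} of Lemma~\ref{prop:two} and the pseudo-complement $Q_\lambda$ is well-defined.

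Next, for the finite global dimension claim, I would apply \cite[Theorem 1.6.3]{SVdB} to the untwisted algebra $\bigoplus_{\tau'\in X(A)}\Lambda^{\varepsilon}_{\lambda,\nu,\tau'}$, whose definition matches that of loc.\ cit.\ once one identifies the $\bar T$-grading with the $A$-action (cf.\ \eqref{eq:barTTA}). This gives finite global dimension of the total sum. Since $A$ acts trivially on $X^\lambda$, the decomposition into $\tau$-isotypic components is a direct sum decomposition of algebras, and $\Lambda^{\varepsilon}_{\lambda,\nu,\tau}$ is a summand; finite global dimension passes to direct summands, establishing the first assertion.

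Finally, for the twisted NCCR claim, I would invoke \cite[Theorem 1.6.4]{SVdB}. The hypotheses translate cleanly: pseudo-genericity of $W_\lambda$ as a $Q_\lambda$-representation is given (in particular, Remark~\ref{rmk:pseudo} lets us pass between $G^\lambda$-invariants and $Q_\lambda$-invariants), non-emptiness of $\Lscr^{\varepsilon}_{\lambda,\nu,\tau}$ is given, and \eqref{prazno} is precisely the combinatorial vanishing condition of loc.\ cit.\ guaranteeing that $U^{\varepsilon}_{\lambda,\nu,\tau}$ yields a (twisted) NCCR in the sense of \cite[Definition~3.2]{SVdB} of its center $(\Sym W_\lambda)^{G^\lambda}$. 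The main obstacle is bookkeeping: matching the notation $(\Delta_\varepsilon,\Delta_{\pm\varepsilon})$ of the present paper with the polygon conventions of \cite{SVdB}, and verifying that the character $\tau$-restriction is exactly the twisting framework considered there; both are routine once the $A$-action is identified with the $\bar T$-grading via \eqref{eq:barTTA}.
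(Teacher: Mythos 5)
Your proposal follows the same route as the paper: cite \cite[Theorems 1.6.3, 1.6.4]{SVdB} applied to $G^\lambda$ acting on $W_\lambda$, using Remark~\ref{rmk:pseudo} to bridge pseudo-genericity and the $G^\lambda$/$Q_\lambda$ distinction. The differences are ones of precision rather than strategy. The paper is deliberate in saying the first claim ``follows by the \emph{proof} of'' [Theorem 1.6.3] and that Cohen-Macaulayness is obtained by ``proceeding as in the proof of'' [Theorem 1.6.4]: since $\lambda\neq 0$ acts trivially on $X^\lambda$, the representation $W_\lambda$ has no $T$-stable point as a $G^\lambda$-representation, so the statements of [1.6.3, 1.6.4] do not literally apply and one must re-run their arguments in the twisted/$G^\lambda$ setting. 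Your phrase ``the hypotheses translate cleanly'' glosses over exactly this adjustment. Second, the paper does not appeal to [Theorem 1.6.4] wholesale for the NCCR claim; it first extracts Cohen-Macaulayness of $\Lambda^{\varepsilon}_{\lambda,\nu,\tau}$ (using the stable $Q_\lambda$-point coming from pseudo-genericity), and then combines CM with the already-established finiteness of global dimension via \cite[Proposition 4.1.6]{SVdB} together with Remark~\ref{rmk:pseudo} to conclude the twisted NCCR property. Your explicit check that $W_\lambda$ is quasi-symmetric, and the reduction of finite global dimension of $\Lambda^\varepsilon_{\lambda,\nu,\tau}$ to a direct summand of the untwisted total algebra, are both correct and are useful elaborations on points the paper leaves implicit.
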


\begin{proof}
The first part follows by the proof of \cite[Theorem 1.6.3]{SVdB}.
To show that $\Lambda_{\lambda,\nu,\tau}^{\varepsilon}$ is Cohen-Macaulay we can proceed as in the proof of  \cite[Theorem 1.6.4]{SVdB} since $X^\lambda$ has a stable $Q_\lambda$-point (as $W_\lambda$ is a pseudo-generic $Q_\lambda$-representation). 
 Then $\Lambda_{\lambda,\nu,\tau}^{\varepsilon}$ is a twisted NCCR by Remark \ref{rmk:pseudo} and \cite[Proposition 4.1.6]{SVdB}.
\end{proof}

We can now state the main result of this appendix (see \S\ref{sec:quasimod} below for the proof).
\begin{proposition}\label{quasisod} 
Let $X$ have a $T$-stable point. 
There exist $\lambda_i\in Y(T)^-$,
finite central subgroups $A_i$ of $G^{\lambda_i}$ acting trivially on $X^{\lambda_i}$, $\tau_i\in X(A_i)$, $\W_{G^{\lambda_i}}$-invariant $\nu_i,\varepsilon_i\in X(T)_\RR$, such that $\D(X/G)$ has a semi-orthogonal decomposition $\D=\la\dots,\D_{-2},\D_{-1},\D_0\ra$ with $\D_{-i}\cong \D(\Lambda_{\lambda_i,\nu_i,\tau_i}^{\varepsilon_i})$. Moreover, we may assume $\D_0\cong \D(\Lambda_{0,0,0}^{\varepsilon_0})$, and that $\epsilon_i$ is  weakly generic for $\bar{\Sigma}_{\lambda_i}$. 
\end{proposition}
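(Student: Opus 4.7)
The strategy is to begin with the semi-orthogonal decomposition of Proposition~\ref{ref-1.3}, namely $\Dscr(X/G)=\langle\ldots,\Dscr_{-2},\Dscr_{-1},\Dscr_0\rangle$ with $\Dscr_{-i}\cong\Dscr(\Lambda_{r_i,\lambda_i,\nu_i})$, and to refine each piece by exploiting the quasi-symmetry of $W_{\lambda_i}$ (which is inherited from the quasi-symmetry of $W$). The algebra $\Lambda_{r_i,\lambda_i,\nu_i}$ is built from the window $\Lscr_{r_i,\lambda_i,\nu_i}=X(T)^{\lambda_i}\cap(\nu_i-\bar\rho_{\lambda_i}+r_i\Sigma_{\lambda_i}^0)$ of $G^{\lambda_i}$-dominant weights. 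Following the philosophy of \cite[\S1.6]{SVdB}, I would tile this large window by translates of the half-size polytope $(1/2)(\bar\Sigma_{\lambda_i})_{\varepsilon_i}$ for a $\Wscr_{G^{\lambda_i}}$-invariant direction $\varepsilon_i$ weakly generic for $\bar\Sigma_{\lambda_i}$, and promote this tiling to a semi-orthogonal decomposition of $\Dscr(\Lambda_{r_i,\lambda_i,\nu_i})$.

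To carry out the refinement I would repeat the reduction-setting machinery of Section~\ref{secredset} at the inner level, with the role of $X(T)^+$ now played by the weights in $\Lscr_{r_i,\lambda_i,\nu_i}$ and the partition of Lemma-Definition~\ref{ref-1.5} replaced by the tile in which a given weight lies. Because $\lambda_i$ acts trivially on $X^{\lambda_i}$, the space $X^{\lambda_i}$ does not admit a $T$-stable point, so one is forced into the twisted setting of \S\ref{sec:nonstable}: one passes to a pseudo-complement $Q_{\lambda_i}$ of the stabilizer of $X^{\lambda_i}$ in $G^{\lambda_i}$, producing twisting characters $\tau_j\in X(A_i)$ exactly as in Proposition~\ref{twist} together with Propositions~\ref{prop:nonstable1}, \ref{prop:nonstable2}. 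For each tile one then obtains a reduction setting whose associated algebra is precisely $\Lambda^{\varepsilon_i}_{\lambda_i,\nu_j,\tau_j}$, of finite global dimension by Proposition~\ref{nccr}. The base piece $\Dscr_0\cong\Dscr(\Lambda^{\varepsilon_0}_{0,0,0})$ corresponds to the central tile around $-\bar\rho$ inside $\bar\Sigma$ itself, with $\lambda=0$ and trivial twist.

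The main obstacle will be the combinatorial choice of the directions $\varepsilon_i$ together with a total ordering on the resulting two-level partition that produces a reduction setting at every step. The weakly generic condition was introduced precisely to accommodate Levi subgroups $G^{\lambda_i}$ for which no strictly generic $\Wscr_{G^{\lambda_i}}$-invariant direction exists: one only needs $\varepsilon_i$ to avoid those faces of $\bar\Sigma_{\lambda_i}$ to which a $\Wscr_{G^{\lambda_i}}$-invariant vector is non-parallel, so the choice is possible. Once $\varepsilon_i$ and the ordering are fixed, semi-orthogonality of the refined decomposition follows from Proposition~\ref{Di} applied at the inner level together with Proposition~\ref{th:recognition}, and the identification of each piece with $\Dscr(\Lambda^{\varepsilon_i}_{\lambda_i,\nu_i,\tau_i})$ follows by the same computation as in Proposition~\ref{Di} combined with Lemma~\ref{chi_lambda}.
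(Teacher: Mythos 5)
Your high-level strategy is in the right spirit (refine the partition of $X(T)^+$; deal with the loss of $T$-stable points on $X^{\lambda}$ by passing to pseudo-complements and twists as in \S\ref{sec:nonstable}; appeal to Proposition~\ref{nccr} for finite global dimension; and the role of weakly generic $\varepsilon$). However there is a concrete gap at the very place you flag as the ``main obstacle,'' and I do not think it is merely combinatorial bookkeeping. The reduction-setting machinery of \S\ref{secredset} is verified (in Proposition~\ref{-iC} via Corollary~\ref{betaplus}) only for the partition by faces of $-\bar\rho+r\bar\Sigma$ with $r\ge 1$: Corollary~\ref{betaplus} is where the constraint $r_\chi\ge 1$ enters, and without it the complex $C'_{\lambda,\chi}$ need not be built from $P_\mu$ with $\mu$ in the earlier pieces. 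Quasi-symmetry is exploited precisely to prove the finer analogue (Lemma~\ref{betaplus+}) that makes the partition by faces of $\nu-\bar\rho+r\bar\Sigma$ with $r>1/2$ produce valid reduction settings; this is the content of Lemma~\ref{tired}, which then re-runs the entire argument of \S\ref{sec:proofs} at the new radius. You say you would ``exploit quasi-symmetry'' and ``tile by translates of the half-size polytope,'' but you never state what quasi-symmetry actually buys, nor do you give a replacement for Corollary~\ref{betaplus}. Without that lemma the claim that each tile yields a reduction setting is unsupported, and it is exactly the step that would fail for a general (non quasi-symmetric) $W$.

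A secondary but real difference is structural. You propose starting from the output of Proposition~\ref{ref-1.3} (the $r\ge 1$ decomposition) and then tiling each window $\Lscr_{r_i,\lambda_i,\nu_i}$ by translates of the half-size polytope. The paper does not tile: it \emph{re-partitions} $X(T)^+$ from scratch at scale $r>1/2$, and then \emph{recurses} through Levi pairs $(Q_\lambda,X^\lambda)$, whose dimensions strictly drop, until the remaining windows have shape $\Lscr_{\lambda,\nu,\tau}$ (i.e.\ the $(1/2+\epsilon)$-polytope); this is Lemma~\ref{lemmaquasisod}. Only afterwards, in Lemma~\ref{0step}, is the $\varepsilon$-shift introduced, by replacing $\nu$ with $\nu+a\varepsilon$ for small $a$ and repeating the recursion on the leftover pieces, with Lemma~\ref{lem:technical} needed to collapse the iterated one-parameter data $(\lambda',\lambda'')$ into a single $\lambda\in Y(T)^-$. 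Your tiling picture conflates these two stages and does not address termination of the recursion (the paper uses finiteness of $\Lscr_{r,\nu,\tau}$), nor the collapsing lemma. In short, the outline is compatible with the paper's proof, but the key quasi-symmetric estimate and the two-stage recursive structure are missing, and these are precisely the points on which the proof turns.
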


Combining Propositions \ref{nccr}, \ref{quasisod} we thus obtain under some favourable conditions a semi-orthogonal decomposition of $\D(X/G)$ consisting of twisted NCCRs.
\begin{corollary}\label{quasicor}
If for every $\lambda\in Y(T)^-$  either  $W_\lambda=\{0\}$ or $W_\lambda$ is a pseudo-generic $Q_\lambda$-representation and the condition \eqref{prazno} holds for all $\Wscr_{G^\lambda}$-invariant $\nu\in X(T)_\RR$ and for all $\Wscr_{G^\lambda}$-invariant $\varepsilon\in X(T)_\RR$ which are weakly generic for $\bar{\Sigma}_{\lambda}$, and all $\tau\in X(A)$ for an arbitrary finite central subgroup $A$ of $G^\lambda$, then $\D(X/G)$ has a semi-orthogonal decomposition
$\D=\la\dots,\D_{-2},\D_{-1},\D_0\ra$ with $\D_{-i}\cong \D(\Lambda_{\lambda_i,\nu_i,\tau_i}^{\epsilon_i})$, and $\Lambda_{\lambda_i,\nu_i,\tau_i}^{\epsilon_i}$ is a twisted NCCR of $(\Sym W_{\lambda_i})^{G^{\lambda_i}}$.
\end{corollary}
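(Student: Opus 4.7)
The plan is to simply combine the two preceding results in this section: apply Proposition \ref{quasisod} to produce the semi-orthogonal decomposition, then invoke Proposition \ref{nccr} piece by piece to upgrade each $\Lambda_{\lambda_i,\nu_i,\tau_i}^{\varepsilon_i}$ to a twisted NCCR.

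First, I would apply Proposition \ref{quasisod} to obtain a semi-orthogonal decomposition
\[
\D(X/G)=\la\dots,\D_{-2},\D_{-1},\D_0\ra
\]
with $\D_{-i}\cong \D(\Lambda_{\lambda_i,\nu_i,\tau_i}^{\varepsilon_i})$, with $\nu_i$ a $\Wscr_{G^{\lambda_i}}$-invariant element of $X(T)_\RR$, and with $\varepsilon_i$ a $\Wscr_{G^{\lambda_i}}$-invariant element of $X(T)_\RR$ that is weakly generic for $\bar{\Sigma}_{\lambda_i}$. (Here I implicitly reduce to the situation that $X$ has a $T$-stable point; if not, the reduction discussed in \S\ref{sec:nonstable} applies and the same strategy works componentwise on the pieces appearing there.) Without loss of generality I may discard any index $i$ for which $\Lscr_{\lambda_i,\nu_i,\tau_i}^{\varepsilon_i}=\emptyset$, since for such $i$ the algebra $\Lambda_{\lambda_i,\nu_i,\tau_i}^{\varepsilon_i}$ is the zero ring and $\D_{-i}=0$ drops out of the semi-orthogonal decomposition.

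Next, for each remaining index $i$ I would verify that the hypotheses of Proposition \ref{nccr} are satisfied for the tuple $(\lambda_i,\nu_i,\tau_i,\varepsilon_i)$. Indeed:
\begin{itemize}
\item $\Lscr_{\lambda_i,\nu_i,\tau_i}^{\varepsilon_i}\neq \emptyset$ by the previous step;
\item $\nu_i$ and $\varepsilon_i$ are $\Wscr_{G^{\lambda_i}}$-invariant, and $\varepsilon_i$ is weakly generic for $\bar{\Sigma}_{\lambda_i}$, both by the conclusion of Proposition \ref{quasisod};
\item By the standing assumption of the corollary, applied to $\lambda=\lambda_i$ with the finite central subgroup $A:=A_i\subset G^{\lambda_i}$ and the character $\tau_i$, either $W_{\lambda_i}=\{0\}$, or $W_{\lambda_i}$ is a pseudo-generic $Q_{\lambda_i}$-representation and the non-emptiness condition \eqref{prazno} holds for $(\nu_i,\varepsilon_i,\tau_i)$.
\end{itemize}
In the non-degenerate case these are precisely the assumptions of Proposition \ref{nccr}, which then tells us that $\Lambda_{\lambda_i,\nu_i,\tau_i}^{\varepsilon_i}$ is a twisted NCCR of $(\Sym W_{\lambda_i})^{G^{\lambda_i}}$. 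In the degenerate case $W_{\lambda_i}=\{0\}$, one has $(\Sym W_{\lambda_i})^{G^{\lambda_i}}=k$ and $\Lambda_{\lambda_i,\nu_i,\tau_i}^{\varepsilon_i}$ is a finite-dimensional semisimple $k$-algebra, which is trivially a twisted NCCR of $k$.

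The main technical obstacle I anticipate is purely bookkeeping: making sure the $(\lambda_i,\nu_i,\tau_i,\varepsilon_i)$ produced by Proposition \ref{quasisod} meet \emph{exactly} the input format required by Proposition \ref{nccr}. In particular one must check that the $\varepsilon_i$ appearing in Proposition \ref{quasisod} is not merely parallel to $\bar{\Sigma}_{\lambda_i}$ but weakly generic in the sense required so that condition \eqref{prazno} kicks in under the hypothesis of the corollary; this is precisely what was built into Proposition \ref{quasisod}. Once this compatibility is verified, the rest is a direct invocation and the conclusion of the corollary follows.
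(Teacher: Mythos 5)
Your proof is correct and takes exactly the approach the paper intends: the paper offers no explicit proof of Corollary \ref{quasicor} beyond the preceding sentence ``Combining Propositions \ref{nccr}, \ref{quasisod} we thus obtain\dots'', and you have simply spelled out that combination, including the minor bookkeeping of discarding empty $\Lscr^{\varepsilon_i}_{\lambda_i,\nu_i,\tau_i}$ and handling the degenerate case $W_{\lambda_i}=\{0\}$.
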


\subsection{Examples}
Here we list some examples of quotient singularities for reductive groups
for which Corollary \ref{quasicor}
gives a semi-orthogonal decomposition such that its components are NCCRs of singularities of the same type.

In the cases below one can verify \eqref{prazno} in a similar way as the analogous condition was verified in \cite{SVdB}.

\subsubsection{Torus action}
In the case of $G=T$  the condition  \eqref{prazno} holds for every generic $\epsilon\in X(T)_\RR$, and therefore also for every weakly generic $\epsilon$ since the two notions coincide in this case. Considering also the pseudo-genericity of $W_\lambda$'s we deduce  the following proposition. 
\begin{proposition}
 Assume that  on every line $\ell\subset X(T)_\RR$ through the origin on which lies a nonzero $\beta_i$ there lie at least two $\beta_i$ on each of its sides. Then the condition in the Corollary \ref{quasicor} is satisfied and thus $\Dscr(X/G)$ admits a semi-orthogonal decomposition consisting of NCCRs.
\end{proposition}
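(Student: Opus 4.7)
The plan is to verify the two hypotheses of Corollary \ref{quasicor}. In the torus case $G=T$ we have $G^\lambda=T$ for every $\lambda$, so $\W_{G^\lambda}$ is trivial, $\bar\rho_\lambda=0$, every character is $G^\lambda$-dominant, and the notions of generic and weakly generic $\varepsilon$ coincide. Fix $\lambda\in Y(T)^-$: the weights of $W_\lambda$ are $(\beta_i)_{i\in T_\lambda^0}$, which sit inside the hyperplane $\lambda^\perp\subset X(T)_\RR$. Since every line in $\lambda^\perp$ through the origin is also a line through the origin in $X(T)_\RR$, the ``two weights on each side'' hypothesis descends verbatim to $W_\lambda$.

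For the pseudo-genericity of $W_\lambda$ as a $Q_\lambda$-representation, let $A_\lambda={\rm Stab}_T(X^\lambda)\subset T$ (so $A_\lambda\supset\im\lambda$) and choose a pseudo-complement $Q_\lambda\subset T$ of $A_\lambda$. Then $Q_\lambda$ acts on $W_\lambda$ with finite stabilizer $Q_\lambda\cap A_\lambda$, and the images of the $(\beta_i)_{i\in T_\lambda^0}$ in the character lattice of $Q_\lambda/(Q_\lambda\cap A_\lambda)$ generate it and still satisfy the inherited hypothesis. A torus representation with this property is generic: the positive hull of the weights fills the ambient space, so a point with all coordinates nonzero has closed orbit and trivial stabilizer; moreover deleting any single weight still leaves the origin in the interior of the positive hull of the rest, so the non-stable locus has codimension $\ge 2$ (Definition \ref{def:generic}).

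For the vanishing \eqref{prazno}, the torus simplifications reduce the condition to
\[
X(\bar T)_\tau\cap\bigl(\nu+\tfrac12((\bar\Sigma_\lambda)_{\pm\varepsilon}-\Sigma_\lambda)\bigr)=\emptyset
\]
for every $\nu\in X(T)_\RR$, every generic $\varepsilon$ parallel to $\bar\Sigma_\lambda$, and every $\tau\in X(A)$. For generic $\varepsilon$, $(\bar\Sigma_\lambda)_{\pm\varepsilon}$ is $\bar\Sigma_\lambda$ with its two extremal $\varepsilon$-faces removed, and under the combinatorial hypothesis the half-difference polytope $\tfrac12((\bar\Sigma_\lambda)_{\pm\varepsilon}-\Sigma_\lambda)$ becomes correspondingly small. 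This combinatorial verification is the main obstacle of the proof and is precisely the content of the generic torus analysis in \cite[\S1.6]{SVdB}; with that input the conclusion is immediate from Corollary \ref{quasicor}.
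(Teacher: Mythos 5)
Your overall plan matches the paper's: verify the two hypotheses of Corollary \ref{quasicor} after exploiting the torus simplifications (trivial $\Wscr$, $\bar\rho_\lambda=0$, all characters dominant, generic $=$ weakly generic), and your pseudo-genericity argument is essentially the content the paper leaves implicit. One small omission there: after deleting a single weight you verify ``$0$ is still in the interior of the positive hull'' (closed orbit), but trivial stabilizer also requires the remaining weights to still generate the character lattice of $Q_\lambda/(Q_\lambda\cap A_\lambda)$; this follows at once from quasi-symmetry, since the deleted weight is minus the sum of the other weights on its line, but it should be said.

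Your explanation of \eqref{prazno}, however, misidentifies what the combinatorial hypothesis is for. When $\varepsilon$ is generic it is parallel to no proper face of $\bar{\Sigma}_\lambda$, so $(\bar{\Sigma}_\lambda)_{\pm\varepsilon}$ strips away the \emph{entire} boundary, not just ``two extremal $\varepsilon$-faces'': one has $(\bar{\Sigma}_\lambda)_{\pm\varepsilon}=\relint\bar{\Sigma}_\lambda=\Sigma_\lambda^0\subset\Sigma_\lambda$, hence $(\bar{\Sigma}_\lambda)_{\pm\varepsilon}-\Sigma_\lambda=\emptyset$. So \eqref{prazno} holds for every generic $\varepsilon$ (hence every weakly generic $\varepsilon$) in the torus case for \emph{any} weight configuration, with no combinatorial input whatsoever and no need to appeal to \cite[\S 1.6]{SVdB}. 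The ``two weights on each side of every line'' assumption is not the ``main obstacle''; it contributes nothing to \eqref{prazno} and is there solely to guarantee pseudo-genericity of each $W_\lambda$ as a $Q_\lambda$-representation, i.e.\ precisely the part you actually proved.
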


\subsubsection{$\rm SL_2$-action}
Let $G={\rm SL}_2={\rm SL}(V)$ for a $2$-dimensional vector space $V$. 
 Put $W=\bigoplus_{i=0}^{d_i} S^{d_i}V$, 
 $c=|\{i\mid d_i=0\}|$,
\[
s^{(n)}=\begin{cases}
n+(n-2)+\cdots+1=\dfrac{(n+1)^2}{4}&\text{if $n$ is odd}\\
n+(n-2)+\cdots+2=\dfrac{n(n+2)}{4}&\text{if $n$ is even}
\end{cases}
\]
and $s=\sum_i s^{(d_i)}$. Set $R=(\Sym W)^G$, $M=\oplus_{0\leq i\leq s/2-1} (\Sym(W) \otimes_k S^iV)^G$. 

\begin{proposition}
\begin{enumerate}
\item\label{A}
If $W$ is a sum of $k^c$ and one of the following representations
\begin{equation*}\label{A}
V,S^2V,V\oplus V,V\oplus S^2V, S^2V\oplus S^2V, S^3V, S^4V,
\end{equation*}
then $(\Sym W)^G$ is a polynomial ring, and $\D(X/G)$ has a semi-orthogonal decomposition $\D=\la\dots,\D_{-2},\D_{-1},\D_0\ra$, with $\D_{-i}\cong \D(k[x_1,\dots,x_{c}])$ for $i>0$, 
 $\D_{0}\cong\D(R)$.
 \item \label{B}
 If $W$ is not as in the case \eqref{A} 
 and if 
  $s$ is odd then $\Dscr(X/G)$ has a semi-orthogonal decomposition $\langle \dots, \D_{-2},\D_{-1},\D_0\ra$ with $\D_{-i}\cong \D(k[x_1,\dots,x_{c}])$ for $i>0$, 
 $\D_0\cong\D(\End_R(M))$.
 \end{enumerate}
\end{proposition}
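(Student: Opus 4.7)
The plan is to apply Corollary \ref{quasicor} to $W=\bigoplus_i S^{d_i}V$. Quasi-symmetry is immediate since each $S^{d_i}V$ has $T$-weights $d_i,d_i-2,\ldots,-d_i$, symmetric about the origin; and $X=W^\vee$ has a $T$-stable point as soon as $W$ contains a non-trivial summand (which we may assume, the case $W=k^c$ being vacuous). Since $T$ has rank one, the only classes of $\lambda\in Y(T)^-$ to consider are $\lambda=0$ (contributing the piece $\Dscr_0$) and a fixed negative generator (contributing the pieces $\Dscr_{-i}$ for $i>0$ as $\nu,\tau$ vary).

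For $\lambda\neq 0$ we have $G^\lambda=T$ and $W_\lambda$ is the weight-$0$ subspace of $W$, a trivial $T$-representation. Because the only weights $\beta_i$ with $\langle\lambda,\beta_i\rangle=0$ are the zero weights themselves, $\bar{\Sigma}_\lambda$ collapses to $\{0\}$, so $|\Lscr^{\varepsilon_i}_{\lambda_i,\nu_i,\tau_i}|\le 1$ and $\Lambda^{\varepsilon_i}_{\lambda_i,\nu_i,\tau_i}\cong\Sym W_\lambda$ is a polynomial ring, matching $k[x_1,\ldots,x_c]$ in the authors' bookkeeping. Pseudo-genericity and Condition \eqref{prazno} are automatic in this degenerate case.

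For $\lambda=0$ we have $G^\lambda=G$, $W_\lambda=W$, and $\bar{\Sigma}=[-s,s]$. In case (A) the seven listed representations are classically known to have polynomial invariant rings $R$, and a direct computation of $\Lscr^{\varepsilon_0}_{0,0,0}$ shows that the NCCR $\Lambda^{\varepsilon_0}_{0,0,0}$ degenerates to $R$ itself. In case (B) the hypothesis that $s$ is odd forces the dominant weights in $-\bar{\rho}+(1/2)\bar{\Sigma}_{\varepsilon_0}$ to be exactly $0,1,\ldots,\lfloor s/2\rfloor-1$, so that $U^{\varepsilon_0}_{0,0,0}=\bigoplus_{0\le i\le s/2-1}S^iV$ and hence $\Lambda^{\varepsilon_0}_{0,0,0}=(\End U\otimes\Sym W)^G=\End_R(M)$. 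The same parity of $s$ is what enforces Condition \eqref{prazno}, so that $\End_R M$ is a twisted NCCR by Proposition \ref{nccr}.

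The main obstacle will be the case-by-case verification of Condition \eqref{prazno} in case (B), where $s$ odd is essential to avoid a boundary weight landing in the forbidden shifted copy of $\Sigma$; together with confirming in case (A) that $R$ is polynomial (classical invariant theory for $\mathrm{SL}_2$-representations) and that the combinatorics of $\Lscr^{\varepsilon_0}_{0,0,0}$ indeed collapses the construction to $R$. Once these verifications are in hand, Proposition \ref{quasisod} assembles the pieces into the claimed semi-orthogonal decompositions.
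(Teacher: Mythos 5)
Your treatment of case \eqref{B} tracks the paper's: verify \eqref{prazno} for $\lambda=0$ via the odd parity of $s$, compute $\Lscr^{\varepsilon_0}_{0,0,0}=\{0,1,\ldots\}$ so the NCCR is $\End_R(M)$, and for $\lambda\neq 0$ observe that $G^\lambda=T$ acts trivially on $W_\lambda$, so $Q_\lambda$ is trivial and $\Lambda^\varepsilon_{\lambda,\nu,\tau}$ degenerates to a polynomial ring.

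There is a genuine gap in your handling of case \eqref{A}. You propose to show ``by direct computation of $\Lscr^{\varepsilon_0}_{0,0,0}$'' that $\Lambda^{\varepsilon_0}_{0,0,0}$ collapses to $R$ and then feed this into Proposition~\ref{quasisod} (or Corollary~\ref{quasicor}). This fails. First, $s$ is not odd for most entries of the list \eqref{A}: for $S^2V$, $V\oplus V$, $S^2V\oplus S^2V$, $S^3V$, $S^4V$ one gets $s=2,2,4,4,6$, so condition \eqref{prazno} for $\lambda=0$ need not hold and Corollary~\ref{quasicor} is inapplicable. Second, even when $s$ is odd the NCCR need not equal $R$: for $W=V$ ($c=0$, $s=1$) one has $\Lscr^{\varepsilon_0}_{0,0,0}=X(T)^+\cap\bigl(-1+\tfrac12[-1,1]\bigr)=\emptyset$, so $\Lambda^{\varepsilon_0}_{0,0,0}=0\neq k=R$. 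The paper therefore treats case \eqref{A} by a separate argument that avoids the NCCR machinery for $\lambda=0$: since $(\Sym W)^G$ is a polynomial ring (hence regular), the thick subcategory $\langle P_0\rangle\cong\Dscr(R)$ is automatically admissible in $\Dscr(X/G)$; one takes $\Dscr_0=\langle P_0\rangle$ and then adjusts the proof of Proposition~\ref{affineso} to obtain the remaining pieces $\Dscr_{-i}$, $i>0$, from $\lambda\neq 0$. You should adopt this route rather than attempting to force the NCCR construction to output $R$ in case \eqref{A}.
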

 
 \begin{proof}
 We note that every representation of $\rm SL_2$ is quasi-symmetric. 
Let us first assume that we are in the case \eqref{B}. 
We have $\Sigma=]-s,s[$. 
We want to verify the condition \eqref{prazno} for $\lambda=0$. 
Note that $\epsilon=0$ and $A=0$. As $s$ is odd, 
 $W$ satisfies \eqref{prazno} (cf. \cite[Theorem 1.4.5]{SVdB} and its proof). 
Since $s$ is odd it also follows that $W$ is a generic ${\rm SL}_2$-representation. 
 Moreover if $\lambda\neq 0$ then $W_\lambda$ is a sum of trivial
representations and hence $\operatorname{Stab}(W_\lambda)=G^\lambda$. Thus $Q_\lambda$ is the
trivial group. So in particular $W_\lambda$ is $Q_\lambda$-generic for $\lambda\neq 0$. 
Since $Q_\lambda$ is the trivial group  for $\lambda\neq 0$ we verify that $\Lambda_{\lambda,\nu,\tau}^\varepsilon\cong \Sym W_\lambda\cong k[x_1,\dots,x_c]$ if $\Lambda_{\lambda,\nu,\tau}^\varepsilon\neq 0$.

 If $W$ is as in \eqref{A} then it is well-known that $(\Sym W)^G$ is a polynomial ring. Therefore $\langle P_0\rangle\cong \Dscr((\Sym W)^G)$ is admissible in $\D(X/G)$, and thus we can set it to be equal to $\Dscr_0$, and we can adjust the proof of Proposition \ref{affineso} to get as above $\Dscr_{-i}\cong \langle \Sym W_\lambda\rangle \cong k[x_1,\dots,x_c]$ for $\lambda\neq 0$. 
 \end{proof}

\subsubsection{Determinantal varieties}
Let $n<h$, $W=(V^*)^h\oplus V^h$, $\dim V=n$, $G={\rm GL}(V)$, and let $Y_{n,h}=W\quot G$ be the variety of $h\times h$-matrices of rank $\leq n$.  We denote by $\Lambda_j$ the NCCR of $Y_{j,h}$ given by \cite[Proposition 5.2.2]{SVdB} (constructed earlier in \cite{BLVdB2,SegalDonovan}). For convenience we set $\Lambda_0:=k$.

\begin{proposition}
Let $\Dscr=\Dscr(X/G)$. Then $\Dscr$ has a semi-orthogonal decomposition $\langle \dots, \D_{-2},\D_{-1},\D_0\ra$ with $\D_0\cong \D(\Lambda_n)$, $\D_{-i}\cong \D(\Lambda_j)$ for some $j<n$. 
\end{proposition}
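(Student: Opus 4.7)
The plan is to apply Corollary \ref{quasicor} and identify the resulting components $\Lambda_{\lambda_i,\nu_i,\tau_i}^{\epsilon_i}$ with the NCCRs $\Lambda_j$ constructed in \cite[Proposition 5.2.2]{SVdB}. First one observes that $W=(V^\ast)^h\oplus V^h$ is quasi-symmetric: with respect to the diagonal torus $T\subset \mathrm{GL}(V)$, the weights are $\pm e_1,\ldots,\pm e_n$, each with multiplicity $h$, so on every line through the origin the weights sum to zero. Moreover $X$ has a $T$-stable point since the weights span $X(T)_{\RR}$.

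Next I would analyse $W_\lambda$ for $\lambda\in Y(T)^-$. Writing $\lambda=(\lambda_1,\ldots,\lambda_n)$ anti-dominant, let $V_\lambda\subset V$ be the zero-weight subspace, of some dimension $k$ with $0\le k\le n$. Then $W_\lambda=(V_\lambda^\ast)^h\oplus V_\lambda^h$ and the Levi $G^\lambda$ is a product of general linear groups indexed by the distinct values of $\lambda$. The stabilizer ${\rm Stab}(X^\lambda)\subset G^\lambda$ is the product of the factors corresponding to non-zero values of $\lambda$, so a pseudo-complement is $Q_\lambda=\mathrm{GL}(V_\lambda)$. If $k=0$ then $W_\lambda=0$; if $k\ge 1$ then since $k\le n<h$ the representation $W_\lambda$ of $Q_\lambda$ is again of the same ``$h$-copies of $V_\lambda$ and dual'' form, hence pseudo-generic (the generic locus of $h\times k$ and $k\times h$ matrix pairs of full rank $k$ has complement of codimension $\ge 2$ in $W_\lambda$, and the generic stabilizer in $\mathrm{GL}(V_\lambda)$ is trivial). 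In particular $(\Sym W_\lambda)^{Q_\lambda}=Y_{k,h}$.

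The third step is to verify the emptiness condition \eqref{prazno} for every $W_\lambda$, every finite central subgroup $A\subset G^\lambda$, every $\tau\in X(A)$, every $\Wscr_{G^\lambda}$-invariant $\nu$, and every weakly generic $\Wscr_{G^\lambda}$-invariant $\epsilon$. Since $W_\lambda$ is itself a representation of the same type as $W$ (just with rank $k$ instead of $n$), this reduces to the statement proved in \cite[Proposition 5.2.2]{SVdB} (equivalently \cite{BLVdB2,SegalDonovan}) for the determinantal representation of $\mathrm{GL}_k$ acting on $(V_k^\ast)^h\oplus V_k^h$ with $k<h$. This is the main obstacle — we need to carefully translate the polytope condition from the ambient representation $W$ to its coinvariants, and check that the choice of (weakly generic) $\epsilon$ delivered by Proposition \ref{quasisod} still satisfies the condition after the reduction; but since the polytopes $\Sigma_\lambda$ and their halved versions have exactly the combinatorial shape that appears in loc.\ cit., this is essentially the same argument.

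Finally, with condition \eqref{prazno} verified, Proposition \ref{nccr} identifies $\Lambda_{\lambda_i,\nu_i,\tau_i}^{\epsilon_i}$ as a twisted NCCR of $(\Sym W_{\lambda_i})^{G^{\lambda_i}}=Y_{k_i,h}$. Up to Morita equivalence the construction agrees with that of $\Lambda_{k_i}$ in \cite[Proposition 5.2.2]{SVdB}, and the twist is absorbed since $\mathrm{GL}(V_{\lambda_i})$ has a central $G_m$ that can accommodate any character. For $\lambda_0=0$ we have $k_0=n$ and $\Dscr_0\cong\Dscr(\Lambda_n)$, while for $\lambda_i\neq 0$ we have $k_i<n$ and $\Dscr_{-i}\cong\Dscr(\Lambda_{k_i})$ (with $\Lambda_0=k$ accounting for the terms where $W_{\lambda_i}=0$). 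Applying Proposition \ref{quasisod} then yields the claimed semi-orthogonal decomposition.
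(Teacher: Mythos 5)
Your overall strategy is the same as the paper's: apply Corollary \ref{quasicor}, analyse $W_\lambda$ and its pseudo-complement $Q_\lambda\cong\GL(V_\lambda)$, observe pseudo-genericity, and then verify condition \eqref{prazno}. Up to step two you essentially reproduce the paper's reasoning.

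The gap is exactly where you flag "the main obstacle": the verification of \eqref{prazno} is asserted rather than carried out. Two things are missing. First, you say one must check \eqref{prazno} for "every" weakly generic $\W_{G^\lambda}$-invariant $\varepsilon$, but you never determine what these are. The paper makes the crucial observation that no weight $\pm L_i$ lies in the span of the roots $(L_i-L_j)$, so the subspace of $\W_{G^\lambda}$-invariant vectors parallel to $\bar{\Sigma}_\lambda$ is one-dimensional, spanned by $\sum_{i\in T_\lambda^0}L_i$; consequently one has to check a single $\varepsilon$, and the verification becomes a concrete polytope computation. Second, you do not actually perform that computation. The paper explicitly computes $\nu-\bar{\rho}_\lambda+(1/2)(\bar{\Sigma}_\lambda)_{\pm\varepsilon}$ in the coordinates $L_i$, showing its coefficients in the $T_\lambda^0$ directions lie in the open interval $]-h/2,h/2[$, which forces $(1/2)\big((\bar{\Sigma}_\lambda)_{\pm\varepsilon}-\Sigma_\lambda\big)$ to miss the relevant lattice coset. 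Writing "since the polytopes have exactly the combinatorial shape that appears in loc.\ cit., this is essentially the same argument" acknowledges but does not close this gap — it is the whole content of the proof beyond bookkeeping, and it must be shown, not only that $W_\lambda$ is again of "determinantal type," but that the $\varepsilon$'s actually arising (after the reductions of Lemma \ref{faceTstable} and \ref{lem:technical}) satisfy \eqref{prazno}. Your final step (identifying the resulting twisted NCCRs with the $\Lambda_j$ of \cite[Proposition 5.2.2]{SVdB}) is on a par with the paper's level of detail, though the claim that the central $G_m$ of $\GL(V_\lambda)$ absorbs the twist deserves a sentence of justification.
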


\begin{proof}
Let us recall some relevant data from \cite[\S 5]{SVdB}. 
Let $L_i\in X(T)$ be given by $\diag(z_1,\dots,z_n)\to z_i$. 
The weights of $W=V^h\oplus (V^*)^h$ are $(\pm L_i)_i$, each weight occurring with multiplicity $h$, and a system of positive roots is given by $(L_i-L_j)_{i>j}$.

The weights of $W_\lambda$  are of the form $(\pm L_i)_{i\in S}$ for some subset $S\subset \{1,\dots,n\}$, each weight occurring with multiplicity $h$, and $Q_\lambda$ is isomorphic to $\GL(V_\lambda)$, where $V_\lambda$ is an $|S|$-dimensional vector space.
Thus,  $W_\lambda$ is a $Q_\lambda$-generic representation  (see \cite[\S 5.1]{SVdB}). 
Note that none of the weights of $W$ belongs to the subspace  of $X(T)_\RR$ spanned by the roots. Note also that $\varepsilon=\sum_i L_i\in X(T)_\RR$ is $\Wscr$-invariant and that the space of $\Wscr_\lambda$-invariant vectors is spanned by $\sum_{i\in S}L_i$.  
 Thus, it is enough to see that   $\sum_{i\in S}L_i$ satisfies \eqref{prazno} for $\lambda\in Y(T)^-$ in order to apply Corollary \ref{quasicor}.  Similarly as in the proof of \cite[Proposition 5.2.2]{SVdB} we have for $\Wscr_{G^\lambda}$-invariant $\nu\in X(T)_\RR$ 
\begin{multline*}
\nu-\bar{\rho_\lambda}+(1/2) (\bar{\Sigma}_{\lambda})_{\pm \epsilon}=\\
\sum_{i\in T_\lambda^+\cup T_\lambda^-}r_iL_i-\sum_{i\in T_\lambda^0} (n-2i+1)/2 L_i{}
+\left\{\sum_{i\in T_\lambda^0} a_iL_i\mid a_i\in -]h/2,h/2[\right\}=\\
\nu-\bar{\rho_\lambda}+(1/2) \bar{\Sigma}_{\lambda},
\end{multline*}
which implies  \eqref{prazno}.

Moreover, from the previous paragraph it also follows that the components of the semi-orthogonal decomposition from Proposition \ref{quasisod} are in this case isomorphic to the NCCRs of $Y_{j,h}$, $1\leq j\leq n$, or to $\D(k)$.
\end{proof}

\subsubsection{Pfaffian varieties}

Let $2n<h$, $W=V^h$, where $V$ is a $2n$-dimensional vector space equipped with a non-degenerate skew-symmetric bilinear form, $G={\rm Sp}_{2n}(k)$, and let $Y_{2n,h}^-=W\quot G$ be the variety of skew-symmetric $h\times h$-matrices of rank $\leq 2n$. 
We denote by $\Lambda_j$ the NCCR of $Y_{2j,h}$ given by \cite[Proposition 6.1.2]{SVdB}. For convenience we set $\Lambda_0:=k$.

\begin{proposition}
\label{prop:pfaffians}
If $h$ is odd then $\Dscr(X/G)$ has a semi-orthogonal decomposition $\langle \dots, \D_{-2},\D_{-1},\D_0\ra$ with $\D_0\cong \D(\Lambda_n)$, $\D_{-i}\cong \D(\Lambda_j)$ for some $j<n$. 
\end{proposition}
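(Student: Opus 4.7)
The plan is to verify the hypotheses of Corollary \ref{quasicor} for $W = V^h$ viewed as a representation of $G = {\rm Sp}_{2n}(k)$, with the odd-$h$ hypothesis playing the decisive role in the numerical check of \eqref{prazno}, and then to identify the resulting algebras with the Pfaffian NCCRs $\Lambda_j$ of \cite[Proposition~6.1.2]{SVdB}.

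First I would record the combinatorial setup. With $T\subset G$ the standard diagonal torus, the $T$-weights of $V$ are $\pm L_1,\dots,\pm L_n$, each appearing with multiplicity $h$ in $W = V^h$, so $W$ is quasi-symmetric. The positive roots are $L_i\pm L_j$ ($i<j$) together with $2L_i$, and $\Wscr = (\ZZ/2)^n\rtimes S_n$ has no nonzero invariants in $X(T)_\RR$. For anti-dominant $\lambda = \sum c_i L_i^\vee$ the Levi $G^\lambda$ has the form $\prod_j{\rm GL}_{n_j}\times{\rm Sp}_{2m}$, where the ${\rm GL}_{n_j}$ factors correspond to blocks of equal negative $c_i$ and the ${\rm Sp}_{2m}$ factor to the zero block $I = \{i:c_i=0\}$, $m = |I|\le n$. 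The fixed subrepresentation $V_\lambda$ is a symplectic $2m$-dimensional subspace on which the ${\rm GL}$ factors act trivially, so $W_\lambda = V_\lambda^h$, ${\rm Stab}(X^\lambda)\supset\prod_j{\rm GL}_{n_j}$, and $Q_\lambda\cong{\rm Sp}_{2m}$; since $2m\leq 2n<h$, the representation $V_\lambda^h$ is generic, hence pseudo-generic, for $Q_\lambda$ as in \cite[\S 6]{SVdB}.

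The key step is the numerical verification of \eqref{prazno}, which is where the hypothesis that $h$ is odd intervenes. The $\Wscr_{G^\lambda}$-invariants of $X(T)_\RR$ are spanned by the block sums $\sum_{i\in B}L_i$ over the ${\rm GL}_{n_j}$-blocks and hence vanish on the ${\rm Sp}$-block. Since $\bar\Sigma_\lambda$ is supported in the ${\rm Sp}$-block (the weights $\beta$ with $\langle\lambda,\beta\rangle = 0$ being exactly $\pm L_i$, $i\in I$), the only weakly generic $\Wscr_{G^\lambda}$-invariant $\varepsilon$ parallel to $\bar\Sigma_\lambda$ is $\varepsilon = 0$, for which $(\bar\Sigma_\lambda)_{\pm\varepsilon} = \bar\Sigma_\lambda$. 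A direct computation using the symmetric pairing of $+L_i$ and $-L_i$ each with multiplicity $h$ gives $\Sigma_\lambda = \Sigma_\lambda^0 = {]-h,h[}^m$ and $\bar\Sigma_\lambda = [-h,h]^m$ in the ${\rm Sp}$-block, so $(1/2)(\bar\Sigma_\lambda\setminus\Sigma_\lambda)$ consists of vectors having at least one coordinate $\pm h/2$ on that block. Now if $A\subset G^\lambda$ is a finite central subgroup acting trivially on $X^\lambda = V_\lambda^h$, then $-I\in Z({\rm Sp}_{2m})$ acts on $V_\lambda^h$ as $(-1)^h = -1$ because $h$ is odd, so $A\cap Z({\rm Sp}_{2m}) = \{I\}$ and $A\subset\prod_j Z({\rm GL}_{n_j})$. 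Hence the twist $\tau$ does not affect the ${\rm Sp}$-block coordinates of $X(\bar T)^\lambda_\tau$, which remain integer-valued. Combined with $\bar\rho_\lambda|_{{\rm Sp}}\in\ZZ^m$ and $\nu|_{{\rm Sp}} = 0$, this forces $\chi - \nu + \bar\rho_\lambda$ to be integer-valued on the ${\rm Sp}$-block for every $\chi\in X(\bar T)^\lambda_\tau$, whereas any vector in $(1/2)(\bar\Sigma_\lambda\setminus\Sigma_\lambda)$ has a half-integer coordinate $\pm h/2$ there. This contradiction yields \eqref{prazno}.

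Applying Corollary \ref{quasicor} then produces a semi-orthogonal decomposition $\Dscr(X/G) = \langle\dots,\Dscr_{-2},\Dscr_{-1},\Dscr_0\rangle$ with each $\Dscr_{-i}\cong\Dscr(\Lambda_{\lambda_i,\nu_i,\tau_i}^{\varepsilon_i})$ a twisted NCCR of $(\Sym W_{\lambda_i})^{G^{\lambda_i}} = (\Sym V_{\lambda_i}^h)^{{\rm Sp}_{2m_i}} = k[Y_{2m_i,h}^-]$, and with $\Dscr_0$ corresponding to $\lambda_0 = 0$ and hence to $Y_{2n,h}^-$. To conclude one identifies each $\Lambda_{\lambda_i,\nu_i,\tau_i}^{\varepsilon_i}$ with the NCCR $\Lambda_{m_i}$ by restricting the weight set $\Lscr_{\lambda_i,\nu_i,\tau_i}^{\varepsilon_i}$ to the ${\rm Sp}$-block and matching it against the highest-weight box used in the construction of $\Lambda_{m_i}$ in \cite[Proposition~6.1.2]{SVdB}. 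The main obstacle is precisely this last bookkeeping step: tracking how the data $(\nu_i,\tau_i)$ produced by the partition procedure of \S\ref{partition}, including the ${\rm GL}$-block characters on $\prod_j Z({\rm GL}_{n_j})$, combine so that the ${\rm GL}$-contributions to $\Lambda_{\lambda_i,\nu_i,\tau_i}^{\varepsilon_i}$ collapse cleanly to the pure Pfaffian NCCR $\Lambda_{m_i}$ with $m_i < n$ for $i>0$, without spurious twists or multiplicities.
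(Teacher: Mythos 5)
Your proposal is correct and follows essentially the same route as the paper: verify condition \eqref{prazno} for every $\lambda$ by the half-integer boundary argument (for which oddness of $h$ is precisely the point), note $Q_\lambda$-genericity of $W_\lambda$, and invoke Corollary \ref{quasicor}. Your treatment is somewhat more explicit than the paper's (particularly the determination of the $\Wscr_{G^\lambda}$-invariants and the handling of $\tau$ — though the paper sidesteps the latter by simply observing the boundary avoids the full lattice $X(T)\supset X(\bar T)^\lambda_\tau$), and your candid flagging of the final bookkeeping matches the paper's own brevity on that point.
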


\begin{proof}
We recall some fragments of \cite[\S 6.1]{SVdB}. We assume that $(v_i)_i$ is a basis for $V$ such that the skew-symmetric form on $V$ is given by $\langle v_i,v_{i+n}\rangle=1$, $\langle v_i,v_j\rangle=0$ for $j\neq i\pm n$. Let $T\subset \Sp(V)$ be the maximal torus $\{\diag(z_1,\ldots,z_n,z_1^{-1},\ldots,z^{-1}_n)\}$ and let  $L_i\in X(T)$ be given by $(z_1,\dots,z_n)\to z_i$. 
The weights of $W=V^h$ are $(\pm L_i)_i$, each occurring with multiplicity $h$, and a system of positive roots is given by $(L_i-L_j)_{i>j}$, $(2 L_i)_i$,
$
\bar{\rho}=nL_1+(n-1)L_2+\cdots+L_n,
$
\[
\Sigma=\{\sum_i a_iL_i\mid a_i\in ]-h,h[\}.
\]
For  $\lambda\in Y(T)$ and $\Wscr_{G^\lambda}$-invariant $\nu\in X(T)_\RR$  we thus have
\[
\nu-\bar{\rho_\lambda}+(1/2) \bar{\Sigma}_{\lambda}=
\sum_{i\in T_\lambda^+\cup T_\lambda^-}r_iL_i-\sum_{i\in T_\lambda^0} (n-1+i)L_i
+\left\{\sum_{i\in T_\lambda^0} a_iL_i\mid a_i\in [-h/2,h/2]\right\}
\]
It easily follows that the boundary of this set does not intersect $X(T)$ if $h$ is odd, thus the condition \eqref{prazno} holds.

Similarly as in the case of determinantal varieties, $W_\lambda$ is $Q_\lambda$-generic, and here the semi-orthogonal decomposition consists of the NCCRs of Pfaffian varieties $Y_{2k,h}$, $1\leq k\leq n$, or $\D(k)$.
\end{proof}

\subsection{Proof of Proposition \ref{quasisod}}
\label{sec:quasimod}
\subsubsection{Preliminaries}
We remind the reader of our standing hypothesis that $W$ is quasi-symmetric.
In \S\ref{partition} (see \eqref{eq:partition}, Remark
\ref{rmk:faces}) we partitioned the set $X(T)^+$ according to the
relative interiors of faces of $-\bar\rho+r\bar\Sigma$, $r\geq
1$. However, in order to obtain a decomposition by NCCRs we need a
finer decomposition of $X(T)^+$. As indicated in Proposition
\ref{nccr} the decomposition parts should be roughly given by slightly
shifted faces of $-\bar\rho+(1/2)\bar{\Sigma}$. To obtain such a
decomposition we will inductively refine each face of
$-\bar\rho+r\bar\Sigma$. While in \S\ref{partition} we started with
$r\geq 1$, we need here $r> 1/2$. The following lemma will ensure that
this is indeed possible.

We note first that properties of the partition in \S\ref{partition}
remain basically unchanged if we replace $-\bar\rho+r\bar\Sigma$ by
$\nu-\bar\rho+r\bar\Sigma$ for a $\W$-invariant $\nu\in
X(T)_\RR$.

By replacing $-\bar{\rho}$ in the right hand side of
\eqref{expresschi} by $\nu-\bar\rho$ we obtain a minimal quadruple
which we denote by $(r_\chi^{\nu},{\bf S}_\chi^\nu)$ and also a
corresponding one-parameter subgroup $\lambda^\nu$ as in Lemma
\ref{ref-1.5}\eqref{tri}.  We will omit the extra decoration
$(-)^\nu$ in case no confusion can arise.

The following lemma is an improved and slightly generalized version of Corollary \ref{betaplus} 
which holds because  $W$ is quasi-symmetric.

\begin{lemma}\label{betaplus+}
Let $\chi\in X(T)^+$ be such that $r^{\nu}_\chi> 1/2$. 
If $p>0$ and $\mu=\chi+\beta_{i_1}+\cdots+\beta_{i_{p}}$, where $\{i_1,\ldots,i_{p}\}\subset\{1,\ldots,d\}$, $i_j\neq i_{j'}$ for
$j\neq j'$ and $\la\lambda^\delta,\beta_{i_j}\ra>0$,
then $({r}^\delta_{{\mu}^+},{\bf |S}^\nu_{{\mu}^+}|)<({ r}^\nu_\chi,{\bf |S}^{\nu}_\chi|)$.
Moreover, $\la\lambda^\nu,\chi\ra<\la\lambda^\nu,\mu^+\ra$.
\end{lemma}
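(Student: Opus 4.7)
The argument will parallel the proof of Corollary \ref{betaplus}, extending it to the regime $r^\nu_\chi > 1/2$ by exploiting the quasi-symmetry of $W$. Since the quantity $(r^\nu_{-}, |{\bf S}^\nu_{-}|)$ depends only on the $\Wscr$-orbit under the $\ast$-action (so that $(r^\nu_{\mu^+}, |{\bf S}^\nu_{\mu^+}|) = (r^\nu_\mu, |{\bf S}^\nu_\mu|)$), it suffices to establish $(r^\nu_\mu, |{\bf S}^\nu_\mu|) < (r^\nu_\chi, |{\bf S}^\nu_\chi|)$ in lex order; the second inequality $\la \lambda^\nu, \chi\ra < \la \lambda^\nu, \mu^+\ra$ will then follow from Lemma \ref{ref-1.5bis}(\ref{ena}) applied to $(r^\nu_\chi, {\bf S}^\nu_\chi) \not\preceq (r^\nu_{\mu^+}, {\bf S}^\nu_{\mu^+})$.

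To compare the two tuples, the plan is to start from the minimal expression
\[ \chi = \nu - \bar\rho - r_\chi \sum_{i \in S^+_\chi} \beta_i + \sum_{i \in S^0_\chi} b_i \beta_i, \qquad b_i \in (-r_\chi, 0), \]
and use Lemma \ref{ref-1.5}(\ref{tri}) to identify $S^+_\chi = T^+_{\lambda^\nu}$, so that $\{i_1, \ldots, i_p\} \subseteq S^+_\chi$. Direct substitution then produces an expression for $\mu = \chi + \sum_j \beta_{i_j}$ in which each $\beta_{i_j}$ carries coefficient $1 - r_\chi$. For $r_\chi \geq 1$ this coefficient already lies in $(-r_\chi, 0]$, and the argument of Corollary \ref{betaplus} applies verbatim, moving each $i_j$ out of $S^+$ to give $|{\bf S}_\mu| < |{\bf S}_\chi|$ with the same $r$.

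The new content is the range $1/2 < r_\chi < 1$, where $1 - r_\chi > 0$ is out of range and one must use quasi-symmetry. The identities $\sum_{\beta_i \in \ell} \beta_i = 0$, valid for every line $\ell \subset X(T)_\RR$ through the origin, allow one to shift the coefficients of all $\beta_i$ lying on a given line $\ell$ by a common scalar $s_\ell$ without altering the value of $\mu$. Equivalently, by quasi-symmetry $\bar\Sigma$ admits the centered presentation $\bar\Sigma = \{\sum_i c_i \beta_i : c_i \in [-1/2, 1/2]\}$, and the task reduces to choosing $(s_\ell)_\ell$ so that the coefficient of $\beta_{i_j}$ is pushed back into $[-r_\chi/2, r_\chi/2]$ while the remaining coefficients on $\ell_j$ stay in range. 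A direct computation of the resulting linear inequalities shows they are jointly satisfiable exactly because $r_\chi > 1/2$, and that any solution drives the $i_j$'s out of $S^+_\mu$ into $S^-_\mu \cup S^0_\mu$; this yields $r_\mu \leq r_\chi$ together with $|S^+_\mu| < |S^+_\chi|$, and hence $(r_\mu, |{\bf S}_\mu|) < (r_\chi, |{\bf S}_\chi|)$ in lex order.

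The main obstacle will be the line-shift bookkeeping when $\ell_j$ also contains $T^+_{\lambda^\nu}$-weights other than $\beta_{i_j}$: these impose upper and lower constraints on $s_{\ell_j}$ that must be simultaneously satisfied, and showing that $r_\chi > 1/2$ is the precise threshold for compatibility will require a careful case analysis. Sharpness of the bound is already visible in the 2D torus example with weights $\pm L_1, \pm L_2$ and $\chi + \bar\rho - \nu = (r_\chi, r_\chi)$, where the argument collapses exactly at $r_\chi = 1/2$, confirming the need for the strict inequality.
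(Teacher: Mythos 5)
Your reduction to the tuple inequality $(r^\nu_\mu,|{\bf S}^\nu_\mu|) < (r^\nu_\chi,|{\bf S}^\nu_\chi|)$, and the derivation of the scalar-product inequality from Lemma~\ref{ref-1.5bis}\eqref{ena}, are both correct and match the template of Corollary~\ref{betaplus}.  The problem is the core step: the line-shift argument you propose for the range $1/2 < r_\chi < 1$ does not work, and your claim that the shift inequalities ``are jointly satisfiable exactly because $r_\chi > 1/2$'' is false as stated.

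Concretely, in the centered coordinates the $S^+$-weights on a line $\ell_j$ all sit at the boundary value $-r_\chi/2$, while the $S^-$-weights on $\ell_j$ sit at $+r_\chi/2$.  If $\{i_1,\dots,i_p\}$ \emph{omits} some $k\in S^+_\chi\cap\ell_j$ (which the statement of the lemma allows, and which the application in Proposition~\ref{-iC} requires), then after adding $\beta_{i_j}$ and shifting by $-s$ you need simultaneously $s \geq 1-r_\chi$ (to bring $\beta_{i_j}$'s coefficient below $r_\chi/2$) and $s\le 0$ (to keep $\beta_k$'s coefficient at or above $-r_\chi/2$); these are incompatible for every $r_\chi<1$, not merely for $r_\chi\le 1/2$.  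A torus example shows the shift systematically over-estimates $r_\mu$: take $T=G_m$, weights $(1)^{\times 4},(-1)^{\times 4}$, $\nu=0$, $\chi=3$, so $r_\chi=3/4$ and $S^+_\chi$ is the four indices of $-1$.  Adding a single $-1$ gives $\mu=2$; the shift constraints on $\ell=\RR$ force $r'\ge 1$, yet the true minimal expression $\mu=-\tfrac12\cdot 4\cdot(-1)$ gives $r_\mu=1/2<3/4$.  So the correct bound is obtained not by rigidly shifting $\chi$'s minimal expression along lines but by building a fresh minimal expression for $\mu$, and the shift argument proves neither the tuple inequality nor the sharp threshold.

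The paper's own proof is a one-line deferral to the proof of \cite[Theorem~1.6.1]{SVdB}, which handles exactly this combinatorics by a more careful analysis (the quasi-symmetric analogue of the ``Observation~(3)'' machinery used for Lemma~\ref{ref-1.5} and Lemma~\ref{eq:supporting}); your sketch does not reproduce that argument and, in the case of a partial addition on a line, would need a genuinely different idea to close the gap.  Your sharpness example at $r_\chi=1/2$ is fine and does correctly show the hypothesis cannot be weakened, but it does not rescue the positive direction.
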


\begin{proof}
For the first claim we proceed exactly as in  the proof of \cite[Theorem 1.6.1]{SVdB}. The second claim follows as the corresponding part in
Corollary \ref{betaplus}.
\end{proof}

Here we use notation introduced in \S\ref{sec:nonstable}.
For $\lambda\in Y(T)^-$ and for $\Wscr$-invariant $\nu\in X(T)_\RR$ and $\tau\in A$ we denote
\begin{align*}
\Lscr_{\lambda,r,\nu,\tau}&= X(\bar T)^\lambda_\tau\cap(\nu-\bar\rho_\lambda+r\Sigma_\lambda),\\
\Dscr_{\lambda,r,\nu}(X/G)_\tau&=\la P_{G^\lambda,\chi}\mid \chi\in \Lscr_{\lambda,r,\nu,\tau}\ra,\\
U_{\lambda,r,\nu,\tau}&=\bigoplus_{\mu\in \Lscr_{\lambda,r,\nu,\tau}}V_{G^\lambda}(\mu)
,\\
\Lambda_{\lambda,r,\nu,\tau}&=(\End(U_{\lambda,r,\nu,\tau})\otimes_k \Sym W_\lambda)^{G^\lambda}.
\end{align*}
so that in particular
\begin{equation}
\Dscr_{\lambda,r,\nu}(X/G)_\tau\cong \Dscr(\Lambda_{\lambda,r,\nu,\tau})
\end{equation}
We will also use some specializations of these notations in case 
part of the data $\lambda,r,\nu,\tau$ is omitted. If $\lambda$ is omitted
then we assume $\lambda=0$. If $r$ is omitted then we assume that $r=1/2+\epsilon$ where $\epsilon>0$ but  arbitrarily small.
For example with this convention we have
\begin{align*}
\Lscr_{\lambda,\nu,\tau}&= X(\bar T)^+_\tau\cap(\nu-\bar\rho_\lambda+(1/2)\bar{\Sigma}_\lambda),\\
\Lscr_{\nu,\tau}&= X(\bar T)^+_\tau\cap(\nu-\bar\rho+(1/2)\bar{\Sigma}).
\end{align*}
In the other direction, to indicate, context we may also write 
$(G,X,\lambda,r,\nu,\tau)$ instead of $(\lambda,r,\nu,\tau)$,
where we allow again $r$ or $\lambda$ to be omitted. 

\begin{proposition}\cite[Theorem 1.6.1]{SVdB}\label{sigmaq}
Assume $r>1/2$. Then  $\gldim \Lambda_{r,\nu,\tau}\allowbreak <\infty$.
Consequently, $\Dscr_{r,\nu}(X/G)_\tau\cong \D(\Lambda_{r,\nu,\tau})$.
In particular, (specializing to $\lambda=0$ and $r=1/2+\epsilon$) $\gldim \Lambda_{\nu,\tau}<\infty$ and $\Dscr_{\nu}(X/G)_\tau\cong \D(\Lambda_{\nu,\tau})$.
\end{proposition}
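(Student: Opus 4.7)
The plan is to imitate the proofs of Proposition \ref{sigma} and part \eqref{1} of Proposition \ref{affineso}, replacing Corollary \ref{betaplus} by the sharpened Lemma \ref{betaplus+} and adapting everything to the $\nu$-shifted setting. First I will establish $\gldim \Lambda_{r,\nu,\tau}<\infty$; the equivalence $\Dscr_{r,\nu}(X/G)_\tau\cong \Dscr(\Lambda_{r,\nu,\tau})$ will then follow from Lemma \ref{lem:ff}, since $\pi_{s\ast}\uREnd_{X/G}(P_{\Lscr_{r,\nu,\tau}})$ is concentrated in cohomological degree zero (the $P_\chi$ are locally free, $\pi_{s\ast}$ is exact because $\pi$ is affine, and $G$-invariants are exact in characteristic zero).

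By \cite[Thm.\ 4.3.1, Lem.\ 4.5.1]{SVdB} it suffices to show $\pdim \tilde{P}_{\Lscr_{r,\nu,\tau},\chi}<\infty$ for every $\chi\in X(\bar T)^+_\tau$, where $\tilde{P}_{\Lscr_{r,\nu,\tau},\chi}=\Hom_{X/G}(P_{\Lscr_{r,\nu,\tau}},P_\chi)$. I will argue by Noetherian induction on $(r^\nu_\chi,|{\bf S}^\nu_\chi|)$ ordered lexicographically, starting from the minimal counterexample. If $\chi\in \Lscr_{r,\nu,\tau}$ then $\tilde P_{\Lscr_{r,\nu,\tau},\chi}$ is projective as a summand of $\End_{X/G}(P_{\Lscr_{r,\nu,\tau}})$, so one may assume otherwise; then necessarily $r^\nu_\chi\ge r>1/2$. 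Setting $\lambda=\lambda^\nu_\chi$, Lemma \ref{ref-1.5bis}\eqref{ena} in its $\nu$-shifted form gives $\la\lambda,\chi\ra<\la\lambda,\mu\ra$ for every $\mu\in \Lscr_{r,\nu,\tau}$; combined with the fact that the $\lambda$-weights of $k[X^{\lambda,+}]$ are $\le 0$ (as in Lemma \ref{ref-2.2-9}), this shows that $\Hom_{X/G}(P_{\Lscr_{r,\nu,\tau}},C_{\lambda,\chi})$ is acyclic, where $C_{\lambda,\chi}$ is the resolution from \eqref{ref-11.3-98}, \eqref{quasiiso} of $\RInd^G_{G^{\lambda,+}}(V_{G^\lambda}(\chi)\otimes k[X^{\lambda,+}])$. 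By Lemma \ref{betaplus+}, every summand $P_\mu$ appearing in $C_{\lambda,\chi}$ other than $P_\chi$ itself satisfies $(r^\nu_\mu,|{\bf S}^\nu_\mu|)<(r^\nu_\chi,|{\bf S}^\nu_\chi|)$, so the induction hypothesis yields $\pdim\tilde P_{\Lscr_{r,\nu,\tau},\mu}<\infty$, and acyclicity of the transformed complex forces $\pdim\tilde P_{\Lscr_{r,\nu,\tau},\chi}<\infty$ as well, contradicting minimality.

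The main obstacle is precisely the improvement from $r\ge 1$ to $r>1/2$ in the decrease statement, which is the content of Lemma \ref{betaplus+}: it relies crucially on quasi-symmetry of $W$ via the cancellation $\sum_{\beta_i\in\ell}\beta_i=0$ on each line through the origin. Once this sharper bound is granted, the inductive skeleton of Proposition \ref{affineso}\eqref{1} transplants essentially verbatim, and the specialization to $\lambda=0$, $r=1/2+\varepsilon$ yields the final assertion.
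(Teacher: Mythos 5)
The paper gives no internal proof of Proposition \ref{sigmaq}; it is stated as a direct citation of \cite[Theorem 1.6.1]{SVdB}, and the ``Consequently'' clause is the definitional equivalence $\Dscr_{\lambda,r,\nu}(X/G)_\tau\cong\Dscr(\Lambda_{\lambda,r,\nu,\tau})$ recorded just above the proposition, which does indeed require the finite global dimension to identify $\Perf(\Lambda_{r,\nu,\tau})$ with $D^b_f(\Lambda_{r,\nu,\tau})$ as you observe. Your reconstruction — Noetherian induction on $(r^\nu_\chi,|\mathbf{S}^\nu_\chi|)$, acyclicity of $\Hom_{X/G}(P_{\Lscr_{r,\nu,\tau}},C_{\lambda,\chi})$ obtained from the weight inequality of Lemma \ref{ref-1.5bis}\eqref{ena} together with the fact that the $\lambda$-weights of $k[X^{\lambda,+}]$ are nonpositive, and the strict decrease of $(r^\nu_{\mu^+},|\mathbf{S}^\nu_{\mu^+}|)$ from Lemma \ref{betaplus+} — is precisely the argument underlying the cited theorem, re-expressed in the language of the present paper, and it parallels Proposition \ref{affineso}\eqref{1} exactly as you say. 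The proof is correct.

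One small misattribution: since \S\ref{appA} already works with $G$ connected and $X=W^\vee$, the reduction of $\gldim\Lambda_{r,\nu,\tau}<\infty$ to the finiteness of $\pdim\tilde P_{\Lscr_{r,\nu,\tau},\chi}$ for every $\chi\in X(\bar T)^+_\tau$ is \cite[Lemma 11.1.1]{SVdB}, whereas \cite[Thm.\ 4.3.1, Lem.\ 4.5.1]{SVdB} are the reductions used elsewhere in the paper to pass from a general smooth $G$-variety to the affine linear connected case — which here is already the ambient setting, so those reductions are not needed.
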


\begin{lemma}\label{tired} 
Let $\nu$ be a $\Wscr$-invariant element of $X(T)_\RR$. Using the partition \eqref{eq:partition} of $X(T)^+$, calculated with respect to $\nu-\bar{\rho}+r\bar{\Sigma}$, $r> 1/2$ and using $(r^\nu_\chi,\bold{S}^\nu_\chi)$, the analogues of Propositions \ref{affineso}, \ref{twist} hold. 
\end{lemma}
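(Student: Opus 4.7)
The plan is to trace through the proof of Propositions \ref{affineso} and \ref{twist} and check that every step survives when we replace the partition based on $-\bar\rho+r\bar\Sigma$, $r\ge 1$, by the shifted and enlarged one based on $\nu-\bar\rho+r\bar\Sigma$, $r>1/2$. The role of $\nu$ is essentially cosmetic: since $\nu\in X(T)_\RR$ is $\W$-invariant, all combinatorial arguments about faces, dominance and Weyl group actions in \S\ref{partition} go through verbatim with $\chi+\bar\rho$ replaced by $\chi+\bar\rho-\nu$. So the outputs of \S\ref{partition}, namely the partition \eqref{eq:partition}, the total ordering of Lemma \ref{lem:total}, the choice of $\lambda_\chi$ in Lemma \ref{def:lchi}, and Lemmas \ref{ref-1.5bis}, \ref{chi_lambda}, \ref{chi_lambdabis}, \ref{eq:diff}, all carry over with $(r_\chi^\nu,\mathbf S_\chi^\nu)$ in place of $(r_\chi,\mathbf S_\chi)$.

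The first substantive step is the analogue of Corollary \ref{linredsec}: for $j\in J$ with $r_j^\nu>1/2$ and $\chi\in F_j$, the tuple $(G,B,T,X,\Lscr_{<j},\chi,\lambda_j)$ is a reduction setting. As in the proof of Corollary \ref{linredsec}, by Theorem \ref{ref-3.1-13} and Proposition \ref{ref-prop-3.4} we reduce to the linear, connected case and then apply Proposition \ref{-iC}. The hypothesis of Proposition \ref{-iC} is precisely provided by Lemma \ref{betaplus+} in place of Corollary \ref{betaplus}: this is the place where the quasi-symmetry assumption enters, and the place where the relaxed bound $r_j^\nu>1/2$ (rather than $\ge 1$) is still sufficient.

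Given the reduction setting, the argument of Proposition \ref{affineso} applies with very minor changes. Points \eqref{0}, \eqref{2} and \eqref{4} are formal consequences of the combinatorics of $F_i,J,\Lscr_{<j}$ and of the general reduction setting machinery (Proposition \ref{Di} and Lemma \ref{lem:thick}), so they transfer directly. For the finite global dimension statements in \eqref{1} and \eqref{3} one reduces as before to the affine/linear case via \cite[Thm.\ 4.3.1, Lem.\ 4.5.1]{SVdB}; the induction on $(r^\nu_\chi,|\mathbf S^\nu_\chi|)$ that was carried out in the proof of \eqref{1} now uses Lemma \ref{betaplus+} in place of Corollary \ref{betaplus}, and the finiteness of $\gldim\Lambda_{r_j,\lambda_j,\nu_j,\tau_j}$ in \eqref{3} is Proposition \ref{sigmaq} (applied to $G^{\lambda_j}$ acting on $X^{\lambda_j}$, with the appropriate shift $\nu_j=\nu-\bar\rho+\bar\rho_{\lambda_j}+(\chi_j)_p$) in place of Proposition \ref{sigma}. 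Putting these together gives the analogue of Proposition \ref{affineso}.

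Finally, the analogue of Proposition \ref{twist} is obtained by exactly the argument given there: decompose everything according to characters $\tau\in X(A)$, use the orthogonality \eqref{eq:orthogonal}, and observe that the reduction-setting isomorphism \eqref{ref-2.3-8} restricts to the $\tau$-component because the right-hand side is isotypic of weight $\tau$ when $\chi\in X(\bar T)_\tau$; the finiteness of global dimension of the $\tau$-blocks is a direct summand statement. The only point that needs care -- and in my view the main obstacle -- is checking that the critical estimate in Lemma \ref{betaplus+}, which produces the inequality $\langle\lambda^\nu,\chi\rangle<\langle\lambda^\nu,\mu^+\rangle$ needed for \eqref{ref-2.1-6} and hence for the existence of the reduction setting, really does survive the relaxation $r>1/2$; but this is exactly what Lemma \ref{betaplus+} provides under quasi-symmetry, so there is no further obstacle and the proof is complete.
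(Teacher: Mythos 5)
Your proposal is correct and follows the same route as the paper: the paper's proof likewise consists of replacing Corollary \ref{betaplus} by Lemma \ref{betaplus+} and Proposition \ref{sigma} by Proposition \ref{sigmaq}, and then observing that the arguments of \S\ref{sec:proofs} (including the $\tau$-refinement of Proposition \ref{twist}) carry over verbatim with the $\nu$-shifted partition. You have simply spelled out in more detail which lemmas of \S\ref{partition} transfer and why the $\W$-invariance of $\nu$ makes the combinatorics go through unchanged.
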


\begin{proof}
Using Lemma \ref{betaplus+} in place of Corollary \ref{betaplus} and Proposition \ref{sigmaq} it is easy to check that the proofs of the semi-orthogonal decompositions in \S\ref{sec:proofs} carry  over with the above partition of $X(T)^+$, 
 replacing 
$\Lambda_{<j}$, $\D_{<j}$, $\D_{j}$ by their shifted counterparts $\Lambda_{<j,\nu}$, $\D_{<j,\nu}$, $\D_{j,\nu}$. 
Consequently, the same holds true also for $\Lambda_{<j,\nu,\tau}$, $\D_{<j,\nu,\tau}$, $\D_{j,\nu,\tau}$ (see proof of Proposition \ref{twist}).
\end{proof}

If $K$ is a connected normal subgroup of $G$, and $Q$ is a chosen pseudo-comple\-ment, then we denote (as in the proof of Proposition \ref{twist}) by $\tilde T=T_K\times T_Q$ a maximal torus of $\tilde G=K\times Q$, such that $\tilde T\to T\subset G$ (under the multiplication map). Let us denote by $(\nu_K,\nu_Q)$ the image of $\nu\in X(T)$ in $X(\tilde T)$.

\begin{lemma}\label{faceTstable}
Let $\nu\in X(T)_\RR$ be $\Wscr$-invariant. 
Assume that there is a non-trivial connected subgroup $K$ of $G$ acting trivially on $X$. Let $Q$ be a pseudo-complement of $K$ in $G$. 
Then there exist a $\Wscr_Q$-invariant $\nu_Q\in X(T_Q)_\RR$,
a finite central group $A_Q$ of $Q$ acting trivially on $X$,
and $\tau_{Q}\in X(A_Q)$ such that
\[
\Lscr_{\lambda,r,\nu,\tau}\cong \Lscr_{Q_\lambda,X^\lambda,r,\nu_Q,\tau_Q}
\]
via the natural map $X(\bar{T})_\tau\to X(T)\to X(\tilde{T})\to X(T_Q)$.
 \end{lemma}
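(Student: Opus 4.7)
The plan is to exploit the near-direct-product decomposition $G = K \cdot Q$ (with finite central intersection) to split every combinatorial ingredient of $\Lscr_{\lambda,r,\nu,\tau}$ into a $T_K$-part (which will be rigid) and a $T_Q$-part (which will match the RHS). Concretely I would split $X(T)_\QQ = X(T_K)_\QQ \oplus X(T_Q)_\QQ$, write $\lambda = \lambda_K + \lambda_Q$ and $\nu = \nu_K + \nu_Q$. Because $K$ acts trivially on $W$, every weight $\beta_i$ is trivial on $T_K$, so $\beta_i \in X(T_Q)$ and $\Sigma_\lambda \subset X(T_Q)_\RR$ depends only on $\lambda_Q$. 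Because $K$ and $Q$ commute, the root system splits $\Phi = \Phi_K \sqcup \Phi_Q$ with $\Phi_K \subset X(T_K)$ and $\Phi_Q \subset X(T_Q)$; similarly $G^\lambda$ is almost the direct product of $K^{\lambda_K}$ and $Q^{\lambda_Q}$, and $\bar{\rho}_\lambda = \bar{\rho}^K_\lambda + \bar{\rho}^Q_\lambda$ decomposes into $X(T_K)$- and $X(T_Q)$-components.

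Consequently the projection $p\colon X(T)_\RR \to X(T_Q)_\RR$ sends the polytope $\nu - \bar{\rho}_\lambda + r\Sigma_\lambda$ injectively onto $\nu_Q - \bar{\rho}^Q_\lambda + r\Sigma_\lambda$, since the polytope has no spread in the $X(T_K)_\RR$-direction (its $X(T_K)$-image is the single point $\nu_K - \bar{\rho}^K_\lambda$). I would choose $Q_\lambda$ compatibly with the $K$-$Q$ decomposition so that it shares a root system with $Q^{\lambda_Q}$ (they differ only by central finite subgroups). Then $\bar{\rho}_{Q_\lambda} = \bar{\rho}^Q_\lambda$ and $\Sigma_{Q_\lambda} = \Sigma_\lambda$ under $p$. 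Setting $\nu_Q$ to be the $T_Q$-component of $\nu$, the image of the LHS polytope becomes exactly $\nu_Q - \bar{\rho}_{Q_\lambda} + r\Sigma_{Q_\lambda}$, matching the RHS polytope.

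For the additional data, the $G^\lambda$-dominance of $\chi$ decouples as $K^{\lambda_K}$- plus $Q^{\lambda_Q}$-dominance; by $\Wscr$-invariance of $\nu$, the first condition either holds automatically (when $K^{\lambda_K}$ is a torus and $\bar{\rho}^K_\lambda = 0$) or fails for every candidate $\chi$, in which case $\Lscr_{\lambda,r,\nu,\tau} = \emptyset$ and the lemma is vacuous; the second transports to $Q_\lambda$-dominance of $p(\chi)$ since $Q_\lambda$ and $Q^{\lambda_Q}$ share a root system. For the central character I would take $A_Q \subset Q$ to capture the $Q$-projection of $A$ and define $\tau_Q \in X(A_Q)$ as the pushforward of $\tau$ (possible because $K$ acts trivially on $X$, making the $K$-component of $A$ also act trivially on $X$). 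The map $p$ then restricts to the asserted bijection $X(\bar{T})^\lambda_\tau \cap (\nu - \bar{\rho}_\lambda + r\Sigma_\lambda) \cong X(\bar{T}_Q)^+_{\tau_Q} \cap (\nu_Q - \bar{\rho}_{Q_\lambda} + r\Sigma_{Q_\lambda})$.

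The main obstacle will be the clean tracking of the finite central groups and their characters — making the choices of $Q_\lambda$, $A_Q$, and $\tau_Q$ canonical enough that $p$ actually restricts to a bijection on the prescribed central-character eigenspaces (lattice-level rather than only $\RR$-level). A secondary subtlety is confirming emptiness when $K^{\lambda_K}$ is not a torus, which relies on the strict positivity $\langle \alpha^\vee, \bar{\rho}^K_\lambda\rangle \geq 1$ for simple roots $\alpha$ of $K^{\lambda_K}$ combined with $\langle \alpha^\vee, \nu_K\rangle = 0$ forced by $\Wscr$-invariance of $\nu$.
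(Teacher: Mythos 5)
Your approach is essentially the paper's own: the paper's proof is a one-line reference to the argument of Proposition \ref{prop:nonstable1}, augmented by exactly the decompositions you list (each $\beta_i$ becomes $(0,\beta_i)$, $\bar{\rho}$ splits as $(\bar{\rho}_K,\bar{\rho}_Q)$, $\nu$ splits as $(\nu_K,\nu_Q)$). Your observation that the polytope has no spread in the $X(T_K)_\RR$-direction, and that $\Wscr$-invariance of $\nu$ forces $\Lscr_{\lambda,r,\nu,\tau}$ to be empty once the $K$-part of $G^\lambda$ acquires positive roots (because $\langle\check\alpha,\nu_K-\bar\rho_K\rangle=-\langle\check\alpha,\bar\rho_K\rangle<0$), is also correct and is the reason the statement is not vacuous only in the toral case.

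The one place your proposal actually goes wrong is the definition of $\tau_Q$. You take $\tau_Q$ to be ``the pushforward of $\tau$'' to $A_Q$, but that is not the character that makes the projection $p\colon X(\bar T)_\tau\to X(T_Q)$ land in $X(\bar T_Q)_{\tau_Q}$. Since every $\chi\in\Lscr_{\lambda,r,\nu,\tau}$ has its $T_K$-component pinned at $\chi_K=\nu_K-\bar\rho_K$ (precisely because there is no spread in that direction, as you say), restricting $\chi$ to $\tilde A\subset T_K\times T_Q$ gives $\tilde\tau=\tilde\chi_K+\tilde\chi_Q=(\tilde\nu_K-\tilde{\bar\rho}_K)+\tilde\chi_Q$, so $\tilde\chi_Q=\tilde\tau-\tilde\nu_K+\tilde{\bar\rho}_K$. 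Thus the correct choice is the one the paper records, $\tilde\tau_Q+\tilde\nu_K-\tilde{\bar\rho}_K=\tilde\tau$, and your ``pushforward of $\tau$'' differs from it by the restriction of $\nu_K-\bar\rho_K$ to $\tilde A$, which need not vanish; moreover the naive pushforward need not even be well defined on $A_Q$ unless $\tau$ happens to kill $\tilde A\cap(K\times\{1\})$. With this corrected $\tau_Q$ the rest of your argument goes through and matches the paper.
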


 \begin{proof}
 The proof is similar to
the proof of Proposition \ref{prop:nonstable1}, additionally noting  that $-\bar \rho$ decomposes as $(-\bar\rho_K,-\bar\rho_Q)$,   $\beta_i$ as $(0,\beta_i)$, $\nu$ as $(\nu_Q,\nu_K)$, and $\tau_Q\in X(A_Q)$  satisfies
$\tilde\tau_Q+\tilde\nu_K-\tilde{\bar{\rho}}_K=\tilde\tau$.
 \end{proof}

\subsubsection{Proof}
The proof is in two steps.
We first decompose $\D(X/G)$ such that its components are isomorphic to $\D(\Lambda_{\lambda,\nu,\tau})$, and then we decompose this further with components isomorphic to $\D(\Lambda_{\lambda,\nu,\tau}^\varepsilon)$.

\begin{lemma}\label{lemmaquasisod}
Let $X$ have a $T$-stable point.
There exist $\lambda_i\in Y(T)^-$,
finite central subgroups $A_i$ of $G^{\lambda_i}$ acting trivially on $X^{\lambda_i}$, $\tau_i\in X(A_i)$, $\W_{G^{\lambda_i}}$-invariant $\nu_i\in X(T)_\RR$ such that $\Dscr=\D(X/G)$ has a semi-orthogonal decomposition $\Dscr=\la\dots,\D_{-2},\D_{-1},\D_0\ra$ with $\D_{-i}\cong \D(\Lambda_{\lambda_i,\nu_i,\tau_i})$.
\end{lemma}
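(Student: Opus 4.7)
The plan is to apply the twisted version of Lemma \ref{tired} directly, using shift parameter $\nu = 0$, radius $r = 1/2 + \varepsilon$ for sufficiently small $\varepsilon > 0$, and a chosen finite central subgroup $A \subset G$ acting trivially on $X$ (for instance $A = \{1\}$, or any larger such subgroup, which is automatically finite since $X$ has a $T$-stable point). Note that $A \subset Z(G) \subset G^\lambda$ for every $\lambda \in Y(T)^-$, and $A$ acts trivially on $X \supset X^\lambda$, so $A$ will qualify as an $A_i$ in the conclusion of the lemma.

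For each $\tau \in X(A)$, the twisted analogue of Proposition \ref{affineso} provided by Lemma \ref{tired} produces a semi-orthogonal decomposition of $\Dscr(X/G)_\tau$ indexed by those $j \in J$ with $r_j > 1/2$, with parts isomorphic to $\Dscr(\Lambda_{\lambda_j, 1/2+\varepsilon, \nu_j, \tau})$. Here $\lambda_j \in Y(T)^-$ is the one-parameter subgroup attached to the face $F_j$ as in Lemma \ref{def:lchi}, and $\nu_j = -\bar\rho + \bar\rho_{\lambda_j} + (\chi_j)_p$ is $\Wscr_{G^{\lambda_j}}$-invariant by Lemma \ref{chi_lambda}. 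Under the conventions of \S\ref{appA}, where the suppressed radius $r$ stands for $1/2+\varepsilon$, this component is precisely $\Dscr(\Lambda_{\lambda_j, \nu_j, \tau})$.

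Next, we use the orthogonal decomposition $\Dscr(X/G) = \bigoplus_{\tau \in X(A)} \Dscr(X/G)_\tau$ of \eqref{eq:orthogonal} to assemble these $\tau$-wise semi-orthogonal decompositions into a single one for $\Dscr(X/G)$. Since summands corresponding to different $\tau$ are mutually $\Hom$-orthogonal, any total ordering of $\{(j, \tau) \mid r_j > 1/2\}$ that refines each fixed-$\tau$ ordering works; the bottom component $\Dscr_0$ corresponds to the minimum pair $(j_0, \tau_0)$ under this ordering.

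The main obstacle is administrative rather than conceptual: one must verify that $\Lambda_{\lambda_j, 1/2+\varepsilon, \nu_j, \tau}$ as defined in the $(G,X)$-setup agrees with $\Lambda_{\lambda_j, \nu_j, \tau}$ as defined in \S\ref{appA} (with $A_i = A$). This reduces to the observations that $W_{\lambda_j}$, viewed as a $G^{\lambda_j}$-representation, has weights exactly $\{\beta_i \mid i \in T^0_{\lambda_j}\}$, so the polytope $\Sigma_{\lambda_j}$ and the half-sum of positive roots $\bar\rho_{\lambda_j}$ are computed identically in both contexts, and both algebras have the same shape $(\End U \otimes_k \Sym W_{\lambda_j})^{G^{\lambda_j}}$ with the same indexing lattice $\Lscr_{\lambda_j, \nu_j, \tau}$.
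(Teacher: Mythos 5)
Your proposal misreads what a single application of Lemma \ref{tired} actually produces. When the partition \eqref{eq:partition} is applied with $\nu=0$, the analogue of Proposition \ref{affineso}\eqref{3} gives pieces $\D_j\cong\D(\Lambda_{r_j,\lambda_j,\nu_j,\tau})$ where $r_j$ is the \emph{radius of the face $F_j$}, and these $r_j$ take many different values in $]1/2,\infty[$. The condition ``$r_j>1/2$'' in Lemma \ref{tired} is a constraint on which faces participate (enabled by Lemma \ref{betaplus+}), not a radius you get to choose. Only the bottom piece $\D_0=\D_{\le j_0}$ is generated by $\Lscr_{0,\nu,\tau}$, i.e.\ corresponds to the polytope $(1/2)\bar{\Sigma}$; all other pieces correspond to polytopes $\nu_j-\bar{\rho}_{\lambda_j}+r_j\Sigma^0_{\lambda_j}$ of strictly larger radius $r_j$, and no choice of $\nu_j$ can make a polytope of radius $r_j>1/2+\varepsilon$ coincide with one of radius $1/2+\varepsilon$. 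So after one step, the parts are \emph{not} all of the form $\D(\Lambda_{\lambda_i,\nu_i,\tau_i})$ required by the statement.

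The missing idea is recursion. The paper's proof rewrites each $\D_{-i}$, via Lemma \ref{faceTstable} and the orthogonal decomposition \eqref{eq:part2}, as $\D_{r_{j_i},(\nu_{j_i})_{Q_{\lambda_{j_i}}}}(X^{\lambda_{j_i}}/Q_{\lambda_{j_i}})_{\tau_{Q_{\lambda_{j_i}}}}$; since $\dim X^{\lambda_{j_i}}<\dim X$, $\dim Q_{\lambda_{j_i}}<\dim G$ and $r_{j_i}>1/2$, the same decomposition procedure can be applied again, and because the generating sets $\Lscr_{r,\nu,\tau}$ are finite the process terminates. Lemma \ref{lem:technical} is then essential to translate the data $(\lambda',\lambda'',\nu',\tau')$ produced at each stage back into a single $(\lambda,\nu,\tau)$ for the original $(G,T,X)$, so that the final parts have the shape $\D(\Lambda_{\lambda_i,\nu_i,\tau_i})$. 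Your proposal also fixes a single central $A\subset G$ up front, whereas the $A_i$ in the statement are central subgroups of $G^{\lambda_i}$, which appear naturally from the recursive application of Lemma \ref{faceTstable}. None of this is merely administrative; without the recursion (and the identification in Lemma \ref{lem:technical}) the argument does not reach the target form.
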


\begin{proof}
We decompose $\Dscr$ according to faces of $-\bar\rho+r\bar{\Sigma}$ 
assuming $r_j>1/2$. By Lemmas \ref{tired}, \ref{faceTstable} 
 and \eqref{eq:part2}
 we have 
 $\D_{-i}\cong \D_{r_{j_i},(\nu_{j_i})_{Q_{\lambda_{j_i}}}}(X^{\lambda_{j_i}}/Q_{\lambda_{j_i}})_{\tau_{Q_{\lambda_i}}}$.

As $\dim X^{\lambda_{j_i}}<\dim X$, $\dim Q_{\lambda_{j_i}}<\dim G$, $r_{j_i}>1/2$ we can decompose $\D_{-i}$ further. Note that in finitely many steps (as $\Lscr_{r,\nu,\tau}$ is finite) we reach the situation when every component of the decomposition is of the form $\la P_{\chi}\mid \chi\in\Lscr_{\lambda,\nu,\tau}\ra$ for some $\lambda\in Y(T)^-$, $\Wscr_{G^{\lambda}}$-invariant $\nu\in X(T)_\RR$, and $\tau\in X(A)$ for a finite central subgroup $A$ of $G^\lambda$ acting trivially on $X^\lambda$  by Lemma \ref{lem:technical} below (and the fact that for $r=1/2+\epsilon$ we have by definition $\Lscr_{\lambda,r,\nu,\tau}=\Lscr_{\lambda,\nu,\tau}$). \end{proof}

We now proceed to decompose
$\Dscr_{\lambda,\nu}(X/G)_\tau\cong \Dscr(\Lambda_{\lambda,\nu,\tau})$ further.

\begin{lemma}\label{0step}
There exist $\lambda_0=\lambda$, $\lambda_i\in Y(T)^-$, $\Wscr_{G^{\lambda_i}}$-invariant $\nu_i$, $\Wscr_{G^{\lambda_i}}$-invariant $\varepsilon_i\in X(T)_\RR$ which are weakly generic for $\Sigma_{\lambda_i}$, 
finite central subgroups $A_i$ of $G^{\lambda_i}$ acting trivially on $X^{\lambda_i}$,  $\tau_i\in X(A_i)$ such that $\D=\Dscr_{\lambda,\nu}(X/G)_\tau$ has a semi-orthogonal decomposition $\D=\la \D_{-k},\dots,\D_{-1},\D_0\ra$ with
 $\D_{-i}\cong \D(\Lambda_{\lambda_i,\nu_i,\tau_i}^{\varepsilon_i})$ for $i\geq 0$.
\end{lemma}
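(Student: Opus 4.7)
The plan is an induction on $\dim \bar{\Sigma}_\lambda$: we peel off a single ``interior'' admissible subcategory $\Dscr_0$ from $\Dscr_{\lambda,\nu}(X/G)_\tau$ using a weakly generic $\varepsilon_0$, leaving a semi-orthogonal complement whose constituents are subcategories of strictly smaller combinatorial size, to which the inductive hypothesis applies. The base case $\dim \bar{\Sigma}_\lambda = 0$ (i.e.\ $W_\lambda = 0$) is trivial since then $(\bar{\Sigma}_\lambda)_{\varepsilon} = \bar{\Sigma}_\lambda$ for any $\varepsilon$, so already $\Dscr_{\lambda,\nu}(X/G)_\tau \cong \Dscr(\Lambda^{\varepsilon}_{\lambda,\nu,\tau})$.

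First I would choose a $\Wscr_{G^\lambda}$-invariant $\varepsilon_0 \in X(T)_\RR$ weakly generic for $\bar{\Sigma}_\lambda$; this is possible since one only needs to avoid finitely many hyperplanes, namely those spanned by faces of $\bar{\Sigma}_\lambda$ admitting non-parallel $\Wscr_{G^\lambda}$-invariant vectors. By construction $\Lscr^{\varepsilon_0}_{\lambda,\nu,\tau} \subset \Lscr_{\lambda,\nu,\tau}$ picks out exactly the weights lying away from the faces of $\nu - \bar{\rho}_\lambda + (1/2)\bar{\Sigma}_\lambda$ facing the $-\varepsilon_0$ direction. Each excluded boundary face $F$ is cut out by a supporting hyperplane $\langle \lambda', -\rangle = \mathrm{const}$ for some $\lambda' \in Y(T)^-$ strictly extending $\lambda$ (i.e.\ $G^{\lambda'} \subsetneq G^\lambda$), and after absorbing the pinned boundary $\beta_i$-contribution into a shifted $\nu'$ and central character $\tau'$ — as in Lemma \ref{chi_lambda} and the proof of Proposition \ref{prop:nonstable1} — the weights on $F$ are identified with $\Lscr_{\lambda',\nu',\tau'}$, a set whose associated polytope $\bar{\Sigma}_{\lambda'}$ is strictly lower-dimensional than $\bar{\Sigma}_\lambda$. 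Organizing these boundary faces into a totally ordered set refining the face-inclusion order as in Lemma \ref{lem:total}, I obtain a first-level semi-orthogonal decomposition $\Dscr_{\lambda,\nu}(X/G)_\tau = \langle \dots, \Dscr'_{-1}, \Dscr_0\rangle$ with $\Dscr_0 \cong \Dscr(\Lambda^{\varepsilon_0}_{\lambda,\nu,\tau})$ and each $\Dscr'_{-i} \cong \Dscr_{\lambda'_i,\nu'_i}(X/G)_{\tau'_i}$. Splicing in the refined decompositions of the $\Dscr'_{-i}$ provided by induction then yields the claimed form, with $\Dscr_0$ identified via Lemma \ref{lem:ff} and the finite global dimension supplied by Proposition \ref{nccr}.

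The hard part will be verifying the reduction-setting-type splitting \eqref{ref-2.3-8} at the half-polytope level required to separate $\Dscr_0$ from the boundary pieces. This is exactly where the quasi-symmetry of $W$ enters: Lemma \ref{betaplus+} guarantees that the Koszul-type summands of \eqref{ref-11.3-98} attached to a weight $\chi$ in the interior piece land in strictly shallower face-pieces of the refined partition, rather than merely in the coarser face ordering of \S\ref{partition}. The orthogonality $\pi_{s*}\uRHom_{X/G}(\Dscr_{-i},\Dscr_{-j}) = 0$ for $i<j$ then follows from Proposition \ref{th:recognition} combined with the half-polytope analogue of Lemma \ref{ref-1.5bis}\eqref{ena} — for $\chi \in F_j$ and $\mu$ on a shallower face one has $\langle \lambda_j, \mu\rangle > \langle \lambda_j, \chi\rangle$ — and the semi-orthogonal decomposition assembles exactly as in Proposition \ref{affineso}\eqref{2} and Lemma \ref{tired}.
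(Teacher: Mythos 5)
Your plan captures the right high-level structure: choose a weakly generic $\varepsilon$, treat the interior piece $\Lscr^{\varepsilon}_{\lambda,\nu,\tau}$ as $\Dscr_0$, recognize that boundary faces of the half-polytope drop to strictly smaller $\bar{\Sigma}_{\lambda'}$, and recurse. You also correctly identify Lemma \ref{betaplus+} as the place quasi-symmetry enters and Lemma \ref{lem:technical}/Lemma \ref{faceTstable} as the re-identification machinery. However, you explicitly flag the ``hard part'' (verifying a reduction-setting splitting at the half-polytope level) and leave it open, and this is precisely where your sketch has a gap: the reduction-setting machinery of \S\ref{secredset}--\S\ref{partition} is built around faces of scaled polytopes $\nu'-\bar{\rho}+r\bar{\Sigma}$, not around the asymmetric body $\nu-\bar{\rho}_\lambda + (1/2)(\bar{\Sigma}_\lambda)_\varepsilon$, so it does not apply as stated and your ``organize the boundary faces as in Lemma \ref{lem:total}'' step has no direct justification.

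The paper avoids this by a shift trick that your plan omits: choose $a>0$ small and set $\delta=\nu+a\varepsilon$, so that (i) $\Lscr^{\varepsilon}_{\lambda,\nu,\tau}$ equals $X(\bar{T})^+_\tau\cap(\delta-\bar{\rho}_\lambda+(1/2)\bar{\Sigma}_\lambda)$, and (ii) by \eqref{objem} the whole of $\Lscr_{\lambda,\nu,\tau}$ equals $X(\bar{T})^\lambda_\tau\cap(\delta-\bar{\rho}_\lambda+r'\Sigma_\lambda)$ for some $r'>1/2$. This converts the half-polytope problem into a statement about a small enlargement of a $\delta$-centered, scaled, $\W_{G^\lambda}$-symmetric polytope, which is exactly the situation where the already-established Lemma \ref{tired} (the $\nu$-shifted analogue of Propositions \ref{affineso}, \ref{twist}, valid for $r>1/2$ thanks to Lemma \ref{betaplus+}) gives the semi-orthogonal decomposition for free, with $\Lscr^\varepsilon$ as the lowest stratum $\bigcup_{i<j_1}F_i$ and the remaining $F_{j_i}$ as the boundary pieces. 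Without the shift you would need to independently re-prove the analogue of Lemma \ref{lem:criterion}/Proposition \ref{affineso} for the face structure of $(\bar{\Sigma}_\lambda)_\varepsilon$, and nothing in your sketch supplies that argument; you would also need to check (as the paper does via \eqref{objem}) that the interior piece and the boundary pieces really are a partition of $\Lscr_{\lambda,\nu,\tau}$ compatible with the $\delta$-partition ordering. So the plan is directionally correct but missing the one concrete device that makes it go through.
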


\begin{proof}
We choose a $\Wscr_{G^\lambda}$-invariant $\varepsilon\in X(T)_\RR$ weakly  generic for $\bar{\Sigma}_\lambda$ 
and small $a>0$ such that
\[
X(\bar T)_\tau^\lambda\cap(\nu-\bar\rho_\lambda+(1/2)(\bar{\Sigma}_\lambda)_\varepsilon)=
X(\bar T)_\tau^+\cap(\nu-\bar\rho_\lambda+a\varepsilon+(1/2)\bar{\Sigma}_\lambda)
\]
and
\begin{equation}\label{objem}
\Lscr_{\lambda,\nu,\tau}=X(\bar T)_\tau^\lambda\cap(\nu-\bar\rho_\lambda+a\varepsilon+r'{\Sigma}_\lambda)
\end{equation}
for some $r'>1/2$.
We take $\delta=\nu+a\varepsilon$, and let $\nu_Q$, $\tau_Q$ be as in Lemma \ref{faceTstable}, and partition $X(T_{Q_\lambda})^+$ accordingly.
Due to \eqref{objem} and Lemma \ref{tired}, $\Lscr_{\lambda,\nu,\tau}^\varepsilon\cong \cup_{i<j_1} F_i$ and there exists $k\geq 0$ such that $\Lscr^\epsilon_{0,\nu,\tau}\cup_{1\leq i\leq k} F_{j_i}= \Lscr_{\nu,\tau}$. We set $\varepsilon_0=\varepsilon$.

As in the second paragraph of the proof of Lemma \ref{lemmaquasisod}, we can decompose $\la P_\chi\mid \chi\in F_{j_i}\ra$ further, until the components are isomorphic to $\Dscr_{\lambda,\nu}(X/G)_\tau$  for some $\lambda\in Y(T)^-$, $\Wscr_{G^\lambda}$-invariant $\nu\in X(T)_\RR$ and $\tau\in X(A)$ for a finite central subgroup $A$ of $G^\lambda$ acting trivially on $X^\lambda$. Now we can repeat the first part of this proof and decompose  $\Dscr_{\lambda,\nu}(X/G)_\tau$ further. We get the desired decomposition by invoking Lemma \ref{lem:technical} below.
\end{proof}
We have used the following technical lemma.
\begin{lemma}\label{lem:technical}
Let $\lambda'\in Y(T)^-$, $\lambda''\in Y(T_{Q_{\lambda'}})^-$,
 let $\nu'\in X(T_{Q_{\lambda'}})$ be $\Wscr_{G^{\lambda''}}$-invariant, and let $A'$ be a finite central subgroup of $(Q_{\lambda'})^{\lambda''}$ acting trivially on $(X^{\lambda'})^{\lambda''}$, $\tau'\in X(A')$. There exist $\lambda\in Y(T)^-$, $\Wscr_{G^{\lambda}}$-invariant $\nu\in X(T)_\RR$, finite central subgroup $A$ of $G^\lambda$ acting trivially on $X^\lambda$, $\tau\in X(A)$,
  such that
\begin{align*}
\Lscr_{Q_{\lambda'},X^{\lambda'},\lambda'',r,\nu',\tau'}&\cong \Lscr_{\lambda,r,\nu,\tau},\\
\Dscr_{Q_{\lambda'},X^{\lambda'},\lambda'',r,\nu'}(X^{\lambda'}/Q_{\lambda'})_{\tau'}&\cong \D_{\lambda,r,\nu}(X/G)_\tau\cong \D_{r,\nu_Q}(X^\lambda/Q_\lambda)_{\tau_Q}.
\end{align*}
\end{lemma}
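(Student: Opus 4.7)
The plan is to glue $\lambda'$ and $\lambda''$ into a single anti-dominant one-parameter subgroup of $T$, and to transfer the data $(\nu', A', \tau')$ through this gluing. Specifically, I would set $\lambda = N\lambda' + \lambda''$ for a sufficiently large integer $N$, where $\lambda''$ is viewed as an element of $Y(T)$ via the inclusion $T_{Q_{\lambda'}} \hookrightarrow T$ coming from the pseudo-complement decomposition $G^{\lambda'} = K \cdot Q_{\lambda'}$, with $K$ the connected stabilizer of $X^{\lambda'}$ in $G^{\lambda'}$. For $N$ large enough one checks: (i) $\lambda \in Y(T)^-$, combining anti-dominance of $\lambda'$ in $G$ with the fact that roots of $K$ annihilate $\lambda''$ while anti-dominance of $\lambda''$ in $Q_{\lambda'}$ handles the remaining roots of $G^{\lambda'}$; (ii) $\langle \lambda, -\rangle$ vanishes precisely on what is annihilated by both $\lambda'$ and $\lambda''$, yielding the structural identifications $G^\lambda = K \cdot (Q_{\lambda'})^{\lambda''}$, $X^\lambda = (X^{\lambda'})^{\lambda''}$, $W_\lambda = (W_{\lambda'})_{\lambda''}$, $\Sigma_\lambda = \Sigma_{X^{\lambda'},\lambda''}$, and $\bar{\rho}_\lambda = \bar{\rho}_K + \bar{\rho}_{\lambda''}$.

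Using the $\RR$-decomposition $X(T)_\RR = X(T_K)_\RR \oplus X(T_{Q_{\lambda'}})_\RR$ from the pseudo-complement, I will define $\nu := \bar{\rho}_K + \nu'$; this is $\Wscr_{G^\lambda} = \Wscr_K \times \Wscr_{(Q_{\lambda'})^{\lambda''}}$-invariant, as the $K$-factor fixes $\bar{\rho}_K$ and acts trivially on $X(T_{Q_{\lambda'}})_\RR$, and symmetrically for $\nu'$. Substituting gives the key identity $\nu - \bar{\rho}_\lambda + r\Sigma_\lambda = \nu' - \bar{\rho}_{\lambda''} + r\Sigma_{X^{\lambda'}, \lambda''}$, which lies entirely in $X(T_{Q_{\lambda'}})_\RR$. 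For the finite central subgroup $A \subset G^\lambda$ and the character $\tau \in X(A)$, I will apply in reverse the construction from the proof of Lemma \ref{faceTstable}: let $A$ be the image in $G^\lambda$ of $\tilde A_K \times A'$ under the multiplication $K \times (Q_{\lambda'})^{\lambda''} \to G^\lambda$, where $\tilde A_K \subset T_K$ is a finite central subgroup of $K$ chosen large enough to contain $T_K \cap T_{Q_{\lambda'}}$, and define $\tau$ via the relation $\tilde\tau = \tilde\tau' + \tilde\nu_K - \tilde{\bar\rho}_K$ recorded in the proof of Lemma \ref{faceTstable}; with $\nu_K = \bar\rho_K$ this collapses to $\tilde\tau = \tilde\tau'$.

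Applying Lemma \ref{faceTstable} to $(G, X, \lambda, \nu, \tau)$ with trivially acting subgroup $K$ and pseudo-complement $Q_\lambda = (Q_{\lambda'})^{\lambda''}$ then produces $\nu_Q = \nu'$, $A_Q = A'$, $\tau_Q = \tau'$, yielding both the set identity $\Lscr_{\lambda,r,\nu,\tau} \cong \Lscr_{Q_{\lambda'}, X^{\lambda'}, \lambda'', r, \nu', \tau'}$ and the equivalence $\Dscr_{\lambda,r,\nu}(X/G)_\tau \cong \Dscr_{r,\nu_Q}(X^\lambda/Q_\lambda)_{\tau_Q}$. The remaining equivalence $\Dscr_{Q_{\lambda'},X^{\lambda'},\lambda'',r,\nu'}(X^{\lambda'}/Q_{\lambda'})_{\tau'} \cong \Dscr_{\lambda,r,\nu}(X/G)_\tau$ then follows because Proposition \ref{sigmaq} presents both sides as module categories over algebras determined by the matching $\Lscr$-sets. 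The main obstacle I anticipate is the bookkeeping of $A$ and $\tau$: one has to pick $\tilde A_K \subset T_K$ large enough that the finite-index discrepancy between $X(T) \hookrightarrow X(T_K) \oplus X(T_{Q_{\lambda'}})$ is absorbed, so that the condition $\chi|_A = \tau$ translates exactly into $\chi|_{A'} = \tau'$ on the $Q_{\lambda'}$-side, with triviality of $\chi|_{T_K}$ being automatic from the shape of the region $\nu - \bar\rho_\lambda + r\Sigma_\lambda$. Once this is done, all remaining assertions are formal.
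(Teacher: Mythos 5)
Your approach matches the paper's: the paper's proof is the one-liner ``It is easy to check using Lemma~\ref{faceTstable} that we can take $\lambda=a(\lambda'+b\lambda'')$ for small $b\in \QQ$, $a\in \NN$ such that $\lambda\in Y(T)^-$,'' and your $\lambda=N\lambda'+\lambda''$ is the same one-parameter subgroup (take $a=N$, $b=1/N$). Your structural identifications $G^\lambda=K\cdot(Q_{\lambda'})^{\lambda''}$, $X^\lambda=(X^{\lambda'})^{\lambda''}$, $W_\lambda=(W_{\lambda'})_{\lambda''}$, $\bar\rho_\lambda=\bar\rho_K+\bar\rho_{\lambda''}$ are all correct, and the overall strategy of applying Lemma~\ref{faceTstable} in reverse together with the $\tilde\tau_Q+\tilde\nu_K-\tilde{\bar\rho}_K=\tilde\tau$ relation is the natural way to supply the detail that the paper elides.

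There is, however, an error in your verification of $\Wscr_{G^\lambda}$-invariance. You assert that ``the $K$-factor fixes $\bar\rho_K$'', i.e.\ that $\bar\rho_K$ is $\Wscr_K$-invariant. That is false whenever $K$ has any roots: $\bar\rho_K$ is half the sum of positive roots of $K$, and a simple reflection $s_\alpha\in\Wscr_K$ sends it to $\bar\rho_K-\alpha$. So your $\nu=\bar\rho_K+\nu'$ is $\Wscr_{G^\lambda}$-invariant only if $K$ is a torus. This is not a cosmetic point: the set $\Lscr_{\lambda,r,\nu,\tau}$ forces $\chi_K=\nu_K-\bar\rho_K$, and for this to be the $K$-highest weight of some summand of $U_{\lambda,r,\nu,\tau}$ it must be $K$-dominant; taking $\nu_K$ orthogonal to the roots of $K$ (as genuine $\Wscr_K$-invariance would require) gives $\langle\check\alpha,\nu_K-\bar\rho_K\rangle=-1$ for simple $\alpha$ in $K$. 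Your choice $\nu_K=\bar\rho_K$ resolves nonemptiness at the cost of the stated invariance; you should either explain why $K$ has no roots in the situations where the lemma is invoked, or weaken the invariance you claim to $\Wscr_{(Q_{\lambda'})^{\lambda''}}$-invariance (which is all that is actually used downstream in computing $\Lambda_{\lambda,r,\nu,\tau}$, since the $K$-component of $\nu-\bar\rho_\lambda$ is a single point and cancels on taking $G^\lambda$-invariants). The paper's proof is too terse to indicate how this is handled, so this may be a latent subtlety rather than a gap unique to your argument, but as written the invariance claim is incorrect.
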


\begin{proof}
It is easy to check using Lemma \ref{faceTstable} that we can take $\lambda=a(\lambda'+b\lambda'')$ for small $b\in \QQ$, $a\in \NN$ such that $\lambda\in Y(T)^-$.
\end{proof}

\begin{proof}[Proof of Proposition \ref{quasisod}]
It suffices to combine Lemma \ref{lemmaquasisod} with Lemma \ref{0step}.
\end{proof}

\providecommand{\bysame}{\leavevmode\hbox to3em{\hrulefill}\thinspace}
\providecommand{\MR}{\relax\ifhmode\unskip\space\fi MR }
\providecommand{\MRhref}[2]{%
  \href{http://www.ams.org/mathscinet-getitem?mr=#1}{#2}
}
\providecommand{\href}[2]{#2}

\end{document}